\numberwithin{equation}{section}
\newtheorem{theorem}{Theorem}[section]
\newtheorem{corollary}[theorem]{Corollary}
\newtheorem{lemma}[theorem]{Lemma}
\newtheorem{proposition}[theorem]{Proposition}
\newtheorem*{theorem*}{Theorem}
\newtheorem{definition}[theorem]{Definition}
\newtheorem*{definition*}{Definition}
\newtheorem*{question*}{Question}
\theoremstyle{remark}
\newtheorem{remark}[theorem]{Remark}
\newtheorem*{remark*}{Remark}
\theoremstyle{remark}
\renewcommand{\S}{\mathbb{S}}
\newcommand{\R}{\mathbb{R}}
\newcommand{\cC}{\mathcal{C}}
\DeclareMathOperator{\diam}{diam}
\DeclareMathOperator{\dist}{dist}
\DeclareMathOperator{\loc}{loc}
\newcommand{\norm}[1]{ \left\Vert #1 \right\Vert }	
\newcommand{\apmd}[2][]{							
	\ifthenelse{\equal{#1}{}}%
					{ \operatorname{N}_{#2}	}%
					{ \operatorname{N}_{#1,#2} 	}}
\newcommand{\aint}[2][]{
	\ifthenelse{\equal{#1}{}}%
					{%
\mathchoice%
      {\mathop{\kern 0.2em\vrule width 0.6em height 0.69678ex depth -0.58065ex
              \kern -0.8em \intop}\nolimits_{\kern -0.45em#2}^{#1}}%
      {\mathop{\kern 0.1em\vrule width 0.5em height 0.69678ex depth -0.60387ex
              \kern -0.6em \intop}\nolimits_{#2}^{#1}}%
      {\mathop{\kern 0.1em\vrule width 0.5em height 0.69678ex depth -0.60387ex
              \kern -0.6em \intop}\nolimits_{#2}^{#1}}%
      {\mathop{\kern 0.1em\vrule width 0.5em height 0.69678ex depth -0.60387ex
              \kern -0.6em \intop}\nolimits_{#2}^{#1}}}%
					{%
\mathchoice%
      {\mathop{\kern 0.2em\vrule width 0.6em height 0.69678ex depth -0.58065ex                                              
              \kern -0.8em \intop}\nolimits_{\kern -0.45em#1}^{#2}}%
      {\mathop{\kern 0.1em\vrule width 0.5em height 0.69678ex depth -0.60387ex
              \kern -0.6em \intop}\nolimits_{#1}^{#2}}%
      {\mathop{\kern 0.1em\vrule width 0.5em height 0.69678ex depth -0.60387ex
              \kern -0.6em \intop}\nolimits_{#1}^{#2}}%
      {\mathop{\kern 0.1em\vrule width 0.5em height 0.69678ex depth -0.60387ex
              \kern -0.6em \intop}\nolimits_{#1}^{#2}}}}
\newcommand{\vol}{\mathrm{vol}}
\newcommand{\C}{\mathbb{C}}
\newcommand{\bS}{\mathbb{S}}
\newcommand{\comass}{\mathrm{comass}}
\newcommand{\symp}{\mathrm{symp}}
\newcommand{\Gr}{\mathrm{Gr}}
\newcommand{\SL}{\mathrm{SL}}
\newcommand{\SO}{\mathrm{SO}}
\newcommand{\CO}{\mathrm{CO}}
\newcommand{\bT}{\mathbb{T}}
\newcommand{\assoc}{\mathrm{assoc}}
\newcommand{\coassoc}{\mathrm{coassoc}}
\newcommand{\Cayley}{\mathrm{Cayley}}
\begin{document}
\title{Liouville's theorem in calibrated geometries}

\author{Toni Ikonen} 
\address{Department of Mathematics, University of Fribourg, Chemin du Musée 23, 1700 Fribourg, Switzerland.}

\email{toni.ikonen@unifr.ch}

\author{Pekka Pankka} 
\address{Department of Mathematics and Statistics, P. O. Box 68 (Pietari Kalmin katu 5), FI-00014 University of Helsinki, Finland}

\email{pekka.pankka@helsinki.fi}

\keywords{Liouville's theorem, M\"obius transformation, quasiconformal geometry, calibration, calibrated submanifold, conformally flat submanifolds, minimal submanifold, isoperimetric inequality, conformal mapping, pseudoholomorphic curve, quasiregular mapping}
\thanks{Both authors were supported by the Academy of Finland, project number 332671. The first author was also supported by the Swiss National Science Foundation grant 212867.}
\subjclass[2020]{Primary 30C65; Secondary 53C38, 53C65, 46E36, 49Q15}

\begin{abstract}
We consider the following extension of the classical Liouville theorem: A calibration $\omega \in \Lambda^n \mathbb{R}^m$, where $3 \le n \le m$, has the \emph{Liouville property} if a Sobolev mapping $F\colon \Omega \to \mathbb{R}^m$, where $\Omega \subset \mathbb{R}^n$ is a domain, in $W^{1,n}_{\loc}( \Omega, \mathbb{R}^m )$ satisfying $\norm{DF}^n = \star F^{*}\omega$ almost everywhere is a restriction of a M\"obius transformation $\mathbb{S}^m \to \mathbb{S}^m$. 

We show that, for $m\ge 5$, every calibration in $\Lambda^{m-2} \mathbb{R}^m$ has the Liouville property and, in low dimensions, a calibration $\omega \in \Lambda^n \mathbb{R}^m$ has the Liouville property for $3 \le n \le m \le 6$ unless $\omega$ is face equivalent to the Special Lagrangian. In these cases, the Liouville property stems from isoperimetric rigidity of these mappings together with a classification of calibrations whose conformally flat calibrated submanifolds are flat.

We also show that, for $3 \leq n \leq m$, the calibrations with the Liouville property form a dense $G_\delta$ set in the space of calibrations. As an application, we consider factorization of more general quasiregular curves and stability of quasiregular curves of small distortion.
\end{abstract}

\maketitle\thispagestyle{empty}

{
    \hypersetup{linkcolor=black}
    \setcounter{tocdepth}{1}
    \tableofcontents
}

\section{Introduction}\label{sec:intro}

The classical Liouville theorem for conformal mappings is a starting point for rigidity theory in higher dimensional conformal geometry: \emph{a (nonconstant) conformal mapping $f\colon \Omega \to \R^n$, where $\Omega$ is a domain in $\R^n$ for $n\ge 3$, is a restriction of a M\"obius transformation $\bS^n \to \bS^n$}, that is, after identification $\bS^{n}= \widehat{\R}^n \coloneqq \R^n \cup \left\{\infty\right\}$, via stereographic projection, the mapping $f$ is given by formula 
\begin{equation}\label{eq:Mobius}
    f(x) = y_0 + L\left( \frac{ x - x_0 }{ |x-x_0|^{\epsilon} } \right)
    \quad\text{for $x \in \R^n\setminus \{x_0\}$},
\end{equation}
where $x_0, y_0 \in \bS^n$, $\epsilon \in \left\{0,2\right\}$, and $L\in \CO(n)$ is an orientation preserving conformal linear map. Recall that $A \in \CO(n)$ if $A = \lambda O$, where $O \in \SO(n)$ and $\lambda \ge 0$.

In the aforementioned statement, a mapping $f\colon \Omega \to \R^n$ is \emph{conformal} if $f$ is smooth in the classical sense and \emph{weakly conformal}, that is, $(Df)_x \in \CO(n)$ for each $x\in \Omega$. In particular, the Liouville's theorem states that \emph{for $n\ge 3$, modulo a Möbius transformation in the domain, the differential $Df$ of a conformal map $f \colon \Omega \to \R^n$ is constant}. We refer to the monographs of Iwaniec and Martin \cite[Chapter 5]{Iw:Ma:01} and Gehring, Palka, and Martin \cite[Chapter 3]{Geh:Mar:Pal:17} for a detailed discussion on the classical Liouville's theorem.

In the 1960's Gehring \cite{Geh:61} and Reshetnyak \cite{Res:60,Res:67} showed that, to obtain this rigidity result, it suffices to assume Sobolev $W^{1,n}_{\loc}$-regularity from the map $f$. More precisely, \emph{if $f\colon \Omega \to \bS^n$, where $\Omega \subset \R^n$ is a domain and $n\ge 3$, is in the Sobolev space $W^{1,n}_{\loc}(\Omega, \bS^n)$ and $(Df)_x \in \CO(n)$ for almost every $x\in \Omega$, then $f$ is a restriction of a M\"obius transformation $\bS^n \to \bS^n$.}

The Sobolev regularity in theorems of Gehring and Reshetnyak was later relaxed to $W^{1,p}_{\loc}$-regularity for $p=n/2$ in even dimensions by Iwaniec and Martin \cite{Iwa:Mar:93} and in odd dimensions to $W^{1,p}_{\loc}$-regularity for $p=p(n)<n$ by Iwaniec \cite{Iwa:92}. We refer to \cite{Iw:Ma:01} on open questions on the Sobolev regularity in this theorem.

\begin{remark*}
We note in passing that, although no local injectivity on the map $f \colon \Omega \to \bS^n$ is assumed, the non-negativity of the Jacobian implies that the mapping $f$ is topologically weakly orientation preserving and has \emph{a posteriori} discrete fibers. Due to the low analytic regularity of the mapping, this additional information on the orientation of the map is crucial in the result. For example, the folding map $\R^3 \to \R^3$, $(x_1,x_2,x_3) \mapsto (|x_1|, x_2,x_3)$, is $1$-Lipschitz and its differential is almost everywhere an orthogonal matrix.
\end{remark*}

In this article, we consider versions of the Liouville's theorem in the context of \emph{calibrated geometry}, that is, rigidity of conformal mappings $F \colon \Omega \to \R^m$, where $\Omega \subset \R^n$ and $2\le n < m$ and the differential $DF$ belongs to a conformal matrices associated to a constant coefficient calibration $\omega\in \Lambda^n \R^m$; here and in what follows we identify covectors $\Lambda^n \R^m$ with constant coefficient smooth forms in $\Omega^n(\R^m)$. Recall that a covector $\omega\in \Lambda^n \R^m$ is a \emph{calibration} if
\[
\norm{\omega}_\comass := \sup\{ \omega(v_1,\ldots,v_n) \colon v_i \in \R^m,\ |v_1|=\cdots=|v_n| = 1\} = 1.
\]
We refer to the seminal paper of Harvey and Lawson \cite{Har:Law:82} and the monograph of Joyce \cite{Joy:07} for the theory of calibrations and merely note at this point that the standard volume form $\vol_{\R^n}\in \Lambda^n \R^n$ in $\R^n$ is a calibration.

Given a calibration $\omega \in \Lambda^n \R^m$, we denote
\begin{align*}
\SO(\omega) &= \{ O \colon \R^n \to \R^m \colon O \text{ linear isometry},\ O^*\omega = \vol_{\R^n} \},\\
\CO(\omega) &= \{ \lambda O \colon \R^n \to \R^m \colon O \in \SO(\omega),\ \lambda \ge 0\},
\end{align*}
and
\[
\Gr(\omega) = \{ V \subset \R^m \colon V \text{ linear subspace},\ \norm{\omega|_V}_{\comass} = 1 \}.
\]
Note that, for $A \in \CO(\omega)$, we have $\mathrm{Im}A = A\R^n \in \Gr(\omega)\cup \{\{ 0 \}\}$. We say that
calibrations $\omega, \omega' \in \Omega^n( \mathbb{R}^m )$ are \emph{equivalent} if $\CO(\omega) = \CO(\omega')$. Calibrations $\omega$ and $\omega'$ are \emph{face equivalent} if there exists $O \in \SO(m)$ for which $\Gr(\omega) = \Gr(O^*\omega')$; see e.g.~Joyce \cite[Section 4.3]{Joy:07}. 

\begin{definition*}
A Sobolev $W^{1,n}_{\loc}$-mapping $F\colon \Omega \to \R^m$, where $\Omega \subset \R^n$ is an open set, is a \emph{conformal $\omega$-curve for a calibration $\omega \in \Lambda^n(\R^m)$} if $(DF)_x \in \CO(\omega)$ for almost every $x\in \Omega$.
\end{definition*}

Since $\SO(\vol_{\R^n}) = \SO(n)$ and  $\CO(\vol_{\R^n}) = \CO(n)$, we readily have that, for a domain $\Omega \subset \R^n$, a map $\Omega \to \R^n$ is conformal if and only if it is a conformal $\vol_{\R^n}$-curve. Thus this definition agrees with the definition of conformal mappings above.

In addition to conformal maps between equidimensional spaces, for example, pseudoholomorphic curves associated to symplectic forms, see e.g. \cite{Grom:85} or \cite[Chapter 2]{McD:Sal:04}, and Smith maps associated to vector cross products are conformal curves. We refer to Cheng, Karigiannis, and Madnick \cite{Che:Kari:Mad:20} and \cite{Che:Kar:Mad:23} for discussion on Smith maps.

\begin{remark*}
We may also characterize conformal $\omega$-curves $F\colon \Omega \to \R^m$ for a calibration $\omega \in \Lambda^n(\R^m)$ as Sobolev mappings in $W^{1,n}_{\loc}(\Omega, \R^m)$ satisfying $\norm{DF}^n = \star F^*\omega$ almost everywhere in $\Omega$; see Section \ref{sec:preliminaries}. Note that, by \cite[Corollary 1.7]{Iko:24}, there exists $p = p(n,m) \in ( n/2, n )$ for which every $f \in W^{1,p}_{loc}( \Omega, \mathbb{R}^m )$ satisfying $(DF)_x \in \CO(\omega)$ for almost every $x\in \Omega$ is, in fact, a conformal $\omega$-curve.
\end{remark*}

We may now restate Liouville's theorem as follows: 
\emph{for $n\ge 3$, nonconstant conformal $\vol_{\R^n}$-curves $\Omega \to \R^n$, where $\Omega \subset \R^n$ is a domain, are restrictions of M\"obius transformations $\bS^n \to \bS^n$.} 
The Liouville problem for constant coefficient calibrations reads as follows.

\begin{question*}[Liouville problem for curves]
For $3 \le n \le m$ and a constant coefficient calibration $\omega \in \Lambda^n \R^m$, are nonconstant conformal $\omega$-curves $\Omega \to \R^m$, where $\Omega\subset \R^m$ is a domain, restrictions of M\"obius transformations $\bS^m \to \bS^m$? In case the answer is positive for a calibration $\omega \in \Lambda^n \R^n$, we say that $\omega$ has the \emph{Liouville property}.
\end{question*}
Here and in what follows, we identify $\R^n$ with $\R^n \times \left\{0\right\} \subset \R^m$ and $\R^m \cup \left\{\infty\right\}$ with $\S^m$ using the stereographic projection. This allows us to consider $\Omega$ as a subset of $\S^m$ and thus the restriction question above is well-posed.

\subsection{Restating the Liouville problem}

Before considering this Liouville problem for particular calibrations, we reformulate the Liouville problem using two methods. The first is the classical regularity which follows from classical elliptic regularity theory (see e.g.~Uhlenbeck \cite{Uhl:77} and Hardt--Lin \cite{Har:Lin:87}), minimal surface theory (see e.g.~Harvey--Lawson \cite{Har:Law:82} and Reifenberg \cite{Reif:64}), and conformal mapping theory (see e.g. Lassas--Liimatainen--Salo \cite{Las:Lii:Sal:22}); see also Iliashenko and Karigiannis \cite{Ilia:Karig:23} for a related $\mathcal{C}^{1,\alpha}$-regularity result.

\begin{theorem}\label{thm:realanalytic}
Let $\omega \in \Lambda^n \R^m$ be a calibration, for $3\le n \le m$, and let $F\colon \Omega \to \R^m$ be a conformal $\omega$-curve, where $\Omega \subset \R^n$ is a domain. Then $F$ is $\mathcal{C}^{1,\alpha}$-regular and the norm function $\norm{DF}^{\frac{n-2}{2}}$ is subharmonic. Furthermore, $F$ is a real analytic immersion in $\Omega_F = \{\norm{DF}>0\}$ and the image $F(\Omega_F)$ is an $\omega$-calibrated and real-analytic conformally flat immersed submanifold of $\R^m$.
\end{theorem}

The other key ingredient in reformulating the Liouville problem is the affine rigidity of conformal $\omega$-curves, which stems from a Reshetnyak type isoperimetric inequality for conformal $\omega$-curves; see Theorem \ref{thm:isoperimetric-inequality}. This isoperimetric inequality yields the following characterization of affine curves among conformal $\omega$-curves.
 
\begin{restatable}{theorem}{constantNorm}
\label{thm:constant-norm-means-affine}
Let $2\le n \le m$, $\omega \in \Lambda^n \R^m$ be a calibration, $\Omega \subset \R^n$ be a domain, and let $F\colon \Omega \to \R^m$ be a conformal $\omega$-curve. Then $F$ is affine if and only if $\norm{DF}$ is constant.
\end{restatable}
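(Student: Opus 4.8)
The plan is to prove the nontrivial direction: assuming $\norm{DF}$ is constant, I want to conclude $F$ is affine. (The converse is trivial, since an affine map $F(x)=Ax+b$ with $A\in\CO(\omega)$ has $\norm{DF}=\norm{A}$ constant.) First I would invoke \Cref{thm:realanalytic} to upgrade the Sobolev regularity: $F$ is $\mathcal{C}^{1,\alpha}$, and on the open set $\Omega_F=\{\norm{DF}>0\}$ it is a real-analytic immersion whose image is an $\omega$-calibrated, hence area-minimizing, conformally flat submanifold. If $\norm{DF}\equiv c$ for a constant $c$, then either $c=0$ and $F$ is trivially constant (hence affine), or $c>0$ and $\Omega_F=\Omega$, so $F$ is a real-analytic conformal immersion on all of $\Omega$ with constant conformal factor $c$.

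The heart of the argument is then an isoperimetric rigidity statement. The idea is to feed the constant-norm hypothesis into the Reshetnyak-type isoperimetric inequality for conformal $\omega$-curves (the referenced \Cref{thm:isoperimetric-inequality}). The point of such an inequality is that it bounds the $n$-volume of the image from above by a power of the boundary $(n-1)$-measure, with equality — the isoperimetric rigidity — forcing the image to be a flat piece, i.e.\ $F$ to be affine. Concretely, I would test the inequality on a round ball $B=B(x_0,r)\subset\Omega$: because $\norm{DF}\equiv c$ is constant, both the volume $\int_B \norm{DF}^n = c^n\,\vol_{\R^n}(B)$ and the boundary area $\int_{\partial B}\norm{DF}^{n-1} = c^{n-1}\,\mathcal{H}^{n-1}(\partial B)$ reproduce exactly the Euclidean quantities, so the image $F(B)$ realizes equality in the (sharp, Euclidean-calibrated) isoperimetric inequality. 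Equality in the isoperimetric inequality for a minimal (calibrated) submanifold forces that submanifold to be a flat $n$-disc, and a conformal immersion onto a flat $n$-disc with constant conformal factor is necessarily the restriction of an affine map $x\mapsto cOx+b$ with $O\in\SO(\omega)$.

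The main obstacle I anticipate is making the equality case of the isoperimetric inequality genuinely rigid rather than merely attained on one ball. Matching the Euclidean isoperimetric constant on every ball is promising, but one must rule out the possibility that $F(B)$ is a nonflat minimal surface that happens to share the extremal volume-to-boundary ratio; here the conformal flatness of the image and the constancy of the conformal factor should be exactly the extra structure that pins down flatness, likely via the subharmonicity of $\norm{DF}^{(n-2)/2}$ from \Cref{thm:realanalytic} (a constant subharmonic function carries no curvature obstruction). An alternative, perhaps cleaner, route is to bypass the geometric equality analysis and argue analytically: on $\Omega_F$ the map $F$ is a conformal immersion, so the induced metric is $c^2$ times the Euclidean metric, i.e.\ $F$ is a homothety of $\Omega$ onto a flat submanifold; the second fundamental form of the image then must vanish because the image is both conformally flat and minimal with a flat induced metric, and vanishing second fundamental form means the image lies in an affine $n$-plane, whence $F$ is affine.

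Between the two routes I would commit to whichever the isoperimetric inequality (\Cref{thm:isoperimetric-inequality}) is actually stated to deliver: if it is stated with a sharp constant and a characterized equality case, the first route is immediate; if it only gives the inequality, I would develop the second, metric route, in which case the hard step is verifying that a constant-norm conformal $\omega$-curve has vanishing second fundamental form. I expect the constancy of $\norm{DF}$ to enter precisely at the point where one shows the image has flat induced metric, after which minimality plus flatness forces totally geodesic, hence affine.
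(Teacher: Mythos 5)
Your first route is exactly the paper's proof: \Cref{thm:isoperimetric-inequality} is indeed stated with Almgren's sharp constant and a characterized equality case, and the paper argues precisely as you commit to — constant $\norm{DF}$ forces equality in \eqref{eq:growthinequality} on every ball $\overline{B}(x_0,r)\subset\Omega$, the equality case makes $F(B(x_0,r))$ an affine disc, and the elementary factorization theorem (\Cref{thm:elementary-factorization}) together with constancy of the conformal factor yields affinity on each ball, which then propagates to all of $\Omega$ by connectedness. Your proposal is correct; the only details you gloss are this final local-to-global patching step and the short case analysis (Liouville/Möbius for $n\ge 3$, a holomorphic derivative of constant modulus for $n=2$) showing that a conformal map with constant conformal factor is affine.
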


\begin{remark*}
We note in passing that \Cref{thm:constant-norm-means-affine}, together with the subharmonicity of $\|DF\|^{ \frac{n-2}{2} }$ when $n \geq 3$ and $\log \|DF\|$ when $n = 2$, yields a maximum principle for conformal $\omega$-curves; see \Cref{cor:Maximum-principle}.
\end{remark*}

Theorems \ref{thm:realanalytic} and \ref{thm:constant-norm-means-affine}, together with characterization of the norm functions for the differential of conformal $\omega$-curves, yield now the following reformulation of the Liouville problem in terms of inner M\"obius curves.

\begin{definition*}
A mapping $F \colon \Omega \to \R^m$, where $\Omega \subset \R^n$ is open, is an \emph{inner Möbius curve} if
\begin{equation*}
    F = A \circ g,
\end{equation*}
where $A \colon \R^n \to \R^m$ is an affine map $x \mapsto y_0 + L x$ for $y_0 \in \R^m$ and an orthogonal linear map $L \colon \R^n \to \R^m$, and $g$ is a restriction of a (nonconstant) Möbius transformation $M \colon \S^n \to \S^n$. A calibration $\omega \in \Lambda^n \R^m$ is \emph{inner M\"obius rigid} if every nonconstant conformal $\omega$-curve is an inner M\"obius curve.
\end{definition*}

\begin{remark*}
The inner M\"obius property of a conformal $\omega$-curve is local in the following sense: \emph{a conformal $\omega$-curve $F \colon \Omega \to \R^m$ that is an inner M\"obius curve in an open set $\emptyset \neq U \subset \Omega$ of a domain $\Omega$ is an inner M\"obius curve}; see \Cref{cor:comparison-principle} for a precise statement.
\end{remark*}

\begin{restatable}{theorem}{innermob}
\label{thm:inner-Mobius-Liouville:rigid}
Let $\omega \in \Lambda^n \R^m$ be a calibration, for $3 \leq n \leq m$, and let $F \colon \Omega \to \R^m$ be a nonconstant conformal $\omega$-curve in a domain $\Omega \subset \R^n$. Then $F$ is the restriction of a Möbius transformation $\S^m \to \S^m$ if and only if $F$ is an inner Möbius curve.
\end{restatable}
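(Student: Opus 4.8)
The plan is to prove the two implications separately, treating the forward implication as essentially formal and concentrating the real work on showing that a conformal $\omega$-curve which is a restriction of a Möbius transformation must map the ambient $n$-sphere into an $n$-plane rather than a genuine round sphere. Throughout I identify $\R^n \times \{0\} \subset \R^m \subset \S^m$ and write $\S^n = (\R^n \times \{0\}) \cup \{\infty\}$ for the corresponding sub-sphere, so that $\Omega \subset \S^n$.

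For the implication that every inner Möbius curve is a restriction of a Möbius transformation, suppose $F = A \circ g$ with $A(x) = y_0 + Lx$, $L \colon \R^n \to \R^m$ orthogonal, and $g = M|_\Omega$ for a Möbius transformation $M \colon \S^n \to \S^n$. First I would extend $M$ to a Möbius transformation $\tilde M \colon \S^m \to \S^m$ preserving $\S^n$: writing $M$ through its generators (similarities and the inversion $x \mapsto x/|x|^2$) via \eqref{eq:Mobius}, each generator extends verbatim to $\R^m \supset \R^n$ and restricts to the original on $\R^n \times \{0\}$. Next I would complete $L$ to an orthogonal map $\tilde L \in \mathrm{O}(m)$, i.e. $\tilde L(u,0) = Lu$, and set $\tilde A(z) = y_0 + \tilde L z$, a Euclidean isometry and hence a Möbius transformation of $\S^m$ with $\tilde A|_{\R^n \times \{0\}} = A$. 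Then $\Phi := \tilde A \circ \tilde M$ is a Möbius transformation of $\S^m$ and $\Phi|_\Omega = \tilde A \circ (M|_\Omega) = A \circ g = F$.

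For the converse, let $F = \Phi|_\Omega$ for a Möbius transformation $\Phi \colon \S^m \to \S^m$. Since $\Phi$ is a conformal diffeomorphism, $F$ is an immersion, so $\Omega_F = \Omega$, and $\Sigma := \Phi(\S^n)$ is an embedded $n$-sphere in $\S^m$ with $F(\Omega) \subset \Sigma$ an open piece. A Möbius image of an $n$-sphere is either a round $n$-sphere contained in $\R^m$ or an affine $n$-plane together with $\infty$, according to whether $\infty \notin \Sigma$ or $\infty \in \Sigma$. The crux is to exclude the round-sphere case. By \Cref{thm:realanalytic}, $F(\Omega)$ is an $\omega$-calibrated real-analytic submanifold of $\R^m$; since $\omega$ has constant coefficients it is closed, so by the fundamental theorem of calibrated geometry \cite{Har:Law:82} the submanifold $F(\Omega)$ is minimal. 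If $F(\Omega)$ were contained in a round sphere $\{z \colon |z - c| = r\}$, then the intrinsic Laplacian would give $\Delta_{F(\Omega)} |z - c|^2 = 2\langle H, z - c\rangle + 2n = 2n$ by minimality ($H \equiv 0$), whereas $|z-c|^2 \equiv r^2$ on $F(\Omega)$ forces $\Delta_{F(\Omega)}|z-c|^2 = 0$; as $n \ge 3 > 0$ this is a contradiction. Hence $\Sigma$ is an affine $n$-plane $P$.

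It remains to read off the factorization. Writing $P = \{ y_0 + L u \colon u \in \R^n\}$ for some $y_0 \in \R^m$ and a linear isometric embedding $L \colon \R^n \to \R^m$, I would extend $L$ to $\tilde L \in \mathrm{O}(m)$ and set $A(x) = y_0 + Lx$ and $\tilde A(z) = y_0 + \tilde L z$ as before, so that $\Psi := \tilde A^{-1} \circ \Phi$ is a Möbius transformation of $\S^m$ mapping $\S^n$ onto $\tilde A^{-1}(P) = \R^n \times \{0\}$, i.e. preserving $\S^n$. Thus $M := \Psi|_{\S^n}$ is a Möbius transformation of $\S^n$, and on $\Omega$ one has $F = \Phi|_\Omega = \tilde A \circ \Psi|_\Omega = A \circ (M|_\Omega)$, exhibiting $F$ as an inner Möbius curve, with $M|_\Omega$ nonconstant because $F$ is nonconstant and $A$ is injective. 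I expect the only genuinely nontrivial point to be the exclusion of the round sphere, which is precisely where the calibrated, hence minimal, nature of the image supplied by \Cref{thm:realanalytic} is indispensable; the remaining steps are formal manipulations with the Möbius group of $\S^m$ and its action on sub-spheres.
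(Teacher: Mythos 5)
Your proof is correct, and for the nontrivial direction it takes a genuinely different route from the paper's. The forward implication (inner M\"obius $\Rightarrow$ restriction of a M\"obius transformation of $\S^m$) is the same in both: Poincar\'e-type extension of $M$ to $\S^m$ and extension of $L$ to an ambient isometry. For the converse, the paper works on the domain side: it extends $F$ to $G = \widehat M|_{\widehat\Omega}$ on $\R^n \setminus \{\widehat M^{-1}(\infty)\}$, invokes unique continuation (\Cref{cor:uniquecont}) to see that $G$ is a conformal $\omega$-curve, writes $\|DG\|(x) = \lambda^2(|x-y_0|^2 + |z_0|^2)^{-1}$ explicitly for the pole $x_0 = (y_0,z_0) \in \R^n \times \R^{m-n}$, and then uses the subharmonicity of $\|DG\|^{\frac{n-2}{2}}$ (\Cref{lemma:subharmonicity}) -- a direct computation shows the Laplacian is strictly negative unless $z_0 = 0$ -- to force the pole onto $\R^n \times \{0\}$; precomposing with an inversion centered at $y_0$ then yields constant norm and \Cref{thm:constant-norm-means-affine} gives affinity. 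You instead argue on the image side: $\Phi(\S^n)$ is either a round $n$-sphere or an affine $n$-plane plus $\infty$, and the round-sphere case is excluded because the image is $\omega$-calibrated (via \Cref{thm:realanalytic}), hence minimal by Harvey--Lawson, while $\Delta_{F(\Omega)}|z-c|^2 = 2n \neq 0$ forbids a minimal submanifold of $\R^m$ from lying in a sphere; the factorization is then read off purely group-theoretically by conjugating $\Phi$ with an ambient isometry taking the plane to $\R^n \times \{0\}$. The two arguments detect the same obstruction -- a round sphere is not minimal, hence not calibrated -- with your mean-curvature computation being the extrinsic counterpart of the paper's superharmonicity computation for $\|DG\|^{\frac{n-2}{2}}$ when $z_0 \neq 0$. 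What your route buys is independence from \Cref{thm:constant-norm-means-affine} and the subharmonicity lemma within this proof, replacing them by a classical two-line minimal-surface fact, and it makes the sphere-versus-plane dichotomy explicit; what the paper's route buys is that it stays entirely inside the analytic toolkit it has already built (isoperimetry $\Rightarrow$ subharmonicity, constant norm $\Rightarrow$ affine), which is reused elsewhere, and it never needs the classification of M\"obius images of subspheres. One cosmetic point: smoothness of the image, needed for your Laplacian computation, is automatic here since the image is a priori an open piece of $\Phi(\S^n)$, so no appeal to real-analyticity beyond calibratedness of the tangent planes is actually required.
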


In light of \Cref{thm:inner-Mobius-Liouville:rigid}, a calibration has the Liouville property if and only if it is inner Möbius rigid. \emph{A fortiori}, Theorems \ref{thm:realanalytic} and \ref{thm:inner-Mobius-Liouville:rigid} translate the question on inner M\"obius rigidity to a question on the existence of non-affine conformally flat $\omega$-calibrated submanifolds in $\R^m$.

\subsection{Inner M\"obius rigidity}

Calibrations in $\Lambda^2 \R^m$ are not M\"obius rigid. Indeed, holomorphic and pseudoholomorphic curves give examples of curves which are not restrictions of M\"obius transformations. In codimension two, the situation is drastically different.

\begin{restatable}{theorem}{codimMobius}
\label{thm:codim-2}
For $m\ge 5$, every calibration in $\Lambda^{m-2} \R^m$ is inner M\"obius rigid.
\end{restatable}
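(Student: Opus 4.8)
The plan is to exploit the reformulation furnished by \Cref{thm:realanalytic} and \Cref{thm:inner-Mobius-Liouville:rigid}. Let $F\colon \Omega\to\R^m$ be a nonconstant conformal $\omega$-curve, $n=m-2$. On the open set $\Omega_F=\{\norm{DF}>0\}$ the map $F$ is a real-analytic conformal immersion whose image is a conformally flat, $\omega$-calibrated (hence minimal) submanifold $M\subset\R^m$ of dimension $m-2$. By \Cref{thm:inner-Mobius-Liouville:rigid} it suffices to prove that $M$ is contained in an affine $(m-2)$-plane $\Pi$. Indeed, if $\Pi=y_0+W$ with $W$ an $(m-2)$-dimensional linear subspace and $A\colon\R^{m-2}\to\Pi$, $A(x)=y_0+Lx$, is an affine isometry with $L$ orthogonal, then $A^{-1}\circ F$ is a conformal $\vol_{\R^{m-2}}$-curve, i.e.\ a weakly conformal map into $\R^{m-2}$; by the classical Liouville theorem of Gehring and Reshetnyak it is the restriction of a M\"obius transformation, so $F=A\circ(A^{-1}\circ F)$ is an inner M\"obius curve.

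The first step is to read off the calibration condition on the two-dimensional normal bundle through Hodge duality. Put $\sigma\coloneqq\star\omega\in\Lambda^2\R^m$; then $\norm{\sigma}_\comass=1$, and an oriented $(m-2)$-plane $V$ lies in $\Gr(\omega)$ precisely when its orthogonal complement $V^\perp$ is a $\sigma$-calibrated $2$-plane. Choosing an orthonormal basis bringing $\sigma$ into the normal form $\sigma=\sum_{i=1}^{k}\lambda_i\, e_{2i-1}\wedge e_{2i}$ with $1=\lambda_1\ge\cdots\ge\lambda_k>0$, let $r$ be the multiplicity of the maximal singular value $1$ and set $\C^r\coloneqq\operatorname{span}(e_1,\ldots,e_{2r})$ with complex structure $Je_{2i-1}=e_{2i}$. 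The Wirtinger inequality then identifies the $\sigma$-calibrated $2$-planes as exactly the positively oriented complex lines of $\C^r$. If $r=1$ this is a single plane, so $\Gr(\omega)$ is a point, the Gauss map of $M$ is constant, and $M$ is already planar.

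It remains to treat $r\ge2$. Every normal plane $N_pM=(T_pM)^\perp$ is now a complex line of $\C^r$, so the normal bundle lies in $\C^r=\operatorname{span}(e_1,\ldots,e_{2r})$ and the constant subspace $\operatorname{span}(e_{2r+1},\ldots,e_m)$ is contained in every tangent space. The associated parallel tangent fields split $M$ isometrically and extrinsically as $M=\R^{m-2r}\times M'$, where $M'\subset\C^r$ has tangent spaces equal to the orthogonal complements of complex lines; since the orthogonal complement of a complex line is $J$-invariant, $M'$ is a complex hypersurface of $\C^r$, in particular a K\"ahler, minimal submanifold. The crucial step is that conformal flatness of $M$ forces $M'$ to be totally geodesic. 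I would obtain this from the classification of conformally flat Riemannian products together with rigidity of the K\"ahler factor: when $m-2r\ge2$ the product criterion makes $M'$ flat; when $m=2r$ the factor $M'=M$ is a conformally flat K\"ahler manifold of complex dimension $\ge2$, hence flat; and when $m-2r=1$ the product criterion forces $M'$ to have constant sectional curvature, which for a K\"ahler manifold of complex dimension $\ge2$ again means flat, while for a holomorphic curve (the case $r=2$, $m=5$) a direct computation with the induced conformal factor $1+\abs{g}^2$ shows that constant curvature forces $g$ to be constant. In each case $M'$ is flat, and a flat complex submanifold of $\C^r$ is totally geodesic by the Gauss equation; therefore $M$ lies in an affine $(m-2)$-plane.

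The main obstacle is exactly this last step: the conformal-flatness rigidity of the complex factor $M'$. It must be assembled uniformly over the parity of $m$, and for $m=5$ it rests not on a general K\"ahler rigidity theorem but on the nonexistence of holomorphic curves of constant nonzero Gaussian curvature, which one verifies by applying $\Delta\log$ to the curvature relation.
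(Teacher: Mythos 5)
Your strategy is essentially the paper's own: dualize to $\sigma=\star\omega\in\Lambda^2\R^m$, put $\sigma$ in symplectic normal form as in \Cref{lemma:symplectic-equivalence}, split off the flat directions (the paper's \Cref{cor:locflat-new}, applied to $\star(\iota\circ\pi)^*\omega_\symp=\pi_1^*\bigl(\omega_\symp^{r-1}/(r-1)!\bigr)\wedge\pi_2^*\vol_{\R^{m-2r}}$), and then combine Yau's product theorem with K\"ahler rigidity. Your $\Delta\log$ computation for a constant-curvature holomorphic curve in $\C^2$ is a sound elementary substitute for the paper's appeal to Umehara's theorem in the case $m=5$. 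Two harmless imprecisions: the extrinsic splitting $M=\R^{m-2r}\times M'$ is only local for an immersed image (which suffices, since flatness is local, and is all that \Cref{cor:locflat-new} provides), and constant curvature forces $g'$, not $g$, to be constant.

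The genuine gap is the case $m=2r$, which for $m\geq 5$ first occurs at $m=6$, i.e.\ $\omega$ face equivalent to $\tfrac{1}{2}\omega_\symp^{2}\in\Lambda^4\R^6$. There $M'=M$ has complex dimension $r-1=2$, and your asserted rigidity --- ``a conformally flat K\"ahler manifold of complex dimension $\geq 2$ is flat'' --- is false in complex dimension $2$. Yano--Mogi \cite{Yan:Mog:55} covers complex dimension $\geq 3$ only; in real dimension four, Tanno's theorem \cite{Sh:72} says a conformally flat K\"ahler manifold is either flat or locally a K\"ahler product of two surfaces of curvatures $K$ and $-K$, and such products (e.g.\ $\S^2(K)\times\mathbb{H}^2(-K)$) are conformally flat, K\"ahler, and non-flat. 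So intrinsic conformal flatness alone cannot finish this case; the embedding must be used. This is exactly what the paper's \Cref{thm:kahler} does: it verifies that Tanno's local product splitting is holomorphic, so that the positively curved factor is a K\"ahler surface in $\C^3$ of constant curvature $K\geq 0$, and then invokes the nonpositivity of the sectional curvature of K\"ahler submanifolds of $\C^\ell$ \cite[Chapter IX, Proposition 9.4]{Kob:Nom:96:II} to force $K=0$. Note the contrast with your $m-2r=1$ case, where Yau already supplies \emph{constant} sectional curvature and a constant-curvature K\"ahler manifold of complex dimension $\geq 2$ is indeed flat; it is the weakening from constant curvature to mere conformal flatness at $m=2r$ that breaks your argument.
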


Recall that, if $\omega \in \Lambda^{m-2} \R^m$ is a calibration, so is $\star \omega \in \Lambda^{2}\R^m$ and the Grassmannian $\Gr(\omega)$ is mapped isomorphically to $\Gr(\star \omega)$ by the Hodge star operator. Therefore the inner M\"obius rigidity in \Cref{thm:codim-2} is not related to the dimension of the Grassmannian $\Gr(\omega)$.

The first example, in higher dimensions, of a calibration without the Liouville property is the Special Lagrangian $\omega_\SL = \Re(dz_1\wedge \cdots \wedge dz_n) \in \Lambda^n \C^{n}$, where we identify $\mathbb{C}^n$ with $\mathbb{R}^{2n} = \R^n \times \R^n$, for $n \geq 2$. For the purpose of our discussion, we recall two examples of conformal $\omega_{\SL}$-curves. Our first example of a non-trivial conformal $\omega_\SL$-curve is given by the Lagrangian catenoid 
\[
    \mathcal{M}_\SL
    \coloneqq
    \left\{
        (x,y) \in \R^{2n}
        \mid
        |x|y = |y|x,\,
        \mathrm{Im}( |x|+i|y| )^n = 1,\,
        |y| < |x| \tan\left( \frac{\pi}{n} \right)
    \right\}
\]
which is calibrated by the Special Lagrangian; see \cite[Theorem 3.5]{Har:Law:82}. The following example stems from the works of Castro and Urbano \cite{Cas:Urb:99} and Blair \cite{Bla:07} on conformal parametrizations of the Lagrangian catenoid.

\begin{proposition}
\label{prop:SL-catenoid}
Let $\widehat F \colon \R\times \bS^{n-1} \to \R^{2n}$ be the map $(t,p) \mapsto f(t) \iota(p)$, where $\iota \colon \R^n \to \C^n$ is the embedding $p \mapsto (p,0)$, and $f \colon \R\to \C$, $t\mapsto (\sinh nt - i)^{1/n}$. Then the map
\[
F = \widehat F \circ \varphi \colon \R^n\setminus \{0\} \to \R^{2n},
\]
where $\varphi \colon \R^n\setminus \{0\} \to \R \times \bS^{n-1}$ is the map $x\mapsto \left( \log |x|, \frac{x}{|x|} \right)$, is a conformal $\omega_{\SL}$-curve onto $\mathcal{M}_\SL$.
\end{proposition}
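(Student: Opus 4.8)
The plan is to verify directly that $F$ is a conformal immersion and that its image is exactly $\mathcal{M}_\SL$, and then to promote conformality to the full calibration condition $(DF)_x \in \CO(\omega_\SL)$. The first reduction is to pass the conformality question from $F = \widehat F\circ\varphi$ to $\widehat F$. The map $\varphi\colon \R^n\setminus\{0\}\to \R\times\bS^{n-1}$, $x\mapsto(\log|x|, x/|x|)$, is the standard logarithmic change to cylindrical coordinates: writing the Euclidean metric in polar form $\df r^2 + r^2 g_{\bS^{n-1}}$ and substituting $t=\log r$ gives $\varphi^*(\df t^2 + g_{\bS^{n-1}}) = |x|^{-2}g_{\mathrm{Eucl}}$, so $\varphi$ is a conformal diffeomorphism onto the flat cylinder $(\R\times\bS^{n-1},\df t^2 + g_{\bS^{n-1}})$. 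Since precomposition with a conformal map preserves conformality, it suffices to show that $\widehat F$ is conformal from the cylinder into $\R^{2n}$.

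Next I would compute the pullback metric of $\widehat F(t,p)=f(t)\iota(p)$. Splitting the differential into the $t$-direction and the sphere directions $v\in T_p\bS^{n-1}$ (so $v\perp p$ in $\R^n$) and using the real inner product $\langle z,w\rangle = \re\sum_k z_k\overline{w_k}$ on $\C^n=\R^{2n}$, the cross terms vanish because $\langle p,v\rangle = 0$, and one obtains $\widehat F^* g_{\R^{2n}} = |f'(t)|^2\,\df t^2 + |f(t)|^2\,g_{\bS^{n-1}}$. Thus $\widehat F$ is conformal on the cylinder precisely when $|f'(t)|=|f(t)|$ for all $t$, and this identity is the heart of the matter: from $f^n=\sinh nt - i$ one gets $f' = \cosh(nt)\,f^{1-n}$ and $|f| = |\sinh nt - i|^{1/n} = (\cosh nt)^{1/n}$, whence $|f'| = \cosh(nt)\,(\cosh nt)^{-(n-1)/n} = (\cosh nt)^{1/n} = |f|$. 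Combined with the previous step, $F$ is a conformal immersion on $\R^n\setminus\{0\}$ whose norm $\norm{DF}$ is the resulting smooth positive conformal factor; in particular $F\in W^{1,n}_{\loc}$.

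I would then identify the image with $\mathcal M_\SL$. Writing $f(t)=a(t)+ib(t)$, the point $\widehat F(t,p)=f(t)\iota(p)$ corresponds to $(x,y)=(a\,p,\,b\,p)\in\R^n\times\R^n$, so $x$ and $y$ are collinear, which is the first defining relation. For the second, $|x|+i|y|$ agrees with $\overline{f(t)}$ up to the branch-dependent signs, and $\overline{f}^{\,n}=\overline{f^n}=\sinh nt + i$ gives $\im(|x|+i|y|)^n = 1$. For the inequality, $|y|/|x| = |\tan(\arg f)|$, and the chosen branch keeps $\arg f\in(-\pi/n,0)$, so $|y| < |x|\tan(\pi/n)$ with the bound approached as $t\to\pm\infty$; this also shows the parametrization sweeps out exactly $\mathcal M_\SL$. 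Keeping careful track of the branch of $(\cdot)^{1/n}$ and the attendant signs is the fiddly part of this step.

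Finally, to upgrade conformality to the conformal $\omega_\SL$-curve property, I would invoke that $\mathcal M_\SL$ is calibrated by $\omega_\SL$ (Harvey--Lawson \cite[Theorem 3.5]{Har:Law:82}). Since $F$ is a conformal immersion onto $\mathcal M_\SL$, each tangent plane $\mathrm{Im}(DF)_x = T_{F(x)}\mathcal M_\SL$ lies in $\Gr(\omega_\SL)$; writing $(DF)_x = \lambda(x)\,O(x)$ with $O(x)$ a linear isometry onto this plane gives $O(x)^*\omega_\SL = \pm\vol_{\R^n}$, and fixing the orientation of the parametrization so that the sign is $+$ yields $(DF)_x\in\CO(\omega_\SL)$, as required. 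I expect the main obstacle to be bookkeeping rather than conceptual: pinning down the correct branch of the root so that the image lands in $\mathcal M_\SL$ exactly, and checking that the induced orientation is the calibrating one rather than its reverse.
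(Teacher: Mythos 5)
Your reduction to $\widehat F$ via the conformal cylinder map and your pullback-metric computation (conformality $\Leftrightarrow |f'| = |f|$, verified from $f' = \cosh(nt)f^{1-n}$ and $|f| = (\cosh nt)^{1/n}$) match the paper's proof in substance: the paper computes the block form of $(D\widehat F)^{*}D\widehat F$ and arrives at the same conclusion, $\|D\widehat F\|^n = \cosh(nt)$. The genuine gap is in your final step. The sign in $O(x)^{*}\omega_{\SL} = \pm\vol_{\R^n}$ is not yours to ``fix'': $F$ is a prescribed parametrization, so after noting that the sign is locally constant and $\R^n\setminus\{0\}$ is connected, you must \emph{compute} the sign somewhere --- if it came out negative, the proposition would simply be false for the given $F$, and precomposing with a reflection would produce a different map. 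Your route via Harvey--Lawson only yields $|\star F^{*}\omega_{\SL}| = \norm{DF}^n$. The paper avoids the issue by a direct calibration computation: differentiating $f^n = \sinh nt - i$ gives the exact \emph{phase} identity $f'(t)\,f(t)^{n-1} = \cosh nt$ (real, positive, and independent of the branch of the root), whence $\widehat F^{*}(dz_1\wedge\cdots\wedge dz_n) = \cosh(nt)\, dt\wedge\vol_{\bS^{n-1}}$ and so $\star \widehat F^{*}\omega_{\SL} = \cosh nt = \|D\widehat F\|^n$, after reducing the evaluation of $\omega_{\SL}$ on the plane spanned by $(p,0),(e_2,0),\dots,(e_n,0)$ to the standard Lagrangian plane by an $\mathrm{SU}(n)$ rotation. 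You already have $f' = \cosh(nt)f^{1-n}$ in hand, so the repair is one line; but as written, the decisive positivity is asserted, not proved, and no appeal to the calibratedness of $\mathcal{M}_\SL$ is then needed at all.

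Relatedly, your image identification is not correct as stated, precisely at the point you flag as fiddly. With your branch $\arg f \in (-\pi/n,0)$ you have $f = a + ib$ with $a > 0 > b$, so $x = ap$, $y = bp$ satisfy $|x|\,y = -|y|\,x$ rather than $|x|\,y = |y|\,x$: the first defining relation of $\mathcal{M}_\SL$ is \emph{positive} proportionality. As you yourself observe, $|x| + i|y| = \overline{f(t)}$, so your parametrization sweeps out the image of the catenoid under the conjugation $(x,y)\mapsto(x,-y)$; for odd $n$ one can repair this by choosing the branch with $\arg f \in (\pi, \pi + \pi/n)$, but for even $n$ every branch satisfies $\im f^n = -1$ while points of $\mathcal{M}_\SL$ force $\im(|x|+i|y|)^n = +1$, so no branch lands in $\mathcal{M}_\SL$ as literally defined. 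This conjugation ambiguity is harmless for the analytic content (the conjugate set is also $\omega_\SL$-calibrated, and the direct pullback computation above is branch-independent), but it means your chain ``image $= \mathcal{M}_\SL$, which is calibrated, hence tangents lie in $\Gr(\omega_\SL)$'' rests on a set identity that fails for your chosen branch; the paper's proof sidesteps the image identification entirely, and so should you.
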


\begin{remark*}
Note that, for $n\ge 3$, the singularity at the origin is essential in the sense that there is no conformal embedding $\R^n \to \R^{2n}$ having the same image. Indeed, $F$ is an embedding and hence $F(\R^n\setminus \{0\})$ is homeomorphic to $\R\times \bS^{n-1}$. However, by Zorich's theorem (see e.g.~Bonk and Heinonen \cite[Proposition 1.4]{Bo:Hei:01}), every nonconstant conformal map $\R^n \to F(\R^n\setminus \{0\})$ is a homeomorphism.
\end{remark*}

Our second example is an entire, but non-affine, calibrated $\omega_\SL$-curve $\R^n \to \R^{2n}$ based on the Clifford torus. The following map is based on the Anti\'c and Vranken classification \cite{Anti:Vra:22} of conformally flat Special Lagrangian submanifolds having Schouten tensor of exactly one eigenvalue of multiplicity one.
\begin{proposition}
\label{prop:SL-example-concrete}
Let $n \geq 2$ and let $h \colon \R^n \times \R \to \C^n$ be the mapping
\[
(y_1,\ldots, y_n,t) \mapsto \frac{i^{-(n-1)}}{\sqrt{n}} e^t \left( e^{i\sqrt{n} y_1}, \ldots, e^{i\sqrt{n} y_n}\right).
\]
Then there exists a linear isometry $O \colon \R^n \to \R^{n+1}$ for which the mapping 
\[
F = h \circ O \colon \R^n \to \R^{2n}
\]
is a conformal $\omega_\SL$-curve. 
\end{proposition}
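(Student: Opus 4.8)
The plan is to verify directly that $(DF)_x \in \CO(\omega_\SL)$ for every $x$, which by the characterization recalled in the introduction is the same as checking $\norm{DF}^n = \star F^*\omega_\SL$ pointwise. Since $F = h\circ O$ with $O$ a linear isometry, it suffices to understand $Dh$ and then to choose the $n$-plane $O(\R^n)\subset\R^{n+1}$ correctly.

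First I would compute the differential of $h$. Writing $h = (h_1,\dots,h_n)$ with $h_k(y,t) = \tfrac{i^{-(n-1)}}{\sqrt n}\, e^t e^{i\sqrt n\, y_k}$, one finds $\partial_{y_k} h = i\sqrt n\, h_k\,\mathbf e_k$ (supported in the $k$th complex coordinate) and $\partial_t h = \sum_{k} h_k \mathbf e_k$, where $\mathbf e_k$ is the $k$th complex coordinate vector of $\C^n$. A short computation with the real inner product shows these $n+1$ vectors are pairwise orthogonal, each of length $e^t$: orthogonality of distinct $\partial_{y_k} h$ is immediate since they live in different coordinates, while $\langle \partial_{y_k} h, \partial_t h\rangle = \sqrt n\,\Re(-i\,|h_k|^2) = 0$, and $\abs{\partial_{y_k}h} = \sqrt n\,\abs{h_k} = e^t = \abs{\partial_t h}$. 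Hence $Dh_{(y,t)} = e^t\, U_{(y,t)}$ for a linear isometry $U_{(y,t)}\colon \R^{n+1}\to\R^{2n}$; in particular $h$ is a conformal immersion with conformal factor $e^t$, and for any linear isometry $O$ the map $F = h\circ O$ is conformal with $\norm{(DF)_x} = e^{t(x)}$, where $t(x)$ is the last coordinate of $O(x)$.

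Next I would take $O$ to be a linear isometry of $\R^n$ onto the hyperplane $\Pi = \{(y,t) : \sum_{k=1}^n y_k = 0\}\subset\R^{n+1}$, noting that $\Pi$ contains the $t$-axis. The remaining, and decisive, task is to show that the image $n$-plane is not merely Lagrangian but \emph{special} Lagrangian of phase $0$, i.e.\ that $F^*\omega_\SL = \norm{DF}^n\,\mathrm dx_1\wedge\cdots\wedge\mathrm dx_n$. To this end I would pull back the holomorphic volume form: from $\mathrm dh_k = h_k(\mathrm dt + i\sqrt n\,\mathrm dy_k)$ one gets $h^*(\mathrm dz_1\wedge\cdots\wedge \mathrm dz_n) = \big(\prod_k h_k\big)\bigwedge_k(\mathrm dt + i\sqrt n\,\mathrm dy_k)$, with $\prod_k h_k = \tfrac{i^{-n(n-1)}}{n^{n/2}}\,e^{nt}\, e^{i\sqrt n\sum_k y_k}$. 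Restricting to $\Pi$ kills the only point-dependent phase, since $\sum_k y_k \equiv 0$ there, so the entire point dependence collapses to the conformal factor $e^{nt}$. Evaluating on the frame $(DF)e_1,\dots,(DF)e_n$, which is orthogonal of common length $e^t$, the complex determinant reduces, by cofactor expansion in the $t$-direction, to the signed volume of an orthonormal basis of $\mathbf 1^\perp\subset\R^n$ together with $\mathbf 1 = (1,\dots,1)$; the normalising constant $i^{-(n-1)}/\sqrt n$ is tailored so that, after taking real parts, this density equals $+e^{nt} = \norm{DF}^n$, and the orientation of $O$ is fixed to make the sign positive.

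The main obstacle is exactly this last phase computation. Lagrangian-ness of the image plane is automatic from the coordinatewise structure of $\partial_{y_k} h$ and $\partial_t h$, but pinning the phase to $0$ rather than to a nonzero constant is what forces both the choice of the hyperplane $\Pi$—which removes the point-dependent phase $e^{i\sqrt n\sum_k y_k}$—and the specific prefactor $i^{-(n-1)}$, which is designed to cancel the residual constant phase arising from $\prod_k h_k$ and from the factors $i\sqrt n$ in $\mathrm dh_k$. Once $F^*\omega_\SL = \norm{DF}^n\,\mathrm dx_1\wedge\cdots\wedge\mathrm dx_n$ is established, we conclude that $(DF)_x\in\CO(\omega_\SL)$ for every $x$, so $F$ is a conformal $\omega_\SL$-curve, as claimed.
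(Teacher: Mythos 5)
Your route is essentially the paper's. You establish that $h$ is a conformal immersion with $\norm{Dh}=e^t$ by direct orthogonality of the partial derivatives (the paper instead factors $h$ through the flat torus as a scalar times $E\circ\varphi$, an immaterial difference), and then, exactly as in \Cref{lemma:SL-h-properties}, you pull back $dz_1\wedge\cdots\wedge dz_n$, restrict to the hyperplane $\Pi=\ker S\times\R$, $S(y)=\sum_k y_k$, to kill the point-dependent phase $e^{i\sqrt n\,S(y)}$, and take $O$ to be a suitably oriented isometry onto $\Pi$.

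However, the one step you yourself single out as ``the main obstacle'' --- that the residual \emph{constant} phase is trivial --- is asserted rather than computed, and when carried out with the constant you correctly recorded, it fails for even $n$. From your $\prod_k h_k=\frac{i^{-n(n-1)}}{n^{n/2}}\,e^{nt}e^{i\sqrt n\,S(y)}$, and since the restriction of $\bigwedge_k(dt+i\sqrt n\,dy_k)$ to $\Pi$ equals $(i\sqrt n)^{n-1}\eta|_{\Pi}$, where $\eta=\sum_{j=1}^{n}(-1)^{n-j}dy_1\wedge\cdots\wedge\widehat{dy_j}\wedge\cdots\wedge dy_n\wedge dt$ (the $dy_1\wedge\cdots\wedge dy_n$-term restricts to zero on $\Pi$, because the $y$-parts of tangent vectors span at most $n-1$ dimensions), the coefficient of $\eta|_\Pi$ in $h^*(dz_1\wedge\cdots\wedge dz_n)|_\Pi$ is
\[
i^{-n(n-1)}\, i^{\,n-1}\,\frac{e^{nt}}{\sqrt n}
=
i^{-(n-1)^2}\,\frac{e^{nt}}{\sqrt n},
\]
which equals $+e^{nt}/\sqrt n$ for odd $n$ but $-i\,e^{nt}/\sqrt n$ for even $n$. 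In the even case the real part vanishes, so $\star F^{*}\omega_{\SL}=0\neq e^{nt}=\norm{DF}^n$; no orientation choice for $O$ repairs a purely imaginary coefficient, nor does any other linear isometry, since $\star F^{*}\omega_{\SL}=\norm{DF}^n>0$ would force every tangent plane to be special Lagrangian of phase zero, hence $e^{i\sqrt n\,S}$ constant equal to $1$ on $O(\R^n)$, i.e.\ $O(\R^n)=\Pi$. So your argument, completed honestly, proves the claim for odd $n$ and shows that for even $n$ the literal prefactor produces a curve calibrated by $\Re(i\,dz_1\wedge\cdots\wedge dz_n)$ instead; the repair is a scalar $\zeta$ with $\zeta^n=i^{-(n-1)}$, or placing the factor $i^{-(n-1)}$ on a single coordinate. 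For fairness: the paper's own computation in \Cref{lemma:SL-h-properties} pulls the scalar out of the $n$-fold wedge to the first power rather than the $n$-th, which is valid exactly when $i^{-(n-1)^2}=1$, i.e.\ for odd $n$ --- so your unverified step sits precisely where the published constant also needs correction. A smaller inaccuracy: Lagrangian-ness of the image plane is \emph{not} automatic from the coordinatewise structure, since $|\omega_{\symp}(\partial_{y_k}h,\partial_t h)|=\sqrt n\,|h_k|^2\neq0$; the plane $Dh(T\Pi)$ is isotropic only because tangent vectors to $\Pi$ have $y$-components summing to zero, so the restriction to $\Pi$ is doing the work there as well.
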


\begin{remark*}
For $n \geq 3$, the image of $F \colon \R^n \to \R^{2n}$ 
in \Cref{prop:SL-example-concrete} is a punctured cone over the link $\mathbb{T} = N \cap \mathbb{S}^{2n-1}$, where $\mathbb{T}$ is conformally equivalent to an $(n-1)$-dimensional torus. In dimension $n=3$, this is in contrast to cones over higher genus surfaces. In \cite{Has:Kap:07}, Haskins and Kapouleas showed that, for every odd genus $g \geq 1$, there exists a countably infinite family of Special Lagrangian $3$-subvarieties $N_g$ which are cones over genus $g$ surfaces $\Sigma_g = N_g \cap \mathbb{S}^{5}$. For $g \ge 3$, the punctured cone $N_g\setminus \{0\}$ is not an image of a conformal $\omega_{\SL}$-curve $\R^3 \to \R^m$ by a version of Varopoulos's theorem \cite[Theorem X.5.1]{Varopoulos:91} in the context of conformal $\omega$-curves. We discuss this in more detail in Section \ref{sec:conical-singularities}.
\end{remark*}

Our interest to state these concrete examples of conformal $\omega_\SL$-curves stems from the observation that $\omega_\SL$ is the only non-rigid calibration in $\Lambda^n \R^m$ for $3 \le n \le m \le 6$. The following theorem extends Theorem \ref{thm:codim-2} to the remaining cases in these dimensions.

\begin{restatable}{theorem}{classification}
\label{thm:classification-dim-six}
Let $\omega \in \Lambda^n \R^m$ be a calibration, where $3\le n \le m\le 6$, which is not face equivalent to $\omega_{\SL}$, let $\Omega \subset \R^n$ be a domain, and let $F \colon \Omega \to \R^m$ be a conformal $\omega$-curve. Then $F$ is an inner M\"obius curve. In particular, $\omega$ is inner M\"obius rigid.
\end{restatable}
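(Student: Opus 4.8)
The plan is to reduce the statement to a flatness property of conformally flat $\omega$-calibrated submanifolds and then to settle that property by an analysis organized according to the codimension $m-n$. By \Cref{thm:inner-Mobius-Liouville:rigid} it suffices to show that $F$ is an inner Möbius curve, and by the local principle recorded in \Cref{cor:comparison-principle} it is enough to do so on one component of the nonempty open set $\Omega_F=\{\norm{DF}>0\}$. The decisive observation is that the inner Möbius curves are precisely the conformal $\omega$-curves with flat image: if $F=A\circ g$ with $A(x)=y_0+Lx$ and $g$ Möbius, then $F$ maps into the affine $n$-plane $A(\R^n)$; conversely, if $F(\Omega_F)$ lies in an affine $n$-plane $V$ then $(DF)_x\R^n$ equals the linear part $V_0\in\Gr(\omega)$, so $F=\iota_V\circ\tilde F$ with $\iota_V\colon\R^n\cong V\hookrightarrow\R^m$ an isometric affine inclusion and $\tilde F$ a nonconstant planar conformal map, whence $\tilde F$ is a restriction of a Möbius transformation $\S^n\to\S^n$ by the classical theorem of Gehring and Reshetnyak, and $F$ is inner Möbius. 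Together with \Cref{thm:realanalytic}, which realizes $F(\Omega_F)$ as a conformally flat $\omega$-calibrated real-analytic immersed submanifold, this reduces the theorem to the assertion that \emph{every conformally flat $\omega$-calibrated submanifold is flat whenever $\omega$ is not face equivalent to $\omega_\SL$}; the isoperimetric rigidity of \Cref{thm:constant-norm-means-affine} underlies this reduction and disposes of the case $\norm{DF}\equiv\text{const}$, where it forces $F$ to be affine.

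Next I would dispatch every pair $(n,m)$ except $(3,6)$. Since $3\le n\le m\le 6$, the codimension satisfies $m-n\in\{0,1,2,3\}$. In codimension $0$ one has $\omega=\vol_{\R^n}$ and the conclusion is the classical Liouville theorem. In codimension $1$ the covector $\star\omega\in\Lambda^1\R^m$ is a unit $1$-form, so $\Gr(\omega)$ is the single hyperplane orthogonal to its dual vector; an $\omega$-calibrated submanifold then has all tangent planes equal and is flat. In codimension $2$ one has $m=n+2\ge 5$, so \Cref{thm:codim-2} already yields inner Möbius rigidity. The only remaining case is $m-n=3$, that is, calibrations $\omega\in\Lambda^3\R^6$, and this is exactly where $\omega_\SL$ lives.

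For $\omega\in\Lambda^3\R^6$ I would appeal to the classification of calibrations in $\Lambda^3\R^6$ (for instance the normal forms of Dadok and Harvey), which brings $\omega$ up to face equivalence into a short list. Let $M^3\subset\R^6$ be a conformally flat $\omega$-calibrated submanifold. Three constraints act on its second fundamental form $\mathrm{II}$: since $M$ is calibrated it is minimal, so $\mathrm{II}$ is trace-free; since $M$ is $3$-dimensional and conformally flat its Cotton tensor vanishes, which via the Gauss equation is a condition on $\mathrm{II}$; and the Gauss map of $M$ takes values in $\Gr(\omega)$, so its differential, and hence $\mathrm{II}$, is confined to the tangent space of $\Gr(\omega)$ at each point. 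For every face-equivalence class other than the Special Lagrangian, $\Gr(\omega)$ is either a single plane—forcing $\mathrm{II}\equiv 0$ at once—or a family rigid enough that these three conditions together force $\mathrm{II}\equiv 0$; hence $M$ is flat. The Special Lagrangian is the unique exceptional class, and the conformally flat non-flat calibrated submanifolds it admits are exactly the ones exhibited in \Cref{prop:SL-catenoid} and \Cref{prop:SL-example-concrete}.

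I expect the main obstacle to be precisely this last step: computing, orbit by orbit in the $\Lambda^3\R^6$ classification, the tangent space of $\Gr(\omega)$ and then solving the coupled system imposed by trace-freeness of $\mathrm{II}$, the Gauss equation, and the vanishing Cotton tensor, so as to certify that the Special Lagrangian is the sole calibration admitting a nonflat conformally flat calibrated submanifold. A secondary point requiring care is the passage from flatness on $\Omega_F$ back to the global inner Möbius conclusion through \Cref{cor:comparison-principle}, together with the treatment of the degenerate locus $\Omega\setminus\Omega_F$.
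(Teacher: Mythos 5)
Your reduction of the theorem to the flatness of conformally flat $\omega$-calibrated submanifolds (via \Cref{thm:inner-Mobius-Liouville:rigid}, \Cref{cor:comparison-principle}, \Cref{thm:realanalytic}, and the elementary factorization of curves with affine image) is exactly the paper's equivalence between conformal rigidity and inner M\"obius rigidity (\Cref{thm:Liouville}), and your treatment of codimensions $0$, $1$, and $2$ agrees with the paper's proof: codimension one gives a simple calibration with $\Gr(\omega)$ a singleton, handled by \Cref{lemm:affinetranslation}, and codimension two is \Cref{thm:codim-2} since $m=n+2\geq 5$. The genuine gap is the only remaining case, $\omega\in\Lambda^3\R^6$, which you do not prove: you propose an ``orbit by orbit'' analysis of the second fundamental form and yourself flag it as the main obstacle. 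That case is the entire content of the theorem here, and the paper closes it with no curvature computation at all. By Morgan's classification \cite[Theorem 4.8]{Mor:85}, a calibration in $\Lambda^3\R^6$ that is not face equivalent to $\omega_{\SL}$ either has \emph{discrete} $\Gr(\omega)$ --- so the Gauss map of a connected calibrated submanifold is locally constant, hence constant, and \Cref{lemm:affinetranslation} gives affineness --- or is equivalent to $(\iota\circ\pi)^{*}(\omega_{\symp}\wedge dt)$ on a five-dimensional subspace, where the codimension-two result \Cref{thm:codim-2} (together with \Cref{lemm:affinetranslation}) already applies.

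Beyond being unexecuted, the plan you sketch would fail in the form stated. In the stabilized symplectic class, $\Gr(\omega_{\symp}\wedge dt)=\{V\times\R \colon V\in\Gr(\omega_{\symp})\}$ is a positive-dimensional family, and its calibrated submanifolds --- locally Riemannian products of holomorphic curves in $\C^2$ with a line (\Cref{cor:locflat-new}) --- have $\mathrm{II}\not\equiv 0$ in general (take the graph of $z\mapsto z^2$ times $\R$). So trace-freeness of $\mathrm{II}$ and the Gauss-map constraint cannot by themselves force $\mathrm{II}\equiv 0$, contrary to your claim that the non-SL classes are ``rigid enough''; and the conformal flatness hypothesis does not enter as a pointwise algebraic condition on $\mathrm{II}$ --- in dimension three it is the vanishing of the Cotton tensor, a third-order condition involving covariant derivatives of $\mathrm{II}$ through the Gauss and Codazzi equations. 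The paper instead uses conformal flatness structurally: the local product splitting plus Yau's theorem (\Cref{prop:conformallyflat:product}) forces the two-dimensional factor to have constant curvature, and K\"ahler curvature rigidity (Umehara's theorem, as in the proof of \Cref{thm:product}) then forces it flat. Your pointwise coupled system would have to be replaced by this, or an equivalent, global argument to certify that the Special Lagrangian is the unique exceptional class.
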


The proof of \Cref{thm:classification-dim-six} has two main ingredients. First is the classification of calibrations in $\R^6$ by Dadok and Harvey \cite{Dad:Har:83} and Morgan \cite{Mor:85}; see also Joyce \cite[Section 4.3]{Joy:07}. The second main ingredient in the proof of \Cref{thm:classification-dim-six} is the fact that if a calibration $\omega \in \Lambda^n \R^m$, for $3 \leq n \leq m \leq 6$, is not face equivalent to the Special Lagrangian, then conformally flat $\omega$-calibrated submanifolds are flat submanifolds. Since the submanifolds in question are minimal surfaces, they are \emph{a fortiori} affine by minimal surface theory and, therefore, $\omega$ is inner M\"obius rigid.

Regarding dimensions $m=7$ and $m=8$, Proposition \ref{prop:SL-example-concrete} implies that the associative calibration $\omega_\assoc \in \Lambda^3 \R^7$ and the Cayley calibration $\omega_\Cayley\in \Lambda^4 \R^8$ are not inner M\"obius rigid. We do not know if the coassociative form $\omega_\coassoc = \star \omega_\assoc \in \Lambda^4 \R^7$ is inner M\"obius rigid.

\subsection{Applications to quasiregular curves}

As an application, we consider rigidity results for quasiregular curves. A map $F\colon \Omega \to \R^m$, where $\Omega \subset \R^n$ is an open set, is a \emph{$K$-quasiregular $\omega$-curve for $K\ge 1$}, if $F\in W^{1,n}_{\loc}(\Omega, \R^m)$ and $\norm{DF}^n \le K (\star F^*\omega)$ almost everywhere. Here $\norm{DF}$ is the operator norm of the differential $DF$ and $\star$ the Hodge star. We refer to \cite{Pan:20} for further discussion on this definition. Beltrami systems associated to almost complex structures give examples of quasiregular curves, see Donaldson \cite{Don:96} and Iwaniec, Verchota, and Vogel \cite[Section 7]{Iwa:Ver:Vog:02}; see also \cite[Section 3]{Hei:Pan:Pry:23}. In this terminology, a conformal $\omega$-curve is a $1$-quasiregular $\omega$-curve and a $K$-quasiregular map is a $K$-quasiregular $\vol_{\R^n}$-curve. Recall also that a quasiconformal map is a quasiregular homeomorphism.

Theorem \ref{thm:constant-norm-means-affine} yields a general conditional factorization statement in spirit of Sto\"ilow's theorem. Recall that the classical Sto\"ilow's theorem states that \emph{a quasiregular mapping $f\colon \Omega \to \R^2$ is a composition $f = h \circ \varphi$, where $\varphi \colon \Omega \to \Omega'$ is quasiconformal and $h \colon \Omega' \to \C$ holomorphic}.

\begin{restatable}{theorem}{qrStoilow}
\label{thm:quasiregular-Stoilow}
Let $2\le n \le m$, $\omega \in \Lambda^n \R^m$ be a calibration, $\Omega \subset \R^n$ be a domain, and let $F\colon \Omega \to \R^m$ be a quasiregular $\omega$-curve for which $(DF)_x\R^n \in \Gr(\omega)$ and $\star F^{*}\omega(x) \geq 0$ for almost every $x\in \Omega$. Then $(DF)^t DF = (Df)^t Df$ for a quasiregular map $f \colon \Omega \to \R^n$ if and only if $F = A \circ f$, where $A\colon \R^n \to \R^m$, $x\mapsto y_0 + Lx$, for $L\in \SO(\omega)$. In particular, the image $F(\Omega)$ is a smooth (affine) $n$-manifold.
\end{restatable}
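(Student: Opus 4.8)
The plan is to prove the two implications separately. The reverse (``if'') direction is immediate from the chain rule, while the forward direction reduces, after a local inversion of $f$, to \Cref{thm:constant-norm-means-affine}. For the ``if'' direction I would simply observe that if $F = A\circ f$ with $A\colon x\mapsto y_0 + Lx$ and $L\in\SO(\omega)$, then $DF = L\,Df$ almost everywhere, and since $L$ is a linear isometry we have $L^tL = I_n$, whence $(DF)^tDF = (Df)^tL^tL\,Df = (Df)^tDf$.

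For the converse I would first dispose of the case $f$ constant (which forces the common pullback metric, and hence $DF$, to vanish) and then assume $f$ nonconstant, so that $f$ is discrete and open with $J_f>0$ and $Df$ invertible almost everywhere. On the full-measure set $\{J_f>0\}$ I set $L_x \coloneqq (DF)_x(Df)_x^{-1}\colon\R^n\to\R^m$; the computation $L_x^tL_x = (Df)_x^{-t}(DF)_x^t(DF)_x(Df)_x^{-1} = (Df)_x^{-t}(Df)_x^t(Df)_x(Df)_x^{-1} = I_n$ shows that each $L_x$ is a linear isometry onto its image and that $DF = L_x\,Df$. Next I would identify $L_x$ with an element of $\SO(\omega)$. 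Since $Df_x$ is invertible, $L_x\R^n = (DF)_x\R^n\in\Gr(\omega)$, so $\omega$ restricts to $\pm\vol_{L_x\R^n}$ on this calibrated plane and $L_x^*\omega = \pm\vol_{\R^n}$. To fix the sign I evaluate
\[
    \star F^*\omega(x) = \omega\big((DF)_xe_1,\dots,(DF)_xe_n\big) = (L_x^*\omega)\big((Df)_xe_1,\dots,(Df)_xe_n\big) = \pm J_f(x),
\]
and since $\star F^*\omega\ge 0$ while $J_f>0$ almost everywhere, the sign is $+$; thus $L_x^*\omega = \vol_{\R^n}$ and $L_x\in\SO(\omega)$ almost everywhere.

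The heart of the argument is to upgrade the measurable family $L_x$ to a single constant $L$. Here I would use that the branch set $B_f$ of the nonconstant quasiregular map $f$ is closed of topological dimension at most $n-2$, so $B_f$ is null and $\Omega\setminus B_f$ is connected. On any ball $\Omega'\subset\Omega\setminus B_f$ on which $f$ restricts to a homeomorphism, the map $h\coloneqq F\circ(f|_{\Omega'})^{-1}$ is, by quasiconformality of $f|_{\Omega'}$, a $W^{1,n}_{\loc}$-mapping on the domain $f(\Omega')$ with $Dh = L_x\circ(f|_{\Omega'})^{-1}\in\SO(\omega)$ almost everywhere; in particular $h$ is a conformal $\omega$-curve with constant norm $\norm{Dh}\equiv 1$. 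By \Cref{thm:constant-norm-means-affine}, $h$ must be affine, so $L_x$ is constant on $\Omega'$. As such balls cover the connected set $\Omega\setminus B_f$ and $L_x$ is locally constant there, the family $L_x$ equals a fixed $L\in\SO(\omega)$ almost everywhere on $\Omega$. Finally, $DF = L\,Df$ with $L$ constant on the connected domain $\Omega$ integrates to $F = Lf + y_0 = A\circ f$ with $A\colon x\mapsto y_0 + Lx$, and $F(\Omega) = A(f(\Omega))$ is then an open subset of the affine $n$-plane $y_0 + L\R^n$, which gives the final assertion.

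I expect the main obstacle to be precisely this last rigidity step, namely passing from the merely measurable pointwise isometries $L_x$ to a single constant $L$. The device that unlocks it is the local inversion $h = F\circ(f|_{\Omega'})^{-1}$, which converts the identity $DF = L_x\,Df$ into a constant-norm conformal $\omega$-curve and thereby brings \Cref{thm:constant-norm-means-affine} to bear. The accompanying care, which I regard as routine but necessary, lies in verifying that this composition remains in $W^{1,n}_{\loc}$ and in invoking the connectedness of $\Omega\setminus B_f$ from the dimension bound on the branch set.
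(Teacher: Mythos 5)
Your proof is correct and follows essentially the same route as the paper's: off the branch set $B_f$ you invert $f$ locally, show that $h = F\circ(f|_{\Omega'})^{-1}$ is a $W^{1,n}_{\loc}$ conformal $\omega$-curve with $\norm{Dh}\equiv 1$ (using the regularity of compositions with quasiconformal inverses and the sign argument combining $\star F^{*}\omega \geq 0$ with $J_f>0$), apply \Cref{thm:constant-norm-means-affine}, and globalize by connectedness of $\Omega\setminus B_f$. The only difference is cosmetic, in the endgame: you integrate $DF = L\,Df$ directly to obtain $F = A\circ f$ for the given $f$, whereas the paper first shows $F(\Omega)$ lies in an affine $n$-plane and then appeals to a quasiregular version of the elementary factorization theorem; your version is, if anything, slightly more explicit on this point.
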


Since the differential $Df$ of a quasiregular mapping $f\colon \Omega \to \R^n$ is almost everywhere non-vanishing, the symmetric matrix field $x\mapsto (DF)_x^t (DF)_x$ in \Cref{thm:quasiregular-Stoilow} may be viewed as a measurable (possibly degenerate) Riemannian metric $g_F$ on $\Omega$. Having the Lagrangian catenoid example $F\colon \R^n\setminus \{0\} \to \C^n$ at our disposal, we may construct a measurable Riemannian metric on $\R^n$ from a quasiregular curve that is not induced by a quasiregular mapping. We formulate this as follows.

\begin{restatable}{corollary}{nonSolvability}
\label{cor:non-solvability}
There exists a quasiregular $\omega_\SL$-curve $H \colon \R^n \to \R^{2n}$ for which the equation 
\[
    (DH)^t DH = (Df)^t Df
\]
is not solvable for quasiregular mappings $f\colon \Omega \to \R^n$ in any subdomain $\Omega \subset \R^n$. Moreover, $H$ is a branched cover of its image --- the Lagrangian catenoid --- and the differential $DH$ exists and has full-rank apart from a set of Hausdorff codimension two.
\end{restatable}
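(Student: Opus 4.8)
The plan is to build $H$ as a composition $H = F \circ Z$, where $F \colon \R^n \setminus \{0\} \to \R^{2n}$ is the conformal Lagrangian catenoid parametrization of \Cref{prop:SL-catenoid} and $Z \colon \R^n \to \R^n \setminus \{0\}$ is a Zorich map, the quasiregular analogue of the exponential. Recall that $Z$ is a $K$-quasiregular surjective branched covering onto $\R^n \setminus \{0\}$ whose branch set $B_Z$ — the set where $Z$ fails to be a local homeomorphism — is a countable union of $(n-2)$-planes, hence of Hausdorff codimension two, and that $Z$ is a local diffeomorphism on $\R^n \setminus B_Z$. Since $Z$ avoids the origin, $H$ is defined on all of $\R^n$ and never meets the essential singularity of $F$ at $0$ (which, by Zorich's theorem, is exactly what prevents an \emph{entire} conformal parametrization of $\mathcal{M}_\SL$ for $n \ge 3$).

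First I would verify that $H$ is a $K$-quasiregular $\omega_\SL$-curve satisfying the structural hypotheses of \Cref{thm:quasiregular-Stoilow}. On compact sets $Z$ stays bounded away from $0$, so $F$ is Lipschitz along the image of $Z$ and the chain rule gives $H \in W^{1,n}_{\loc}(\R^n, \R^{2n})$ with $DH = (DF)_{Z}\, DZ$ almost everywhere. Writing $(DF)_{Z(x)} = \lambda(Z(x))\, O$ with $O \in \SO(\omega_\SL)$ (as $F$ is a conformal $\omega_\SL$-curve and, by \Cref{thm:realanalytic}, an immersion on $\R^n \setminus \{0\}$), left invariance of the operator norm under the isometry $O$ gives $\norm{DH} = (\lambda \circ Z)\,\norm{DZ}$, while naturality of the pullback gives $\star H^*\omega_\SL = (\lambda \circ Z)^n \det DZ \ge 0$. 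Combining these with the quasiregularity estimate $\norm{DZ}^n \le K \det DZ$ yields $\norm{DH}^n \le K \star H^*\omega_\SL$. Moreover, on $\R^n \setminus B_Z$ we have $\det DZ > 0$ and $(DH)_x \R^n = (DF)_{Z(x)} \R^n \in \Gr(\omega_\SL)$, so both hypotheses $(DH)_x \R^n \in \Gr(\omega_\SL)$ and $\star H^*\omega_\SL(x) \ge 0$ hold almost everywhere.

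The non-solvability then follows by contradiction from \Cref{thm:quasiregular-Stoilow}. Suppose that, on some subdomain $\Omega \subset \R^n$, the equation $(DH)^t DH = (Df)^t Df$ admits a quasiregular solution $f \colon \Omega \to \R^n$. Since $H|_\Omega$ satisfies the hypotheses above, the theorem forces $H|_\Omega = A \circ f$ with $A \colon x \mapsto y_0 + Lx$, $L \in \SO(\omega_\SL)$; in particular $H(\Omega) = A(f(\Omega))$ is a nonempty open piece of the affine $n$-plane $y_0 + L\R^n$. On the other hand, $H(\Omega) = F(Z(\Omega))$ is a nonempty open subset of the Lagrangian catenoid $\mathcal{M}_\SL$. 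As $\mathcal{M}_\SL$ is a connected, real-analytic, non-totally-geodesic minimal submanifold, no open piece of it can lie in an affine $n$-plane (its analytic second fundamental form would vanish on an open set, hence identically, forcing $\mathcal{M}_\SL$ to be a plane). This contradiction shows that no such $f$ exists in any subdomain.

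The remaining assertions are read off the construction: $H = F \circ Z$ is a branched cover of $\mathcal{M}_\SL$ because $Z$ is a branched covering onto $\R^n \setminus \{0\}$ and $F$ is a homeomorphism onto $\mathcal{M}_\SL$; and since $F$ is an immersion on all of $\R^n \setminus \{0\}$, the rank of $DH = (DF)_Z\, DZ$ drops only where $DZ$ degenerates, i.e. on $B_Z$, which has Hausdorff codimension two, while on $\R^n \setminus B_Z$ the map $H$ is real-analytic with $DH$ of full rank. The main obstacle I anticipate is the second step: checking carefully that $F \circ Z$ inherits both the quasiregular $\omega_\SL$-curve estimate and the pointwise conditions $(DH)_x \R^n \in \Gr(\omega_\SL)$ and $\star H^*\omega_\SL \ge 0$ almost everywhere, since these are precisely what license the application of \Cref{thm:quasiregular-Stoilow} and thereby convert the (geometrically evident) non-flatness of the catenoid into genuine non-solvability; the rest is bookkeeping with standard properties of the Zorich map.
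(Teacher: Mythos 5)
Your proposal is correct and follows the paper's proof in its essentials: the same construction $H = F \circ Z$ with $F$ from \Cref{prop:SL-catenoid} and a Zorich map $Z$, and the same reduction via \Cref{thm:quasiregular-Stoilow} from solvability of $(DH)^t DH = (Df)^t Df$ to the image $H(\Omega)$ being affine. The one genuine divergence is the endgame: the paper concludes by noting that if $U$ is a component of $F^{-1}(H(\Omega))$, then $F|_U$ has affine image and is therefore a conformal $\eta$-curve for a simple calibration $\eta$, whence the unique continuation result \Cref{cor:uniquecont} forces all of $F$ to have affine image, contradicting the shape of $\mathcal{M}_\SL$; you instead invoke the classical fact that the second fundamental form of the connected real-analytic minimal submanifold $\mathcal{M}_\SL$, vanishing on a nonempty open set, vanishes identically. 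Both are unique-continuation arguments and both are valid; yours is more self-contained and elementary, while the paper's stays inside its own calibration machinery (the same \Cref{cor:uniquecont} used throughout). You also supply the chain-rule verification that $H$ is a quasiregular $\omega_\SL$-curve satisfying $(DH)_x\R^n \in \Gr(\omega_\SL)$ and $\star H^{*}\omega_\SL \geq 0$ almost everywhere, which the paper asserts without detail --- a worthwhile addition. One small caveat: your claims that $DH$ degenerates \emph{exactly} on $B_Z$ and that $H$ is real-analytic off $B_Z$ depend on choosing a Zorich map that is smooth across the $(n-1)$-faces of the underlying cubical structure; for a generic construction differentiability can fail on a codimension-one set there, and the paper accordingly asserts only what is needed, namely full-rank differentiability of $Z$ (hence of $H$) outside a set of Hausdorff codimension two.
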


\subsection{Space of inner Möbius rigid calibrations}

We finish this introduction with properties of the space of inner Möbius rigid calibrations $\mathcal I_n(\R^m) \subset \Lambda^n \R^m$ in the space of all calibrations $\mathcal{C}_n(\R^m) \subset \Lambda^n \R^m$ for $3 \leq n \leq m$. We prove the following dichotomy. For the statement, recall that a subset $E$ of a topological space is a \emph{$G_\delta$ set} if $E$ is the intersection of a countably many open sets.

\begin{restatable}{theorem}{compactness}
\label{thm:compactnessproperty}
For $3 \leq n \leq m$, either $\mathcal I_n(\R^m) = \mathcal{C}_{n}( \R^m )$ or $\mathcal{I}_n( \R^m ) \subset \mathcal{C}_n( \R^m )$ is a non-compact and dense $G_\delta$ set.
\end{restatable}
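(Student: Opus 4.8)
The plan is to prove that $\mathcal{I}_n(\R^m)$ is \emph{always} a dense $G_\delta$ subset of $\mathcal{C}_n(\R^m)$, and then extract the dichotomy from the compactness of the ambient space. First I record that $\mathcal{C}_n(\R^m)$ is the unit sphere of the comass norm on the finite-dimensional space $\Lambda^n\R^m$ (recall $\norm{\omega}_\comass=\sup_\xi\abs{\omega(\xi)}$ over unit simple $\xi$, so the comass is genuinely a norm), hence a compact metric space. By \Cref{thm:inner-Mobius-Liouville:rigid}, $\omega \in \mathcal{I}_n(\R^m)$ if and only if every nonconstant conformal $\omega$-curve is an inner Möbius curve. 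Combined with the locality of the inner Möbius property (\Cref{cor:comparison-principle}) and invariance under domain Möbius maps, affine codomain rescalings, and translations, a calibration $\omega$ is \emph{non-rigid}, i.e. $\omega \in \mathcal{N} := \mathcal{C}_n(\R^m)\setminus \mathcal{I}_n(\R^m)$, if and only if it admits a nonconstant conformal $\omega$-curve on the unit ball whose image is not contained in any affine $n$-plane. Here I use that, since $(DF)_x\R^n \in \Gr(\omega)\cup\{0\}$ and every plane in $\Gr(\omega)$ has dimension $n$, a nonconstant conformal $\omega$-curve whose image lies in an affine $n$-plane is, after identifying that plane with $\R^n$, a weakly conformal $W^{1,n}_{\loc}$-map, hence Möbius by the Gehring--Reshetnyak theorem, hence inner Möbius.

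For density I exhibit an explicit dense family of rigid calibrations. Given $\omega \in \mathcal{C}_n(\R^m)$, choose a calibrated plane $V_0 \in \Gr(\omega)$ and let $\vol_{V_0} \in \Lambda^n\R^m$ be the simple unit-comass covector dual to $V_0$, chosen so that $\omega|_{V_0} = \vol_{V_0}$. For $s>0$ set $\eta_s := (\omega + s\,\vol_{V_0})/\norm{\omega + s\,\vol_{V_0}}_\comass$. A direct estimate gives $\norm{\omega + s\,\vol_{V_0}}_\comass = 1+s$ with the value $1+s$ attained only on $V_0$: for a unit simple $\xi$ one has $\omega(\xi)+s\,\vol_{V_0}(\xi)\le 1+s$, with equality forcing both $\omega(\xi)=1$ and $\vol_{V_0}(\xi)=1$, and the latter holds only for $\xi$ spanning $V_0$. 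Hence $\Gr(\eta_s) = \{V_0\}$ and $\eta_s \to \omega$ as $s\to 0^+$. Any calibration $\eta$ with $\Gr(\eta) = \{V_0\}$ is rigid: every conformal $\eta$-curve has $(DF)_x\R^n = V_0$ wherever $DF\neq 0$, so $F$ maps into a translate of $V_0$ and is inner Möbius by the equivalence above. Thus rigid calibrations, and therefore $\mathcal{I}_n(\R^m)$, are dense.

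For the $G_\delta$ property I show $\mathcal{N}$ is $F_\sigma$ by a normal-families argument. After normalizing bad curves by translation and codomain scaling, I measure their failure to lie in an affine $n$-plane by a continuous non-degeneracy functional $\delta$, e.g. the maximal $(n+1)$-volume of a simplex spanned by $n+2$ image points over $\overline{B^n(0,1/2)}$, which vanishes exactly when the image lies in an affine $n$-plane. I then write $\mathcal{N} = \bigcup_{k} E_k$, where $E_k$ is the set of calibrations admitting a normalized conformal curve $F$ on $B^n(0,1)$ satisfying a uniform energy bound together with $\delta(F)\ge 1/k$. Closedness of each $E_k$ follows from \Cref{thm:realanalytic}: the subharmonicity of $\norm{DF}^{\frac{n-2}{2}}$ yields uniform interior $L^\infty$-bounds on $\norm{DF}$, and the $\mathcal{C}^{1,\alpha}$-regularity then gives uniform interior $\mathcal{C}^{1,\alpha}$-bounds for a sequence of normalized bad curves $F_j$ attached to $\omega_j \to \omega$. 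By Arzelà--Ascoli a subsequence converges in $\mathcal{C}^1$ on a smaller ball to a limit $F$; upper semicontinuity of $\omega \mapsto \CO(\omega)$ (if $A_j\in\CO(\omega_j)$, $A_j\to A$ and $\omega_j\to\omega$, then $A\in\CO(\omega)$) shows $F$ is a conformal $\omega$-curve, and continuity of $\delta$ gives $\delta(F)\ge 1/k$, so $\omega \in E_k$. Hence $\mathcal{I}_n(\R^m) = \bigcap_k (\mathcal{C}_n(\R^m)\setminus E_k)$ is a $G_\delta$ set.

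I expect the main obstacle to be the compactness step: choosing the normalization so that the limiting curve is genuinely nonconstant and retains its non-degeneracy, i.e. ruling out concentration of energy or escape of the bad behavior to the boundary, and verifying that the regularity estimates of \Cref{thm:realanalytic} are uniform over the converging family of unit-comass calibrations. Granting this, the dichotomy follows at once: if $\mathcal{I}_n(\R^m) = \mathcal{C}_n(\R^m)$ we are in the first alternative; otherwise $\mathcal{I}_n(\R^m)$ is a \emph{proper} dense subset of the compact Hausdorff space $\mathcal{C}_n(\R^m)$, hence not closed and therefore non-compact, placing us in the non-compact dense $G_\delta$ alternative.
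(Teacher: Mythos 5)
Your proof is correct, but it takes a genuinely different route from the paper's in both main steps. For density, the paper does not perturb toward a simple spike: it invokes \Cref{cor:applicability} (built on the splitting \Cref{lemm:splitting} and the stabilization result \Cref{thm:product}, hence ultimately on Yau's and Ejiri's theorems) to write $\omega = \widetilde\omega + \epsilon$ with $\widetilde\omega$ conformally rigid and $\omega_t = \widetilde\omega + t\epsilon$ rigid and equivalent to $\widetilde\omega$ for $0\le t<1$, so $\omega_t\to\omega$ as $t\to 1^-$. Your construction $\eta_s = (\omega + s\,\vol_{V_0})/(1+s)$ is more elementary and self-contained: the equality analysis in $|\omega(\xi)+s\vol_{V_0}(\xi)|\le 1+s$ (Hadamard forces $\xi=\pm\xi_{V_0}$, and compactness of unit frames in any fixed $V\ne V_0$ gives strict inequality there) correctly yields $\Gr(\eta_s)=\{V_0\}$, and singleton-Grassmannian calibrations are rigid exactly as in the paper's own discrete-Grassmannian case in the proof of \Cref{thm:classification-dim-six}, via constancy of $\pi_{V_0}^\perp\circ F$ (a $W^{1,n}$ map with a.e.\ vanishing differential on a domain) followed by \Cref{thm:elementary-factorization} and Gehring--Reshetnyak. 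What your shortcut gives up is the structural information the paper's deformation provides ($\omega_t$ equivalent to a \emph{fixed} rigid calibration along the whole segment), which the paper reuses elsewhere. For the $G_\delta$ step, the paper instead introduces the Iwaniec function $\tau^\omega(t)$ in \eqref{eq:functionfunction}, proves its upper semicontinuity (\Cref{lemm:distortionfunction}) via the normal-families \Cref{prop:normalfamily}, and writes $\mathcal{I}_n(\R^m) = \bigcap_{\epsilon>0}\{\tau^\omega(0)<\epsilon\}$; your $F_\sigma$ decomposition of the non-rigid set by a non-planarity (simplex-volume) functional is a different dressing of the same compactness mechanism, and it is valid --- but note that $\tau$ earns its keep in the paper by also powering the quantitative stability results (\Cref{prop:HPP-revisited}, \Cref{thm:localinjec}), whereas your functional only detects rigidity. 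Finally, the obstacle you flag largely dissolves: since $\delta$ is evaluated on the compact half-ball and witnesses can be renormalized by domain rescaling, translation, and codomain scaling (under which the conformal curve equation is homogeneous), equiboundedness plus the Caccioppoli and H\"older estimates of \Cref{prop:localcontinuity} (whose constants are independent of the calibration) prevent escape to the boundary, and the interior $\mathcal{C}^{1,\alpha}$ bounds are uniform in the calibration because they come from $n$-harmonicity alone (\Cref{prop:nharmonic}); alternatively one can bypass $\mathcal{C}^1$-convergence entirely by quoting the closure of quasiregular curves under locally uniform limits together with converging calibrations, exactly as in \Cref{prop:normalfamily}. The concluding dichotomy (a proper dense subset of the compact unit sphere of the comass norm cannot be compact) coincides with the paper's.
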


Both cases in \Cref{thm:compactnessproperty} occur. Indeed, \Cref{thm:codim-2} proves the equality $\mathcal{I}_{m-2}( \R^m ) = \mathcal{C}_{m-2}( \R^m )$ for $m \geq 5$, while \Cref{prop:SL-catenoid} implies that $\mathcal{I}_{n}( \R^{m} )$ is not compact for $m \geq 2n \geq 6$. For $m \geq 3$, the equality $\mathcal{I}_{m}( \R^m ) = \mathcal{C}_{ m }( \R^m )$ is a reformulation of the classical Liouville's theorem and, for $m \geq 4$, the equality $\mathcal{I}_{m-1}( \R^{m} ) = \mathcal{C}_{ m-1 }( \R^{m} )$ follows from the classical Liouville's theorem and the fact that every element in $\mathcal{C}_{ m-1 }( \R^{m} )$ is simple.

Our interest to the $G_\delta$ property and density of the space $\mathcal I_n(\R^m)$ in $\mathcal{C}_n(\R^m)$ stems from the observation that $\mathcal{I}_n(\R^m)$ has a neighborhood $\mathcal{U}$ in $\Lambda^n \R^m$ having the property that, for a calibration $\omega \in \mathcal U$, conformal $\omega$-curves are weakly quasisymmetric. We finish the introduction with this result. Recall that a mapping $F\colon \Omega \to \R^m$, where $\Omega \subset \R^n$ is an open set, is \emph{$(H,\rho)$-weakly quasisymmetric for $H \geq 1$ and $\rho \in (0,1/4]$} if, for every ball $B( x_0, r_0 ) \subset \Omega$,
\begin{equation}\label{eq:weakly:QS}
    \sup_{ y \in B( x_0, \rho r_0 ) } | F(y) - F(x_0) |
    \leq
    H
    \inf_{ y \in \partial B( x_0, \rho r_0 ) } | F(y) - F( x_0 ) |.
\end{equation}
It follows from \eqref{eq:weakly:QS} that an $( H, \rho )$-weakly quasisymmetric map is either constant or a topological immersion. In particular, such a map is discrete if nonconstant.

\begin{restatable}{theorem}{weakQS}\label{thm:localinjec}
Let $3 \leq n \leq m$ and $H>1$. Then there exists $\rho = \rho(H) \in ( 0, 1/4 ]$ and an open set $\mathcal{U} \subset \Lambda^n \R^m$ containing $\mathcal{I}_{n}( \R^m )$ and having the following property: for a calibration $\omega \in \mathcal U$, every conformal $\omega$-curve $\Omega\to \R^m$ is $( H, \rho )$-weakly quasisymmetric. In particular, such a curve is constant or a discrete map.
\end{restatable}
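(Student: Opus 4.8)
The plan is to prove the statement in two steps: (i) every $\omega \in \mathcal I_n(\R^m)$ already satisfies the conclusion with an explicit constant $\rho(H)$; and (ii) this property is open in $\Lambda^n\R^m$, so it persists on a neighborhood of $\mathcal I_n(\R^m)$. For (ii) I would argue by contradiction and compactness. Setting up the bookkeeping: let $\mathcal B \subset \mathcal C_n(\R^m)$ be the set of calibrations admitting a \emph{nonconstant} conformal curve that fails \eqref{eq:weakly:QS} for the pair $(H,\rho)$. Since the comass is continuous, $\mathcal C_n(\R^m)$ is closed in $\Lambda^n\R^m$, hence $\overline{\mathcal B}\subset \mathcal C_n(\R^m)$; it then suffices to show $\overline{\mathcal B}\cap \mathcal I_n(\R^m)=\emptyset$ and to take $\mathcal U=\Lambda^n\R^m\setminus\overline{\mathcal B}$, which is open, contains $\mathcal I_n(\R^m)$, and whose calibrations lie in $\mathcal C_n(\R^m)\setminus\mathcal B$, i.e.\ have only $(H,\rho)$-weakly quasisymmetric curves.

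For step (i), fix $\omega\in\mathcal I_n(\R^m)$ and a nonconstant conformal $\omega$-curve $F$. By inner Möbius rigidity (the definition of $\mathcal I_n(\R^m)$) we have $F=A\circ g$ with $A(x)=y_0+Lx$, $L$ orthogonal, and $g$ the restriction of a Möbius transformation $\S^n\to\S^n$. As $L$ is an isometric embedding, $|F(y)-F(x)|=|g(y)-g(x)|$, so the quotient of the two sides of \eqref{eq:weakly:QS} for $F$ equals that for $g$. Precomposing with a domain isometry and postcomposing with a similarity of $\R^n$ leaves this quotient invariant on every ball, so I may take $g$ to be either affine (quotient identically $1$) or the inversion $j(x)=x/|x|^2$. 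The identity $|j(y)-j(x)|=|y-x|/(|x|\,|y|)$ gives, for any ball $B(c,s)$ whose closure avoids the pole $0$ (so that $d:=|c|\ge s$),
\[
\sup_{y\in B(c,\rho s)}|j(y)-j(c)|=\frac{\rho s}{d(d-\rho s)},\qquad \inf_{y\in\partial B(c,\rho s)}|j(y)-j(c)|=\frac{\rho s}{d(d+\rho s)},
\]
so the quotient equals $\frac{d+\rho s}{d-\rho s}\le\frac{1+\rho}{1-\rho}$. Choosing $\rho=\rho(H):=\tfrac14\cdot\frac{H-1}{H+1}\in(0,1/4]$ makes this $<H$; thus $\mathcal I_n(\R^m)\cap\mathcal B=\emptyset$. (The same $\rho(H)$ leaves slack: it also forces $\frac{1/2+\rho}{1/2-\rho}<H$, which I use below.)

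For step (ii), suppose instead $\omega_k\to\omega_\infty\in\mathcal I_n(\R^m)$ with $\omega_k\in\mathcal B$, witnessed by nonconstant conformal $\omega_k$-curves $F_k$ and balls violating \eqref{eq:weakly:QS}. Translating and scaling the domain (legitimate, as the defining condition $(DF)_x\in\CO(\omega_k)$ is a pointwise cone condition) I arrange the witnessing ball to be $B(0,1)\subset\Omega_k$ with the failure on $B(0,\rho)$, and I normalize $F_k(0)=0$ by a target translation. The aim is to extract a subsequence converging in $C^1$ on $\overline{B(0,1/2)}$ to a conformal $\omega_\infty$-curve $F_\infty$ that is nonconstant on $B(0,\rho)$ — the limit is a conformal $\omega_\infty$-curve because $\omega_k\to\omega_\infty$ and the conditions $O^*\omega=\vol_{\R^n}$ defining $\CO(\omega)$ are closed. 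Then $\omega_\infty\in\mathcal I_n(\R^m)$ forces $F_\infty$ to be an inner Möbius curve, hence injective, so $\inf_{\partial B(0,\rho)}|F_\infty-F_\infty(0)|>0$ and passing to the limit in \eqref{eq:weakly:QS} yields quotient $\ge H$ for $F_\infty$; but its Möbius factor has pole at distance $\ge 1/2$ from $0$, so step (i) bounds that quotient by $\frac{1/2+\rho}{1/2-\rho}<H$, a contradiction.

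The main obstacle is exactly this compactness-with-non-degeneracy. Interior $C^{1,\alpha}$-bounds on $\overline{B(0,1/2)}$, and hence $C^1_{\mathrm{loc}}$-convergence, should follow from the regularity theory underlying \Cref{thm:realanalytic} and the subharmonicity of $\norm{DF_k}^{(n-2)/2}$, once a uniform $C^0$-bound is in place; the latter I would obtain from the convex-hull (maximum) principle for the minimal images $F_k(\overline{B(0,1)})$ after normalizing them into a fixed ball. The genuinely delicate point is that there is no a priori energy bound on $\int_{B(0,1)}\norm{DF_k}^n$, so the differentials may both concentrate away from $B(0,\rho)$ and collapse $F_k$ to a point on $B(0,\rho)$, in which case the limit is constant there and the contradiction evaporates. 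I expect to control this by normalizing the size of $F_k$ at the failure scale (rather than on $B(0,1)$) and using the maximum principle of \Cref{cor:Maximum-principle} to propagate non-degeneracy across scales — and, if concentration persists, by blowing up at the scale where $\norm{DF_k}$ is non-degenerate — so that the scale-$\rho$ failure survives in the $C^1$-limit. Executing this uniformly in $k$, without an energy bound, is where the real work lies.
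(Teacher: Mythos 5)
Your step (i) is correct and nicely explicit: inner M\"obius curves are isometric post-compositions of M\"obius restrictions, the quasisymmetry quotient reduces to the standard inversion, and your computation $\frac{d+\rho s}{d-\rho s}\le\frac{1+\rho}{1-\rho}$ with $\rho(H)=\frac14\frac{H-1}{H+1}$ is sound. But the theorem's actual content is the openness statement, i.e.\ your step (ii), and there the proof is not carried out: you yourself flag that "executing this uniformly in $k$, without an energy bound, is where the real work lies," and the sketched fixes do not obviously close it. Concretely: after the only available normalization (scale the target so that $\sup_{B(0,\rho)}|F_k-F_k(0)|=1$), the energy bound of \Cref{prop:localcontinuity} requires a doubled ball with controlled image diameter, so you get equicontinuity only on compact subsets of $B(0,\rho)$; the supremum in the failure of \eqref{eq:weakly:QS} may concentrate at $\partial B(0,\rho)$, the infimum over $\partial B(0,\rho)$ does not pass to any locally uniform limit, and the limit curve may well be constant, in which case the contradiction evaporates. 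The maximum principle (\Cref{cor:Maximum-principle}) concerns interior maxima of $\|DF\|$ and gives no Harnack-type propagation of nondegeneracy across scales, and a blow-up at a "nondegenerate scale" changes the ball at which the failure is witnessed, destroying the fixed ratio between the failure radius and the distance to the M\"obius pole that your step (i) estimate needs. So as written there is a genuine gap precisely at the compactness-with-nondegeneracy step.

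It is worth noting that the paper takes a different route that is engineered to avoid exactly this degeneracy. First, a one-line Hadamard/comass estimate shows that a conformal $\tau$-curve with $\|\tau-\omega\|_{\comass}<\delta$ is a $(1-\delta)^{-1}$-quasiregular $\omega$-curve; this converts the varying-calibration problem into a small-distortion problem for the \emph{fixed} rigid calibration $\omega$, and yields the neighborhood $\mathcal U$ as a union of comass balls of radius $\delta(\omega,H)$. Second, the weak quasisymmetry for $(1+\epsilon)$-quasiregular $\omega$-curves is obtained from the Iwaniec-type stability result (\Cref{prop:HPP-revisited} via \Cref{prop:7.7:HPP-revisited}, following \cite[Proposition 7.7]{Hei:Pan:Pry:23} and \cite{Iwa:87}). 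There the compactness is applied to the normalized families $\mathcal F^\omega_t$ of curves mapping the unit ball into the unit ball, and the weighted distance $d(F,G)=\sup_{x\in B_1}|1-|x||\,|F(x)-G(x)|$ in the Iwaniec function \eqref{eq:functionfunction} is precisely what absorbs the possible boundary concentration that blocks your naive extraction; the conclusion of \Cref{prop:7.7:HPP-revisited} then carries the quantitative quotient bound $\frac{1+2\rho}{1-2\rho}+2^{4/\rho}\delta$ uniformly over all balls. If you completed your step (ii), you would essentially be reproving that stability theorem; as it stands, your argument establishes the conclusion on $\mathcal I_n(\R^m)$ itself but not on an open neighborhood of it.
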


In general, quasiregular curves of large distortion need not be discrete mappings, see e.g.~Iwaniec, Verchota, and Vogel \cite{Iwa:Ver:Vog:02}, Rosay \cite{Ros:10}, or \cite[Sections 2 and 3]{Hei:Pan:Pry:23}. Recall that a mapping $F\colon \Omega \to \R^m$ is \emph{discrete} if $F^{-1}(y)$ is a discrete subset of $\Omega$ for each $y\in \R^m$.

\subsection*{Organization of the article}

This article is organized as follows. After preliminaries, we discuss in Section \ref{sec:C1} the classical smoothness of conformal curves. In Section \ref{sec:isoperimetry}, we discuss the isoperimetric inequality for conformal $\omega$-curves, subharmonicity of the norm function, and prove Theorems \ref{thm:constant-norm-means-affine} and \ref{thm:quasiregular-Stoilow}. We also finish the proof of Theorem \ref{thm:realanalytic} in this section.

After these sections, we prove the equivalence between the Liouville property and inner Möbius rigidity (\Cref{thm:inner-Mobius-Liouville:rigid}), and move to discuss the proof of the Liouville theorems (Theorems \ref{thm:codim-2} and \ref{thm:classification-dim-six}) in Section \ref{sec:Liouville-property}. The examples of conformal $\omega$-curves related to the Special Lagrangian are discussed in Section \ref{sec:examples}. We finish with a discussion on properties of the space $\mathcal{I}_n(\R^m)$ in Section \ref{sec:HPP}.

\subsection*{Acknowledgements}

We thank Ilkka Holopainen for valuable discussions on minimal surfaces and Susanna Heikkil\"a for comments on the manuscript.


\section{Preliminaries}
\label{sec:preliminaries}

\subsection{Hadamard inequality}
Let $\omega \in \Lambda^n \R^m$ be a calibration. We note first that a conformal $\omega$-curve is a $1$-quasiregular $\omega$-curve. 
For the equivalence of these definitions, we recall the following version of Hadamard's inequality for calibrations, where $\norm{\cdot}_{HS}$ is the Hilbert--Schmidt norm. That is, $\norm{A}_{HS}^2 = \mathrm{tr} A^t A$.

\begin{lemma}
\label{lemma:LA}
Let $\omega \in \Lambda^n \R^m$ be a calibration and let $A\colon \R^n \to \R^m$ be a linear map. Then
\[
|\star A^{*} \omega| \le \frac{1}{n^{n/2}} \norm{A}_{HS}^n \leq \norm{A}^n.
\]
Furthermore, $\star A^{\star}\omega = \frac{1}{n^{n/2}} \norm{A}_{HS}^n$ if and only if $A \in \CO( \omega )$ and $- \star A^{\star}\omega = \frac{1}{n^{n/2}} \norm{A}_{HS}^n$ if and only if $A = A' C$ for $A' \in \CO( \omega )$ and an orientation-reversing isometry $C \colon \R^n \to \R^n$. In either of these cases, $\frac{1}{n^{n/2}} \norm{A}_{HS}^n = \|A\|^n$.
\end{lemma}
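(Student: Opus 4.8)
The plan is to reduce the whole statement to the single scalar $\star A^{*}\omega = \omega(Ae_1, \ldots, Ae_n)$, where $e_1, \ldots, e_n$ denotes the standard basis of $\R^n$; this is legitimate because $A^{*}\omega \in \Lambda^n \R^n$ is a multiple of $\vol_{\R^n}$ and $\star$ reads off its coefficient. The main inequality then splits into two elementary estimates. Writing $v_i = Ae_i$ and normalizing the nonzero columns, the definition $\norm{\omega}_{\comass} = 1$ gives $\abs{\omega(v_1, \ldots, v_n)} \le \prod_{i=1}^n \abs{v_i} = \prod_{i=1}^n \abs{Ae_i}$, since negating a vector flips the sign of $\omega$ and hence the signed comass also controls $\abs{\omega}$. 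Applying the arithmetic--geometric mean inequality to the column norms and using $\norm{A}_{HS}^2 = \sum_i \abs{Ae_i}^2$ yields $\prod_i \abs{Ae_i} \le \bigl( n^{-1}\sum_i \abs{Ae_i}^2\bigr)^{n/2} = n^{-n/2}\norm{A}_{HS}^n$. Combining the two gives the first inequality, and the last inequality $n^{-n/2}\norm{A}_{HS}^n \le \norm{A}^n$ is the standard comparison $\norm{A}_{HS} \le \sqrt{n}\,\norm{A}$ (every singular value of $A$ is bounded by the operator norm $\norm{A}$).

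For the equality statements I would track the two estimates separately. The arithmetic--geometric mean inequality is an equality exactly when the column norms $\abs{Ae_i}$ all coincide, say with common value $\lambda \ge 0$; the case $\lambda = 0$ forces $A = 0 \in \CO(\omega)$, so I assume $\lambda > 0$ and put $u_i = Ae_i/\lambda$, a tuple of unit vectors. The positive equality $\star A^{*}\omega = n^{-n/2}\norm{A}_{HS}^n$ is then equivalent to $\omega(u_1, \ldots, u_n) = 1$. The step I expect to be the crux is upgrading this scalar identity to the structural conclusion $A \in \CO(\omega)$. For this I would choose an orthonormal basis $f_1, \ldots, f_n$ of $V = \mathrm{span}(u_1, \ldots, u_n)$, write $u_i = \sum_j b_{ij} f_j$, and expand $\omega(u_1, \ldots, u_n) = \det(b_{ij})\,\omega(f_1, \ldots, f_n)$. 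Since the rows of $(b_{ij})$ are unit vectors, Hadamard's determinant inequality gives $\abs{\det(b_{ij})} \le 1$, while $\abs{\omega(f_1, \ldots, f_n)} = \norm{\omega|_V}_{\comass} \le 1$; the identity $1 = \abs{\det(b_{ij})}\cdot\abs{\omega(f_1, \ldots, f_n)}$ therefore saturates both factors. Saturation of Hadamard forces the $u_i$ to be orthonormal, and $\abs{\omega(f_1, \ldots, f_n)} = 1$ forces $V \in \Gr(\omega)$. Hence $O \colon e_i \mapsto u_i$ is a linear isometry with $O^{*}\omega = \vol_{\R^n}$ (the sign being $+1$ because $\omega(u_1, \ldots, u_n) = 1$), so $O \in \SO(\omega)$ and $A = \lambda O \in \CO(\omega)$.

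The negative equality $-\star A^{*}\omega = n^{-n/2}\norm{A}_{HS}^n$ runs identically, except that $\omega(u_1, \ldots, u_n) = -1$ yields $O^{*}\omega = -\vol_{\R^n}$; composing with any fixed orientation-reversing isometry $C$ of $\R^n$ corrects the orientation, since $(OC^{-1})^{*}\omega = (C^{-1})^{*}(-\vol_{\R^n}) = \vol_{\R^n}$, so $A' := \lambda(OC^{-1}) \in \CO(\omega)$ and $A = A'C$. Both converses I would verify by direct substitution into $\star A^{*}\omega = \omega(Ae_1, \ldots, Ae_n)$: if $A = \lambda O$ with $O \in \SO(\omega)$ then $\star A^{*}\omega = \lambda^n$, and if $A = A'C$ then the factor $\det C = -1$ produces $\star A^{*}\omega = -\lambda^n$, in each case with $\norm{A}_{HS}^2 = n\lambda^2$. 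Finally, in every equality case $A$ is a scalar multiple of an isometry, so all its singular values equal $\lambda$; thus $\norm{A} = \lambda$ and $n^{-n/2}\norm{A}_{HS}^n = \lambda^n = \norm{A}^n$, which gives the last assertion.
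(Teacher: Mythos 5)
Your proof is correct, and it reaches the paper's equality conditions through a genuinely different decomposition. The paper first isolates a purely Euclidean statement, the inequality \eqref{eq:Hadamard:HS} $\det B \le n^{-n/2}\norm{B}_{HS}^{n}$ for square matrices with equality if and only if $B \in \CO(n)$ (proved, as in your argument, by Hadamard's inequality plus the arithmetic--geometric mean inequality on columns), and then transfers it to the calibrated setting via a polar decomposition $A = OB$, with $O \colon \R^n \to \R^m$ a linear isometry and $\det B \ge 0$; there the comass enters exactly once, through $\abs{\star O^{*}\omega} \le \norm{\omega}_{\comass} = 1$, so the equality analysis reduces to $B \in \CO(n)$ together with $\lambda = \pm 1$. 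You instead work directly with the columns $Ae_i$, invoking the comass bound $\abs{\omega(v_1,\dots,v_n)} \le \prod_i \abs{v_i}$ at the outset (legitimate with the paper's definition of comass, as you correctly note, since negating one vector flips the sign), and you reconstruct the isometry \emph{a posteriori} from the saturation conditions: AM--GM forces equal column norms, and your expansion of the normalized columns in an orthonormal basis of their span, with Hadamard applied to the coefficient matrix $(b_{ij})$, forces the columns to be orthonormal and forces $\norm{\omega|_V}_{\comass} = 1$, i.e.\ $V \in \Gr(\omega)$. The two routes are logically equivalent --- your orthonormal-basis expansion is in effect a hand-made polar decomposition --- but yours makes explicit where the calibrated plane arises, at the cost of redoing the linear algebra inline, whereas the paper's factorization buys a reusable square-matrix lemma and a cleaner split between the Euclidean and calibration-theoretic parts of the argument. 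One micro-step is left implicit on your side: before choosing an orthonormal basis $f_1,\dots,f_n$ of $V = \mathrm{span}(u_1,\dots,u_n)$ you should observe that $\dim V = n$, which holds because $\omega(u_1,\dots,u_n) = \pm 1 \neq 0$ forces the $u_i$ to be linearly independent; with that remark added, your sign bookkeeping in the negative case, the converses by direct substitution, and the final identity $n^{-n/2}\norm{A}_{HS}^n = \norm{A}^n$ in the equality cases are all in order.
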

\begin{proof}
We have that $\norm{A}_{HS}^{2} = \mathrm{tr}( A^t A ) = \sum_{ i = 1 }^{ n } |a_i|^2$, where $|a_i|$ is the Euclidean norm of the $i$'th column vector $a_i$ of $A$ . Here $| a_i |^2 \leq \|A\|^2$ for the operator norm $\|A\|$ so
\begin{equation*}
    \frac{1}{n^{n/2}} \norm{A}_{HS}^n \leq \norm{A}^n.
\end{equation*}
We next prove the inequality 
\begin{equation}\label{eq:Hadamard:HS:calib}
    |\star A^{*} \omega| \le \frac{1}{n^{n/2}} \norm{A}_{HS}^n
\end{equation}
and claim that equality holds if and only if $A \in \CO( \omega )$ or $A = A' C$ for $A' \in \CO( \omega )$ and an orientation-reversing linear isometry $C \colon \R^n \to \R^n$.

We consider the intermediate inequality
\begin{equation}\label{eq:Hadamard:HS}
    \det( B ) \leq \frac{ 1 }{ n^{n/2} }\norm{B}_{HS}^n
\end{equation}
for linear maps $B \colon \R^n \to \R^n$. We also prove that the equality \eqref{eq:Hadamard:HS} holds if and only if $B \in \CO(n)$.

By Hadamard's inequality for linear maps $B \colon \R^n \to \R^n$ and an orthonormal basis $(e_1,\ldots, e_n)$ of $\R^n$, we have that $|\det B| \le |Be_1|\cdots |Be_n|$, with equality if and only if the vectors $( B e_1, \dots, B e_n )$ are orthogonal. By the arithmetic-geometric inequality, $|Be_1|\cdots |Be_n| \le \frac{1}{n^{n/2}} \norm{B}_{HS}^n$, with equality if and only if $| B e_1 | = | B e_2 | = \dots = | B e_n |$. So inequality \eqref{eq:Hadamard:HS} holds and the equality holds if and only if $B \in \CO(n)$.

We prove \eqref{eq:Hadamard:HS:calib}. To this end, let $A = O B$ be a polar decomposition of $A$. That is, $O \colon \R^n \to \R^m$ is a linear isometry and $B \colon \R^n \to \R^n$ has a non-negative determinant. Then $\norm{A} = \norm{B}$ and $\norm{A}_{HS} = \norm{B}_{HS}$. Since $O$ is an isometry, we further have that $|\star O^*\omega| = \norm{O^*\omega}_{\comass} \le \norm{\omega}_\comass = 1$. Thus 
\[
A^*\omega = B^* O^*\omega = B^* (\lambda \vol_{\R^n}) = \lambda B^*\vol_{\R^n} = \lambda \det B \vol_{\R^n},
\]
where $-1 \le \lambda \le 1$. We conclude that
\[
|\star A^{*}\omega| = |\lambda| \det B  \leq \det B
\le \frac{1}{n^{n/2}} \norm{B}_{HS}^n
= \frac{1}{n^{n/2}} \norm{A}_{HS}^n.
\]
The claimed inequality \eqref{eq:Hadamard:HS:calib} follows. We consider now the equality conditions. If $\frac{1}{n^{n/2}}\norm{A}_{HS}^n = |\star A^{*}\omega|$, then either $A = 0$ and thus $A \in \CO( \omega )$ or $A \neq 0$ and $B$ satisfies \eqref{eq:Hadamard:HS} with equality and $|\lambda|=1$. So $B \in \CO(n) \setminus \left\{0\right\}$ in the latter case and by definition of $\lambda$, either $A \in \CO( \omega )$ (if $\lambda = 1$) or $A = A' C$ for $A' \in \CO( \omega )$ and an orientation-reversing linear isometry $C \colon \R^n \to \R^n$ (if $\lambda = -1$).

Conversely, if $A \in \CO( \omega )$, then $A = r P$ for $P \in \SO( \omega )$ and $r \geq 0$. Therefore, by definition of $\CO(\omega)$, $\star A^{*}\omega = r^n$, $\norm{A}^n = r^n$, and $\frac{1}{n^{n/2}} \norm{A}_{HS}^n = r^n$. If $A = A' C$ for $A' \in \CO( \omega )$ and an orientation-reversing linear isometry $C \colon \R^n \to \R^n$, then $\star A^{*}\omega = -\|A'\|^n$, $\norm{A}^n = \|A'\|^n$, and $\frac{1}{n^{n/2}} \norm{A}_{HS}^n = \|A'\|^n$. The claim follows.
\end{proof}

As an immediate corollary, we have the following characterization of elements of $\CO(\omega)$ which identifies conformal $\omega$-curves with $1$-quasiregular $\omega$-curves; see \cite[Lemma 4.1]{Hei:Pan:Pry:23} for a similar lemma.

\begin{corollary}
\label{cor:LA}
Let $\omega \in \Lambda^n \R^m$ be a calibration and let $A\colon \R^n \to \R^m$ be a linear map. Then the following are equivalent:
\begin{enumerate}
\item $A \in \CO(\omega)$,\label{item:LA-1}
\item $\norm{A}^n= \star A^*\omega$, and \label{item:LA-2}
\item $\frac{1}{n^{n/2}} \norm{A}_{HS}^n = \star A^*\omega$. \label{item:LA-3}
\end{enumerate}
In any of these cases, $\|A\|^n = \frac{1}{n^{n/2}} \norm{A}_{HS}^n$.
\end{corollary}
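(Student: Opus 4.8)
The plan is to deduce all three equivalences directly from \Cref{lemma:LA}, since the corollary merely repackages its inequality chain together with its equality characterization. The starting point is the chain of inequalities recorded in that lemma, namely
\[
\star A^*\omega \le |\star A^*\omega| \le \frac{1}{n^{n/2}}\norm{A}_{HS}^n \le \norm{A}^n,
\]
together with the two facts established there: the middle equality $\star A^*\omega = \frac{1}{n^{n/2}}\norm{A}_{HS}^n$ holds precisely when $A \in \CO(\omega)$, and in that case one additionally has $\frac{1}{n^{n/2}}\norm{A}_{HS}^n = \norm{A}^n$.

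First I would observe that the equivalence of \eqref{item:LA-1} and \eqref{item:LA-3} is immediate: it is exactly the equality characterization $\star A^*\omega = \frac{1}{n^{n/2}}\norm{A}_{HS}^n \iff A \in \CO(\omega)$ stated in \Cref{lemma:LA}. To bring in \eqref{item:LA-2} and close the cycle, I would argue as follows. For the implication from \eqref{item:LA-1} to \eqref{item:LA-2}, assuming $A \in \CO(\omega)$ already gives \eqref{item:LA-3}, while the final sentence of the lemma supplies $\frac{1}{n^{n/2}}\norm{A}_{HS}^n = \norm{A}^n$; combining the two yields $\star A^*\omega = \norm{A}^n$, which is \eqref{item:LA-2}. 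For the implication from \eqref{item:LA-2} to \eqref{item:LA-3}, the hypothesis $\norm{A}^n = \star A^*\omega$ forces every inequality in the displayed chain to be an equality, so in particular $\star A^*\omega = \frac{1}{n^{n/2}}\norm{A}_{HS}^n$, which is \eqref{item:LA-3}. This establishes $\eqref{item:LA-1}\Leftrightarrow\eqref{item:LA-3}$ and $\eqref{item:LA-1}\Rightarrow\eqref{item:LA-2}\Rightarrow\eqref{item:LA-3}$, hence the equivalence of all three conditions.

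Finally, the concluding identity $\norm{A}^n = \frac{1}{n^{n/2}}\norm{A}_{HS}^n$ under any of the three conditions is just the last sentence of \Cref{lemma:LA} applied once \eqref{item:LA-1} is known to hold. I expect essentially no obstacle here, since all the analytic content --- the Hadamard and arithmetic--geometric mean estimates and the polar decomposition argument --- has already been absorbed into \Cref{lemma:LA}. The only point deserving a moment's care is that conditions \eqref{item:LA-2} and \eqref{item:LA-3} equate $\star A^*\omega$ itself, rather than its absolute value, to a nonnegative quantity, so the sign information $\star A^*\omega \ge 0$ is automatically built in and one never needs to invoke the orientation-reversing case $A = A'C$ of the lemma.
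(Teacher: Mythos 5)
Your proof is correct and is essentially the paper's own argument: the paper states the corollary as an immediate consequence of \Cref{lemma:LA}, and your cycle $(1)\Leftrightarrow(3)$ together with $(1)\Rightarrow(2)\Rightarrow(3)$, using the inequality chain $\star A^*\omega \le |\star A^*\omega| \le n^{-n/2}\norm{A}_{HS}^n \le \norm{A}^n$ and the lemma's equality characterizations, is exactly the bookkeeping the paper leaves to the reader. Your closing remark is also on point: since $(2)$ and $(3)$ equate the signed quantity $\star A^*\omega$ with a nonnegative one, the orientation-reversing case $A = A'C$ of the lemma can never arise.
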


\subsection{Calibrations}

In what follows, we use the notation $\omega|_{V}$ when refering to the pullback $\iota^{*}\omega$ of $\omega$ along the inclusion map $\iota \colon V \rightarrow \R^m$. As an elementary fact on calibrations, we recall that the Hodge duality preserves comass norm and hence calibrations. 
\begin{lemma}
\label{lemma:calibration-duality}
Let $\omega \in \Lambda^n \R^m$ be a calibration. Then $\star \omega \in \Lambda^{m-n} \R^m$ is a calibration. Moreover, if $\xi$ and $\zeta$ are equivalent calibrations in $\Lambda^n \R^m$, then $\star \xi$ and $\star \zeta$ are equivalent calibrations in $\Lambda^{m-n} \R^m$. The corresponding statament is true for face equivalence.
\end{lemma}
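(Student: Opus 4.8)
The plan is to reduce all three assertions to a single geometric fact: under the Hodge star, orthogonal complementation of planes intertwines $\omega$ with $\star\omega$. First I would record that $\star\colon \Lambda^n\R^m \to \Lambda^{m-n}\R^m$ is an orthogonal transformation for the standard inner product $\langle\cdot,\cdot\rangle$ on the exterior algebra and that it carries the set of unit simple $n$-vectors bijectively onto the set of unit simple $(m-n)$-vectors: if $(e_1,\dots,e_m)$ is a positively oriented orthonormal basis of $\R^m$ and $\xi_V = e_1\w\cdots\w e_n$ represents the oriented $n$-plane $V$, then $\star\xi_V = e_{n+1}\w\cdots\w e_m =: \xi_{V^\perp}$ represents $V^\perp$ with the induced orientation. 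Since $\norm{\omega}_\comass = \sup\langle\omega,\xi\rangle$ over unit simple $n$-vectors $\xi$ and $\langle\star\omega,\star\xi\rangle = \langle\omega,\xi\rangle$, taking the supremum over $\eta=\star\xi$ gives $\norm{\star\omega}_\comass = \norm{\omega}_\comass = 1$, so $\star\omega$ is a calibration. The same identity shows that an oriented $n$-plane $V$ satisfies $\langle\omega,\xi_V\rangle=1$ if and only if $\langle\star\omega,\xi_{V^\perp}\rangle=1$, so $V\mapsto V^\perp$ restricts to a bijection $\Gr(\omega)\to\Gr(\star\omega)$ of (unoriented) Grassmannians, and also to a bijection of the oriented calibrated planes.

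For the equivalence statement, I would first note that $\CO(\xi)=\CO(\zeta)$ is equivalent to $\SO(\xi)=\SO(\zeta)$, because $\SO(\omega)$ is exactly the set of linear isometries in $\CO(\omega)$. Next I would characterize $\SO(\omega)$ intrinsically: for a linear isometry $O\colon\R^n\to\R^m$ one has $O^*\omega = \langle\omega,\xi_{O\R^n}\rangle\,\vol_{\R^n}$, so $O\in\SO(\omega)$ if and only if the oriented image plane $O\R^n$ is $\omega$-calibrated. Thus $\SO(\omega)$ is determined by, and determines, the collection of oriented $\omega$-calibrated $n$-planes. Combining this with the oriented bijection $V\mapsto V^\perp$ from the first step, $\SO(\xi)=\SO(\zeta)$ forces $\xi$ and $\zeta$ to have the same oriented calibrated $n$-planes, hence $\star\xi$ and $\star\zeta$ to have the same oriented calibrated $(m-n)$-planes, and therefore $\SO(\star\xi)=\SO(\star\zeta)$ and $\CO(\star\xi)=\CO(\star\zeta)$.

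For face equivalence, the extra ingredient is the naturality of the Hodge star under the ambient group: for $O\in\SO(m)$ one has $O^*\star = \star\,O^*$, since $O$ preserves both the metric and the orientation of $\R^m$. Given $O\in\SO(m)$ with $\Gr(\xi)=\Gr(O^*\zeta)$, I would apply the unoriented bijection $V\mapsto V^\perp$, noting that $O^*\zeta$ is again a calibration because pullback by an isometry preserves comass, to obtain $\Gr(\star\xi) = \{\, V^\perp : V\in\Gr(O^*\zeta)\,\} = \Gr(\star(O^*\zeta)) = \Gr(O^*(\star\zeta))$. Hence the same $O$ witnesses the face equivalence of $\star\xi$ and $\star\zeta$.

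The only genuinely delicate point is the orientation bookkeeping in the first step: one must fix the convention that orients $V^\perp$ by $\star\xi_V$ so that $\langle\star\omega,\xi_{V^\perp}\rangle=\langle\omega,\xi_V\rangle$ holds on the nose rather than up to the sign $(-1)^{n(m-n)}$, and likewise verify that both candidate orientations of a given plane cannot simultaneously be calibrated. Once this convention is pinned down, the surjectivity of $\star$ onto unit simple $(m-n)$-vectors and the equivalence and face-equivalence claims follow formally; I do not expect any further obstacle.
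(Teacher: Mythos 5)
Your proof is correct and takes essentially the same route as the paper: the paper's entire proof consists of the single observation that $\norm{\omega|_V}_\comass = \norm{\star\omega|_{V^\perp}}_\comass$ for an $n$-dimensional subspace $V \subset \R^m$, which is precisely your intertwining of the Hodge star with orthogonal complementation. You merely make explicit what the paper leaves implicit — the oriented refinement $\langle\star\omega,\star\xi_V\rangle = \langle\omega,\xi_V\rangle$ needed to pass from $\Gr$ to $\SO$ and $\CO$ for the equivalence claim, and the naturality $O^{*}\star = \star\, O^{*}$ for $O \in \SO(m)$ used for face equivalence — and your orientation bookkeeping is handled correctly.
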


\begin{proof}
For both claims, it suffices to observe the equality $\norm{\omega|_V}_\comass = \norm{\star \omega|_{V^\bot}}_\comass$ for an $n$-dimensional subspace $V\subset \R^m$ and its orthogonal complement $V^\bot$.  
\end{proof}

One way to generate calibrations on $\mathbb{R}^m$ is by pulling back calibrations on subspaces. The following lemma shows how their calibrated submanifolds are related. We recall that an $n$-dimensional submanifold $N \subset \R^m$ is \emph{$\omega$-calibrated}, for a calibration $\omega \in \Lambda^n \R^m$, $1 \leq n \leq m$, if $T_x N \in \Gr(\omega)$ for every $x \in N$.

\begin{lemma}\label{lemm:affinetranslation}
Suppose that $\pi \colon \R^m \to V \subset \R^m$ is an orthogonal projection to an $\ell$-dimensional subspace, $\iota \colon V \to \mathbb{R}^{\ell}$ is a linear isometry, and $\tau \in \Lambda^n \mathbb{R}^{\ell}$ is a calibration for $m \geq \ell \geq n \geq 2$. If $\omega = ( \iota \circ \pi )^{\star}\tau$ and $N \subset \R^m$ is a connected $\omega$-calibrated submanifold, then a translation of $N$ is contained in $V$. More precisely, there exists a $\tau$-calibrated submanifold $Q \subset \R^{\ell}$ and $y_0 \in \R^m$ such that $T(x) = y_0 + \iota^{-1}(x)$, $x \in \mathbb{R}^{\ell}$, satisfies $T(Q) = N$.
\end{lemma}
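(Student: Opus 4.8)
The plan is to reduce the statement to a single pointwise fact about the Grassmannian, namely that every $\omega$-calibrated plane is forced to lie inside $V$. Concretely, I would first isolate the key claim: for every $W \in \Gr(\omega)$ one has $W \subset V$ and $\iota(W) \in \Gr(\tau)$. Granting this, the calibration hypothesis $T_x N \in \Gr(\omega)$ for all $x \in N$ immediately yields $T_x N \subset V$ at every point, and the conclusion will follow by an elementary connectedness argument together with a transport of the calibrating frames through $\iota$.

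For the key claim I would argue via the equality case of the comass inequality, in the spirit of \Cref{lemma:LA}. Write $p = \iota \circ \pi \colon \R^m \to \R^{\ell}$; since $\pi$ is an orthogonal projection and $\iota$ an isometry, $p$ is norm-nonincreasing, with $|p(v)| = |v|$ precisely when $v \in V$. Fix $W \in \Gr(\omega)$ and pick an orthonormal basis $e_1,\ldots,e_n$ of $W$; because $\Lambda^n W$ is one-dimensional and $\|\omega|_W\|_\comass = 1$, we may orient the basis so that $\omega(e_1,\ldots,e_n) = 1$. Then
\[
1 = \omega(e_1,\ldots,e_n) = \tau(p e_1,\ldots,p e_n) \le |p e_1|\cdots|p e_n| \le 1,
\]
where the first inequality is the defining comass bound for $\tau$ and the second uses $|p e_i| \le |e_i| = 1$. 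Equality throughout forces $|p e_i| = 1$, hence $e_i \in V$ for each $i$, so $W \subset V$; moreover $(p e_1,\ldots,p e_n) = (\iota e_1,\ldots,\iota e_n)$ is then an orthonormal frame with $\tau(\iota e_1,\ldots,\iota e_n) = 1$, giving $\iota(W) \in \Gr(\tau)$. I expect this equality analysis to be the main (if not deep) obstacle, as it is the only place where the geometry of the projection genuinely enters.

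With the claim established, set $q = \id - \pi \colon \R^m \to V^{\perp}$. For each $x \in N$ the differential of $q|_N$ is the restriction of $q$ to $T_x N \subset V = \ker q$, hence vanishes; thus $q|_N$ is locally constant and, since $N$ is connected, equal to a constant $y_0 \in V^{\perp}$. Therefore $N \subset y_0 + V$. Finally I would set $Q = \iota(N - y_0) \subset \R^{\ell}$, which is a submanifold because $N - y_0 \subset V$ and $\iota|_V$ is an isometric isomorphism onto $\R^{\ell}$. For $x \in N$ the tangent space $T_x N \subset V$ is carried by $\iota$ to $T_{\iota(x - y_0)} Q = \iota(T_x N)$, and the second half of the key claim gives $\iota(T_x N) \in \Gr(\tau)$, so $Q$ is $\tau$-calibrated. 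The map $T(x) = y_0 + \iota^{-1}(x)$ then satisfies $T(Q) = y_0 + (N - y_0) = N$, which completes the proof.
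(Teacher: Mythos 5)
Your proof is correct and takes essentially the same route as the paper: the paper likewise observes that the projection onto $V^{\perp}$ has vanishing differential along $N$ (phrased there via composition with the exponential map), concludes by connectedness that it is constant, and obtains $N$ as a translate of a $\tau$-calibrated set in $V$. The only difference is that you spell out, via the equality case of the comass--Hadamard estimate, the key pointwise fact that every plane in $\Gr(\omega)$ is contained in $V$ with $\iota$-image in $\Gr(\tau)$ --- a fact the paper uses without proof in the phrase ``since the tangent spaces of $N$ are contained in $V$'' --- so your write-up fills in that implicit step rather than deviating from the argument.
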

\begin{proof}
Let $p \in N$ and let $\pi^{\perp} \colon \mathbb{R}^m \to V^{\perp}$ denote the projection to the orthogonal complement of $V$. If $\phi$ is the composition of $\pi^{\perp}$ and the exponential map of $N$ at $p$, then the differential of $\phi$ is identically zero since the tangent spaces of $N$ are contained in $V$. So $\pi^{\perp}$ is locally constant on $N$ and, by connectivity of $N$, constant on $N$. Thus if $q$ is the unique point in $\pi^{\perp}( N )$, then the translation $N - q$ is contained in $V$ by construction. The existence of $T$ follows by unwinding the definitions.
\end{proof}
We denote $\omega \in \Lambda^n V \subset \Lambda^n \R^m$ for a subspace $V \subset \R^m$ if $\omega = \pi^{*}\tau$ for some $\tau \in \Lambda^n V$ and the orthogonal projection $\pi \colon \R^m \to V$.
\begin{corollary}\label{cor:faceequivalence}
If $V \subset \R^m$ is a subspace of dimension $\ell$ and $\omega \in \Lambda^n V \subset \Lambda^n \R^m$ is a calibration, then $\omega$ is face equivalent to $\omega' \in \Lambda^n\left( \left\{0\right\} \times \mathbb{R}^{\ell} \right)$ and an $\omega'$-calibrated submanifold is contained in $\left\{ c \right\} \times \mathbb{R}^{\ell}$ for some $c \in \R^{m-\ell}$.
\end{corollary}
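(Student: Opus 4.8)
The plan is to reduce the whole statement to \Cref{lemm:affinetranslation} after moving $V$ onto the coordinate subspace by a rotation. First I would choose $O \in \SO(m)$ with $O(V) = W$, where $W := \{0\} \times \R^{\ell}$. Such a map exists: any two $\ell$-dimensional subspaces of $\R^m$ are related by an element of $O(m)$, and since $\ell \ge 1$ we may postcompose with a reflection of $W$ that fixes $W^{\perp}$ pointwise to arrange $\det O = 1$ without disturbing $O(V) = W$. Because $O$ is orthogonal, it simultaneously maps $V^{\perp}$ onto $W^{\perp} = \R^{m-\ell}\times\{0\}$.

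Next I would transport $\omega$ by this rotation and check that the result genuinely lands in $\Lambda^n W$. Writing $\omega = \pi_V^{*}\tau$ for the orthogonal projection $\pi_V \colon \R^m \to V$ and a calibration $\tau \in \Lambda^n V$, I set $\omega' := (O^{-1})^{*}\omega$. Since $O$ respects both orthogonal splittings $\R^m = V \oplus V^{\perp}$ and $\R^m = W \oplus W^{\perp}$, the projections intertwine as $\pi_V \circ O^{-1} = O^{-1} \circ \pi_W$, where $\pi_W \colon \R^m \to W$ is the orthogonal projection. Hence
\[
\omega' = (O^{-1})^{*}\pi_V^{*}\tau = (\pi_V \circ O^{-1})^{*}\tau = \pi_W^{*}\bigl( (O^{-1}|_W)^{*}\tau \bigr),
\]
and $\tau' := (O^{-1}|_W)^{*}\tau$ is a calibration in $\Lambda^n W$ because $O^{-1}|_W \colon W \to V$ is a linear isometry and isometric pullback preserves comass. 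Thus $\omega' \in \Lambda^n(\{0\}\times\R^{\ell})$ is a calibration. By construction $O^{*}\omega' = \omega$, so $\Gr(\omega) = \Gr(O^{*}\omega')$ and therefore $\omega$ is face equivalent to $\omega'$ with witness $O \in \SO(m)$.

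For the second assertion I would feed $\omega'$ into \Cref{lemm:affinetranslation}. Writing $\tau' = \iota^{*}\tau''$ for the canonical isometry $\iota \colon W \to \R^{\ell}$ and $\tau'' := (\iota^{-1})^{*}\tau' \in \Lambda^n \R^{\ell}$, one has $\omega' = (\iota \circ \pi_W)^{*}\tau''$, which is exactly the hypothesis of the lemma with $W$ in the role of the subspace (and $\ell \ge n$ inherited from $\omega$ being a calibration). The lemma then produces $y_0 \in \R^m$ and a $\tau''$-calibrated $Q \subset \R^{\ell}$ so that any connected $\omega'$-calibrated submanifold $N$ equals $y_0 + \iota^{-1}(Q)$. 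Splitting $y_0 = (c,d)$ with $c \in \R^{m-\ell}$ and $d \in \R^{\ell}$, and using $\iota^{-1}(Q) \subset \{0\}\times\R^{\ell}$, gives $N \subset \{c\}\times\R^{\ell}$, as claimed.

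The argument is essentially bookkeeping once \Cref{lemm:affinetranslation} is available; the only genuinely careful points are arranging the rotation to be orientation preserving (so that the relation witnesses \emph{face} equivalence, which by definition requires $O \in \SO(m)$) and verifying the intertwining $\pi_V \circ O^{-1} = O^{-1}\circ\pi_W$, which is precisely what guarantees that $\omega'$ lives in $\Lambda^n(\{0\}\times\R^{\ell})$ rather than merely being face equivalent to some form supported there.
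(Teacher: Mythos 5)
Your proof is correct and follows exactly the route the paper intends: the corollary is stated as an immediate consequence of \Cref{lemm:affinetranslation}, obtained by rotating $V$ onto $\{0\}\times\R^{\ell}$ via some $O \in \SO(m)$ (so that $\omega = O^{*}\omega'$, which trivially witnesses face equivalence) and then applying the lemma to $\omega' = (\iota\circ\pi_W)^{*}\tau''$. Your careful verification of the projection intertwining $\pi_V \circ O^{-1} = O^{-1}\circ\pi_W$ and of the orientation fix via a reflection inside $W$ is just the bookkeeping the paper leaves implicit.
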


Another way to generate further calibrations is as follows; see \cite[Lemma 4.2]{Hei:Pan:Pry:23} for a special case.
\begin{lemma}\label{lemm:decompositionofcalibrations}
Let $\mathbb{R}^{m} = \mathbb{R}^{ p_1 } \times \dots \times \mathbb{R}^{ p_l }$ for $p_1, \dots, p_l \geq 3$ and $l \geq 2$ and consider the coordinate projection $\pi_j \colon \R^m \to \R^{p_j}$ for $1 \leq j \leq l$. If $n \geq 3$ and $\omega_{j} \in \Lambda^n \R^{ p_j }$, then
\begin{equation*}
    \omega = \sum_{ j = 1 }^{ l } \pi_{ j }^{*}\omega_{j}
\end{equation*}
satisfies $\|\omega\|_{\comass} = \max_{ 1 \leq j \leq l } \| \omega_j \|_{\comass}$. In fact, when $\omega$ is a calibration, then
\begin{equation*}
    \Gr( \omega ) = \bigcup_{ j \in I } \Gr( \pi_j^* \omega_j),
\end{equation*}
where $I = \left\{ 1 \leq j \leq l \mid \| \omega_j \|_{\comass} = 1 \right\}$. In particular, if $A \in \CO( \omega )$, then there exists $j_0 \in I$ such that $\pi_{ j_0 } \circ A \in \CO( \omega_{ j_0 } )$ and $\pi_j \circ A = 0$ for $j \in \left\{1,\dots,l\right\} \setminus \left\{j_0\right\}$.
\end{lemma}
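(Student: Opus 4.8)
The plan is to reduce everything to a single sharp comass inequality together with a careful equality analysis, the latter being where the hypothesis $n \geq 3$ enters decisively.

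First I would prove the comass identity. Writing $v_i^{(j)} = \pi_j v_i$ for unit vectors $v_1, \dots, v_n \in \R^m$, multilinearity gives $\omega(v_1, \dots, v_n) = \sum_{j=1}^l \omega_j(v_1^{(j)}, \dots, v_n^{(j)})$, and the homogeneous form of the comass bound yields $|\omega_j(v_1^{(j)}, \dots, v_n^{(j)})| \leq \|\omega_j\|_{\comass}\prod_{i=1}^n |v_i^{(j)}|$. Setting $a_{ij} = |v_i^{(j)}| \geq 0$, the constraint $|v_i| = 1$ reads $\sum_j a_{ij}^2 = 1$, so each $a_{ij} \in [0,1]$ and hence $\sum_j a_{ij}^n \leq \sum_j a_{ij}^2 = 1$. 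H\"older's inequality with all exponents equal to $n$ then gives
\[
\sum_{j=1}^l \prod_{i=1}^n a_{ij} \leq \prod_{i=1}^n \Big(\sum_{j=1}^l a_{ij}^n\Big)^{1/n} \leq 1.
\]
Combining these estimates produces $\omega(v_1,\dots,v_n) \leq \max_j \|\omega_j\|_{\comass}$, and testing on vectors supported in the factor realizing the maximum gives the reverse inequality, so $\|\omega\|_{\comass} = \max_j \|\omega_j\|_{\comass}$.

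For the Grassmannian statement I would carry out the equality analysis, which I expect to be the main point. Suppose $\omega$ is a calibration and $V \in \Gr(\omega)$; choose an oriented orthonormal basis $v_1, \dots, v_n$ of $V$ with $\omega(v_1,\dots,v_n) = 1$. Equality must propagate through the entire chain above. The power inequalities $a_{ij}^n \leq a_{ij}^2$ become equalities, and since $n \geq 3$ this forces $a_{ij} \in \{0,1\}$ for every $i,j$; together with $\sum_j a_{ij}^2 = 1$ each $v_i$ then concentrates in a single factor, and the terminal equality $\sum_j \prod_i a_{ij} = 1$ forces exactly one factor $\R^{p_{j_0}}$ to contain \emph{all} the $v_i$. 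Equality at the comass-bound step (where $\prod_i a_{i,j_0} = 1 \neq 0$) then forces $\|\omega_{j_0}\|_{\comass} = 1$, i.e. $j_0 \in I$, and $\omega_{j_0}(v_1,\dots,v_n) = 1$, so $V \subset \R^{p_{j_0}}$ lies in $\Gr(\pi_{j_0}^*\omega_{j_0})$. The reverse inclusion is direct: if $V \in \Gr(\pi_j^*\omega_j)$ for $j \in I$, the same sharpness forces $|\pi_j v_i| = 1$ and hence $V \subset \R^{p_j}$, whence $\omega|_V = \omega_j|_V$ and $\|\omega|_V\|_{\comass} = 1$.

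Finally, for $A \in \CO(\omega)$ nonzero I would apply the decomposition to $V = \operatorname{Im} A \in \Gr(\omega)$ to obtain $j_0 \in I$ with $V \subset \R^{p_{j_0}}$, which immediately gives $\pi_j \circ A = 0$ for $j \neq j_0$. Writing $A = \lambda O$ with $O \in \SO(\omega)$ and using that $\pi_{j_0} \circ O = O$ on $\operatorname{Im} O = V$ together with $(\pi_{j_0} \circ O)^*\omega_{j_0} = O^*\omega = \vol_{\R^n}$, one concludes $\pi_{j_0}\circ O \in \SO(\omega_{j_0})$ and hence $\pi_{j_0}\circ A \in \CO(\omega_{j_0})$; the case $A = 0$ is trivial. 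The one delicate point throughout is the equality analysis: for $n = 2$ the estimate $\sum_j \prod_i a_{ij} \leq 1$ is merely Cauchy--Schwarz and admits non-concentrated equality cases, so the factorization of $\Gr(\omega)$ genuinely relies on $n \geq 3$.
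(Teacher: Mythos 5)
Your proof is correct, and your equality analysis isolates exactly the right mechanism: for $a \in [0,1]$ and $n \geq 3$, $a^n = a^2$ forces $a \in \{0,1\}$, which is the same $\ell^2$-versus-$\ell^n$ concentration phenomenon underlying the paper's argument. The implementation, however, is genuinely different. The paper works at the level of linear maps: it takes $A \in \SO(\omega)$, applies its Hadamard-type Lemma \ref{lemma:LA} (namely $|\star A^{*}\omega| \le n^{-n/2}\|A\|_{HS}^n$, with equality characterizing $\CO(\omega)$ up to orientation) to each block $A_j = \pi_j \circ A$, and then compares $\bigl(\sum_j \|A_j\|_{HS}^2\bigr)^{n/2}$ with $\sum_j \|\omega_j\|_{\comass}\|A_j\|_{HS}^n$, where the intersection of the unit spheres of the $\ell^2$- and $\ell^n$-norms forces a single surviving index $j_0$. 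You instead work at the level of individual unit vectors, keeping the per-vector quantities $a_{ij} = |\pi_j v_i|$ and recombining them with the generalized H\"older inequality $\sum_j \prod_i a_{ij} \leq \prod_i \bigl(\sum_j a_{ij}^n\bigr)^{1/n}$; this is more elementary (no polar decomposition, no Hilbert--Schmidt lemma) and yields the comass identity directly for arbitrary test vectors, without invoking attainment of the comass at an $\SO(\omega)$ frame. Your deduction order is also slightly more explicit than the paper's: you extract the Grassmannian splitting first from the equality case (each maximizing $v_i$ concentrates in one factor, and the terminal equality $\sum_j \prod_i a_{ij} = 1$ forces a common factor), and then obtain the $\CO(\omega)$ statement cleanly from $\mathrm{Im}\,A \in \Gr(\omega)$, whereas the paper asserts that the $\Gr(\omega)$ and $\CO(\omega)$ claims follow ``similarly'' from the $\SO(\omega)$ computation. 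What the paper's route buys is uniformity of tools, since Lemma \ref{lemma:LA} is its workhorse elsewhere (for instance in the $n$-harmonicity argument); what yours buys is self-containedness. Your closing observation that for $n = 2$ the estimate degenerates to Cauchy--Schwarz with non-concentrated equality cases correctly identifies why $n \geq 3$ is indispensable --- the standard symplectic form, whose calibrated planes need not lie in a single factor, being the counterexample.
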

\begin{proof}
We first observe that $\|\omega\|_{\comass} \geq \max_{ 1 \leq j \leq l } \| \omega_j \|_{\comass}$ as is immediate from the definitions. Indeed, if $A_j \colon \R^n \to \R^{ p_j }$ is an orthogonal linear map such that $\|\omega_j\|_{\comass} = \star A_j^{*}\omega_j$ for some $1 \leq j \leq n$, then the map $A \colon \R^n \to \R^m$ for which $\pi_j \circ A = A_j$ and $\pi_{ k } \circ A = 0$ for $k \neq j$ is orthogonal and satisfies
\begin{equation*}
    \|\omega\|_{\comass}
    \geq
    \star A^{*}\omega
    =
    \star A^{*}_j\omega_j
    =
    \|\omega_j\|_{\comass}.
\end{equation*}
In case $\| \omega \|_{\comass} = 0$, the equality $\|\omega\|_{\comass} = \max_{ 1 \leq j \leq l } \| \omega_j \|_{\comass}$ is clear, so we may assume that $\| \omega \|_{\comass} > 0$ and, in fact, that $\| \omega \|_{\comass} = 1$ after normalization. After the normalization, we claim that the equality $1 = \max_{ 1 \leq j \leq l } \|\omega_j\|_{\comass}$ holds.

We consider $A \in \SO( \omega )$. Now, by \Cref{lemma:LA}, we have that
\begin{align*}
    n^{-n/2} \|A\|_{HS}^{n}
    &=
    \star A^{*}\omega
    =
    \sum_{ j = 1 }^{ l } \star A_{j}^{*}\omega_j
    \leq
    n^{-n/2} \sum_{ j = 1 }^{ l } \| \omega_j\|_{\comass} \|A_j\|_{HS}^n,
\end{align*}
where $A_j = \pi_j \circ A$ for $1 \leq j \leq l$. Hence, on the one hand, we obtain that
\begin{align*}
    \left( \sum_{ j = 1 }^{ l } \|A_j\|_{HS}^{2} \right)^{\frac{n}{2}}
    =
    \|A\|_{HS}^{n}
    \leq
    \left( \sum_{ j = 1 }^{ l } \| \omega_j\|_{\comass}\|A_j\|_{HS}^n \right).
\end{align*}
On the other hand, the elementary inequality $( \sum_{ j = 1 }^{ l } a^n_j )^{1/n} \leq ( \sum_{ j = 1 }^{ l } a^{2}_j )^{1/2}$ for $n \geq 3$ and $a_1, \dots, a_l \geq 0$, together with $\max_{ 1 \leq j \leq l }\| \omega_j \|_{\comass} \leq 1$, imply the equalities
\begin{equation*}
    \left( \sum_{ j = 1 }^{ l } \|A_j\|_{HS}^{2} \right)^{\frac{n}{2}}
    =
    \left( \sum_{ j = 1 }^{ l } \| \omega_j\|_{\comass}\|A_j\|_{HS}^n \right)
    =
    \left( \sum_{ j = 1 }^{ l } \|A_j\|_{HS}^n \right).
\end{equation*}
Considering the intersection of the boundaries of the unit balls of $\ell^2$- and $\ell^{n}$-norms gives that there exists a unique index $1 \leq j_0 \leq l$ such that $\| A \|_{HS}^{n} = \| A_{ j_0 } \|_{HS}^n$ and $\| \omega_{j_0} \|_{\comass} = 1$ (and so $A_j = 0$ for every $j \neq j_0$). 

Now that we have $\|\omega\|_{\comass} = \max_{ 1 \leq j \leq l } \| \omega_j \|_{\comass}$, the computations above imply that if $\omega$ is a calibration, then
\begin{equation*}
    \Gr( \omega ) = \bigcup_{ j \in I } \Gr( \pi_j^* \omega_j ),
\end{equation*}
where $I = \left\{ 1 \leq j \leq l \mid \| \omega_j \|_{\comass} = 1 \right\}$. The claimed property for $\CO( \omega )$ follows similarly to the $\SO(\omega)$ case above.
\end{proof}

Recall that it is a famous open problem, due to Federer \cite[1.8.4]{Fed:69}, whether the exterior product $\pi_1^*\xi \wedge \pi_2^*\zeta$ of calibrations $\xi\in \Lambda^n \R^m$ and $\zeta\in \Lambda^p \R^r$ is a calibration in $\mathbb{R}^m \times \mathbb{R}^r$. By a result of Federer (see \cite[1.8.4]{Fed:69}), this product property holds if either of the calibrations $\xi$ or $\zeta$ is simple. In particular, $\pi_1^*\omega \wedge \pi_2^*\vol_{\R^r}$ is a calibration if $\omega$ is a calibration. Moreover, by a result of Morgan \cite[Theorem 5.1]{Mor:85}, if $\min\{n,p\} \le 2$, $n\ge m-2$, $p\ge r-2$, $n=p=3$ or, $m-n = r-p = 3$, the product property holds. As far as we are aware, the general case is open.

We give a proof for \cite[1.8.4]{Fed:69} in the simple case for the reader's convenience. The claim follows by induction from the following elementary observation.
\begin{lemma}
\label{lemma:Grassmannian}
Let $\omega \in \Lambda^n\R^m$ be a calibration. Let also $\pi_1 \colon \R^m \times \R\to \R^m$ and $\pi_2 \colon \R^m \times \R \to \R$ be coordinate projections. Then
\[
\Gr(\pi_1^*\omega \wedge \pi_2^*dt) = \{ V \times \R  \colon V \in \Gr(\omega)\}.
\]
\end{lemma}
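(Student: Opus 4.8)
The plan is to reduce the set equality to a pointwise computation of the $(n+1)$-form $\Omega := \pi_1^{*}\omega \wedge \pi_2^{*}dt$ on orthonormal frames, and then read off both the comass of $\Omega$ and its calibrated $(n+1)$-planes from the equality case. First I would observe that an $(n+1)$-dimensional subspace $W \subset \R^m \times \R$ carries only a one-dimensional space of alternating $(n+1)$-forms, so $\Omega|_W = c\,\vol_W$ and $\|\Omega|_W\|_{\comass} = |c| = |\Omega(u_1,\ldots,u_{n+1})|$ for every orthonormal basis $u_1,\ldots,u_{n+1}$ of $W$. Thus the lemma is equivalent to the two assertions that $|\Omega(u_1,\ldots,u_{n+1})| \le 1$ for every orthonormal frame, and that equality holds precisely when $W = V \times \R$ with $V \in \Gr(\omega)$.

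The computational heart is an algebraic identity. Let $e$ denote the unit vector spanning the $\R$-factor, so that $\pi_2^{*}dt = e^{\flat}$ is its dual covector and $\pi_1^{*}\omega$ annihilates every list containing a factor of $e$. Writing $u_i = w_i + t_i e$ with $w_i = \pi_1(u_i) \in \R^m$ and $t_i = \langle u_i, e \rangle$, I would expand the decomposable $(n+1)$-vector $\xi := u_1 \wedge \cdots \wedge u_{n+1}$, using $e \wedge e = 0$, as
\[
\xi = \eta + e \wedge a, \qquad \eta := w_1 \wedge \cdots \wedge w_{n+1} \in \Lambda^{n+1}\R^m, \quad a := \iota_{e^{\flat}}\xi \in \Lambda^n \R^m,
\]
where $\iota_{e^{\flat}}$ denotes contraction (interior product) by $e^{\flat}$. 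Since $\pi_1^{*}\omega$ kills any factor of $e$ while $e^{\flat}$ only pairs with $e$, a direct evaluation gives $\Omega(\xi) = \pm\,\omega(a)$.

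Two elementary facts about $a$ then close the comass bound. First, $a = \iota_{e^{\flat}}\xi$ is the contraction of a decomposable multivector by a covector and is therefore itself simple; consequently its mass norm equals its Euclidean norm, so $|\omega(a)| \le \|\omega\|_{\comass}\,|a| = |a|$. Second, the splitting $\xi = \eta + e \wedge a$ is orthogonal in $\Lambda^{n+1}(\R^m \times \R)$, whence $1 = |\xi|^2 = |\eta|^2 + |a|^2$ and in particular $|a| \le 1$. Combining the two, $|\Omega(\xi)| = |\omega(a)| \le |a| \le 1$, which yields $\|\Omega\|_{\comass} \le 1$.

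It remains to track the equality case, which is where I expect the only genuine care is needed. Equality $|\Omega(\xi)| = 1$ forces both $|a| = 1$ and $|\omega(a)| = |a|$. The relation $1 = |\eta|^2 + |a|^2$ shows $|a| = 1 \iff \eta = 0 \iff \dim \pi_1(W) \le n \iff e \in W$; and when $e \in W$ the simple unit $n$-vector $a = \iota_{e^{\flat}}\xi$ represents precisely the $n$-plane $V := W \cap \R^m = \pi_1(W)$, so that $W = V \times \R$. The surviving condition $|\omega(a)| = 1$ then says exactly $V \in \Gr(\omega)$. Conversely, for any $V \in \Gr(\omega)$ the plane $W = V \times \R$ attains $|\Omega(\xi)| = 1$ on the frame $u_{n+1} = e$ together with an orthonormal basis $u_1,\ldots,u_n$ of $V$. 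This simultaneously shows $\|\Omega\|_{\comass} = 1$ (as $\Gr(\omega) \ne \emptyset$, $\omega$ being a calibration) and identifies $\Gr(\Omega) = \{\, V \times \R \colon V \in \Gr(\omega) \,\}$. The main obstacle is the structural input that $a = \iota_{e^{\flat}}\xi$ is simple: this is exactly what both licenses the bound $|\omega(a)| \le |a|$ and lets the equality case be interpreted as an honest calibrated $n$-plane $W \cap \R^m$.
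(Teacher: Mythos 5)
Your proof is correct, and it takes a genuinely different route from the paper's. The paper argues directly on an orthonormal basis $(w_1,\ldots,w_{n+1})$ of a plane $\widetilde W\in\Gr(\widetilde\omega)$: after permuting so that $\pi_2(w_{n+1})\neq 0$, it uses the fact that all the $\pi_2$-images lie in a one-dimensional space, and multilinearity together with $\widetilde\omega(w_1,\ldots,w_{n+1})=1$ forces $\pi_2(w_j)=0$ for $1\le j\le n$, after which $\widetilde W = W\times\R$ drops out by evaluating $\omega$ on the projections. You instead split the unit simple $(n+1)$-vector $\xi$ orthogonally as $\xi=\eta+e\wedge a$ with $a=\iota_{e^\flat}\xi$, and run the chain $|\Omega(\xi)|=|\omega(a)|\le \|\omega\|_{\comass}\,|a|\le |a|\le 1$, reading the Grassmannian off the equality case. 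The steps you flag as the crux all check out: contractions of simple multivectors are simple, mass equals Euclidean norm on simple multivectors, the splitting is orthogonal so $1=|\eta|^2+|a|^2$, and $\eta=0$ holds iff $e\in W$ iff $W=V\times\R$, with $a$ then representing $V=W\cap\R^m$. Your argument also buys something the paper's lemma does not state: it establishes $\|\pi_1^*\omega\wedge\pi_2^*dt\|_{\comass}=1$ en route, i.e.\ exactly the simple-factor product property of \cite[1.8.4]{Fed:69} that the paper obtains by induction from this lemma, and it generalizes in one stroke (contracting with $r$ orthonormal covectors handles $\pi_2^*\vol_{\R^r}$ directly, with no induction). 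What the paper's version buys in exchange is brevity and elementarity, since it needs no mass--comass duality. One small point shared by both proofs: the paper's definition of $\Gr$ does not fix the dimension of the subspace, and both arguments tacitly take $(n+1)$-dimensional planes for the degree-$(n+1)$ form; it is worth saying so explicitly, though the statement survives either reading.
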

\begin{proof}
We denote $\widetilde \omega  =\pi_1^*\omega \wedge \pi_2^*dt$. Let also $(e_1,\ldots, e_m,e_{m+1})$ be  the standard basis of $\R^m \times \R$. 

Clearly, $V\times \R \in \Gr(\widetilde \omega)$ for $V\in \Gr(\omega)$. Thus it suffices to prove the converse. Let $\widetilde W\in \Gr(\widetilde \omega)$ and let $(w_1,\ldots, w_{n+1})$ be an orthonormal basis of $\widetilde W$. Then $\widetilde \omega(w_1,\ldots, w_{n+1}) =1$. By permuting the basis vectors, if necessary, we may assume that $\pi_2(w_{n+1})\ne 0$. Since $\pi_2(w_j) = \frac{|\pi_2(w_j)|}{|\pi_2(w_{n+1})|} \pi_2(w_{n+1})$ for each $j=1,\ldots, n$, we have, by multilinearity and $\widetilde \omega( w_1, \dots, w_{n}, w_{n+1} ) = 1$, that $\pi_2(w_j) = 0$ for $1\le j \le n$. Thus, we also have that $|\pi_2(w_{n+1})|=1$. We conclude that $w_j \in \R^m \times \{0\}$ for each $j=1,\ldots, n$, and that $w_{n+1}\in \{0\}\times \R$. Finally, by replacing $w_1$ by $-w_1$ and $w_{n+1}$ by $-w_{n+1}$ if necessary, we may assume that $w_{n+1} = e_{m+1}$.

Let now $W\subset \R^m$ be the subspace spanned by $\pi_1(w_1),.., \pi_1(w_n)$. Then 
\begin{align*}
\omega(\pi_1(w_1),\ldots, \pi_1(w_n)) &= \pi_1^*\omega(w_1,\ldots, w_n)\pi_2^*(dt)(e_{m+1}) 
\\
&= \widetilde \omega( w_1,\ldots, w_n, w_{n+1}) =1.
\end{align*}
Thus $W\in \Gr(\omega)$. We conclude that $\widetilde W = W\times \R$.
\end{proof}

As an application of \Cref{lemma:Grassmannian}, we prove the following technical lemma for later use. To set the stage, recall that every calibration is face equivalent to a calibration $\omega \in \Lambda^{3}\R^m$ so that $\omega( e_{m-n+1}, \dots, e_{m} ) = 1$ for the last $n$ standard coordinate vectors in the standard basis $(e_1, e_2, \dots, e_{m})$ of $\R^m$.
\begin{lemma}\label{lemm:splitting}
Let $\omega \in \Lambda^{n} \R^m$ be a calibration, for $3 \leq n \leq m$, having the property that 
\[
\omega( e_{m-n+1}, \dots, e_{m} ) = 1
\]
for the standard basis $(e_1, \dots, e_{m-n+1}, e_{m-n+2}, \dots, e_{m})$ of $\R^m$. Consider the standard coordinate projections $\pi_1 \colon \R^{m-2} \times \R^{2} \rightarrow \R^{m-2}$ and $\pi_2 \colon \R^{m-2} \times \R^{2} \rightarrow \R^{2}$. Then there exist a calibration $\alpha \in \Lambda^{n-2} \R^{m-2}$ and a form $\epsilon \in \Lambda^{n} \R^m$ satisfying $\|\epsilon\|_{\comass} \leq 2$ for which 
\begin{enumerate}
\item $\omega = \pi_{1}^{*}\alpha \wedge \pi_{2}^{*}\vol_{ \R^2 } + \epsilon$,
\item $\pi_{1}^{*}\alpha \wedge \pi_{2}^{*}\vol_{ \R^2 }( e_{m-n+1}, \dots, e_{m} ) = 1$,
\item $\epsilon( e_{m-n+1}, \dots, e_{m} ) = 0$, and 
\item for $0\le t <1$, the calibration $\omega_t = \pi_{1}^{*}\alpha \wedge \pi_{2}^{*}\vol_{ \R^2 } + t \epsilon$ is equivalent to the calibration $\pi_{1}^{*}\alpha \wedge \pi_{2}^{*}\vol_{ \R^2 }$.
\end{enumerate}
\end{lemma}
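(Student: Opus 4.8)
The plan is to build $\alpha$ and $\epsilon$ directly from the decomposition of $\omega$ along the splitting $\R^m=\R^{m-2}\times\R^2$, and then to extract properties (1)--(4) from this decomposition together with the fact that $\pi_2^*\vol_{\R^2}=dx_{m-1}\wedge dx_m$ is simple. Grouping the monomials of $\omega$ by how many of the two covectors $dx_{m-1},dx_m$ they contain, I would isolate the part containing both, factor it as $\pi_1^*\alpha\wedge dx_{m-1}\wedge dx_m$ with $\alpha\in\Lambda^{n-2}\R^{m-2}$ the resulting coefficient form, and then set $\epsilon=\omega-\pi_1^*\alpha\wedge\pi_2^*\vol_{\R^2}$, so that by construction $\epsilon$ contains \emph{no} monomial divisible by $dx_{m-1}\wedge dx_m$. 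Property (1) is then the definition of $\epsilon$. For (2) and (3), observe that the only monomial of $\omega$ not annihilated by $(e_{m-n+1},\dots,e_m)$ is $dx_{m-n+1}\wedge\cdots\wedge dx_m$, whose coefficient equals $\omega(e_{m-n+1},\dots,e_m)=1$; since this monomial lies in the $\pi_1^*\alpha\wedge dx_{m-1}\wedge dx_m$ part, it gives $\pi_1^*\alpha\wedge\pi_2^*\vol_{\R^2}(e_{m-n+1},\dots,e_m)=1$ and hence $\epsilon(e_{m-n+1},\dots,e_m)=0$.

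The first substantive step is to check that $\alpha$ is a calibration. The tool is the evaluation identity: for orthonormal $v_1,\dots,v_{n-2}\in\R^{m-2}$ one has $\epsilon(v_1,\dots,v_{n-2},e_{m-1},e_m)=0$, because every monomial of $\epsilon$ carries at most one of $dx_{m-1},dx_m$ and so cannot pair both $e_{m-1}$ and $e_m$ while matching its remaining entries against vectors of $\R^{m-2}$. Hence $\omega(v_1,\dots,v_{n-2},e_{m-1},e_m)=\alpha(v_1,\dots,v_{n-2})$, and as $(v_1,\dots,v_{n-2},e_{m-1},e_m)$ is an orthonormal $n$-frame, $\|\omega\|_{\comass}=1$ yields $|\alpha(v_1,\dots,v_{n-2})|\le 1$; with (2) this gives $\|\alpha\|_{\comass}=1$. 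The estimate $\|\epsilon\|_{\comass}\le 2$ then follows from the triangle inequality for the comass norm, using that $\pi_1^*\alpha\wedge\pi_2^*\vol_{\R^2}$ is a calibration by Federer's product property in the simple case (with simple factor $\vol_{\R^2}$) recalled above.

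The core of the lemma is property (4). Writing $\beta_0=\pi_1^*\alpha\wedge\pi_2^*\vol_{\R^2}$, the form $\omega_t=(1-t)\beta_0+t\omega$ is a convex combination of the calibrations $\beta_0$ and $\omega$, so $\|\omega_t\|_{\comass}\le 1$, and $\omega_t(e_{m-n+1},\dots,e_m)=1$ shows that each $\omega_t$ is again a calibration. The decisive claim is the inclusion $\SO(\beta_0)\subseteq\SO(\omega)$. To prove it I would first apply \Cref{lemma:Grassmannian} twice, once for $dx_{m-1}$ and once for $dx_m$, to identify $\Gr(\beta_0)=\{\,W\oplus\mathrm{span}(e_{m-1},e_m):W\in\Gr(\alpha)\,\}$. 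For $A\in\SO(\beta_0)$ the image $A(\R^n)$ is such a plane, and evaluating $\epsilon$ on the adapted orthonormal frame $(w_1,\dots,w_{n-2},e_{m-1},e_m)$ vanishes by the same observation as above; therefore $A^*\epsilon=0$ and $A^*\omega=A^*\beta_0=\vol_{\R^n}$, that is $A\in\SO(\omega)$.

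Finally I would close the equivalence by convexity. By the inclusion just proved, every $A\in\SO(\beta_0)$ satisfies $A^*\omega_t=(1-t)\vol_{\R^n}+t\vol_{\R^n}=\vol_{\R^n}$, so $\SO(\beta_0)\subseteq\SO(\omega_t)$ for all $t\in[0,1)$. Conversely, if $A\in\SO(\omega_t)$ with $0<t<1$, then $1=\star A^*\omega_t=(1-t)\star A^*\beta_0+t\star A^*\omega$ with both summands at most $1$, which forces $\star A^*\beta_0=1$ and hence $A\in\SO(\beta_0)$; the case $t=0$ is immediate. Thus $\SO(\omega_t)=\SO(\beta_0)$, whence $\CO(\omega_t)=\CO(\beta_0)$ for $0\le t<1$, which is (4). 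I expect the main obstacle to be the inclusion $\SO(\beta_0)\subseteq\SO(\omega)$: it is precisely where the defining property of $\epsilon$ (no $dx_{m-1}\wedge dx_m$-component) has to be matched against the exact list of $\beta_0$-calibrated planes, and it is also where the strict bound $t<1$ is essential, since $\epsilon$ may calibrate extra planes and make $\SO(\omega)$ strictly larger than $\SO(\beta_0)$.
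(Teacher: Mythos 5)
Your proposal is correct, and its first half is essentially the paper's argument: isolating the part of $\omega$ divisible by $dx_{m-1}\wedge dx_m$, setting $\epsilon$ to be the remainder, and checking that $\alpha$ is a calibration by evaluating $\omega$ on orthonormal frames of the form $(v_1,\dots,v_{n-2},e_{m-1},e_m)$ — the paper merely organizes the same extraction as two successive peelings, first of $dx_m$ and then of $dx_{m-1}$, and (1)--(3) and the comass bound come out the same way. The genuine difference is in property (4). Writing $\beta_0=\pi_1^*\alpha\wedge\pi_2^*\vol_{\R^2}$ as you do, the paper's proof of (4) is a pure convexity argument: if $\omega_t(v_1,\dots,v_n)=1$ for an orthonormal frame and $0<t<1$, then $\omega_0=\beta_0$ and $\omega_1=\omega$ both evaluate to $1$ on that frame. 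As written, this yields only the inclusion $\SO(\omega_t)\subseteq\SO(\beta_0)$; the reverse inclusion $\SO(\beta_0)\subseteq\SO(\omega_t)$ — which the stated equivalence $\CO(\omega_t)=\CO(\beta_0)$ requires — needs exactly your ``decisive claim'' $\SO(\beta_0)\subseteq\SO(\omega)$, and the paper leaves this step implicit. You prove it correctly: by \Cref{lemma:Grassmannian} applied twice, every $\beta_0$-calibrated plane has the form $W\times\R^2$ with $W\in\Gr(\alpha)$, and since no monomial of $\epsilon$ contains both $dx_{m-1}$ and $dx_m$, the restriction of $\epsilon$ to such a plane vanishes (it vanishes on the adapted orthonormal basis, hence identically, being a top-degree form on that subspace), so $A^*\omega=A^*\beta_0=\vol_{\R^n}$ for $A\in\SO(\beta_0)$. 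Your closing remark that $t<1$ is essential — since $\epsilon$ may calibrate extra planes and enlarge $\SO(\omega)$ strictly beyond $\SO(\beta_0)$ at $t=1$ — is also on point. In short, your route buys a complete two-sided verification of the equivalence in (4) at the cost of one extra Grassmannian computation, whereas the paper's version is shorter but records explicitly only one of the two inclusions; for the downstream application (\Cref{cor:applicability} and the density argument) the inclusion $\Gr(\omega_t)\subseteq\Gr(\beta_0)$ would in fact suffice, but your proof establishes the lemma exactly as stated.
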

\begin{proof}
In the following proof $I \subset \left\{1,2,\dots,m\right\}$ will denote an increasing multi-index of length $n$ and
\begin{equation*}
    dx_{I} = dx_{i_1} \wedge \dots \wedge dx_{i_n}
\end{equation*}
for $I = (i_1, \dots, i_n )$. Now $\omega = \sum_{ I } \omega_I dx_I$ in the standard basis. We let $\beta' = \sum_{ m \in I } \omega_I dx_I$ and $\gamma = \sum_{ m \not\in I } \omega_I dx_I$.

We express $\gamma' = \pi^{*}\gamma$, where $\pi \colon \R^m \to \R^{m-1}$ is a coordinate projection to the first $(m-1)$ coordinates. We have 
\[
\omega = \pi^{*}\beta \wedge dx_m + \pi^{*}\gamma
\]
for some $\beta \in \Lambda^{n-1} \R^{m-1}$. By \Cref{lemma:Grassmannian}, we have that $\| \pi^{*}\beta \wedge dx_m \|_{ \comass } = \| \beta \|_{ \comass }$. Furthermore, if vectors $v_1, \dots, v_{n-1} \in \R^m$ are orthonormal, we have that
\begin{equation*}
    1 \geq \omega( v_1, \dots, v_{n}, e_m ) = \pi^{*}\beta( v_1, \dots, v_n ).
\end{equation*}
Thus $1 \geq \| \beta \wedge dx_m \|_{ \comass } = \| \beta \|_{ \comass }$ and choosing $v_j = e_{m-n+j}$, for $1 \leq j \leq n-1$, yields $\| \beta \|_{\comass} = 1$. Similarly, if vectors $w_1, \dots, w_n$ are orthonormal in $\R^{m-1}$, we have that 
\begin{equation*}
    \omega( (w_1,0), \dots, (w_n,0) )
    =
    0
    +
    \pi^{*}\gamma( (w_1,0), \dots, ( w_n,0 ) )
    =
    \gamma( w_1, \dots, w_n ).
\end{equation*}
Thus $\| \omega \|_{ \comass } = 1  \geq \| \gamma \|_{ \comass } = \| \gamma' \|_{ \comass }$, where the latter equality follows from \Cref{lemma:Grassmannian}.

We argue similarly for $\beta \in \Lambda^{n-1} \R^{m-1}$ and obtain a representation $\beta = \alpha' \wedge dx_{m-1} + \theta'$ for $\alpha' \in \Lambda^{n-2}\R^{m-1}$ and $\theta' \in \Lambda^{n-1}\R^{m-1}$, where $\alpha'$ is a calibration and the forms $\alpha'$ and $\theta'$ have the properties that $\alpha'(v_1,\ldots, v_{n-3},e_{m-1})=0$ and $\theta'(w_1,\ldots, w_{n-2},e_{m-1})=0$ for $v_1,\ldots, v_{n-3}, w_1,\ldots, w_{m-2}\in \R^{m-1}$, and that $\alpha'( e_{m-n+1}, \dots, e_{m-2} ) = 1$ and $\| \theta' \|_{\comass} \leq 1$. Then
\begin{equation*}
    \omega = \pi^{*}\alpha' \wedge dx_{m-1} \wedge dx_{m} + \pi^{*}\theta' \wedge dx_{m} + \pi^{*}\gamma.
\end{equation*}
We denote $\epsilon = \pi^{*}\theta' \wedge dx_{m} + \pi^{*}\gamma$ and write $\pi^{*}\alpha' = \pi_1^{*}\alpha$ for a calibration $\alpha \in \Lambda^{n-2} \R^{m-2}$. Since $dx_{m-1} \wedge dx_{m} = \pi_{2}^{*}\vol_{\R^n}$, we have that
\[
\omega = \pi_1^*\alpha \wedge \pi_2^*\vol_{\R^n} + \epsilon.
\]

Observe that $\omega_t( e_{m-n+1}, \dots, e_{m} ) = 1$ for every $0 \leq t \leq 1$ and $\| \omega_t \|_{ \comass } \leq 1$ by convexity of the comass norm. Thus $\| \omega_t \| = 1$ for every $0 \leq t \leq 1$. Furthermore, if $0 < t < 1$ and $\omega_t( v_1, \dots, v_n ) = 1$ for orthonormal vectors $v_1, \dots, v_n \in \R^m$, then
\begin{align*}
    1 &= \omega_t( v_1, \dots, v_n ) = t \omega_1( v_1, \dots, v_n ) + (1-t) \omega_0( v_1, \dots, v_n ) 
    \\
    &\leq t \|\omega_1\| + (1-t)\| \omega_0 \| = 1.
\end{align*}
Thus $\omega_1( v_1, \dots, v_n ) = 1 = \omega_0( v_1, \dots, v_n )$. We conclude that $\omega_{t}$ and $\omega_0$ are equivalent. The bound for the comass of $\epsilon$ follows now from the triangle inequality.
\end{proof}


\section{Classical regularity and the full-rank set}
\label{sec:C1}

Since conformal $\omega$-curves are $1$-quasiregular, we readily conclude that these maps are $n$-harmonic and hence $\mathcal{C}^{1,\alpha}$-regular by \cite[Theorem 5.1]{Hei:Pan:Pry:23}. Since the proof of  \cite[Theorem 5.1]{Hei:Pan:Pry:23} contains a mistake with norms in the formula corresponding to \eqref{eq:fixed}, we recall -- and fix -- the argument for the reader's convenience. We also extend the result as follows.

\begin{proposition}\label{prop:nharmonic}
Let $\omega \in \Lambda^n \R^m$ be a calibration, $\Omega \subset \R^n$ an open subset, and $F \colon \Omega \to \R^m$ a conformal $\omega$-curve. Then $F$ has an $n$-harmonic representative. In particular, $F \in \mathcal{C}^{1,\alpha}_{loc}( \Omega, \mathbb{R}^m )$. Furthermore, $F$ is real-analytic in the set $\left\{ \|DF\| > 0 \right\}$.
\end{proposition}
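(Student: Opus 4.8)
The plan is to identify conformal $\omega$-curves with minimizers of the Hilbert--Schmidt $n$-energy and then invoke the regularity theory for the $n$-harmonic system. The starting point is \Cref{cor:LA}: since $(DF)_x \in \CO(\omega)$ for almost every $x$, we have the pointwise identity $\frac{1}{n^{n/2}}\|DF\|_{HS}^n = \star F^{*}\omega$ almost everywhere, whereas for an arbitrary Sobolev map $G$ the Hadamard inequality of \Cref{lemma:LA} only gives $\frac{1}{n^{n/2}}\|DG\|_{HS}^n \geq \star G^{*}\omega$ almost everywhere. The crucial point, and the one requiring care with the norms, is that the energy must be taken with respect to the Hilbert--Schmidt norm $\|\cdot\|_{HS}$ rather than the operator norm, since the equality $\|A\|^n = \frac{1}{n^{n/2}}\|A\|_{HS}^n$ holds only on $\CO(\omega)$.

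First I would establish the minimization property. Fix a ball $B \Subset \Omega$ and let $G \in W^{1,n}(B,\R^m)$ satisfy $G - F \in W^{1,n}_0(B,\R^m)$. Since $\omega$ has constant coefficients, the integrand $\star G^{*}\omega$ is a linear combination of $n \times n$ Jacobian minors of $DG$, hence a null Lagrangian; consequently $\int_B \star G^{*}\omega$ depends only on the boundary trace, so $\int_B \star G^{*}\omega = \int_B \star F^{*}\omega$. Combining this with the two pointwise relations above yields
\[
\frac{1}{n^{n/2}}\int_B \|DF\|_{HS}^n = \int_B \star F^{*}\omega = \int_B \star G^{*}\omega \leq \frac{1}{n^{n/2}}\int_B \|DG\|_{HS}^n,
\]
so $F$ minimizes the $n$-energy $G \mapsto \int_B \|DG\|_{HS}^n$ in its Dirichlet class. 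In particular $F$ is a weak solution of the associated Euler--Lagrange system, namely the $n$-harmonic system $\operatorname{div}(\|DF\|_{HS}^{n-2}\,DF) = 0$ (understood row by row), which provides the $n$-harmonic representative and, via Uhlenbeck's regularity theorem \cite{Uhl:77}, the conclusion $F \in \mathcal{C}^{1,\alpha}_{loc}(\Omega,\R^m)$.

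For the real-analyticity I would work on the open set $\{\|DF\|>0\}$, where by the $\mathcal{C}^{1,\alpha}$-regularity just obtained the factor $\|DF\|_{HS}^{n-2}$ is continuous and locally bounded away from $0$. There the Lagrangian $P \mapsto \|P\|_{HS}^n$ is real-analytic and uniformly convex near each value $(DF)_x \neq 0$, so the $n$-harmonic system is a nondegenerate elliptic system with real-analytic structure. Standard Schauder bootstrapping then upgrades $F$ to $\mathcal{C}^{\infty}$, and Morrey's analyticity theorem for solutions of analytic elliptic systems yields real-analyticity of $F$ on $\{\|DF\|>0\}$.

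I expect the main obstacle to be the careful justification of the null Lagrangian identity $\int_B \star G^{*}\omega = \int_B \star F^{*}\omega$ at the Sobolev level $W^{1,n}$, together with the consistent bookkeeping of the Hilbert--Schmidt versus operator norm in the energy functional --- precisely the normalization that must be corrected relative to \cite[Theorem 5.1]{Hei:Pan:Pry:23}. The regularity inputs themselves are citable, but one must first verify that the ellipticity on $\{\|DF\|>0\}$ is genuinely nondegenerate on compact subsets before invoking the analytic bootstrap.
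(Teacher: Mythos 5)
Your first half coincides with the paper's own argument: the pointwise equality from \Cref{cor:LA}, the Hadamard inequality of \Cref{lemma:LA}, and the null-Lagrangian identity $\int_U F^{*}\omega = \int_U G^{*}\omega$ (which the paper justifies by writing $F^{*}\omega - G^{*}\omega = d\tau$ with $\tau \in W^{1,\frac{n}{n-1}}_0$ and invoking Stokes, citing \cite{Iwa:Mar:93,Kan:21}) give that $F$ minimizes the Hilbert--Schmidt $n$-energy in its Dirichlet class, and $\mathcal{C}^{1,\alpha}_{\loc}$-regularity plus $\mathcal{C}^{\infty}$-regularity on $\{\|DF\|>0\}$ follow from \cite{Uhl:77,Har:Lin:87} --- with exactly the Hilbert--Schmidt versus operator norm bookkeeping you flag. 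Where you genuinely diverge is the analyticity step, and your route works: on $\{\|DF\|>0\}$ the Hessian of $P \mapsto \|P\|_{HS}^{n}$ satisfies $D^2(\|P\|_{HS}^n)[Q,Q] = n\|P\|_{HS}^{n-2}\bigl(\|Q\|_{HS}^2 + (n-2)\langle P,Q\rangle^2/\|P\|_{HS}^2\bigr) \geq n\|P\|_{HS}^{n-2}\|Q\|_{HS}^2$, so on compact subsets the system is uniformly strongly elliptic with real-analytic coefficients (note $\|P\|_{HS}^{n-2} = (\|P\|_{HS}^2)^{(n-2)/2}$ is analytic away from $P=0$ also for odd $n$), and the standard bootstrap followed by Morrey's analyticity theorem for analytic elliptic systems yields real-analyticity. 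The paper instead argues geometrically: by the constant rank theorem $F$ is locally a $\mathcal{C}^{\infty}$ embedding on $\{\|DF\|>0\}$, its image is $\omega$-calibrated and hence minimal by \cite{Har:Law:82}, real-analytic by \cite{Reif:64}, and $F$ is then a conformal map onto a real-analytic manifold, hence real-analytic by \cite[Proposition A.3]{Las:Lii:Sal:22}. Your PDE route is more self-contained, avoiding minimal surface theory and the conformal-regularity input entirely; the paper's geometric route, in exchange, produces along the way the stronger conclusion recorded in \Cref{thm:realanalytic} and reused later (e.g. in \Cref{cor:uniquecont:manif}), namely that the image $F(\Omega_F)$ is a real-analytic $\omega$-calibrated minimal submanifold. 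That extra statement is still recoverable from your argument --- a real-analytic immersion has locally real-analytic image, whose tangent planes lie in $\Gr(\omega)$ by conformality --- but for the proposition as stated your proof is complete as written, modulo the two verifications you yourself identify (the Sobolev-level Stokes argument and the uniform ellipticity constants on compact subsets), both of which go through.
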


\begin{proof}
By Corollary \ref{cor:LA}, we have that 
\begin{equation}\label{eq:keyinequality}
    ( \star F^{*}\omega )^{ \frac{2}{n} } = \|DF\|^2 = n \|DF\|_{HS}^2
\end{equation}
almost everywhere in $\Omega$.

Next, consider an open set $U \subset \Omega$ compactly contained in $\Omega$ and let $G \in W^{1,n}_{\loc}(\Omega,\R^m)$ for which $F - G \in W^{1,n}_{0}( U, \mathbb{R}^m )$. Then $F^*\omega - G^*\omega = d\tau$, where $\tau \in W^{1,\frac{n}{n-1}}_0(\wedge^n \Omega)$; see e.g.~Iwaniec--Martin \cite{Iwa:Mar:93} or Kangasniemi \cite{Kan:21}.  Thus, by Stokes' theorem,
\[
\int_U F^*\omega = \int_U G^*\omega.
\]
Hence
\begin{equation}
\label{eq:fixed}
    \frac{ 1 }{ n^{ \frac{n}{2} } }
    \int_{ U } \|DF\|^{n}_{HS} \,d\mathcal{H}^n
    =
    \int_{ U } F^{*}\omega 
    =
    \int_{U} G^{*}\omega 
    \leq
    \frac{ 1 }{ n^{ \frac{n}{2} } }
    \int_{ U } \|DG\|^{n}_{HS} \,d\mathcal{H}^n
\end{equation}
by the pointwise Hadamard inequality for calibrations. The $n$-harmonicity of $F$ follows by the arbitrariness of $U$ and $G$. The $\mathcal{C}^{1,\alpha}$-regularity in $\Omega$ and $\mathcal{C}^{\infty}$-regularity in $\left\{ \|DF\| > 0 \right\}$ are standard for $n$-harmonic mappings, cf. \cite{Uhl:77,Har:Lin:87}.

To obtain the real-analyticity in the set $\left\{ \|DF\| > 0 \right\}$, we argue as follows. By the constant rank theorem, for every $x_0 \in \left\{ \|DF\| > 0 \right\}$, there is an open neighbourhood $U$ at which $F|_{U}$ is a $\mathcal{C}^{\infty}$-regular embedding onto its image. The image $F(U)$ is therefore a $\mathcal{C}^{\infty}$-regular $\omega$-calibrated submanifold, as its tangents are calibrated by $\omega$. Such manifolds are minimal by a result due to Harvey and Lawson \cite[Corollary 4.6]{Har:Law:82} and thus real-analytic by Reifenberg \cite{Reif:64}. Moreover, the restriction $F|_{U}$ is a conformal mapping onto the real-analytic manifold $F(U)$ and is therefore real-analytic, see e.g. \cite[Proposition A.3]{Las:Lii:Sal:22}.
\end{proof}


\Cref{prop:nharmonic} readily yields the followin unique continuation property for conformal $\omega$-curves.

\begin{proposition}
\label{cor:uniquecont}
Let $\omega \in \Lambda^{n}\mathbb{R}^m$ be a calibration, for $3\le n \le m$. Suppose that $G \colon \Omega \to \R^m$ is real-analytic with $( DG )_x \neq 0$ for every $x \in \Omega$ in a domain $\Omega \subset  \mathbb{R}^n$. Suppose that there exists an open set $\emptyset \neq U \subset \Omega$ for which $G|_{U}$ is a conformal $\omega$-curve. Then $G$ is the unique conformal $\omega$-curve extending $G|_U \colon U \to \R^m$.
\end{proposition}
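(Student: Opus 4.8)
The plan is to prove the statement in two stages: first upgrade $G$ itself to a conformal $\omega$-curve on all of $\Omega$ (so that it genuinely extends $G|_U$), and then prove that any competing extension must coincide with $G$, using real-analytic continuation along an open–closed argument.

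For the first stage I would introduce the Hadamard defect
\[
    \Phi(x) = \frac{1}{n^{n/2}}\|(DG)_x\|_{HS}^n - \star (DG)_x^{*}\omega, \qquad x \in \Omega.
\]
By \Cref{lemma:LA} we have $\Phi \geq 0$ everywhere, and on $U$ we have $\Phi \equiv 0$ since $(DG)_x \in \CO(\omega)$ there (\Cref{cor:LA}). The key observation is that $\Phi$ is \emph{real-analytic}: as $G$ is real-analytic, the matrix field $DG$ is real-analytic, so $\star (DG)^{*}\omega$ is a polynomial in the entries of $DG$ and $\|DG\|_{HS}^2 = \operatorname{tr}\!\big((DG)^t DG\big)$ is real-analytic; moreover the hypothesis $(DG)_x \neq 0$ forces $\|DG\|_{HS}^2 > 0$ everywhere, so the power $\|DG\|_{HS}^n = (\|DG\|_{HS}^2)^{n/2}$ is again real-analytic (via $\exp(\tfrac{n}{2}\log\|DG\|_{HS}^2)$), even when $n$ is odd. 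Since $\Omega$ is connected, the identity theorem for real-analytic functions promotes the vanishing of $\Phi$ on $U$ to $\Phi \equiv 0$ on $\Omega$, and then \Cref{cor:LA} yields $(DG)_x \in \CO(\omega)$ for every $x \in \Omega$. Thus $G$ is a conformal $\omega$-curve extending $G|_U$.

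For uniqueness, let $\widetilde G \colon \Omega \to \R^m$ be any conformal $\omega$-curve with $\widetilde G|_U = G|_U$, and set $V = \{\, x \in \Omega : \widetilde G \equiv G \text{ on a neighborhood of } x \,\}$, which is open and contains $U$. I would show $V$ is also closed in $\Omega$. Take $x_0 \in \Omega \cap \partial V$. By \Cref{prop:nharmonic}, $\widetilde G$ is $\mathcal{C}^{1,\alpha}$, so $\widetilde G$ and $D\widetilde G$ are continuous; since $\widetilde G = G$ on $V$ and $x_0 \in \overline{V}$, passing to the limit gives $(D\widetilde G)_{x_0} = (DG)_{x_0} \neq 0$. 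By continuity $\|D\widetilde G\| > 0$ on a ball $W \ni x_0$, and \Cref{prop:nharmonic} then makes $\widetilde G$ real-analytic on $W$. As $G$ is also real-analytic on $W$ and the two agree on the nonempty open set $V \cap W$, the identity theorem forces $\widetilde G = G$ on all of $W$, so $x_0 \in V$. Hence $\Omega \cap \partial V = \emptyset$, so $V$ is open and closed in the connected, nonempty $\Omega$; therefore $V = \Omega$ and $\widetilde G = G$.

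The main obstacle is not a single hard estimate but the careful bookkeeping of regularity in the uniqueness step: one must know that a competing extension $\widetilde G$ is real-analytic \emph{precisely} on its full-rank set (\Cref{prop:nharmonic}) and that this set reaches the boundary $\partial V$ of the agreement region. This is exactly where the standing hypothesis $(DG)_x \neq 0$ is indispensable, transported from $V$ to $x_0$ through the $\mathcal{C}^{1}$-continuity of the differentials; without it, $\widetilde G$ could develop a critical point at $\partial V$ where real-analyticity, and hence continuation, would fail.
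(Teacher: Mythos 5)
Your proof is correct, and its core -- unique continuation of a real-analytic Hadamard-equality defect from $U$ to all of $\Omega$ -- is the same mechanism the paper uses. There are two genuine differences worth noting. First, the paper sidesteps the odd-$n$ analyticity issue by squaring: it continues $H = (\star G^{*}\omega)^2 - n^{-n}\|DG\|_{HS}^{2n}$, which is real-analytic with no positivity assumption, but must then split $\Omega$ into the loci where $\star G^{*}\omega = \pm n^{-n/2}\|DG\|_{HS}^{n}$ and invoke continuity of $\star G^{*}\omega$ together with connectedness to exclude the minus sign. Your unsquared defect $\Phi$ instead uses the standing hypothesis $(DG)_x \neq 0$ to make $\|DG\|_{HS}^{n} = (\|DG\|_{HS}^{2})^{n/2}$ real-analytic directly, so $\Phi \equiv 0$ pins down the correct sign at once via \Cref{cor:LA}; this is marginally cleaner, at the cost of consuming the nondegeneracy hypothesis already in the existence stage, whereas the paper reserves it for the sign-splitting and the final immersion remark. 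Second, and more substantively, the paper's written proof establishes only that $G$ itself is a conformal $\omega$-curve on $\Omega$ and leaves the uniqueness assertion implicit; your open--closed argument supplies exactly this missing detail, and it is the right argument: a competing extension $\widetilde G$ is a priori real-analytic only on its full-rank set (\Cref{prop:nharmonic}), so one must transport nondegeneracy from the agreement set $V$ to a boundary point $x_0 \in \Omega \cap \partial V$ through the continuity of $D\widetilde G$ guaranteed by $\mathcal{C}^{1,\alpha}$-regularity, after which the identity theorem on a ball around $x_0$ closes $V$ in $\Omega$. Both stages of your argument check out.
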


\begin{proof}
Consider the real-analytic mapping $G \colon \Omega \to \mathbb{R}^m$ and the real-analytic function $H(x) = ( \star G^{*}\omega )^2(x) - n^{-n} \|D_xG\|_{HS}^{2n}$ on $\Omega$ for the calibration $\omega \in \Lambda^n \mathbb{R}^m$ given by the assumption. Then $H$ vanishes on the open set $\emptyset \neq U \subset \Omega$ in which $G$ is a conformal $\omega$-curve, so $H$ is identically zero on the domain $\Omega$ by unique continuation of real-analytic functions. On the other hand, as $D_xG \neq 0$ for every $x \in \Omega$, we deduce $n^{-n}\|D_xG\|^{2n}_{HS} > 0$ for every $x \in \Omega$. So for $H$ to be identically zero, $\Omega$ splits into two components: the sets where $\star G^{*}\omega = n^{ -n/2 }\|D_xG\|^{n}_{HS} > 0$ and $\star G^{*}\omega = - n^{ -n/2 }\|D_xG\|^{n}_{HS} < 0$, respectively. By continuity of $\star G^{*}\omega$ and the connectedness of $\Omega$, only the first case happens. Thus $G$ is a conformal $\omega$-curve on $\Omega$. Lastly, we recall that the conformality of $G$ and non-degeneracy of its differential imply that $G$ is an immersion.
\end{proof}

The following proposition gives us a way to extend conformal curves that follow conformally flat submanifolds.
\begin{proposition}\label{prop:extensionresult:toamanifold}
Let $\omega \in \Lambda^n \R^m$ be a calibration, for $n \geq 3$, let $\Omega\subset \R^n$ be a domain, and let $F\colon \Omega \to \R^m$ be a nonconstant conformal $\omega$-curve whose image is contained in a connected conformally flat submanifold $N \subset \R^m$. Then there exist a domain $\hat \Omega \supset \Omega$ and a conformal map $\hat F \colon \hat \Omega \to N$ extending $F$ and satisfying $\| D\hat F \| > 0$ everywhere so that $N \setminus \hat F( \hat \Omega ) \subset N$ is relatively closed and discrete.
\end{proposition}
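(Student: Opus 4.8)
The goal is to extend a nonconstant conformal $\omega$-curve $F \colon \Omega \to \R^m$ whose image lies in a connected conformally flat submanifold $N$ to a maximal conformal map $\hat F$ onto $N$ with everywhere non-degenerate differential, where the complement $N \setminus \hat F(\hat\Omega)$ is relatively closed and discrete. The plan is to combine the local structure of conformal $\omega$-curves coming from \Cref{prop:nharmonic} with the conformal geometry of $N$ itself, which, being conformally flat and (by minimal surface theory, via the calibration condition) real-analytic, carries a developing-map/holonomy structure that lets us continue $F$ along paths.

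\textbf{Step 1: Local conformal developing structure on $N$.} First I would record that, since $N$ is a conformally flat real-analytic $n$-manifold with $n \geq 3$, its conformal structure is modeled on $\S^n$: there is a conformal developing map $\dev \colon \widetilde N \to \S^n$ from the universal cover and an associated holonomy representation into the M\"obius group. Away from the (relatively closed, lower-dimensional) branch set, $F$ is, by \Cref{prop:nharmonic}, a real-analytic conformal immersion into the real-analytic manifold $N$; composing a local inverse of the embedding $F|_U$ with a chart of $N$ exhibits $F$ locally as a conformal map between domains of $\R^n$, which by the classical Liouville theorem is a restriction of a M\"obius transformation. Thus $F$ locally lifts, through $\dev$, to a M\"obius map of $\R^n$.

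\textbf{Step 2: Analytic continuation along paths.} The M\"obius nature of the local lifts means the continuation of $F$ along a path in $\Omega$ (or, more precisely, the continuation of the associated map into $N$) is governed by the continuation of a M\"obius transformation, which is rigid: a M\"obius map is determined by its restriction to any open set. I would therefore continue $F$ along all paths in a maximal domain $\hat\Omega \supset \Omega$. The unique continuation property of \Cref{cor:uniquecont} guarantees that any such continuation that stays nondegenerate remains a conformal $\omega$-curve, and guarantees uniqueness of the continuation, so the continued map is well-defined as long as monodromy is trivial; one must check consistency of overlapping continuations, which follows from Liouville rigidity of the local M\"obius lifts and connectedness. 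The outcome is an everywhere-immersive conformal extension $\hat F \colon \hat\Omega \to N$, where $\hat\Omega$ is built as the natural domain of the continued developing data.

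\textbf{Step 3: Discreteness and relative closedness of the omitted set.} Finally I would analyze $N \setminus \hat F(\hat\Omega)$. Since $\hat F$ is an open conformal immersion, its image is open in $N$, so the complement is relatively closed; the content is discreteness. Here the key point is that $\hat F$, being a conformal immersion of a domain in $\R^n$ into $N$ that develops to a M\"obius map of $\S^n$, has image equal to $N$ minus at most the points corresponding, under the developing map, to the finitely many ``poles'' of the M\"obius transformation (the image of $\infty$ and a possible inverted center). Because M\"obius transformations of $\S^n$ omit only isolated points when restricted to a domain missing a single point, the omitted locus in $N$ is discrete. The \emph{main obstacle} I expect is precisely the global control of this continuation: ruling out nontrivial monodromy of the developing map on $N$ and ensuring the continued map does not develop new degeneracies $\{\|D\hat F\| = 0\}$, since the continuation is only guaranteed conformal where the differential stays nonzero. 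Overcoming this should rely on \Cref{thm:constant-norm-means-affine} together with the subharmonicity of $\|D\hat F\|^{(n-2)/2}$ from \Cref{prop:nharmonic}, which forbids interior zeros of $\|D\hat F\|$ from arising in the extension, so that immersivity propagates and the only failure of surjectivity onto $N$ is the discrete polar set identified above.
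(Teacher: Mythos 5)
Your outline contains two genuine gaps. The first is global: you correctly identify monodromy of the path-continuation as the main obstacle, but the tools you propose to overcome it (\Cref{thm:constant-norm-means-affine} and subharmonicity of $\norm{DF}^{(n-2)/2}$) say nothing about monodromy, so Step 2 never actually produces a single-valued map on a domain $\hat\Omega \subset \R^n$. The paper avoids the developing-map apparatus altogether: it partially orders the \emph{single-valued} extensions --- conformal immersions $\widehat G \colon \widehat\Omega \to N$ on domains $\widehat\Omega \supset \Omega$ with $\widehat G|_{\Omega} = F$ --- and takes a Zorn-maximal element $\widehat F$, so no continuation along loops ever occurs. Discreteness of the omitted set is then proved \emph{locally}, with maximality doing the work your ``pole'' heuristic gestures at: if $y_0 \in (N \setminus \widehat F(\widehat\Omega)) \cap \overline{\widehat F(\widehat\Omega)}$ and $(W,\psi)$ is a conformal chart at $y_0$, then $\psi \circ \widehat F$ on $\widehat F^{-1}(W)$ is the restriction of a M\"obius transformation $g \colon \S^n \to \S^n$; extending $\widehat F$ by $\psi^{-1}\circ g$ over $g^{-1}(\psi(W)) \setminus \{\infty\}$ and invoking maximality forces $g^{-1}(\psi(y_0)) = \infty$, so $y_0$ is isolated and $W \setminus \{y_0\}$ lies in the image. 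Your global picture --- the image is $N$ minus the finitely many ``poles'' of one M\"obius transformation read through the developing map --- presupposes a single global developing description (false in general when $N$ is not simply connected or the developing map is not injective) and asserts finiteness, which is neither available nor claimed: the statement gives only discreteness.

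The second gap is your mechanism for $\norm{D\widehat F} > 0$. Subharmonicity of $\norm{DF}^{(n-2)/2}$ yields a \emph{maximum} principle, not a minimum principle: a subharmonic function vanishes at interior points freely, so it cannot ``forbid interior zeros of $\norm{D\hat F}$.'' What actually eliminates the zero set is the local M\"obius structure itself: for $x \in \Omega \cap \overline{\{\norm{DF}>0\}}$ and a conformal chart $(U,\varphi)$ of $N$ at $F(x)$, the composition $\varphi \circ F$ is a nonconstant Sobolev map with conformal differential, hence by the Gehring--Reshetnyak Liouville theorem (here $n \geq 3$ is used) the restriction of a M\"obius transformation taking values in $\R^n$, whose differential is nowhere zero; thus $\{\norm{DF}>0\}$ is open and relatively closed in $\Omega$, hence all of $\Omega$ by connectedness, and the same argument applies to any extension. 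Relatedly, your appeal to real-analyticity of $N$ ``by minimal surface theory, via the calibration condition'' is unjustified at this stage: only the image of $F$ is known to be calibrated, not $N$ --- this is precisely why \Cref{cor:uniquecont:manif} assumes real-analyticity of $N$ as a separate hypothesis and only concludes \emph{a posteriori} that $N$ is calibrated. Fortunately none of it is needed: conformal charts plus the classical Liouville theorem suffice for the entire proof.
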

\begin{remark}
It can happen that $N \setminus \hat F( \hat \Omega )$ is non-empty for a maximal extension of $F$ in the setting of \Cref{prop:extensionresult:toamanifold}. Indeed, consider a Möbius transformation $G \colon \S^n \to \S^n$ for which $p = G^{-1}( \infty ) \in \S^n \setminus \left\{\infty\right\} \simeq \R^n$. Then $F \coloneqq G|_{ \mathbb{R}^n \setminus \left\{p\right\} }$ maps onto $\mathbb{R}^n \setminus \left\{ G( \infty ) \right\}$ and does not extend to an $\R^n$-valued map.
\end{remark}

\begin{proof}[Proof of Proposition \ref{prop:extensionresult:toamanifold}]
Since $F$ is not constant, there exists $x \in \Omega \cap \overline{ \left\{ \|DF\| > 0 \right\} }$. Since $N$ is conformally flat, there exists a conformal chart $(U,\varphi)$ of $N$ for which $F(x)\in U$. Let now $V\subset \Omega$ be a neighborhood of $x$ for which $F(V) \subset U$. Then $\varphi \circ F \colon U \to \R^n$ is conformal. Thus $\varphi \circ F$ is a restriction of a nonconstant M\"obius transformation $\bS^n \to \bS^n$. Since $V \subset \R^n$ and $(\varphi \circ F)(V) \subset \R^n$, we conclude that $D(\varphi \circ F) \ne 0$ in $V$. In particular, $( DF )_x\ne 0$. Consequently, $\left\{ \|DF\| > 0 \right\}$ is open and relatively closed in $\Omega$, so $\Omega = \left\{ \|DF\| > 0 \right\}$.

Next, we partially order the possible extensions $\widehat{G} \colon \widehat{\Omega} \to N$ of $F$ as follows: we require that $\widehat{G}$ is a conformal immersion defined on a domain $\widehat{\Omega}$ containing $\Omega$ and $F= \widehat{G}|_{\Omega}$. For a pair of extensions, we denote $\widehat{G}_1 \leq \widehat{G}_2$ if $\widehat{\Omega}_1 \subset \widehat{\Omega}_2$ and $\widehat{G}_1 = \widehat{G}_2|_{ \widehat{\Omega}_1 }$. An arbitrary totally ordered chain of such extensions contains a maximal element, so Zorn's lemma gives a maximal extension. We denote such a maximal extension by $\widehat{F} \colon \widehat{\Omega} \to N$.

Since $\widehat{F}$ is an open map as a local homeomorphism, the set $P \coloneqq (N\setminus \widehat{F}( \widehat{\Omega}) ) \cap \overline{ \widehat{F}( \widehat{\Omega} ) }$ is relatively closed in $N$. It remains to prove its discreteness. To this end, suppose that there exists $y_0 \in (N\setminus \widehat{F}( \widehat{\Omega}) ) \cap \overline{ \widehat{F}( \widehat{\Omega} ) }$. Let $(W,\psi)$ be a conformal chart of $N$ for which $y_0 \in W$ and denote $W' = F^{-1}( W )$.

We claim that $(N\setminus \widehat{F}( \widehat{\Omega}) ) \cap \overline{ \widehat{F}( \widehat{\Omega} ) }$ intersects $W$ only at $y_0$. This will then imply the discreteness claim. The composition $\psi \circ \widehat{F}|_{W'} \colon W' \to \psi (W)$ is a restriction of a unique nonconstant M\"obius transformation $g\colon \bS^n \to \bS^n$. Let now $W'' = g^{-1}\left( \psi(W) \right) \setminus \left\{ \infty \right\}$ and extend $\widehat{F}$ by the formula $x\mapsto (\psi^{-1} \circ g)(x)$ in $W''$. By maximality of the extension $\widehat{F}$, we conclude that $W'' \subset \widehat{\Omega}$ so by definition of $W''$, we have that $g^{-1}( \psi(y_0) ) = \infty$. The coordinate representation $\psi^{-1} \circ g|_{ W''}$ for $F$ implies that $y_0$ is isolated in $(N\setminus \widehat{F}( \widehat{\Omega}) ) \cap \overline{ \widehat{F}( \widehat{\Omega} ) }$. We also deduce that $W \setminus \left\{y_0\right\} \subset \widehat{F}( \widehat{\Omega} )$, which implies that $\widehat{F}$ maps onto $N \setminus P$ for the (relatively) closed and discrete set $P \subset N$.
\end{proof}

The extension $\hat F\colon \hat \Omega \to N$ of $F$ provided by Proposition \ref{prop:extensionresult:toamanifold} is real-analytic if the manifold $N$ is real-analytic. In this case, \emph{a posteriori}, the submanifold $N$ is $\omega$-calibrated. We record this observation as follows.

\begin{corollary}[Unique extension over submanifolds]\label{cor:uniquecont:manif}
Let $\omega \in \Lambda^{n}\mathbb{R}^m$ be a calibration,  for $3\le n \le m$, and let $N \subset \mathbb{R}^m$ be a real-analytic conformally flat submanifold. If $F \colon U \to \mathbb{R}^m$ is a nonconstant conformal $\omega$-curve mapping from a domain into $N$, there exist a domain $\Omega \supset U$ and a conformal $\omega$-curve $G \colon \Omega \to \R^m$ for which $( DG )_x \neq 0$ for every $x \in \Omega$ and $N \setminus G(\Omega) \subset N$ is relatively closed and discrete. A posteriori, $N$ is $\omega$-calibrated.
\end{corollary}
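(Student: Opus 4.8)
The plan is to deduce \Cref{cor:uniquecont:manif} from \Cref{prop:extensionresult:toamanifold} and \Cref{prop:nharmonic}, upgrading the topological/conformal extension to a real-analytic conformal $\omega$-curve and then verifying that $N$ is $\omega$-calibrated \emph{a posteriori}. First I would apply \Cref{prop:extensionresult:toamanifold} to the nonconstant conformal $\omega$-curve $F \colon U \to \R^m$ whose image lies in the connected real-analytic conformally flat submanifold $N$. This yields a domain $\Omega \supset U$ and a conformal map $G \colon \Omega \to N$ extending $F$, with $\|DG\| > 0$ everywhere and $N \setminus G(\Omega)$ relatively closed and discrete in $N$. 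Thus the only substantive work is to check that this conformal map into $N$ is in fact a conformal $\omega$-curve on all of $\Omega$, and to identify $N$ as $\omega$-calibrated.

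Next I would establish real-analyticity and the $\omega$-curve property. Since $N$ is real-analytic and conformally flat, the extension $G$ produced by the proof of \Cref{prop:extensionresult:toamanifold} is built locally out of restrictions of M\"obius transformations composed with real-analytic conformal charts of $N$; hence $G$ is real-analytic with $(DG)_x \neq 0$ everywhere. On the open subdomain $U$ we already know $G|_U = F$ is a conformal $\omega$-curve, i.e.\ $(DF)_x \in \CO(\omega)$ a.e., equivalently $\|DF\|^n = \star F^*\omega$ a.e.\ by \Cref{cor:LA}. Now I would invoke \Cref{cor:uniquecont}: $G \colon \Omega \to \R^m$ is real-analytic with nonvanishing differential, agrees with a conformal $\omega$-curve on the nonempty open set $U \subset \Omega$, so by unique continuation $G$ is itself a conformal $\omega$-curve on the whole domain $\Omega$. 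This is exactly the hypothesis-to-conclusion template of \Cref{cor:uniquecont}, so no new estimate is needed here.

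Finally, to conclude that $N$ is $\omega$-calibrated, I would use that $G \colon \Omega \to N$ is a conformal $\omega$-immersion which, by the discreteness of $N \setminus G(\Omega)$, is surjective onto $N$ away from a relatively closed discrete set. For each $y = G(x) \in G(\Omega)$ we have $(DG)_x \in \CO(\omega) \setminus \{0\}$, so $T_y N = \mathrm{Im}(DG)_x = (DG)_x \R^n \in \Gr(\omega)$; thus every tangent space of $N$ on the dense open set $G(\Omega)$ lies in $\Gr(\omega)$. Since $\Gr(\omega)$ is a closed subset of the Grassmannian (the comass condition $\|\omega|_V\|_\comass = 1$ is closed) and the Gauss map $y \mapsto T_y N$ of the real-analytic submanifold $N$ is continuous, the set $\{ y \in N \colon T_y N \in \Gr(\omega) \}$ is closed; containing the dense set $G(\Omega)$, it equals all of $N$. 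Hence $T_y N \in \Gr(\omega)$ for every $y \in N$, i.e.\ $N$ is $\omega$-calibrated.

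The main obstacle — though a mild one — is the passage from $G(\Omega)$ to all of $N$ over the discrete exceptional set $N \setminus G(\Omega)$: one must confirm that the calibration condition propagates across those finitely-accumulating missing points, which I handle by the closedness of $\Gr(\omega)$ together with continuity of the Gauss map of the real-analytic $N$, rather than by any further analysis of $G$. The remaining steps are direct citations of \Cref{prop:extensionresult:toamanifold}, \Cref{cor:uniquecont}, and \Cref{cor:LA}, so the corollary follows without additional hard analysis.
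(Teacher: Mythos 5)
Your proposal follows the paper's proof almost verbatim in structure: apply \Cref{prop:extensionresult:toamanifold} to obtain the extension $G \colon \Omega \to N$ with nondegenerate differential and discrete complement, upgrade $G$ to a conformal $\omega$-curve by \Cref{cor:uniquecont}, and propagate the calibration condition from the dense set $G(\Omega) = N \setminus P$ to all of $N$ via the closedness of $\Gr(\omega)$ and continuity of the Gauss map --- the last step is exactly the paper's closing argument. The one place where you deviate, and where your write-up has a small unjustified assertion, is the real-analyticity of $G$. You argue that $G$ is locally of the form $\psi^{-1} \circ (\text{M\"obius})$ for ``real-analytic conformal charts'' $\psi$ of $N$, but the definition of conformally flat only provides \emph{conformal} charts, with no stated regularity; the fact that a conformal chart (or, more directly, the conformal immersion $G$) into a real-analytic Riemannian manifold is itself real-analytic is precisely a regularity theorem, namely \cite[Proposition A.3]{Las:Lii:Sal:22}, which the paper invokes at this point applied directly to $G$. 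So your route implicitly uses the same regularity result you would need to cite for the charts, making the assertion circular as written; replacing it with the citation applied to $G$ closes the gap, after which your proof coincides with the paper's.
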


\begin{proof}
By \Cref{prop:extensionresult:toamanifold}, there exists a conformal immersion $G \colon \Omega \to N$ having non-degenerate differential and extending $F \colon U \to \R^m$ for which the set $P \coloneqq N \setminus G(\Omega) \subset N$ is relatively closed and discrete. Since $N$ is real-analytic, the conformal immersion $G$ is real-analytic by \cite[Proposition A.3]{Las:Lii:Sal:22}. Then, by \Cref{cor:uniquecont}, we have that $G$ is a conformal $\omega$-curve. Hence $N \setminus P$ is $\omega$-calibrated. Since $N \setminus P$ is dense in $N$ and the set $\Gr( \omega )$ is closed, we deduce that $N$ is $\omega$-calibrated.
\end{proof}

\subsection{Conformal curves having conical images}
\label{sec:conical-singularities}

We extend the regularity results by two results on conformal curves into conical varieties. The first result is a Reshetnyak type result on the discreteness of the fiber of the conical point, and the second a version of Varopoulos's theorem for conformal $\omega$-curves.

We say that a subset $E\subset \R^m$ has a \emph{conical singularity at $y_0\in E$} if there exists a radius $r>0$ and a closed submanifold $\Sigma \subset \bS^{m-1}(y_0,r)$ for which $E\cap B^m(y_0,r)$ is a cone over $\Sigma$, that is,
\[
E \cap B^m(y,r) = \{ ty \in B^m(x_0,r) \colon y\in \Sigma\}.
\]
We also say that $E \subset \R^m$ is a cone over a link $\Sigma \subset \bS^m(y_0,r)$ if 
\[
E= \{ y_0 + ty \in \R^m \colon y\in \Sigma\}.
\]

Since we do not use terminology related to capacities later in this article, we refer to monographs Rickman \cite[Section II.10]{Ric:93} and Heinonen--Kilpel\"ainen--Martio \cite[Section 2]{Hei:Kil:Mar:06} for the terminology and detailed discussion.

\begin{proposition}
\label{prop:conical-singularities}
Let $F \colon \Omega \to \R^m$ be a conformal $\omega$-curve for which $F\Omega$ has a conical singularity at $y_0 \in F( \Omega )$. Then $F^{-1}(0)$ is discrete in $\Omega$.
\end{proposition}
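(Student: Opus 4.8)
The plan is to convert the statement into a capacity estimate by using that a punctured cone is conformally a product. After translating we may assume $y_0 = 0$ and it suffices to prove that $E \coloneqq F^{-1}(0)$ is discrete. Fix $r>0$ and a closed submanifold $\Sigma \subset \bS^{m-1}(0,r)$ with $F(\Omega)\cap B^m(0,r)$ equal to the cone $C$ over $\Sigma$; by \Cref{thm:realanalytic} the image $F(\{\|DF\|>0\})$ is an $n$-dimensional immersed submanifold, so $\dim \Sigma = n-1$. The map $y \mapsto (\log|y|, y/|y|)$ is a conformal diffeomorphism $C\setminus\{0\} \to \R\times\Sigma$ onto the product cylinder, and under it the coordinate $s=\log|y|$ has vanishing $n$-Laplacian. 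Hence $u = -\log|\cdot|$ is a nonnegative $n$-harmonic function on $C\setminus\{0\}$ near the apex with $u\to+\infty$ at $0$.

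First I would work locally in a ball $B = B(x_0,\rho)$ around a fixed $x_0\in E$ with $F(B)\subset C\cap B^m(0,r)$. If $C$ is a genuine cone, i.e. $\Sigma$ is not a great $(n-1)$-sphere, then no immersive point can map to the apex: a smooth $n$-disk through $0$ contained in $C$ would force its tangent plane into $C$, hence $\Sigma$ to contain a great sphere. Thus $E\cap B \subset \{\|DF\|=0\}$; conversely, on $B\setminus E$ the curve maps into the real-analytic conformally flat submanifold $C\setminus\{0\}$, so \Cref{cor:uniquecont:manif} gives $\|DF\|>0$ there, whence $E\cap B = \{\|DF\|=0\}\cap B$ has empty interior and measure zero. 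Since the $n$-Laplacian is a conformal invariant in dimension $n$, the composition $v = u\circ F$ is $n$-harmonic and nonnegative on $B\setminus E$ with $v\to+\infty$ along $E$. Using $w_T = \min(1, v/T)$ as an admissible function and the area formula, conformal invariance yields
\[
\operatorname{cap}_n\bigl(\{v\ge T\}, \partial B\bigr) \le T^{-n}\!\int_{\{v<T\}}\!|\nabla v|^n \,d\mathcal H^n = T^{-n}\!\int_{\{u<T\}} |\nabla u|^n \,d\theta \le C\,T^{1-n},
\]
where $\theta$ is the (locally finite) multiplicity of $F$ and the last bound uses that on the cylinder $\int_{\{s>-T\}}|\nabla s|^n$ grows linearly in $T$. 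Letting $T\to\infty$ gives $\operatorname{cap}_n(E\cap B',\partial B)=0$; in particular $E$ is polar and does not separate $B$, so $B\setminus E$ is a domain (here I would invoke the capacity theory of \cite[Section~II.10]{Ric:93} and \cite[Section~2]{Hei:Kil:Mar:06}).

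The hard part is to upgrade \emph{zero capacity} to \emph{discreteness}, since a polar set need not be discrete. For this I would use the maximal extension of \Cref{prop:extensionresult:toamanifold}: the conformal immersion $F|_{B\setminus E}$ into $N = C\setminus\{0\}$ extends to a conformal local homeomorphism $\hat F\colon \hat B \to N$ with $N\setminus \hat F(\hat B)$ relatively closed and discrete, and $\hat B\cap E = \emptyset$ because $\hat F$ omits the value $0$. Each point of $E$ then corresponds to an end of $\hat B$ on which $s\circ\hat F \to -\infty$, i.e. on which $\hat F$ converges to the apex end $\mathcal E_T = \{s<-T\}\times\Sigma$. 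For $T$ large the restriction of $\hat F$ over $\mathcal E_T$ is a proper local homeomorphism, hence a finite covering of the connected end $\mathcal E_T$; its number of sheets is bounded by the density of the minimal cone $C$ at $0$, which is finite by the monotonicity formula for the calibrated --- and therefore minimal (Harvey--Lawson \cite{Har:Law:82}) --- submanifold $F(\{\|DF\|>0\})$. A finite covering of a connected end has finitely many source components, and each component limits to exactly one point of $E$; hence $E$ meets a slightly smaller ball in finitely many points. As $x_0\in E$ was arbitrary, $E=F^{-1}(0)$ is discrete.

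The two steps requiring the most care are the properness of $\hat F$ over the apex end --- which I would extract from the maximality of the extension together with the fact that the only boundary behaviour left for $\hat B$ inside $B$ is convergence to points of $E$ --- and the identification of the sheet number with the cone density, where one must verify that the pushforward varifold $F_\#[\Omega]$ is stationary and that the monotonicity formula applies at the singular point $0$.
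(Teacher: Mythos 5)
Your first half runs parallel to the paper's proof --- both pass to $u=-\log|\cdot|$, which is $n$-harmonic on the punctured cone via the conformal cylinder coordinates, and both aim to show that $E=F^{-1}(0)$ has $n$-capacity zero --- but your quantitative capacity estimate has a circular gap. The identity $\int_{\{v<T\}}|\nabla v|^n\,d\mathcal{H}^n=\int_{\{u<T\}}|\nabla u|^n\,\theta\,d\mathcal{H}^n$ is fine, but the bound by $C\,T$ requires the multiplicity $\theta$ to be \emph{uniformly} bounded on $C\cap B^m(0,r)$ up to the apex, and that is essentially the finiteness statement you are trying to prove. Finite energy only yields the averaged bound $\int_{A_j}\theta\,d\mathcal{H}^n\le\int\|DF\|^n$ on the annuli $A_j=\{e^{-j-1}<|y|<e^{-j}\}\cap C$, which is useless against the weight $|\nabla u|^n\approx e^{jn}$ there. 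The paper sidesteps the estimate entirely: $v=u\circ F$ lies in $W^{1,n}_{\loc}$ across $E$ (citing \cite[Theorem 14.28]{Hei:Kil:Mar:06}) and is $n$-harmonic off $E$, so the extension by $+\infty$ is $n$-superharmonic and $\{v=\infty\}$ has zero $n$-capacity by \cite[Theorem 10.1]{Hei:Kil:Mar:06}; no multiplicity function is needed.

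The discreteness upgrade is where you diverge most from the paper, and your key step fails as stated: the sheet number of the covering over the apex end is \emph{not} bounded by the density of the minimal cone $C$ at $0$ --- that density is a geometric constant of the image and carries no information about how often the map wraps around it (compare \Cref{prop:SL-example-concrete} and \Cref{cor:non-solvability}, where curves cover their image with infinite multiplicity). The charitable reading, namely (sheets)$\,\cdot\,$(density of $C$ at $0$) $\le$ density at $0$ of the pushforward varifold, would still require you to prove stationarity of $F_\#[\,\cdot\,]$ across the apex and the polar set $E$, which you only flag as "requiring care". The paper replaces all of this machinery by a two-line energy count: pick a relatively compact neighborhood $U_0$ of a zero with $\partial U_0\cap F^{-1}(0)=\emptyset$ (possible since $E$ is totally disconnected), set $r=\dist\bigl(0,F(\partial U_0)\bigr)>0$, and observe that every component $W$ of $F^{-1}(B^m(0,r))$ in $U_0$ maps \emph{onto} $B^m(0,r)\cap C$ by openness and compact containment, so the change of variables gives $\int_W\|DF\|^n\ge\vol\bigl(B^m(0,r)\cap C\bigr)$; local integrability of $\|DF\|^n$ then bounds the number of components. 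Your closing topological step (nested connected closures forcing each end-component to limit on a single point of the totally disconnected set $E$) is sound, and your maximal-extension and properness discussion can be made to work with the same localization, but I would recommend adopting the paper's surjectivity-plus-volume count, which subsumes the covering-space analysis; note also that you never treat the case where $\Sigma$ is a great sphere, i.e.\ the cone is an affine disc, which your "genuine cone" dichotomy leaves open.
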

\begin{proof}
We show first that $F^{-1}(y_0)$ has $n$-capacity zero. We may assume that $y_0=0$ and that $F(\Omega) \cap B^m(0,1)$ is a cone over a submanifold $\Sigma \subset \bS^m(0,1)$, where $\Sigma$ is an orientable and smooth $(n-1)$-manifold. Let $C$ be the infinite cone over $\Sigma$, that is, $C = \{ ty \in \R^m \colon y\in \Sigma\}$ and let $u \colon \R^m \to \R$ be the function $y \mapsto -\log |y|$. Then $u$ is $n$-harmonic in the punctured cone $C\setminus \{0\}$. Indeed, the map $h \colon \Sigma \times \R \to C\setminus \{0\}$, $(z,t) \mapsto e^t z$, is a conformal homeomorphism and the composition $u \circ h \colon \Sigma \times \R \to \R$ is the $n$-harmonic function $(z,t) \mapsto -t$. 

Let now $U \subset \Omega$ be a connected component of $F^{-1}(C \cap B^m(0,1))$ 
and let $v = u \circ F \colon U \to \R$, $x\mapsto -\log|F(x)|$. 
Then $v$ is $n$-harmonic in $U\setminus F^{-1}(0)$, see e.g.~Rickman \cite[Section VI.2]{Ric:93} 
or Heinonen--Kilpeläinen--Martio \cite[Theorem 14.19]{Hei:Kil:Mar:06}, 
especially \cite[Corollary VI.2.8]{Ric:93}. Since $F$ is in $W^{1,n}_{\loc}(\Omega, \R^m)$, 
we conclude that $v \in W^{1,n}_{\loc}(U)$, see e.g.~Heinonen--Kilpel\"ainen--Martio 
\cite[Theorem 14.28]{Hei:Kil:Mar:06}. If we set the function to be $\infty$ at $F^{-1}(0)$, the extension is $n$-superharmonic and thus $F^{-1}(0) \cap U$ has $n$-capacity zero by \cite[Theorem 10.1]{Hei:Kil:Mar:06} and hence is a totally disconnected closed set in $U$. 

We are now ready to show that $F^{-1}(0)$ is discrete; let $x_0\in F^{-1}(0)$. Since $F^{-1}(0)$ is totally disconnected, it has topological dimension zero and $x_0$ has a relatively compact neighborhood $U_0$ for which $\partial U_0 \cap F^{-1}(0) = \emptyset$. Since $F(\partial U_0)$ is compact and does not meet $0$, we have that $r= \dist(0,F(\partial U_0)) >0$. It suffices to show that the set $V=F^{-1}(B^m(0,r))$ has finitely many components in $U_0$. This implies that $F^{-1}(0) \cap U_0$ is finite, thereby proving the claim.

Since $\overline{V}$ is compactly contained in $U_0$ and $F|_{\R^n\setminus F^{-1}(0)} \colon \R^n \setminus F^{-1}(0) \to C\setminus \{0\}$ is an open map, we have that $F(W)=B^m(0,r)\cap C$ for each component $W\subset V$. We orient $C\setminus \{0\}$ with the volume form $\omega|_{C\setminus \{0\}}$. Then, by change of variables, 
\[
\int_W \norm{DF}^n \ge \int_W J_F = \vol(B^n(0,r)\cap C).
\]
Since $\norm{DF}^n\in L^1(V)$, we conclude that $V$ has finitely many components. 
\end{proof}

Proposition \ref{prop:conical-singularities}, together with Picard theorems for quasiregular mappings, yield the following version of Varopoulos's theorem for conformal curves. For the statement, we recall that a finitely generated group $\Gamma$ has \emph{polynomial growth of order at most $n$} if there exists a (symmetric) generating set $S$ of $\Gamma$ for which the volume $r\mapsto \vol(B_\Gamma(e,r))$ of the balls of radius $r$ about the neutral element $e$ in the Cayley graph $\Cayley(\Gamma,S)$ is dominated by a polynomial $r\mapsto Cr^n$.

\begin{corollary}
\label{cor:Varopoulos}
Let $\omega \in \Lambda^n \R^m$ be a calibration, for $3 \leq n \leq m$, and let $N$ be a cone over link $\Sigma \subset B^m(0,1)$, where $\Sigma$ is a closed $(n-1)$-manifold, for which $N\setminus \{0\}$ is an $\omega$-calibrated submanifold. If there exists a non-constant conformal $\omega$-curve $\R^n \to N$, then the fundamental group $\pi_1(\Sigma)$ has polynomial growth of order at most $n$. 
\end{corollary}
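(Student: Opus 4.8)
The plan is to reduce the statement to the quasiregular ellipticity of a cocompact cover and then apply Varopoulos's theorem in the form \cite[Theorem X.5.1]{Varopoulos:91}. Normalizing $y_0=0$ and $\Sigma\subset\bS^{m-1}$, let $F\colon\R^n\to N$ be the given nonconstant conformal $\omega$-curve. By \Cref{prop:conical-singularities}, the fiber $E\coloneqq F^{-1}(0)$ is discrete, and its proof shows that $E$ is a closed set of $n$-capacity zero. The punctured cone $N\setminus\{0\}$ is a smooth submanifold, and the map $(z,t)\mapsto e^{t}z$ identifies $\Sigma\times\R$ conformally with $N\setminus\{0\}$, since the cone metric equals $e^{2t}(dt^{2}+g_\Sigma)$ and is hence conformal to the product cylinder metric $dt^{2}+g_\Sigma$. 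Therefore $\hat F\coloneqq F|_{\R^n\setminus E}$ becomes, after this identification, a nonconstant conformal ($1$-quasiregular) map into $\Sigma\times\R$. Since $n\ge 3$ and $E$ is discrete, $\R^n\setminus E$ is simply connected, so $\hat F$ lifts through the universal covering $\tilde\Sigma\times\R\to\Sigma\times\R$ to a nonconstant conformal map $\tilde F\colon\R^n\setminus E\to\tilde\Sigma\times\R$.

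Next I would show that the cover $\tilde\Sigma\times\R$ is $n$-parabolic. The key point is that $\R^n\setminus E$ is $n$-parabolic, because $E$ is closed of $n$-capacity zero and $\R^n$ is $n$-parabolic, so removing $E$ is negligible for $n$-capacity (cf.\ \cite[Theorem 10.1]{Hei:Kil:Mar:06}). If $\tilde\Sigma\times\R$ were $n$-hyperbolic, it would carry a nonconstant nonnegative $n$-superharmonic function $s$; since pullbacks of $\mathcal A$-superharmonic functions under quasiregular maps are again $\mathcal A$-superharmonic (see \cite[Chapter 14]{Hei:Kil:Mar:06} and \cite[Section VI.2]{Ric:93}) and $\tilde F$ is open and nonconstant, the function $s\circ\tilde F$ would be a nonconstant nonnegative $n$-superharmonic function on the $n$-parabolic manifold $\R^n\setminus E$, which is impossible. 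Hence $\tilde\Sigma\times\R$ is $n$-parabolic.

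Finally I would read off the growth bound. The manifold $\tilde\Sigma\times\R$ is the universal cover of the closed $n$-manifold $\Sigma\times\bS^{1}$, and the group $\pi_1(\Sigma)\times\Z$ acts on it cocompactly by isometries, with $\Z$ acting by integer translations of the $\R$-factor and with quotient $\Sigma\times\bS^{1}$. Varopoulos's theorem \cite[Theorem X.5.1]{Varopoulos:91} translates the $n$-parabolicity of this cocompact cover into polynomial growth of order at most $n$ for the deck group $\pi_1(\Sigma)\times\Z$. Since $\pi_1(\Sigma)$ is a subgroup of $\pi_1(\Sigma)\times\Z$, it also has polynomial growth of order at most $n$, as claimed.

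The hard part will be the transfer of parabolicity across the exceptional set $E$. Concretely, one must verify that $\tilde F$ is a genuine quasiregular map in the sense required for the pullback of $n$-superharmonic functions — its branch set is present, and it has essential singularities along $E$, where the cylinder coordinate $t=\log|F|$ tends to $-\infty$ — and that $\R^n\setminus E$ remains $n$-parabolic after removing the capacity-zero set $E$. These facts are standard in nonlinear potential theory, but assembling them relies on the calibrated regularity of \Cref{prop:nharmonic} and on the capacity estimate inside the proof of \Cref{prop:conical-singularities}; this packaging is precisely the adaptation of Varopoulos's theorem to the setting of conformal $\omega$-curves.
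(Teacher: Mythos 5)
Your proposal is correct and follows essentially the same route as the paper: the capacity-zero fiber from \Cref{prop:conical-singularities}, the $n$-parabolicity and simple connectivity of $\R^n\setminus F^{-1}(0)$, the lift to the universal cover of the punctured cone (conformally the cylinder $\Sigma\times\R$), and the Varopoulos--Coulhon--Holopainen--Saloff-Coste equivalence between $n$-parabolicity of a cocompact cover and polynomial growth of order at most $n$ of the deck group. The paper merely packages this by contradiction, citing \cite[Proposition 5.1]{Cou:Hol:S-C:01} for the nonexistence of nonconstant quasiregular maps from an $n$-parabolic into an $n$-hyperbolic manifold (your superharmonic-pullback step) and the proof of \cite[Theorem 1.3]{Pan:Raj:11} for the $n$-hyperbolicity of the cover, rather than rederiving these as you do.
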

\begin{proof}
Suppose there exists a conformal $\omega$-curve $F \colon \R^n \to N$, where $N$ is a cone over a manifold $\Sigma$ whose fundamental group $\pi_1(\Sigma)$ does not have polynomial growth of order at most $n$.
The space $N$ has one conical singularity and, by Proposition \ref{prop:conical-singularities}, $F^{-1}(0)$ is a closed set of $n$-capacity zero. Let $N_o = N\setminus \{0\}$. Then $M = \R^n \setminus F^{-1}(0)$ is $n$-parabolic in the terminology of Coulhon, Holopainen, and Saloff-Coste \cite{Cou:Hol:S-C:01} and simply connected by Rickman \cite[Lemma III.5.1]{Ric:93}. Let $H = F|_M \colon M\to N\setminus \{0\}$ and consider its lift $\widetilde H \colon M \to \widetilde{N\setminus \{0\}}$ to the universal cover $\widetilde{N_o}$ of $N_o$. Then $\widetilde{N_0}$ is $n$-hyperbolic; see the proof of \cite[Theorem 1.3]{Pan:Raj:11}. Thus, by \cite[Proposition 5.1]{Cou:Hol:S-C:01}, the mapping $\widetilde H$ is constant. This is a contradiction. 
\end{proof}

\subsection{Elementary factorization of curves having affine image}

We state now formally an elementary -- but essential -- factorization theorem for conformal curves, which is used repeatedly in the following sections. 

\begin{theorem}[Elementary factorization]
\label{thm:elementary-factorization} 
Let $\omega \in \Lambda^n \R^m$ for $2 \leq n \leq m$, let $\Omega \subset \R^n$ be a domain, and let $F\colon \Omega \to \R^m$ be a conformal $\omega$-curve whose image $F(\Omega)$ is contained in an $n$-dimensional affine subspace of $\R^m$. Then there exists an affine map $A\colon \R^n \to \R^m$, $x\mapsto y_0 + Lx$, where $y_0\in \R^m$ and $L \in \SO(\omega)$, and a conformal mapping $g \colon \Omega \to \R^n$ for which $F = A \circ g$.
\end{theorem}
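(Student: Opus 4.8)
The plan is to identify the $n$-dimensional affine subspace containing $F(\Omega)$ with $\R^n$ by means of a suitable element of $\SO(\omega)$, and then to verify that the resulting coordinate map is conformal in the classical sense.

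First I would dispose of the constant case: if $F\equiv p$, then, since $\omega$ is a calibration, $\SO(\omega)\neq\emptyset$; writing the affine subspace as $y_0+V$ with $\dim V=n$, picking any $L\in\SO(\omega)$ with $L\R^n=V$, and noting $p-y_0\in V$, the constant map $g\equiv L^{-1}(p-y_0)$ together with $A(x)=y_0+Lx$ gives $F=A\circ g$. So assume $F$ is nonconstant. Write the affine subspace as $W=y_0+V$ with $V\subset\R^m$ an $n$-dimensional linear subspace. Since $F-y_0$ is $V$-valued and $V$ is linear, $(DF)_x\R^n\subset V$ for every $x$ (for $u\perp V$ the function $\langle F(\cdot),u\rangle$ is constant, so $\langle (DF)_xv,u\rangle=0$). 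By \Cref{prop:nharmonic}, $F$ is $\mathcal{C}^{1,\alpha}$, and as $F$ is nonconstant there is a point $x_0$ with $(DF)_{x_0}\neq 0$. Then $(DF)_{x_0}\in\CO(\omega)\setminus\{0\}$, so $(DF)_{x_0}=\lambda_0 L$ with $\lambda_0>0$ and $L\in\SO(\omega)$; since $L\R^n=(DF)_{x_0}\R^n\subset V$ is $n$-dimensional, in fact $L\R^n=V$, so $L\colon\R^n\to V$ is an isometric isomorphism and $V\in\Gr(\omega)$.

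Next I would set $g=L^{-1}\circ(F-y_0)\colon\Omega\to\R^n$, where $L^{-1}\colon V\to\R^n$ is the inverse isometry; this is legitimate because $F(\Omega)-y_0\subset V$. By construction $F=y_0+Lg=A\circ g$ with $A(x)=y_0+Lx$, and $g\in W^{1,n}_{\loc}(\Omega,\R^n)$ with $(Dg)_x=L^{-1}(DF)_x$ almost everywhere, since $L^{-1}$ is linear. It remains to check that $g$ is conformal, i.e. $(Dg)_x\in\CO(n)$ almost everywhere. At a.e.\ $x$ we have $(DF)_x=\lambda_x P_x$ with $\lambda_x\geq 0$ and $P_x\in\SO(\omega)$; if $\lambda_x>0$, then $P_x\R^n=(DF)_x\R^n=V$ (again using that the image lies in the $n$-dimensional space $V$), so $L^{-1}P_x\colon\R^n\to\R^n$ is a linear isometry and $(Dg)_x=\lambda_x\,L^{-1}P_x$; for $\lambda_x=0$ we have $(Dg)_x=0\in\CO(n)$.

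The hard --- though ultimately bookkeeping --- point is to confirm that $L^{-1}P_x$ is orientation preserving, so that $(Dg)_x\in\CO(n)$ rather than merely in $\lambda_x\,O(n)$; this is exactly where the orientation built into the definition of $\SO(\omega)$ is essential. From $L^*\omega=\vol_{\R^n}$ I would first deduce $(L^{-1})^*\vol_{\R^n}=\omega|_V$, and hence
\[
(L^{-1}P_x)^*\vol_{\R^n}=P_x^*\big((L^{-1})^*\vol_{\R^n}\big)=P_x^*(\omega|_V)=P_x^*\omega=\vol_{\R^n},
\]
using $P_x\in\SO(\omega)$ and $P_x\R^n=V$. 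Thus $L^{-1}P_x\in\SO(n)$, so $(Dg)_x\in\CO(n)$ for a.e.\ $x$, and $g$ is a conformal $\vol_{\R^n}$-curve, that is, a conformal mapping $\Omega\to\R^n$. This yields the desired factorization $F=A\circ g$ with $L\in\SO(\omega)$.
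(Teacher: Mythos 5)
Your proof is correct and takes essentially the same route as the paper: both identify the affine subspace $y_0+V$ with $\R^n$ via an isometry compatible with $\omega|_V=\vol_V$ (the paper picks an ambient rotation $Q\in\SO(m)$ with $Q^{*}\vol_{\R^n}=\omega|_V$ and sets $f=Q\circ F$, while you invert an $L\in\SO(\omega)$ extracted from a point of nonvanishing differential), and then observe that the coordinate map is a Sobolev map with differential in $\CO(n)$ almost everywhere; your explicit orientation check $(L^{-1}P_x)^{*}\vol_{\R^n}=\vol_{\R^n}$ and your derivation that $V\in\Gr(\omega)$ from $(DF)_{x_0}\neq 0$ merely spell out steps the paper leaves implicit. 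One small slip in your constant case: for the \emph{given} subspace $V$ an $L\in\SO(\omega)$ with $L\R^n=V$ need not exist (that would force $V\in\Gr(\omega)$), but the case is trivially repaired by taking any $L\in\SO(\omega)$, $y_0=p$, and $g\equiv 0$.
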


\begin{proof}
By translating the image of $F$, we may assume that $0\in F(\Omega)$ and that $F(\Omega)$ is contained in an $n$-dimensional linear subspace $V$ of $\R^m$. Let $Q \in \SO(m)$ be a linear isometry for which $Q(V) = \R^n \simeq \mathbb{R}^{n} \times \left\{0\right\} \subset \R^m$ and $Q^*\vol_{\R^n} = \vol_V = \omega|_V$. Let $f=Q \circ F \colon \Omega \to \R^n$. Then $f\in W^{1,n}_{\loc}(\Omega, \R^n)$ and $(Df)_x\in \CO(n)$ for almost every $x\in \Omega$. Thus $f$ is a conformal map. Now $F = Q^{-1}|_{\R^n} \circ f$, where $Q^{-1}|_{\R^n} \in \SO(\omega)$.
\end{proof}


\subsection{Two-dimensional conformal $\omega$-curves are pseudoholomorphic}
\label{sec:two-dimensional-curves}
We denote by
\begin{equation*}
    \omega_{\symp}
    \coloneqq
    \sum_{ k = 1 }^{ \ell }
    dx_{2k-1} \wedge dx_{2k}
\end{equation*}
the \emph{standard symplectic form} on $\mathbb{R}^{ 2 \ell }$ for $\ell \in \left\{1,2,\dots\right\}$. Conformal $\omega_{\symp}$-curves are \emph{pseudoholomorphic}, see e.g.~\cite[Chapter 2]{McD:Sal:04}. We prove below that two-dimensional conformal $\omega$-curves, for any $\omega \in \Lambda^2 \mathbb{R}^m$, are pseudoholomorphic with respect to a symplectic form on a subspace. This is based on the following lemma on equivalence of calibrations in $\Lambda^2 \R^m$; see \cite[Theorem 7.16, p.~79]{Har:Law:82}.
\begin{lemma}
\label{lemma:symplectic-equivalence}
Let $\omega \in \Lambda^2 \R^m$ be a calibration. Then $\omega$ is equivalent to a symplectic form on a subspace. More precisely, there exists an orthogonal projection $\pi \colon \R^m \to V \subset \R^m$ and an isometric linear isomorphism $\iota \colon V \to \mathbb{R}^{2\ell}$ for which $\omega = ( \iota \circ \pi )^{*} \omega_{\symp}$ for the standard symplectic form $\omega_\symp \in \Lambda^2 \R^{2\ell}$ in $\R^{2\ell}$ for $1 \le \ell \le \lfloor m/2 \rfloor$.  
\end{lemma}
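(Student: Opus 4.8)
The plan is to reduce the statement to the spectral normal form for skew--symmetric bilinear forms and then read off both the comass and the set of calibrated planes from the singular values. First I would invoke the standard orthogonal normal form: there is an orthonormal basis $(e_1,\dots,e_m)$ of $\R^m$ and reals $\lambda_1\ge\lambda_2\ge\cdots\ge\lambda_r>0$ with
\[
\omega=\sum_{k=1}^{r}\lambda_k\,e_{2k-1}^{*}\wedge e_{2k}^{*},
\]
the remaining basis vectors spanning the kernel of $\omega$ regarded as a skew--symmetric operator $\Omega\colon\R^m\to\R^m$. Since $\|\Omega\|=\lambda_1$, for orthonormal $v_1,v_2$ one has $\omega(v_1,v_2)=\langle v_1,\Omega v_2\rangle\le\lambda_1$ by Cauchy--Schwarz, with equality at $v_1=e_1,\,v_2=e_2$; hence $\|\omega\|_{\comass}=\lambda_1$. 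As $\omega$ is a calibration, $\lambda_1=1$.

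Next I would isolate the top block. Let $\ell\ge1$ be the multiplicity of the value $1$ among the $\lambda_k$, set $V=\operatorname{span}(e_1,\dots,e_{2\ell})$ (so $1\le\ell\le\lfloor m/2\rfloor$), let $\pi\colon\R^m\to V$ be the orthogonal projection, and let $\iota\colon V\to\R^{2\ell}$ be the coordinate isometry sending $e_j$ to the $j$-th standard basis vector. By construction $\omega|_V=\sum_{k=1}^{\ell}e_{2k-1}^{*}\wedge e_{2k}^{*}$, so that $\iota^{*}\omega_{\symp}=\omega|_V$; on $V$ the operator $\Omega$ restricts to the compatible complex structure $J$ given by a right-angle rotation in each block.

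The heart of the argument is to show that $\omega$ and $(\iota\circ\pi)^{*}\omega_{\symp}$ calibrate exactly the same \emph{oriented} $2$-planes, since $\CO(\omega)$ is determined precisely by its set of oriented calibrated planes (each $O\in\SO(\omega)$ being an oriented orthonormal frame of such a plane). I would prove that every calibrated plane of $\omega$ lies in $V$ and is a $J$-complex line: if $v_1,v_2$ are orthonormal with $\omega(v_1,v_2)=1$, then the equality case of $\omega(v_1,v_2)=\langle v_1,\Omega v_2\rangle\le\|\Omega v_2\|\le1$ forces $\|\Omega v_2\|=1$ and $v_1=\Omega v_2$; splitting $v_2=v_2^{V}+v_2^{\perp}$ and using $\|\Omega v_2\|^{2}\le\|v_2^{V}\|^{2}+\lambda_{\ell+1}^{2}\|v_2^{\perp}\|^{2}$ with $\lambda_{\ell+1}<1$ gives $v_2\in V$, whence $v_1=\Omega v_2=Jv_2\in V$. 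Thus $(v_1,v_2)$ is an oriented complex line in $V$, and conversely every such line is calibrated by $\omega|_V=\iota^{*}\omega_{\symp}$. The same computation for $(\iota\circ\pi)^{*}\omega_{\symp}$, now with $\|\pi v_i\|\le1$, shows its calibrated planes are also exactly the oriented complex lines in $V$; the orientations match because $\omega|_V=\iota^{*}\omega_{\symp}$ holds literally.

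Consequently $\SO(\omega)=\SO\bigl((\iota\circ\pi)^{*}\omega_{\symp}\bigr)$ and hence $\CO(\omega)=\CO\bigl((\iota\circ\pi)^{*}\omega_{\symp}\bigr)$, i.e.\ $\omega$ is equivalent to the symplectic form $(\iota\circ\pi)^{*}\omega_{\symp}$ on the subspace $V$. I expect the main obstacle to be the equality-case analysis in the previous paragraph---pinning down that a calibrated plane cannot acquire any component in the directions where $\lambda_k<1$ and must in fact be $J$-holomorphic---together with the bookkeeping needed to match oriented planes rather than merely unoriented ones; the normal form and the comass computation are routine by comparison.
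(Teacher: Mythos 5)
Your proof is correct and takes essentially the same route as the paper: both pass to the orthogonal (symplectic) normal form of $\omega$, isolate the subspace $V$ spanned by the blocks with coefficient $\lambda_k = 1$, and conclude that $\omega$ and $(\iota\circ\pi)^{*}\omega_{\symp}$ are equivalent, i.e.\ have the same $\CO$-sets. The only difference is one of detail: the paper cites Federer for the normal form and asserts the equality $\CO(\hat\omega)=\CO(\omega)$ without elaboration, whereas you spell out precisely that implicit step via the Cauchy--Schwarz equality-case (pinching) analysis showing every calibrated plane lies in $V$ and is a $J$-complex line.
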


\begin{proof}
By the construction of a symplectic basis of $\omega$, there exists a $2k$-dimensional subspace $W$ of $\R^m$ and an orthonormal basis $(v_1,\ldots, v_{2k})$ of $W$ for which 
\[
\omega|_W = \lambda_1 v_1^* \wedge v_2^* + \cdots + \lambda_k v_{2k-1}^*\wedge v_{2k}^*
\]
and $\omega = \pi_W^* \omega|_W$, where $\pi_W \colon \R^m \to W$ is an orthogonal projection; see e.g.~Federer \cite[p.29]{Fed:69}. By permuting the basis elements if necessary, we may further assume that $\lambda_1 \ge \cdots \ge \lambda_k \ge 0$.

Let now $1\le \ell \le k$ be the maximal number for which $\lambda_\ell = \lambda_1$ and let $V$ be the subspace of $W$ spanned by vectors $v_1,\ldots, v_{2\ell}$. Then $\lambda_1 = \dots = \lambda_{\ell} = 1$ since $\omega$ is a calibration, cf. the proof of \Cref{lemm:decompositionofcalibrations}. Moreover, 
\[
\hat \omega = \pi_W^*(v_1^*\wedge v_2^* + \cdots + v_{2\ell-1}^* \wedge v_{2\ell}^*)
\]
satisfies $\CO(\hat \omega) = \CO(\omega)$.

Finally, let $\iota \colon V \to \mathbb{C}^{\ell}$ be the linear isometry satisfying $\iota(v_i) = e_i$ for $1 \leq i \leq \ell$ for the standard basis $(e_1,\ldots, e_{2\ell})$ of $\R^{2\ell}$. Then $( \iota \circ \pi )^*\omega_\symp = \pi^* \iota^* \omega_\symp = \hat \omega$. Thus $\omega$ is equivalent to $( \iota \circ \pi )^*\omega_\symp$.
\end{proof}

\begin{proposition}
\label{prop:2-dim-case}
Let $2 = n < m$ and consider a calibration $\omega \in \Lambda^2 \R^m$. Then there exists an even dimensional subspace $V\subset \R^m$ and an almost complex structure $J \colon V\to V$ having the following property. Let $F\colon \Omega \to \R^m$ be a conformal $\omega$-curve. Then $F = \tau|_V \circ H$, where $H \colon \Omega \to V$ is $J$-holomorphic and $\tau \colon \R^m \to \R^m$ is a translation. 
\end{proposition}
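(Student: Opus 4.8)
The plan is to reduce $\omega$ to the symplectic normal form of \Cref{lemma:symplectic-equivalence} and then read off the Cauchy--Riemann relations directly from the pointwise inclusion $(DF)_x\in\CO(\omega)$. By \Cref{lemma:symplectic-equivalence} there is an orthogonal projection $\pi\colon\R^m\to V$ onto an even-dimensional subspace $V$ and a linear isometry $\iota\colon V\to\R^{2\ell}$ with $\CO(\omega)=\CO(\omega')$ for $\omega'=(\iota\circ\pi)^{*}\omega_\symp$. I would then define $J\colon V\to V$ by $J=\iota^{-1}\circ J_0\circ\iota$, where $J_0\colon\R^{2\ell}\to\R^{2\ell}$ is the standard complex structure (multiplication by $i$ under $\R^{2\ell}=\C^{\ell}$); since $\iota$ is an isometry and $J_0$ is orthogonal with $J_0^2=-\id$, the map $J$ is an orthogonal almost complex structure on $V$, independent of $F$.

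The key linear-algebra step is to show that a nonzero $A\in\CO(\omega')$ satisfies $A(\R^2)\subset V$ together with the relation $A\circ J_{\R^2}=J\circ A$, where $J_{\R^2}$ is the rotation by $\pi/2$ on $\R^2$ sending $e_1$ to $e_2$. Writing $A=\lambda O$ with $O\in\SO(\omega')$ and $\lambda>0$, I would use that $Oe_1,Oe_2$ are orthonormal and $\omega_\symp(\iota\pi Oe_1,\iota\pi Oe_2)=(O^{*}\omega')(e_1,e_2)=1$. Since $\omega_\symp$ has comass one and $\abs{\pi w}\le\abs{w}$ for the projection $\pi$, equality forces $\pi Oe_1=Oe_1$ and $\pi Oe_2=Oe_2$, so that $Oe_1,Oe_2\in V$, and moreover $\iota Oe_2=J_0(\iota Oe_1)$ because $\omega_\symp$ is calibrated exactly on positively oriented complex lines. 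Translating this back through $\iota$ gives $Oe_2=J(Oe_1)$, hence $Ae_2=J(Ae_1)$; the case $A=0$ being trivial, this is precisely the claimed description.

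With this in hand the factorization is immediate. Since $\CO(\omega)=\CO(\omega')$, the curve $F$ is also a conformal $\omega'$-curve, so $(DF)_x(\R^2)\subset V$ for almost every $x$, whence the $V^{\perp}$-component $\pi^{\perp}\circ F$ has vanishing derivative almost everywhere. As $F\in\mathcal{C}^{1,\alpha}_{\loc}(\Omega,\R^m)$ by \Cref{prop:nharmonic} and $\Omega$ is connected, $\pi^{\perp}\circ F$ equals a constant $c\in V^{\perp}$, so that $F(\Omega)\subset c+V$. Setting $\tau\colon\R^m\to\R^m$, $y\mapsto y+c$, and $H=F-c\colon\Omega\to V$, the identity $Ae_2=J(Ae_1)$ applied to $A=(DF)_x$ shows that $(DH)_x\circ J_{\R^2}=J\circ(DH)_x$ almost everywhere, i.e.\ $H$ is $J$-holomorphic, and $F=\tau|_V\circ H$ by construction.

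I expect the main obstacle to be the equality analysis in the second step: one must argue that the only way an orthonormal pair $Oe_1,Oe_2$ can achieve the comass bound $\omega_\symp(\iota\pi Oe_1,\iota\pi Oe_2)=1$ is for the pair to lie in $V$ and to span a positively oriented $J_0$-complex line, so that no length is lost under $\pi$ and the orientation is pinned down. This is essentially the equality case of Wirtinger's inequality combined with $\abs{\pi w}\le\abs{w}$; once it is established, the constancy of $\pi^{\perp}\circ F$ and the definitions of $\tau$ and $H$ are routine.
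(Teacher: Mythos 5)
Your proposal is correct, and it follows the same overall route as the paper: reduce to the symplectic normal form of \Cref{lemma:symplectic-equivalence} and set $J = \iota^{-1} \circ J_0 \circ \iota$ on $V$. The difference lies in how the two remaining points are handled. The paper simply composes: it observes that $\iota \circ \pi \circ F$ is a conformal $\omega_{\symp}$-curve, hence $J_0$-holomorphic by a citation to \cite[Example 1.3]{Pan:20}, and it treats the translation part tersely (normalizing $0 \in F(\Omega)$ at the outset and leaving the constancy of the $V^{\perp}$-component implicit). You instead prove both points from first principles: the equality case of Wirtinger's inequality combined with $|\pi w| \le |w|$ shows that every nonzero $A \in \CO(\omega')$, for $\omega' = (\iota\circ\pi)^{*}\omega_{\symp}$, maps $\R^2$ into $V$ and intertwines $J_{\R^2}$ with $J$; from the first conclusion you get that $\pi^{\perp}\circ F$ has vanishing derivative and is therefore constant, and from the second the Cauchy--Riemann relation for $H = F - c$. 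Your version is self-contained and, usefully, makes explicit why $F(\Omega)$ lies in a translate of $V$ --- the precise content of the claimed factorization $F = \tau|_V \circ H$, which the paper's proof does not spell out --- while the paper's version buys brevity by outsourcing the holomorphicity to the literature. One minor remark: the constancy of $\pi^{\perp}\circ F$ does not actually require the $\mathcal{C}^{1,\alpha}$-regularity of \Cref{prop:nharmonic}; a $W^{1,2}_{\loc}$-map with almost everywhere vanishing weak differential on a connected domain is already constant.
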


\begin{proof}
By translating the image of $F$, we may assume that $0 \in F(\Omega)$. By Lemma \ref{lemma:symplectic-equivalence}, there exists an orthogonal projection $\pi \colon \R^m \to V \subset \R^m$ and a linear isometry $\iota \colon V \to \mathbb{R}^{2\ell}$ for which $\omega = ( \iota \circ \pi )^{*} \omega_{\symp}$ for the standard symplectic form $\omega_\symp \in \Lambda^2 \R^{2\ell}$ in $\R^{2\ell}$ for $1 \le \ell \le \lfloor m/2 \rfloor$.

We have that $F \colon \Omega \to \R^m$ is a conformal $( \iota \circ \pi )^{\star}\omega_{\symp}$-curve and hence $G= \pi \circ F \colon \Omega \to \R^m$ is a conformal $\iota^*\omega_\symp$-curve. Similarly, $H = \iota \circ G$ is a $\omega_{\symp}$-curve. As $H$ is a conformal $\omega_{\symp}$-curve, it is $J_0$-holomorphic with respect to the standard complex structure $J_0$ on $\C^{\ell}$; see e.g.~\cite[Example 1.3]{Pan:20}. If we denote $J = \iota^{-1} \circ J_0 \circ \iota \colon V \to V$, then $G$ is $J$-holomorphic. This concludes the proof.
\end{proof}


\section{Isoperimetric inequality for conformal curves}
\label{sec:isoperimetry}

The following theorem is a conformal curve replacement for Reshetnyak's isoperimetric inequality \cite{Res:66:isoperi}; see \cite[Proposition 6.7]{Iko:23} for a related result.

%
\begin{restatable}{theorem}{localIsom}
\label{thm:isoperimetric-inequality}
Let $\omega \in \Lambda^n \R^m$ be a calibration, $\Omega\subset \R^n$ an open set, and let $F \colon \Omega \to \R^m$ be a conformal $\omega$-curve. Then, for $\overline{B}(x,r) \subset \Omega$,
\begin{equation}
\label{eq:growthinequality}
    \int_{ B( x, r ) }
        \|DF\|^n
    \,d\mathcal{H}^n
    \leq
    A_{n-1}
    \left(
        \int_{ \partial B(x,r) } \|DF\|^{n-1} \,d\mathcal{H}^{n-1}
    \right)^{ \frac{n}{n-1} },
\end{equation}
where $A_{n-1}$ is Almgren's isoperimetric constant. If \eqref{eq:growthinequality} is an equality, then $F( B(x,r) )$ is an affine image of $B(x,r)$.
\end{restatable}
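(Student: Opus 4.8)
The plan is to realize both sides of \eqref{eq:growthinequality} as masses of currents and then invoke the optimal isoperimetric inequality for area-minimizing currents. By \Cref{prop:nharmonic}, $F$ is $\mathcal{C}^{1,\alpha}$ and hence locally Lipschitz, so on $\overline{B}(x,r) \subset \Omega$ the pushforward $T = F_{*}[B(x,r)]$ of $B(x,r)$, with its standard orientation, is a well-defined integer-multiplicity rectifiable current satisfying $\partial T = F_{*}[\partial B(x,r)]$. By \Cref{cor:LA} the $n$-Jacobian of $F$ equals $\star F^{*}\omega = \|DF\|^n \geq 0$, so no orientation cancellation occurs and the area formula gives $\mathbf{M}(T) = \int_{B(x,r)}\|DF\|^n\,d\mathcal{H}^n$. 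Writing $(DF)_x = \|DF\|_x\, O_x$ with $O_x$ a linear isometry, as $(DF)_x \in \CO(\omega)$, the restriction of $DF$ to the tangent space of the sphere $\partial B(x,r)$ scales $(n-1)$-volume by exactly $\|DF\|^{n-1}$; hence $\mathbf{M}(\partial T) \leq \int_{\partial B(x,r)}\|DF\|^{n-1}\,d\mathcal{H}^{n-1}$, with equality precisely when $F|_{\partial B(x,r)}$ has multiplicity one.

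The key geometric input is that $T$ is calibrated. Since $\omega$ has constant coefficients it is closed, and since $(DF)_x \in \CO(\omega)$ the oriented tangent planes of $T$ lie in $\Gr(\omega)$ with $\langle \omega, \vec T\rangle = 1$ holding $\|T\|$-almost everywhere; thus $\mathbf{M}(T) = \int_T \omega$. For any competitor $T'$ with $\partial T' = \partial T$, the cycle $T - T'$ is a boundary in $\R^m$, so $\int_T\omega = \int_{T'}\omega \leq \mathbf{M}(T')$ because $\omega$ is closed of comass one. Hence $T$ is homologically mass-minimizing in the sense of Harvey--Lawson \cite{Har:Law:82}. Consequently, Almgren's optimal isoperimetric inequality for area-minimizing currents applies to $T$ and yields $\mathbf{M}(T) \leq A_{n-1}\,\mathbf{M}(\partial T)^{\frac{n}{n-1}}$, with $A_{n-1}$ the sharp constant attained by the flat round disk. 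Chaining the three estimates produces exactly \eqref{eq:growthinequality}.

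For the rigidity statement, suppose \eqref{eq:growthinequality} is an equality; then each of the three estimates above is an equality. In particular $\mathbf{M}(T) = A_{n-1}\mathbf{M}(\partial T)^{\frac{n}{n-1}}$ realizes equality in Almgren's inequality, whose rigidity case forces $\partial T$ to be an integer multiple of a round $(n-1)$-sphere and $T$ the flat $n$-disk it bounds; the equality $\mathbf{M}(\partial T) = \int_{\partial B(x,r)}\|DF\|^{n-1}$ forces $F|_{\partial B(x,r)}$ to have multiplicity one, so this round sphere is genuinely $F(\partial B(x,r))$ and $F(B(x,r))$ is the flat disk it bounds, supported in an affine $n$-plane $V \subset \R^m$. \Cref{thm:elementary-factorization} then factors $F = A \circ g$ through an affine map $A\colon \R^n \to \R^m$, $x\mapsto y_0 + Lx$ with $L \in \SO(\omega)$, and a conformal $g\colon B(x,r) \to \R^n$; since $F(B(x,r))$ is the round disk bounded by $F(\partial B(x,r))$, it is a similarity image of $B(x,r)$, which is the desired affine image.

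I expect the main obstacle to be the rigidity analysis in the equality case: one must justify that equality propagates through all three estimates simultaneously and combine the sharp equality characterization of Almgren's inequality with the multiplicity-one conclusion on the boundary. The measure-theoretic bookkeeping --- that $\mathbf{M}(T) = \int\|DF\|^n$ without cancellation and that the oriented tangent planes are $\omega$-calibrated $\|T\|$-almost everywhere, both resting on \Cref{cor:LA} and the immersion regularity of \Cref{prop:nharmonic} --- is routine but must be handled with care, as is the verification that the calibrated current $T$ is genuinely mass-minimizing rather than merely stationary.
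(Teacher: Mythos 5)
Your proposal is correct and follows essentially the same route as the paper's: its Proposition \ref{prop:massenergy} likewise realizes $\int_{B(x,r)}\|DF\|^n\,d\mathcal{H}^n$ as the mass of the calibrated (hence mass-minimizing) pushforward current $F_\sharp[B(x,r)]$, applies Almgren's sharp isoperimetric inequality together with its equality case, and then bounds $M(F_\sharp[\partial B(x,r)])$ by $\int_{\partial B(x,r)}\|DF\|^{n-1}\,d\mathcal{H}^{n-1}$ exactly as you do. The only difference is that the paper outsources your mass bookkeeping and the calibration/mass-minimization argument to \cite[Lemma 6.1, Proposition 6.7]{Iko:23} rather than arguing them directly; also, your parenthetical ``equality precisely when $F|_{\partial B(x,r)}$ has multiplicity one'' should say ``no orientation cancellation,'' a harmless imprecision here.
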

Recall that 
\begin{equation*}
    A_{n-1}
    =
    \frac{ \omega_n }{ ( n \omega_n )^{ \frac{n}{n-1} } }
    =
    \frac{1}{ n^{ \frac{n}{n-1} } \omega_n^{ \frac{1}{n-1} } }
\end{equation*}
is the constant in Almgren's isoperimetric inequality \cite{Alm:86} for integral $n$-currents in $\mathbb{R}^m$ and $\omega_{n}$ is the volume of the $n$-dimensional unit ball in $\R^n$. \Cref{thm:isoperimetric-inequality} implies the subharmonicity of $\|DF\|^{ \frac{n-2}{2} }$ for $n\ge 3$ stated in \Cref{thm:realanalytic} and subharmonicity of $\log \norm{DF}$ for $n=2$.

The proof of \Cref{thm:isoperimetric-inequality}
is based on the following special case of mass-energy inequality for conformal $\omega$-curves; see \cite{Iko:23}. We use the information that conformal $\omega$-curves are $\mathcal{C}^1$-regular. 
\begin{proposition}[Mass-energy inequality]
\label{prop:massenergy}
Let $\omega \in \Lambda^n \R^m$ be a calibration, $\Omega\subset \R^n$ an open set, and $F \colon \Omega \to \R^m$ a conformal $\omega$-curve. Then, for $\overline{B}(x,r) \subset \Omega$, 
\begin{equation}
\label{eq:growthinequality:isom}
\int_{ B(x,r) } \|DF\|^n \,d\mathcal{H}^n = M( F_{\sharp}[ B(x,r) ] ) 
\leq A_{n-1} \left( M( F_\sharp[ \partial B(x,r) ] ) \right)^{ \frac{n}{n-1} }.
\end{equation}
If \eqref{eq:growthinequality:isom} is an equality, then $F( B(x,r) )$ is an affine image of $B(x,r)$.
\end{proposition}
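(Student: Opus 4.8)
The plan is to prove Proposition \ref{prop:massenergy}, the mass-energy inequality whose equality case drives the rigidity. The key structural fact is the equality in Corollary \ref{cor:LA}: since $F$ is a conformal $\omega$-curve, $(DF)_x \in \CO(\omega)$ almost everywhere, which gives the pointwise identity $\|DF\|^n = \star F^*\omega$ and, consequently, $\|DF\|^n = J_F$, the Jacobian of $F$ as a map onto its image. This identifies the $n$-energy integral $\int_{B(x,r)}\|DF\|^n\,d\mathcal{H}^n$ with the mass $M(F_\sharp[B(x,r)])$ of the pushed-forward current, which is the first equality in \eqref{eq:growthinequality:isom}.

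First I would set up the pushforward current language carefully. Since $F$ is $\mathcal{C}^{1,\alpha}$-regular by Proposition \ref{prop:nharmonic} and $\overline{B}(x,r)\subset\Omega$, the image $F_\sharp[B(x,r)]$ is a well-defined integral $n$-current in $\R^m$ whose boundary is $\partial F_\sharp[B(x,r)] = F_\sharp[\partial B(x,r)]$ by naturality of the boundary operator under pushforward. The mass of the boundary current is controlled by the area of $F(\partial B(x,r))$, namely $M(F_\sharp[\partial B(x,r)]) \le \int_{\partial B(x,r)}\|DF\|^{n-1}\,d\mathcal{H}^{n-1}$, which is exactly the reduction needed to pass from \eqref{eq:growthinequality:isom} to \eqref{eq:growthinequality}. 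The heart of the inequality is then Almgren's sharp isoperimetric inequality \cite{Alm:86} for integral $n$-currents in $\R^m$: any integral $n$-current $T$ with $\partial T = S$ satisfies $M(T) \le A_{n-1}\,(M(S))^{n/(n-1)}$, provided $T$ is a mass-minimizing (area-minimizing) filling of its boundary. The crucial point that makes Almgren's inequality applicable with equality-tracking is that $F_\sharp[B(x,r)]$ is itself area-minimizing: its underlying surface $F(B(x,r))$ is an $\omega$-calibrated submanifold (in the full-rank set), and calibrated currents are homologically mass-minimizing by the fundamental calibration inequality of Harvey--Lawson \cite[Theorem II.4.2]{Har:Law:82}. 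So I would invoke the calibration property of $F$ to certify that $F_\sharp[B(x,r)]$ realizes the minimal mass among fillings of $F_\sharp[\partial B(x,r)]$, and then apply Almgren's inequality to that minimal filling.

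The main obstacle will be the equality case. Almgren's isoperimetric inequality is sharp, with equality characterized precisely when the minimizing current is (a multiplicity-one piece of) a round $n$-ball in some affine $n$-plane. I would argue that if \eqref{eq:growthinequality:isom} holds with equality, then $F_\sharp[B(x,r)]$ must coincide with such a flat round ball: the rigidity in Almgren's theorem forces $F(B(x,r))$ to lie in an affine $n$-dimensional subspace $V\subset\R^m$ and to be a round ball therein. Once the image is contained in an affine $n$-plane, Theorem \ref{thm:elementary-factorization} (elementary factorization) applies and yields $F = A\circ g$ with $A$ affine and $g\colon B(x,r)\to\R^n$ conformal; combined with the equality in the Euclidean isoperimetric inequality for the conformal map $g$, one concludes $g$ is itself an affine (M\"obius-then-affine) parametrization of a ball, so that $F(B(x,r))$ is an affine image of $B(x,r)$ as claimed. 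The delicate part is to extract the equality characterization cleanly from Almgren's theorem and to handle the possibly-lower-rank set $\{\|DF\|=0\}$; here the $\mathcal{C}^{1,\alpha}$-regularity and the real-analyticity on $\{\|DF\|>0\}$ from Proposition \ref{prop:nharmonic}, together with the identity $\|DF\|^n = J_F$, ensure the current has no hidden cancellation and that the degenerate set does not contribute mass, so the equality case transfers faithfully from the current level back to the map $F$.
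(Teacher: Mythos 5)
Your proposal is correct and follows essentially the same route as the paper: the paper's proof compresses exactly your chain --- the pointwise identity $\|DF\|^n = \star F^{*}\omega$, the identification of energy with the mass of the pushforward, mass-minimality of the calibrated current, Almgren's sharp isoperimetric inequality, and its equality rigidity --- into citations of \cite[Lemma 6.1 and Proposition 6.7]{Iko:23} together with \cite[Theorem 10]{Alm:86}. For the energy--mass identification, which you justify somewhat loosely via ``no hidden cancellation,'' the clean mechanism is the mass--comass sandwich
\[
\int_{B(x,r)} F^{*}\omega \;=\; F_{\sharp}[B(x,r)](\omega) \;\leq\; M( F_{\sharp}[B(x,r)] ) \;\leq\; \int_{B(x,r)} J_F \,d\mathcal{H}^n \;=\; \int_{B(x,r)} F^{*}\omega,
\]
which forces equality throughout; this is the content of \cite[Proposition 6.7]{Iko:23}, and it simultaneously certifies (together with exactness of the constant-coefficient form $\omega$ in $\R^m$) that $F_{\sharp}[B(x,r)]$ is mass-minimizing among fillings of its boundary, so that Almgren's inequality indeed applies as you intend.

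One caveat on your equality case: your final step is both unnecessary and, as stated, false. Once Almgren's rigidity identifies $F_{\sharp}[B(x,r)]$ with a multiplicity-one flat round disc, the claimed conclusion --- that $F(B(x,r))$ is an affine image of $B(x,r)$ --- is immediate, and this is where the paper stops. Equality in \eqref{eq:growthinequality:isom} does \emph{not} force the conformal factor $g$ from \Cref{thm:elementary-factorization} to be affine: a ball-preserving, non-affine Möbius transformation composed with an isometric affine embedding also attains equality, since the mass of the image and of its boundary see only the image sets. It is only the additional hypothesis that $\|DF\|$ is constant, in \Cref{thm:constant-norm-means-affine}, that upgrades the conclusion to $F$ itself being affine, so you should not carry the ``$g$ is affine'' claim forward into that argument.
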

Given an oriented $k$-dimensional submanifold $M \subset \mathbb{R}^n$, we denote by $[ M ]$ the \emph{integral $k$-current} associated to $M$. Here the mass measure is as in Ambrosio and Kirchheim \cite[Definition 2.6]{AK:00:current}; see also \cite[Section 2.7]{Iko:23}) for discussion.
\begin{proof}[Proof of \Cref{prop:massenergy}]
Let $\overline{B}(x,r_0)$ be a closed ball contained in $\Omega$.
Then \cite[Lemma 6.1 and Proposition 6.7]{Iko:23} together with the Euclidean isoperimetric inequality due to Almgren \cite[Theorem 10]{Alm:86} imply that
\begin{equation*}
    \int_{B(x,r)} \|DF\|^n \,d\mathcal{H}^n
    =
    \int_{ B(x,r) } F^{*}\omega
    =
    M( F_{\sharp}[ B(x,r) ] )
\end{equation*}
for every $0 < r < r_0$ where $[ B(x,r) ]$ is the integral $n$-current induced by $[ B(x,r) ]$, oriented in the standard way, and $F_{\sharp}[ B(x,r) ]$ the pushforward under the $\mathcal{C}^{1}$-mapping $F$, cf. \cite[Definition 2.4]{AK:00:current} or \cite[4.1.7]{Fed:69}. Indeed, the first equality follows from $\|DF\|^n = \star F^{*}\omega$ (recall \Cref{prop:nharmonic}) while the second equality is immediate from \cite[Proposition 6.7]{Iko:23}. Now \cite[Lemma 6.1]{Iko:23} gives that
\begin{equation*}
    M( F_{\sharp}[ B(x,r) ] )
    =
    \int_{ B(x,r) } F^{*}\omega
    \leq
    A_{n-1}
    \left(
        M( F_\sharp[ \partial B(x,r) ] )
    \right)^{ \frac{n}{n-1} }.
\end{equation*}
Suppose that inequality \eqref{eq:growthinequality:isom} is an equality. Then, by the equality condition in Almgren's isoperimetric inequality \cite[Theorem 10]{Alm:86}, the integral current $F_\sharp[ B(x,r) ]$ is an affine disc and $\partial F_\sharp[ B(x,r) ] = F_\sharp[ \partial B(x,r) ]$ is the boundary of the said affine disc.
\end{proof}

\begin{proof}[Proof of Theorem \ref{thm:isoperimetric-inequality}]
Since the restriction $F|_{ \partial B(x,r) } \colon \partial B(x,r) \rightarrow \mathbb{R}^m$ is a well-defined $\mathcal{C}^1$-mapping, we have by the change of variables formula that
\[
M( F_\sharp[ \partial B(x,r) ] ) \leq \int_{ \partial B(x,r) } \|DF\|^{n-1} \,d\mathcal{H}^{n-1}.
\]
Thus, by \eqref{eq:growthinequality:isom}, we have that
\begin{align*}
\int_{B(x,r)} \norm{DF}^n \, d\mathcal{H}^n 
&\leq A_{n-1} \left( M( F_\sharp[ \partial B(x,r) ] ) \right)^{ \frac{n}{n-1} } \\
&\le A_{n-1} \left( \int_{ \partial B(x,r) } \|DF\|^{n-1} \,d\mathcal{H}^{n-1} \right)^{\frac{n-1}{n}}.
\end{align*}
So \eqref{eq:growthinequality} holds. Moreover, if inequality \eqref{eq:growthinequality} is an equality, then also \eqref{eq:growthinequality:isom} is an equality and hence $F( B(x,r) )$ is an affine image of $B(x,r)$.
\end{proof}

\subsection{Subharmonicity of $\rho_F$}

The isoperimetric inequality for conformal curves yields that, for each conformal curve $F$, the function $\rho_F$, defined below, associated to the norm function $\norm{DF}$, is subharmonic.

For $2\le n \le m$, $\omega \in \Lambda^n \R^n$ a calibration, $\Omega \subset \R^n$ a domain, and $F\colon \Omega \to \R^m$ a conformal $\omega$-curve, we denote, for $n\ge 3$, 
\[
\rho_F = \norm{DF}^{\frac{n-2}{2}} \colon \Omega \to [0,\infty),
\]
and, for $n=2$, 
\[
\rho_F = \log \norm{DF} \colon \Omega \to [-\infty,\infty).
\]

\begin{lemma}
\label{lemma:subharmonicity}
Let $2\le n \le m$, $\omega \in \Lambda^n \R^n$ be a calibration, $\Omega \subset \R^n$ be a domain, and $F \colon \Omega \to \R^m$ a conformal $\omega$-curve. Then $\rho_F$ is subharmonic in the distributional sense.
\end{lemma}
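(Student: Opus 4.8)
The plan is to deduce subharmonicity of $\rho_F$ directly from the isoperimetric inequality of \Cref{thm:isoperimetric-inequality}, since a function is subharmonic precisely when it satisfies the sub-mean-value property on spheres, and \eqref{eq:growthinequality} controls the solid integral of $\|DF\|^n$ by a power of the surface integral of $\|DF\|^{n-1}$. Concretely, I would first recall the classical fact that an upper semicontinuous function $u$ (here $u = \rho_F$, which is continuous by the $\mathcal{C}^{1,\alpha}$-regularity of \Cref{prop:nharmonic}) is subharmonic in the distributional sense if and only if for every ball $\overline{B}(x,r) \subset \Omega$ one has
\[
    u(x) \le \fint_{\partial B(x,r)} u \, d\mathcal{H}^{n-1},
\]
equivalently, if the spherical average $r \mapsto \fint_{\partial B(x,r)} u\, d\mathcal{H}^{n-1}$ is nondecreasing. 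So the task reduces to verifying this monotonicity for $\rho_F = \|DF\|^{(n-2)/2}$ (and $\rho_F = \log\|DF\|$ when $n=2$).

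The key step is to set up the differential inequality for the surface energy. Writing $E(r) = \int_{\partial B(x,r)} \|DF\|^{n-1}\, d\mathcal{H}^{n-1}$ for the boundary energy and $V(r) = \int_{B(x,r)} \|DF\|^n\, d\mathcal{H}^n$ for the solid energy, we have $V'(r) = E(r)$ by the coarea formula. \Cref{thm:isoperimetric-inequality} gives $V(r) \le A_{n-1} E(r)^{n/(n-1)}$, which rearranges to a lower bound on $E(r)$ in terms of $V(r)$, hence an \emph{a priori} growth estimate for $V$. I would then combine this with Hölder's inequality on the sphere $\partial B(x,r)$, relating $\int \|DF\|^{n-1}$ to the spherical average of $\|DF\|^{(n-2)/2}$ and the correct power of the surface measure $n\omega_n r^{n-1}$, so as to extract the monotonicity of the spherical mean of $\rho_F$. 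The precise exponents are arranged so that the combination of the sharp isoperimetric constant $A_{n-1} = \omega_n/(n\omega_n)^{n/(n-1)}$ with the Hölder constant yields exactly the inequality needed for subharmonicity. The case $n=2$ is handled in parallel, where the isoperimetric inequality together with the analogous averaging argument produces the sub-mean inequality for $\log\|DF\|$, consistent with the classical fact that the logarithm of the modulus of a holomorphic (here pseudoholomorphic) map is subharmonic.

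An alternative, and perhaps cleaner, route is to bypass the averaging computation and instead use \Cref{thm:realanalytic} together with \Cref{prop:nharmonic}: on the open set $\{\|DF\| > 0\}$ the map $F$ is a real-analytic conformal immersion onto a minimal $\omega$-calibrated submanifold, and the norm $\|DF\|$ is the conformal factor of this immersion. For minimal submanifolds the relevant power of the conformal factor is classically subharmonic, so $\rho_F$ is smooth and subharmonic on $\{\|DF\| > 0\}$. It then remains to check the distributional inequality across the closed set $\{\|DF\| = 0\}$, where $\rho_F$ attains its minimum value $0$ (resp.\ $-\infty$ when $n=2$); since $\rho_F \ge 0$ with equality on this set, the sub-mean-value inequality holds trivially at such points, and upper semicontinuity lets one conclude subharmonicity globally.

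I expect the main obstacle to be the bookkeeping of exponents and constants in the first approach — ensuring that the power $(n-2)/2$ in $\rho_F$ is exactly the one for which the isoperimetric inequality, after applying the coarea formula and Hölder, closes into the monotonicity of spherical means, rather than merely yielding a one-sided estimate with a suboptimal constant. A secondary technical point is justifying the passage to the distributional (rather than pointwise smooth) formulation across the zero set of $\|DF\|$, which is where the continuity from \Cref{prop:nharmonic} and the nonnegativity of $\rho_F$ do the essential work.
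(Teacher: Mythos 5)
Your first route --- the one closest to the paper's actual argument --- has a genuine gap at its central step. The paper does begin exactly as you do: it normalizes \Cref{thm:isoperimetric-inequality} into the Beckenbach--Rad\'o type mean inequality
\[
\left( \aint{ B(x,r) } g^{ \frac{n}{n-1} } \,d\mathcal{H}^n \right)^{ \frac{n-1}{n} }
\leq
\aint{ \partial B(x,r) } g \,d\mathcal{H}^{n-1},
\qquad g = \norm{DF}^{n-1}.
\]
But the passage from this inequality to subharmonicity of $g^{\lambda}$ with $\lambda = \frac{n-2}{2(n-1)}$ (so that $g^{\lambda} = \rho_F$), respectively of $\log g$ when $n=2$, is not H\"older bookkeeping, and this is where your sketch fails to close. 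Jensen's inequality bounds the spherical mean of $g^{\lambda}$ from \emph{above} by $\bigl( \aint{\partial B(x,r)} g \,d\mathcal{H}^{n-1} \bigr)^{\lambda}$, whereas monotonicity of the spherical means of $\rho_F$ requires lower bounds at larger radii; moreover the isoperimetric inequality compares a solid integral to a boundary integral, not to the value at the center. The implication you need is precisely a generalized Beckenbach--Rad\'o theorem, and the paper invokes it as a black box from Ekonen \cite{Ekon:02} (Theorem 3.21(i) for $n \geq 3$, Theorem 2.13(ii) for $n=2$). Since Ekonen's theorems require $g$ continuous and bounded below by a positive constant, the paper additionally verifies that the mean inequality is stable under replacing $g$ by $g_\delta = g + \delta$ (a Minkowski-inequality argument) and then lets $\delta \to 0^{+}$, using that decreasing limits of subharmonic functions are subharmonic; your sketch addresses neither this regularization across the zero set of $\norm{DF}$ nor the hard implication itself.

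Your alternative route is genuinely different from the paper's and is, for $n \geq 3$, essentially sound: on $\left\{ \norm{DF} > 0 \right\}$ the induced metric is $\norm{DF}^2 \delta = \rho_F^{4/(n-2)} \delta$, the Gauss equation for the minimal image gives nonpositive scalar curvature, and the conformal transformation law for scalar curvature then yields $\Delta \rho_F \geq 0$ there. The extension across $\left\{ \norm{DF} = 0 \right\}$ needs one more sentence than you give it: the sub-mean-value inequality is trivial at points where $\rho_F = 0$ (since $\rho_F \geq 0$), but at points of $\left\{ \rho_F > 0 \right\}$ you must note that it suffices to test on small balls contained in that open set, and that for continuous functions the local small-ball sub-mean-value property already implies distributional subharmonicity --- a standard pasting lemma that should be cited or proved rather than asserted. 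For $n = 2$ there is a further implicit issue: $\rho_F = \log \norm{DF}$ equals $-\infty$ on the zero set, so one must rule out $\rho_F \equiv -\infty$ and secure local integrability; this is available because two-dimensional conformal $\omega$-curves are pseudoholomorphic (\Cref{prop:2-dim-case}) and hence have isolated zeros of finite order, but it is not automatic. With these points supplied, your geometric route gives a self-contained proof avoiding Ekonen's theorem at the price of the curvature computation, whereas the paper stays entirely at the level of the integral inequality.
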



\begin{proof}
By the isoperimetric inequality for conformal curves (Theorem \ref{thm:isoperimetric-inequality}), the function $g = \norm{DF}^{n-1}$ satisfies the Beckenbach--Radó type inequality
\begin{equation}\label{eq:integralaverages}
    \left(
    \aint{ B( x, r ) }
        g^{ \frac{n}{n-1} }
    \,d\mathcal{H}^n
    \right)^{ \frac{n-1}{n} }
    \leq
    \aint{ \partial B(x,r) } g \,d\mathcal{H}^{n-1}
\end{equation}
for every $\overline{B}(x,r) \subset \Omega$; here $\aint{X} h \,d\nu$ refers to an integral average over the set $X$ of positive and finite measure with respect to the measure $\nu$. The inequality \eqref{eq:integralaverages} is stable under adding a positive constant to $g$. Indeed, if $g_\delta = g + \delta$ for $\delta > 0$, then
\begin{align*}
    \aint{ \partial B(x,r) } g_\delta \,d\mathcal{H}^{n-1}
    \geq
    \left( \aint{ B(x,r) } g^{ \frac{n}{n-1} } \,d\mathcal{H}^{n} \right)^{ \frac{n-1}{n} }
    +
    \delta
\end{align*}
Let $\nu$ be the restriction of $\mathcal H^n$ to $B(x,r)$ normalized to a probability measure. Then
\begin{align*}
    \left( \aint{ B(x,r) } g^{ \frac{n}{n-1} } \,d\mathcal{H}^{n} \right)^{ \frac{n-1}{n} }
    +
    \delta
    &=
    \| g_\delta - \delta \|_{ L^{ \frac{n}{n-1} }(\nu) }
    +
    \| \delta \|_{ L^{\frac{n}{n-1}}( \nu ) }
    \\
    &\geq
    \| g_\delta \|_{ L^{\frac{n}{n-1}}(\nu) }
    =
    \left( \aint{ B(x,r) } g^{ \frac{n}{n-1} }_\delta \,d\mathcal{H}^{n} \right)^{ \frac{n-1}{n} },
\end{align*}
where $\| \cdot \|_{ L^{p}( \nu ) }$ refers to the standard norm on the space of $L^{p}(\nu)$-integrable functions and the inequality follows from triangle inequality. Hence $g_\delta$ satisfies \eqref{eq:integralaverages}. We need the strictly positive lower bound and continuity of $g_\delta$ to apply \cite[Theorem 3.21 (i)]{Ekon:02} when $n \geq 3$ and \cite[Theorem 2.13. (ii)]{Ekon:02} when $n = 2$. The first theorem gives that $\rho_\delta \coloneqq g_\delta^{ \lambda }$ for $\lambda = (n-2)/(2(n-1))$ is subharmonic when $n \geq 3$ while the latter one gives that $\rho_\delta \coloneqq \log( g_\delta )$ is subharmonic when $n =2$. In either case, the subharmonic functions $\rho_\delta$ converge pointwise decreasingly to $\rho_F$ as $\delta \rightarrow 0^{+}$, so $\rho_F$ is subharmonic as claimed.
\end{proof}

We are now ready to finish the proof of \Cref{thm:realanalytic}.
\begin{proof}[Proof of \Cref{thm:realanalytic}]
Aside from the subharmonicity claim, the regularity of conformal $\omega$-curves follows from \Cref{prop:nharmonic}. The subharmonicity follows from \Cref{lemma:subharmonicity}.
\end{proof}

\subsection{Factorization of isoperimetrically extremal curves}
\label{sec:conditional}

We prove now the first consequence of the isoperimetric inequality \eqref{eq:growthinequality} and the elementary factorization theorem.

\constantNorm*

\begin{proof}
Since $DF$ is constant for an affine curve $F$, it suffices to prove that constant norm function $\norm{DF}$ implies an affine curve. This is clear if the constant is zero, so let $F\colon \Omega \to \R^m$ be a conformal $\omega$-curve for which $\norm{DF}$ is a non-zero constant function. Now we have that \eqref{eq:growthinequality} is an equality for each $\overline{B}(x_0,r) \subset \Omega$. Thus, by Theorem \ref{thm:isoperimetric-inequality}, $F( B(x_0,r) )$ is an affine $n$-dimensional disc. By the elementary factorization theorem (\Cref{thm:elementary-factorization}), we have that $F|_{B(x_0,r)} = A \circ g$, where $A \colon \R^n \to \R^m$ is an affine isometry and $g \colon B(x_0,r) \to \R^n$ is conformal; for $n\ge 3$, $g$ is M\"obius and, for $n=2$, $g$ is holomorphic. Since $\norm{Dg} = \norm{DF}$ and $\norm{DF}$ is constant, we conclude that $g$ is affine both for $n\ge 3$ and for $n = 2$. So $F$ is affine on $B(x_0,r)$. Now, if $F$ is affine on two intersecting balls, then $F$ is affine on the union of the balls. As $\Omega$ is connected, this implies that $F$ is affine on $\Omega$.
\end{proof}

As an immediate corollary of Theorem \ref{thm:constant-norm-means-affine}, we have the following factorization for conformal $\omega$-curves.
\begin{corollary}
\label{cor:norm-Stoilow}
Let $\omega \in \Lambda^n \R^m$ be a calibration, for $2\le n \le m$, and let $\Omega \subset \R^n$ be a domain. Consider a conformal $\omega$-curve $F\colon \Omega \to \R^m$ and a conformal embedding $g \colon \Omega \to \R^n$. Then $\norm{DF} = \norm{Dg}$ if and only if $F = A \circ g$, where $A \colon \R^n \to \R^m$, $x\mapsto y_0 + Lx$, for $y_0\in \R^m$ and $L \in \SO(\omega)$.
\end{corollary}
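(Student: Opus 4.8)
The plan is to deduce Corollary~\ref{cor:norm-Stoilow} directly from Theorem~\ref{thm:constant-norm-means-affine} (the restated \texttt{constantNorm}) together with the elementary factorization theorem (Theorem~\ref{thm:elementary-factorization}). The forward direction is the substantive one: assuming $\norm{DF} = \norm{Dg}$, I want to produce the affine map $A$ with $F = A\circ g$. The converse is a short computation, since if $F = A\circ g$ with $A(x) = y_0 + Lx$ and $L \in \SO(\omega)$, then $DF = L\circ Dg$, and because $L$ is a linear isometry we immediately get $\norm{DF} = \norm{Dg}$ pointwise.

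For the forward direction, the key idea is to pass from $F$ and $g$ to a single conformal $\omega$-curve whose norm is \emph{constant}, so that Theorem~\ref{thm:constant-norm-means-affine} applies. Since $g \colon \Omega \to \R^n$ is a conformal embedding, it is in particular a conformal map between domains in $\R^n$, hence (for $n \ge 3$ by Liouville, for $n = 2$ by holomorphicity) it is a $\mathcal{C}^\infty$ diffeomorphism onto its image $\Omega' := g(\Omega)$ with $\norm{Dg} > 0$ everywhere. First I would form the composition $\widetilde F := F \circ g^{-1} \colon \Omega' \to \R^m$. Using the chain rule together with conformality of $g^{-1}$, the differential satisfies $(D\widetilde F)_y = (DF)_{g^{-1}(y)} \circ (Dg^{-1})_y$, and since $(Dg^{-1})_y$ is a conformal linear map of $\R^n$ while $(DF)_{g^{-1}(y)} \in \CO(\omega)$, the composite again lies in $\CO(\omega)$; thus $\widetilde F$ is a conformal $\omega$-curve on $\Omega'$. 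Its operator norm computes as $\norm{D\widetilde F}(y) = \norm{DF}(g^{-1}(y))\cdot \norm{Dg^{-1}}(y) = \norm{Dg}(g^{-1}(y)) \cdot \norm{Dg^{-1}}(y)$, and because $\norm{Dg}(g^{-1}(y))\,\norm{Dg^{-1}}(y) = 1$ for a conformal map (the conformal factor of $g^{-1}$ is the reciprocal of that of $g$), we obtain $\norm{D\widetilde F} \equiv 1$.

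With $\norm{D\widetilde F}$ constant, Theorem~\ref{thm:constant-norm-means-affine} forces $\widetilde F$ to be affine, say $\widetilde F(y) = y_0 + Ly$ for some linear map $L \colon \R^n \to \R^m$. Because $\norm{D\widetilde F} \equiv 1$ and $\widetilde F$ is a conformal $\omega$-curve, Corollary~\ref{cor:LA} gives $L \in \CO(\omega)$ with $\norm{L} = 1$, i.e.\ $L \in \SO(\omega)$. Unwinding $\widetilde F = F \circ g^{-1}$ yields $F = \widetilde F \circ g = A \circ g$ with $A(x) = y_0 + Lx$, $L \in \SO(\omega)$, as claimed.

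The main obstacle I anticipate is the careful verification that the norm of the composed differential behaves multiplicatively and collapses to a constant — in particular confirming the reciprocal relation $\norm{Dg}(g^{-1}(y))\,\norm{Dg^{-1}}(y) = 1$, which relies on $g$ being genuinely conformal (not merely an $\omega$-curve) so that its differential is a scalar multiple of an isometry of $\R^n$. A secondary technical point is ensuring $g^{-1}$ is well-defined and smooth, which requires recording that a conformal embedding of a domain in $\R^n$ is a diffeomorphism onto its (open) image with non-vanishing differential; this follows from the classical Liouville theorem for $n \ge 3$ and from the holomorphic/antiholomorphic structure for $n = 2$. Once these are in place, the invocation of Theorem~\ref{thm:constant-norm-means-affine} and Corollary~\ref{cor:LA} is routine.
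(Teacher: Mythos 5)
Your proposal is correct and follows essentially the same route as the paper: both form the composition $F \circ g^{-1}$ on $g(\Omega)$, verify it is a conformal $\omega$-curve with $\norm{D(F \circ g^{-1})} \equiv 1$ via the reciprocal conformal factors of $g$ and $g^{-1}$, and then invoke Theorem~\ref{thm:constant-norm-means-affine} to conclude that this composition is affine, yielding $F = A \circ g$ with $L \in \SO(\omega)$. The only difference is that you spell out the routine verifications (chain rule, closure of $\CO(\omega)$ under precomposition with $\CO(n)$, the unit-norm argument giving $L \in \SO(\omega)$, and smoothness of $g^{-1}$) that the paper leaves implicit.
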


\begin{proof}
It suffices to show that $\norm{DF} = \norm{Dg}$ yields the factorization. Since $g \colon \Omega \to g(\Omega)$ is conformal, also its inverse $g^{-1} \colon g(\Omega) \to \Omega$ is conformal. Now the mapping $A = F \circ g^{-1} \colon g(\Omega) \to \R^m$ is a well-defined conformal $\omega$-curve satisfying 
\begin{align*}
\norm{DA} = (\norm{DF} \circ g^{-1})) \norm{Dg^{-1}} &= (\norm{Dg} \circ g^{-1})\norm{Dg^{-1}} 
    \\
    &= \frac{1}{\norm{Dg^{-1}}} \norm{Dg^{-1}} = 1.
\end{align*}
Thus $A$ is affine by \Cref{thm:constant-norm-means-affine}.
\end{proof}


We are now ready to prove a sharp version of the local inner M\"obius rigidity property for conformal $\omega$-curves mentioned in the introduction.

\begin{restatable}
{corollary}{factorizationPrinciple}
\label{cor:comparison-principle}
Let $2\le n \le m$, let $\omega \in \Lambda^n \R^m$ be a calibration, and let $F\colon \Omega \to \R^m$ be a conformal $\omega$-curve, where $\Omega \subset \R^n$ is a domain. If $\|DF\| = \|DM\|$ on an open set $\emptyset \neq U \subset \Omega$ for a nonconstant Möbius transformation $M \colon \S^n \to \S^n$, then $F = A \circ M|_{\Omega}$ for an affine $A \colon \R^n \to \R^m$, $x \mapsto y_0 + L(x)$, $L \in \SO( \omega )$. In particular, $F$ is an inner Möbius curve and $\norm{DF} = \norm{DG}$.
\end{restatable}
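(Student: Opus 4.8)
The plan is to first localize via the norm factorization of \Cref{cor:norm-Stoilow}, then propagate the resulting affine factorization to all of $\Omega$ by real-analytic unique continuation, and finally rule out the pole of $M$ lying in $\Omega$.

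First I would shrink $U$ to a ball $U' \subset U$ small enough that $g := M|_{U'} \colon U' \to \R^n$ is a conformal embedding into $\R^n = \R^n \times \{0\}$; this is possible because $M \colon \S^n \to \S^n$ is a conformal diffeomorphism and $\R^n \subset \S^n$ is open, so it suffices to avoid the single pole $p = M^{-1}(\infty)$ and keep $\infty \notin M(\overline{U'})$. On $U'$ we then have $\norm{DF} = \norm{Dg}$, so \Cref{cor:norm-Stoilow} produces an affine map $A \colon \R^n \to \R^m$, $x \mapsto y_0 + Lx$ with $L \in \SO(\omega)$, for which $F|_{U'} = A \circ g = A \circ M|_{U'}$. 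Note also that $\norm{DM} > 0$ everywhere $M$ is defined, since $M$ is a diffeomorphism, so in particular $\norm{DF} = \norm{DM} > 0$ on $U'$.

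Next I would set $\Omega' = \Omega \setminus \{p\}$ if $p \in \Omega$ and $\Omega' = \Omega$ otherwise; since $n \geq 2$, $\Omega'$ is again a domain. On $\Omega'$ the map $G := A \circ M$ is real-analytic and immersive, as $DM$ never vanishes and $L$ is a linear isometry, so $\norm{DG} = \norm{DM} > 0$ on $\Omega'$. The aim is to show $F = G$ on $\Omega'$. Both $G$ (on all of $\Omega'$) and $F$ (on its full-rank set $\Omega_F = \{\norm{DF} > 0\}$, by \Cref{prop:nharmonic}) are real-analytic, and they agree on the open set $U' \subset \Omega_F \cap \Omega'$. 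Letting $W_0$ be the connected component of $\Omega_F \cap \Omega'$ containing $U'$, the identity principle for real-analytic maps on the connected set $W_0$ gives $F = G$ on $W_0$.

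The crux is then to upgrade $F = G$ from $W_0$ to all of $\Omega'$, and this is the step I expect to carry the weight. I would show $W_0$ is relatively closed in $\Omega'$: if $x_* \in \Omega'$ lies in the closure of $W_0$, pick $x_k \in W_0$ with $x_k \to x_*$; since $\norm{DF} = \norm{DM}$ on $W_0$ and both $\norm{DF}$ (by the $\mathcal{C}^{1,\alpha}$-regularity of \Cref{prop:nharmonic}) and $\norm{DM}$ are continuous at $x_* \in \Omega'$, we obtain $\norm{DF}(x_*) = \norm{DM}(x_*) > 0$, so $x_* \in \Omega_F \cap \Omega'$; being a limit of points of the component $W_0$, which is relatively closed in $\Omega_F \cap \Omega'$, it lies in $W_0$. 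Thus $W_0$ is open, nonempty, and relatively closed in the connected set $\Omega'$, whence $W_0 = \Omega'$ and $F = A \circ M$ throughout $\Omega'$. (For $n \geq 3$ one could instead invoke the unique continuation of \Cref{cor:uniquecont} applied to the immersive real-analytic $G$.) Finally I would rule out $p \in \Omega$: if $p \in \Omega$, then $\abs{M(x)} \to \infty$ as $x \to p$, so $\abs{A(M(x))} \geq \abs{M(x)} - \abs{y_0} \to \infty$ because $L$ is an isometry, contradicting that $F = A \circ M$ near $p$ is continuous at $p$ with finite value. Hence $\Omega' = \Omega$ and $F = A \circ M|_\Omega$, so $F$ is an inner Möbius curve; and since $L$ preserves the operator norm, $\norm{DF} = \norm{D(A \circ M)} = \norm{DM}$ on all of $\Omega$.
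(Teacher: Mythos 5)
Your proposal is correct and follows essentially the same route as the paper: localize via \Cref{cor:norm-Stoilow} to get $F = A \circ M$ on a small ball, propagate the identity through an open--closed argument on the connected component of the full-rank set in $\Omega \setminus \{M^{-1}(\infty)\}$ using real-analyticity (\Cref{prop:nharmonic}) and unique continuation, and finally exclude the pole from $\Omega$. Your only deviations are cosmetic streamlinings --- you compare the maps $F$ and $A \circ M$ directly (bypassing the paper's intermediate use of \Cref{cor:uniquecont} to certify that $A \circ M$ is a conformal $\omega$-curve, a fact the conclusion does not require), you prove relative closedness via continuity of $\norm{DF}$ and $\norm{DM}$ rather than of the pullback densities $\star F^{*}\omega$ and $\star H^{*}\omega$, and you exclude the pole via blow-up of $|F|$ rather than of $\norm{DF}$ --- all of which are sound.
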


\begin{proof}
Let $F \colon \Omega \rightarrow \R^m$ be a conformal $\omega$-curve having the property that $\norm{DF} = \norm{DM}$ in an open set $U \subset \Omega$ for a non-constant Möbius transformation $M \colon \S^n \to \S^n$. We may assume that $U$ is a domain. 

By \Cref{cor:norm-Stoilow}, we have that $F|_{U} = A \circ M|_{U}$ for an affine isometry $A \colon \R^n \to \R^m$, $x\mapsto y_0 + Lx$, for $y_0\in \R^m$ and $L \in \SO(\omega)$. As $H = A \circ M|_{ \R^n \setminus \left\{ M^{-1}( \infty ) \right\} }$ is an extension of the conformal $\omega$-curve $F|_{U}$ and $H$ is real-analytic, with full-rank differential, \Cref{cor:uniquecont} implies that $H$ is a conformal $\omega$-curve.

Let $V$ be a connected component of $\left\{ \|DF\| > 0 \right\} \setminus \left\{ M^{-1}(\infty) \right\}$ containing $U$. As $\star H^{*}\omega$ and $\star F^{*}\omega$ are real-analytic on $V$ (\Cref{thm:realanalytic}) and coincide on $U \subset V$, they coincide on $V$ by unique continuation. By continuity, they also coincide on the relative closure $\overline{V} \cap \Omega \setminus \left\{ M^{-1}(\infty) \right\}$. As the differential of $H$ has full rank on $\Omega \setminus \left\{ M^{-1}(\infty) \right\}$, we have that $\overline{V} \cap \Omega \subset \left\{ \|DF\| > 0 \right\} \setminus \left\{ M^{-1}(\infty) \right\}$. So $V$ is open and relatively closed on $\Omega \setminus M^{-1}( \infty )$ and, by connectivity, $V = \Omega \setminus M^{-1}( \infty )$. We conclude that $F$ and $H$ are real-analytic on $\Omega \setminus \left\{ M^{-1}(\infty) \right\}$ that coincide on $U$ and, by unique continuation, they coincide on $\Omega \setminus \left\{ M^{-1}(\infty) \right\}$. Thus the equality $F = H$ on $\Omega \setminus \left\{ M^{-1}(\infty) \right\}$ implies that $M^{-1}( \infty ) \in \S^n \setminus \Omega$, since $\|DF\|$ is continuous on $\Omega$. The claim follows.
\end{proof}



Before proving Theorem \ref{thm:quasiregular-Stoilow}, we prove the maximum principle for conformal $\omega$-curves mentioned in the introduction.

\begin{restatable}
{corollary}{maximumPrinciple}
\label{cor:Maximum-principle}    
Let $\omega \in \Lambda^n \R^m$ be a calibration, for $2 \leq n \leq m$, and let $F\colon \Omega \to \R^m$ be a conformal $\omega$-curve, where $\Omega \subset \R^n$ is a domain. If the norm function $\norm{DF}$ attains a positive local maximum in $\Omega$, then $\norm{DF}$ is constant on $\Omega$. In particular, $F$ is affine.
\end{restatable}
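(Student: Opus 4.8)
The plan is to exploit the subharmonicity of the auxiliary function $\rho_F$ from \Cref{thm:realanalytic} and \Cref{lemma:subharmonicity} to convert the hypothesized local maximum into pointwise constancy of $\norm{DF}$ on a small ball, and then to globalize this via the rigidity of conformal $\omega$-curves. Recall that $\rho_F = \norm{DF}^{(n-2)/2}$ for $n \geq 3$ and $\rho_F = \log\norm{DF}$ for $n=2$; in both regimes the transform $\rho$ is strictly increasing on $(0,\infty)$, so I would work with $\rho_F$ in place of $\norm{DF}$.

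First I would localize. If $\norm{DF}$ attains a positive local maximum at $x_0$ with value $c = \norm{DF}(x_0) > 0$, then since $\norm{DF}$ is continuous (conformal curves are $\mathcal{C}^{1,\alpha}$ by \Cref{thm:realanalytic}) there is a ball $B(x_0,r)\subset\Omega$ on which $0 < \norm{DF} \leq c$. There $\rho_F$ is finite, continuous, subharmonic, and attains a local maximum at $x_0$ because $\rho$ is increasing. The subharmonic maximum principle then applies: comparing $\rho_F(x_0)$ with its averages over concentric balls and using $\rho_F(x_0) - \rho_F \geq 0$ forces $\rho_F \equiv \rho_F(x_0)$, hence $\norm{DF} \equiv c$, on $B(x_0,r)$.

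To globalize I would apply \Cref{cor:comparison-principle} to the dilation $M\colon \S^n \to \S^n$, $x\mapsto c\,x$ --- a nonconstant Möbius transformation with $\norm{DM}\equiv c$ --- for which $\norm{DF} = \norm{DM}$ on the open set $B(x_0,r)$. This yields $F = A\circ M|_\Omega$ for an affine $A$, so $F$ is affine and $\norm{DF}\equiv c$ on all of $\Omega$. Equivalently, \Cref{thm:constant-norm-means-affine} makes $F|_{B(x_0,r)}$ affine and \Cref{cor:uniquecont} propagates the resulting affine map across $\Omega$.

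The main obstacle I anticipate is conceptual rather than computational: a local maximum of a subharmonic function does not by itself force global constancy (on the line, a convex function that is constant on a subinterval but not globally is a counterexample). Hence subharmonicity alone delivers only constancy of $\norm{DF}$ on a ball around $x_0$, and the genuine content of the statement is the upgrade from this local affineness to global affineness. That upgrade rests entirely on the unique-continuation and rigidity phenomenon for conformal $\omega$-curves packaged in \Cref{cor:comparison-principle}, and the only bookkeeping needed is the uniform choice of the increasing transform $\rho$ across the cases $n=2$ and $n\geq 3$.
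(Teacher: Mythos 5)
Your proposal is correct and follows essentially the same route as the paper's proof: subharmonicity of $\rho_F$ (\Cref{lemma:subharmonicity}) together with the strong maximum principle for subharmonic functions gives constancy of $\norm{DF}$ on a ball around $x_0$, and \Cref{cor:comparison-principle} then globalizes the affine identity to all of $\Omega$. Your primary globalization step, applying \Cref{cor:comparison-principle} directly to the dilation $x \mapsto c\,x$, is only a cosmetic reshuffling of the paper's argument, which first invokes \Cref{thm:constant-norm-means-affine} on the ball and then \Cref{cor:comparison-principle} --- a variant you yourself note as equivalent.
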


\begin{proof}
We are given a conformal $\omega$-curve $F \colon \Omega \to \R^m$ such that $\|DF\|$ has a local maximum at $x_0 \in \Omega$. Using the notation $\rho_F$ from \Cref{lemma:subharmonicity}, we deduce that the subharmonic function $\rho_F$ reaches a local maximum at $x_0 \in \Omega$. That is, there exists a domain $U \subset \Omega$ such that $U \subset \left\{ \rho_F = \rho_F(x_0) \right\}$. \Cref{thm:constant-norm-means-affine} yields that $F$ coincides with an affine inner Möbius curve on $U$ after which \Cref{cor:comparison-principle} implies that $F$ coincides with the affine inner Möbius curve on $\Omega$. The claim follows.
\end{proof}

We continue now with the proof of Theorem \ref{thm:quasiregular-Stoilow}.

\qrStoilow*

\begin{proof}
Suppose $F = A \circ f$, where $f\colon \Omega \to \R^n$ is quasiregular and $A\colon \R^n \to \R^m$, $x\mapsto y_0 + Lx$, where $L\in \SO(\omega)$, is affine. Then 
\[
(DF)^t DF = (Df)^t L^t L Df = (Df)^t Df.
\]
Suppose now that $(DF)^t DF = (Df)^t Df$ for a quasiregular map $f \colon \Omega \to \R^n$. Let $B_f = \{ x\in \Omega \colon f \text{ is not a local homeomorphism at } x\}$ be the branch set of $f$ and $\Omega' = \Omega \setminus B_f$. Since $B_f$ is a closed set of topological dimension at most $n-2$ (see e.g.~Rickman \cite[Section I.4]{Ric:93}), we have that $\Omega'$ is open, connected, and dense in $\Omega$; by definition $f$ is a local homeomorphism in $\Omega'$.

We show first that each point $x\in \Omega'$ has a neighborhood $U_x$ for which $F( U_x )$ is contained in an affine $n$-dimensional subspace.

Let $x \in \Omega'$ and let $U_x \subset \Omega'$ be a neighborhood of $x$ for which $h = f|_{U_x} \colon U_x \to f(U_x)$ is a homeomorphism. Then $h \colon U_x \to f(U_x)$ and the inverse $h^{-1} \colon f(U_x) \to U_x$ are quasiconformal. Let $A = F \circ h^{-1}$. We recall that $A \in W^{1,n}_{\loc}( f(U_x), \R^m )$ by \cite[Lemma VI.6.6]{Ric:93} and that the weak differential satisfies the chain rule
\begin{equation*}
    DA = ( (DF) \circ h^{-1} ) D(h^{-1})
    \quad\text{almost everywhere on $f(U_x)$.}
\end{equation*}
We also recall that $h^{-1}$ satisfies the Lusin's properties $(N)$ and $(N^{-1})$ (see e.g.~Rickman \cite[Theorem II.7.4(3)]{Ric:93}). So we have
\begin{equation*}
    I = ( (Dh) \circ h^{-1} ) \circ D( h^{-1} ),
    \quad
    (DA)\R^n = (DF)\R^n \in \Gr( \omega ),
\end{equation*}
and $\star A^{\star}\omega = ( \star F^{\star}\omega ) \circ h^{-1} J_{h^{-1}}\geq 0$ almost everywhere on $f(U_x)$. We also have that 
\begin{align*}
(DA)^t DA 
&= \left( ((DF)\circ h^{-1}) Dh^{-1} \right)^t 
\left((DF)\circ h^{-1}) Dh^{-1}\right) \\
&= (Dh^{-1})^t \left( (DF)^t (DF))\circ h^{-1} \right) Dh^{-1} \\
&= (Dh^{-1})^t ( (Df)^t \circ h^{-1}) ((Df) \circ h^{-1}) Dh^{-1} \\ 
&= (D(f\circ h^{-1}))^t D(f\circ h^{-1}) 
= I.
\end{align*}
In particular, $DA$ is conformal, $(DA)\R^n \in \mathrm{Gr}( \omega )$, and $\star A^{\star}\omega \geq 0$ hold almost everywhere on $f(U_x)$. So $A$ is a conformal $\omega$-curve satisfying $\|DA\| \equiv 1$. Thus, by \Cref{thm:constant-norm-means-affine}, $A$ is affine. Since $F|_{U_x} = A \circ h^{-1}$, we have that $F(U_x)$ is contained in an $n$-dimensional affine subspace.

Since each point of $\Omega'$ has a neighborhood in $\Omega'$ which is contained in an $n$-dimensional affine subspace and $\Omega'$ is connected, we conclude that $F(\Omega')$ is contained in an $n$-dimensional affine subspace $V$. Since $\Omega'$ is dense in $\Omega$ and $F$ is continuous, we have that $F(\Omega)$ is contained in $V$. The claim follows now from a quasiregular version of the elementary factorization theorem.
\end{proof}

\begin{remark}
\label{rmk:Stoilow}
Corollary \ref{cor:norm-Stoilow} may also be viewed as a special case of Theorem \ref{thm:quasiregular-Stoilow}. Indeed, for a conformal $\omega$-curve $F\colon \Omega \to \R^m$ and a conformal map $g \colon \Omega \to \R^n$, we have that $(DF)_x \in \CO(\omega)$ and hence $(DF)_x\R^n \in \Gr(\omega)$ and $\star F^{*}\omega \geq 0$ for each $x\in \Omega$. Furthermore, $(DF)^t DF = (\star F^*\omega)^{2/n} I = \norm{DF}^2 I$ and $(Dg)^t Dg = J_g^{2/n} I = \norm{Dg}^2 I$. Thus $(DF)^t DF = (Dg)^t Dg$ if $\norm{DF} = \norm{Dg}$.
\end{remark}

We finish this section with the proof of \Cref{cor:non-solvability}.

\nonSolvability*

\begin{proof}
Let $Z \colon \R^n \to \R^n\setminus \{0\}$ be a Zorich map; see e.g.~\cite[Section I.3.3]{Ric:93}. The map $Z$ is a quasiregular mapping, so, in particular, a branched cover, and has a full-rank differential outside a set of Hausdorff codimension two.

Consider the conformal $\omega$-curve $F \colon \R^n \setminus \{0\} \to \R^{2n}$ from \Cref{prop:SL-catenoid}. Then $H \coloneqq F\circ Z\colon \R^n \to \R^{2n}$ is a quasiregular $\omega_\SL$-curve and $H( \R^n )$ is the Lagrangian catenoid $\mathcal{M}_{\SL}$ by the surjectivity of $Z$.

Suppose that there exists a quasiregular map $f\colon \Omega \to \R^n$ for which the equality $(Df)^t Df = (DH)^t DH$ holds almost everywhere on a domain $\Omega \subset \R^n$. Since $(DH)_x \R^n\in \Gr(\omega_\SL)$ and $\star H^{\star}\omega(x) \geq 0$ for almost every $x\in \Omega$, we have, by \Cref{thm:quasiregular-Stoilow}, that $H( \Omega )$ is an affine submanifold on $\R^{2n}$. Thus $\mathcal{M}_{\SL}$ is flat. Indeed, if $U$ is a connected component of $F^{-1}( H(\Omega) )$, then $F|_{U}$ has affine image and hence is a conformal $\eta$-curve for a simple calibration $\eta \in \Lambda^n \R^m$. Since $F$ is a real-analytic mapping with full rank differential, \Cref{cor:uniquecont} implies that $F$ is a conformal $\eta$-curve and thus has an affine image. This, however, contradicts the definition of $\mathcal{M}_{\SL}$. We conclude that no such $f$ exists.
\end{proof}



\section{Inner M\"obius rigid calibrations}
\label{sec:Liouville-property}

In the following two subsections, we prove the equivalence of the Liouville property with inner Möbius rigidity and with \emph{conformal rigidity}, respectively. After these equivalences, we show that stabilizations of calibrations are conformally rigid and that, in general, codimension two calibrations are conformally rigid.

To define conformal rigidity, recall that a Riemannian $n$-manifold is \emph{conformally flat} if it has an atlas of charts that are conformal and \emph{flat} if it has an atlas of charts that are Riemannian isometries. 

\begin{definition}\label{def:conf:rigid}
A calibration $\omega \in \Lambda^n\mathbb{R}^m$, for $2 \leq n \leq m$, is \emph{conformally rigid} if every conformally flat $\omega$-calibrated submanifold is a flat submanifold of $\R^m$.
\end{definition}
Since every two-dimensional manifold is conformally flat due to the existence of isothermal coordinates, \Cref{def:conf:rigid} is mainly of interest in the case $n \geq 3$.

\subsection{Equivalence of the Liouville property and inner Möbius rigidity}
We are now ready to prove that a calibration has the Liouville property if and only if it is inner M\"obius rigid, that is, Theorem \ref{thm:inner-Mobius-Liouville:rigid}.

\innermob*

\begin{proof}
We recall that if a conformal $\omega$-curve $F \colon \Omega \subset \R^n \to \R^m$ is inner Möbius curve, we have $F = A \circ g$ for affine $A(x) = x_0 + Lx$ for $L \in \CO( \omega )$ and $g(z) = M(x)$ for $x \in \Omega$ and a Möbius transformation $M \colon \S^n \to \S^n$. Notice that there is $K \in \CO(m)$ satisfying $K( x, 0 ) = L( x )$ for $x \in \R^n$. As $M$ is defined by \eqref{eq:Mobius}, we find, by identifying $\R^n$ with $\R^n \times \left\{0\right\} \subset \R^m$ as above (and $\S^n$ with $\S^n \times \left\{0\right\} \subset \S^m$), a Möbius transformation $\widehat{M} \colon \S^m \to \S^m$ extending $M$, that is, $\widehat{M}|_{ \S^n } = M$.

Suppose now that $F \colon \Omega \to \R^m$ is a nonconstant conformal $\omega$-curve that is the restriction of a Möbius transformation $\widehat{M} \colon \S^m \to \S^m$, with the identifications $\R^n \subset \R^m \subset \S^m$ as above. We show that $F$ is an inner Möbius curve. 

Let now $\widehat{\Omega} = \R^n \setminus \{ \widehat{M}^{-1}(\infty) \}$. Then $G = \widehat{M}|_{ \widehat{\Omega} } \colon \widehat{\Omega} \to \R^m$ is a real-analytic map having a nondegenerate differential and extending $F$, so \Cref{cor:uniquecont} gives that $G$ is a conformal $\omega$-curve. Since $\widehat{M}$ is conformal, we have that $\| DG \|(x) = \| D\widehat{M} \|( x, 0 )$ for $x \in \R^n$. There are two cases to consider. Either $\|DG\|$ is constant and thus $G$ is affine by \Cref{thm:constant-norm-means-affine}, or $\|DG\|(x) = \lambda^2 | (x,0) - x_0 |^{-2} = \lambda^2 ( |x-y_0|^2 + |z_0|^2 )^{-1}$ for $\lambda \in (0,\infty)$ and $x_0 = ( y_0, z_0 ) \in \R^n \times \R^{m-n}$. In the first case, $F$ being inner Möbius is immediate. For the second case, we recall from \Cref{lemma:subharmonicity} that $\|DG\|^{ \frac{n-2}{2} } \colon \widehat{\Omega} \to (0,\infty)$ is a subharmonic function. A direct computation shows that
\begin{align*}
    \Delta( \|DG\|^{ \frac{n-2}{2} } )(x)
    &=
    \frac{ \lambda^{ n - 2 }( n - 2 ) n }{ \left( |x-y_0|^2 + |z_0|^2 \right)^{ \frac{n}{2} }  }
    \left(
        \frac{ |x-y_0|^2 }{ |x-y_0|^2 + |z_0|^2 } - 1
    \right)
\end{align*}
is negative for every $x \in \widehat{\Omega}$ unless $z_0 = 0$. Thus, by subharmonicity, $z_0 = 0$.

Let now $M \colon \R^n\setminus \{y_0\} \to \R^n$, $y \mapsto R( (y-y_0)/|y-y_0|^2 )$, where $R \colon \R^n \to \R^n$, $(x_1,\ldots, x_n) \mapsto (x_1,\ldots, -x_n)$. Then $G \circ M^{-1}$ is a conformal $\omega$-curve on $\R^n$ for which $\|D( G \circ M^{-1} ) \|$ is constant. Therefore $G \circ M^{-1}$ is affine by \Cref{thm:constant-norm-means-affine}. Thus $G$, and also $F = G|_{ \Omega }$, is an inner Möbius curve. 
\end{proof}

\subsection{Equivalence of conformal and inner M\"obius rigidity}
\label{sec:conformal-rigidity}

The following theorem connects two notions of rigidity.
\begin{restatable}{theorem}{Liouville}
\label{thm:Liouville}
Let $3 \le n \le m$ and let $\omega \in \Lambda^n \R^m$ be a calibration. Then $\omega$ is conformally rigid if and only if $\omega$ is inner M\"obius rigid.
\end{restatable}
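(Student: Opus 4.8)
The plan is to prove both implications by passing between a conformal $\omega$-curve and its image viewed as a calibrated submanifold, using \Cref{thm:realanalytic} in one direction and local conformal parametrizations in the other. The recurring local device I would set up first is this: if $\psi\colon W \to \R^m$ is a conformal immersion (in the Riemannian sense) whose image lies in an $\omega$-calibrated submanifold $N$, then $(D\psi)_x$ is a nonzero conformal linear map onto $T_{\psi(x)}N \in \Gr(\omega)$. Since $\dim T_{\psi(x)}N = n$ forces $\omega|_{T_{\psi(x)}N} = \pm \vol_{T_{\psi(x)}N}$, one has $\star\psi^{*}\omega = \pm\norm{D\psi}^n$; as $\star\psi^{*}\omega$ is real-analytic and nonvanishing, its sign is constant on each connected chart, so after precomposing $\psi$ with a fixed reflection of $\R^n$ when needed we obtain $(D\psi)_x \in \CO(\omega)$. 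Thus such a $\psi$ is, up to a reflection, a conformal $\omega$-curve.

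For the implication that inner M\"obius rigidity implies conformal rigidity, I would take a conformally flat $\omega$-calibrated submanifold $N$ and fix $p \in N$. Conformal flatness provides a conformal chart $\varphi\colon U \to \varphi(U) \subset \R^n$ with $p \in U \subset N$, and its inverse $\psi = \varphi^{-1}$ is a conformal immersion into $\R^m$ with image $U \subset N$. By the device above, $\psi$ is (after a reflection) a nonconstant conformal $\omega$-curve, so inner M\"obius rigidity gives $\psi = A \circ g$ with $A\colon x \mapsto y_0 + Lx$ for an orthogonal linear $L$ and $g$ a restriction of a M\"obius transformation. Hence $U$ is contained in the affine $n$-plane $A(\R^n)$, so $N$ agrees near $p$ with an open subset of an affine $n$-plane. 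As $p$ is arbitrary and flatness is a local property, $N$ is flat.

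For the converse, let $F\colon \Omega \to \R^m$ be a nonconstant conformal $\omega$-curve with $\Omega_F = \{\norm{DF} > 0\} \neq \emptyset$. By \Cref{thm:realanalytic} the image $N = F(\Omega_F)$ is a conformally flat $\omega$-calibrated submanifold, hence flat by conformal rigidity. Flatness yields, near any point of $N$, a local isometric parametrization $\Psi\colon W \subset \R^n \to N$; by the device above $\Psi$ is a conformal $\omega$-curve with $\norm{D\Psi} \equiv 1$, so \Cref{thm:constant-norm-means-affine} forces $\Psi$ to be affine and $N$ to be locally an affine $n$-plane. Therefore, choosing a ball $B' \subset \Omega_F$ small enough, $F(B')$ lies in an affine $n$-plane, and the Elementary factorization theorem (\Cref{thm:elementary-factorization}) gives $F|_{B'} = A \circ g$ with $A\colon x \mapsto y_0 + Lx$, $L \in \SO(\omega)$, and $g\colon B' \to \R^n$ conformal and nonconstant. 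Since $n \geq 3$, the classical Liouville theorem makes $g = M|_{B'}$ for a nonconstant M\"obius transformation $M\colon \S^n \to \S^n$, whence $\norm{DF} = \norm{DM}$ on $B'$ (as $L$ is an isometry). Then \Cref{cor:comparison-principle} upgrades this to $F = A' \circ M|_{\Omega}$ on all of $\Omega$, so $F$ is an inner M\"obius curve.

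I expect the main obstacle to be the orientation bookkeeping in the local device: the calibration is orientation-sensitive, so one must genuinely check that the sign of $\star\psi^{*}\omega$ is locally constant and correct the parametrization by a reflection, rather than assume the chart is automatically calibrated. The secondary delicate point is the reduction of \emph{flat and calibrated} to \emph{locally affine}; I would resolve this cleanly by exhibiting a unit-norm isometric parametrization and invoking \Cref{thm:constant-norm-means-affine}, which I prefer to a direct appeal to minimal-surface theory since it keeps the argument internal to the paper. The remaining propagation of local information is routine: flatness in the first direction is already local, and in the second direction \Cref{cor:comparison-principle} carries the local inner M\"obius identity to all of $\Omega$.
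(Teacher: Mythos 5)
Your proposal is correct and follows essentially the same route as the paper: both directions reduce to producing a conformal $\omega$-curve from a chart of the calibrated submanifold, then invoking \Cref{thm:constant-norm-means-affine} (for flat pieces), the elementary factorization theorem \Cref{thm:elementary-factorization}, and the local-to-global step \Cref{cor:comparison-principle}, exactly as in the paper's \Cref{lemma:mob:to:conf:rigid}, \Cref{lemma:flat-is-affine}, and the proof of \Cref{thm:Liouville}. Your explicit reflection device for the orientation of $\star\psi^{*}\omega$ is a point the paper leaves implicit, and is a welcome (minor) refinement rather than a different argument; the only small imprecision is applying conformal rigidity directly to the immersed image $F(\Omega_F)$, where one should first restrict to a small ball on which $F$ is an embedding, as the paper does --- but since flatness is local this is cosmetic.
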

Since calibrated submanifolds are minimal \cite{Har:Law:82}, flat calibrated connected submanifolds are affine. Although this is well known, see e.g.~\cite[Proposition 3.10]{Daj:Toj:19}, we give a simple proof based on isoperimetric inequality for conformal curves for the reader's convenience.
\begin{lemma}
\label{lemma:flat-is-affine}
Let $\omega \in \Lambda^n \R^m$ be a calibration and let $M$ be a flat and connected $\omega$-calibrated submanifold. Then $M$ is contained in an $n$-dimensional affine subspace.
\end{lemma}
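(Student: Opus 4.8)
The plan is to exploit flatness to produce local isometric parametrizations of $M$, recognize these as unit-norm conformal $\omega$-curves, apply \Cref{thm:constant-norm-means-affine} to conclude each coordinate patch is affine, and then glue the patches together by connectedness.

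First I would fix $p \in M$ and use flatness to obtain a connected open set $\Omega \subset \R^n$ together with a Riemannian isometry from a neighborhood $W$ of $p$ in $M$ onto $\Omega$; its inverse $F \colon \Omega \to \R^m$ is then an isometric embedding with image $W$. Since $M$ is $\omega$-calibrated, $(DF)_x \R^n = T_{F(x)}M \in \Gr(\omega)$ for every $x \in \Omega$, and as $F$ is an isometric immersion, each $(DF)_x$ is a linear isometry onto this calibrated plane; in particular $\norm{DF} \equiv 1$. Because a plane in $\Gr(\omega)$ carries a preferred (calibrated) orientation, we have $(DF)_x^{*}\omega = \pm \vol_{\R^n}$, and after precomposing $F$ with an orientation-reversing isometry of $\R^n$ if necessary --- which alters neither the image nor $\norm{DF}$ --- I may assume $(DF)_x \in \SO(\omega) \subset \CO(\omega)$. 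Thus $F$ is a conformal $\omega$-curve whose norm function is the constant $1$, so \Cref{thm:constant-norm-means-affine} yields that $F$ is affine, and hence $W = F(\Omega)$ is contained in an $n$-dimensional affine subspace $V_p \subset \R^m$.

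Since $W$ is an $n$-dimensional open piece of $M$, the affine plane $V_p$ is the unique affine $n$-plane containing it, so it is independent of the chart chosen near $p$. If two such patches $W_p$ and $W_q$ overlap, then $W_p \cap W_q$ is a nonempty open subset of both $V_p$ and $V_q$, forcing $V_p = V_q$. Consequently $p \mapsto V_p$ is locally constant on $M$, and connectedness of $M$ makes it constant; therefore all of $M$ lies in a single $n$-dimensional affine subspace, as claimed.

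I expect the only genuinely delicate point to be the orientation bookkeeping in the second paragraph: one must arrange the isometric parametrization so that $(DF)_x \in \CO(\omega)$, rather than merely being an isometry onto a calibrated plane, since \Cref{thm:constant-norm-means-affine} is stated for conformal $\omega$-curves. The subsequent passage from the local affine patches to a single global affine subspace is routine.
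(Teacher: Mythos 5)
Your proof is correct and follows essentially the same route as the paper's: a local isometric parametrization of the flat manifold gives a conformal $\omega$-curve with $\norm{DF}\equiv 1$, Theorem~\ref{thm:constant-norm-means-affine} makes each patch affine, and overlapping patches plus connectedness give a single affine subspace. Your orientation bookkeeping (noting that the sign of $\star F^{*}\omega$ is constant on a connected chart, so one may precompose with a reflection to land in $\CO(\omega)$) is a point the paper leaves implicit, and handling it explicitly is a sound refinement rather than a deviation.
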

\begin{proof}
Let $p \in M$ and consider a Riemannian isometry $\phi_p \colon B_{\epsilon} \rightarrow B_{M}( p, \epsilon )$, where $B_\epsilon \subset \R^n$, and the inclusion map $\iota \colon M \rightarrow \mathbb{R}^m$. Then $F = \iota \circ \phi$ is a conformal $\omega$-curve. Since $\norm{DF}=1$ everywhere, the mapping $F$ is affine by Theorem \ref{thm:constant-norm-means-affine}. If two balls $B_M( p_1, \epsilon_1 )$ and $B_M( p_2, \epsilon_2 )$ overlap, the corresponding affine subspaces coincide and hence $M$ is contained in an $n$-dimensional affine subspace by connectivity of $M$.
\end{proof}

Similar argument shows that inner M\"obius rigid calibrations are conformally rigid.
\begin{lemma}\label{lemma:mob:to:conf:rigid}
Let $\omega \in \Lambda^n \mathbb{R}^m$ be an inner Möbius rigid calibration. Then $\omega$ is conformally rigid.
\end{lemma}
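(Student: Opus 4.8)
The plan is to mirror the argument of \Cref{lemma:flat-is-affine}, but running the implication in the opposite direction: there one starts from a flat calibrated submanifold, produces conformal $\omega$-curves with $\norm{DF}\equiv 1$, and concludes affineness; here I would start from a conformally flat calibrated submanifold, produce conformal $\omega$-curves from its conformal charts, and use inner M\"obius rigidity to force the local image into an affine $n$-plane. Since flatness is a local property, it suffices to show that each point of the submanifold has a flat neighborhood.

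First I would fix a conformally flat $\omega$-calibrated submanifold $N\subset\R^m$ and a point $p\in N$. Using a conformal chart near $p$, its inverse gives a conformal immersion $\varphi\colon U\to N$ from a domain $U\subset\R^n$; composing with the inclusion $N\hookrightarrow\R^m$ produces a smooth weakly conformal map $F\colon U\to\R^m$ whose differential $(DF)_x$ maps $\R^n$ conformally onto $T_{\varphi(x)}N$. Because $N$ is $\omega$-calibrated, $T_{\varphi(x)}N\in\Gr(\omega)$ for every $x$, so $(DF)_x\in\CO(\omega)$ once orientations are matched, and thus $F$ is a nonconstant conformal $\omega$-curve. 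Next I would invoke the hypothesis that $\omega$ is inner M\"obius rigid to write $F=A\circ g$ with $A(x)=y_0+Lx$ affine with orthogonal linear part and $g$ a restriction of a M\"obius transformation $\S^n\to\S^n$. This forces $\varphi(U)=F(U)$ to lie in the affine $n$-plane $A(\R^n)$. Finally, since $\varphi(U)$ is an $n$-dimensional open subset of $N$ contained in an $n$-dimensional affine subspace, it is an open piece of that subspace, so its induced metric is Euclidean; as $p$ was arbitrary, $N$ is flat.

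The one place requiring care --- and the main technical obstacle --- is the transition from $T_{\varphi(x)}N\in\Gr(\omega)$ to $(DF)_x\in\CO(\omega)$. Membership in $\Gr(\omega)$ only records that $\norm{\omega|_V}_{\comass}=1$ for the (unoriented) tangent plane $V$, whereas $\CO(\omega)$ additionally demands the correct orientation, namely that the isometric part pull $\omega$ back to $+\vol_{\R^n}$ rather than $-\vol_{\R^n}$. I would resolve this by precomposing $\varphi$ with a fixed orthogonal reflection of $\R^n$ when necessary, which flips the orientation without affecting conformality, so that $F$ becomes a genuine conformal $\omega$-curve. The remaining points --- that a smooth conformal chart lies in $W^{1,n}_{\loc}$, that $\varphi$ is nonconstant because it is a chart inverse, and that an $n$-manifold contained in an $n$-plane is open in it --- are routine.
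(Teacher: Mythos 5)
Your proposal is correct and follows essentially the same route as the paper's proof: invert a conformal chart of the conformally flat calibrated submanifold, compose with the inclusion to obtain a conformal $\omega$-curve, invoke inner M\"obius rigidity to place the local image in an affine $n$-plane, and conclude flatness chart by chart. The orientation-matching step you flag is legitimate (membership in $\Gr(\omega)$ is unoriented, while $\CO(\omega)$ is not) and your fix by precomposing with a reflection is correct on each connected chart domain, since $\star F^{*}\omega = \pm\norm{DF}^n$ with constant sign there; the paper leaves this point implicit.
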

\begin{proof}
Let $M$ be a conformally flat $\omega$-calibrated submanifold. It suffices to prove that $M$ is flat. Let $p \in M$ and consider a conformal chart $\phi \colon U \to W \subset \mathbb{R}^n$, where $p \in U \subset M$ is a (relatively open) domain. Let $\iota \colon M \xhookrightarrow{} \mathbb{R}^m$ be the inclusion. Then $F = \iota \circ \phi^{-1}$ is a conformal $\omega$-curve since $M$ is $\omega$-calibrated and $\phi$ is conformal. In particular, the image of $F$ is contained in an $n$-dimensional affine subspace of $\mathbb{R}^m$ and thus $U$ is flat. As $M$ can be covered by such $U$, $M$ is flat and the claim follows.
\end{proof}

\begin{proof}[Proof of \Cref{thm:Liouville}]
\Cref{lemma:mob:to:conf:rigid} proved one direction of the claim. So it suffices to prove that conformal rigidity implies inner Möbius rigidity. To this end, let $F \colon \Omega \rightarrow \mathbb{R}^m$ be a conformal $\omega$-curve for a conformally rigid calibration $\omega \in \Lambda^n \R^m$ and a domain $\Omega \subset \R^n$.

We first recall from \Cref{prop:nharmonic} that $F \in \mathcal{C}^{1}( \Omega, \mathbb{R}^m )$. The claim is clear if $F$ is constant, so we may assume that there exists $x \in \left\{ \|DF\| > 0 \right\}$. Then, by \Cref{prop:nharmonic}, there exists a domain $x \in U \subset \left\{ \|DF\| > 0 \right\}$ for which the restriction $F|_{ U } \colon U \to \R^m$ is a real-analytic embedding. Since $F$ is a conformal $\omega$-curve, the image $M = F( U )$ is an $\omega$-calibrated submanifold. By \Cref{lemma:flat-is-affine}, $M$ is contained in an $n$-dimensional affine subspace $W$ and, by the elementary factorization theorem (\Cref{thm:elementary-factorization}), $F|_{U}$ is inner M\"obius. \Cref{cor:comparison-principle} now implies that $F$ is inner Möbius.
\end{proof}


\subsection{Conformal rigidity of stabilizations of calibrations}
\label{sec:products}

In this section, we prove the following theorem.

\begin{theorem}
\label{thm:product}
Let $\omega \in \Lambda^n\R^m$ be a calibration for $2\le n \le m$ and $r\ge 1$. Let also $\pi_1 \colon \R^m \times \R^r \to \R^m$ and $\pi_2 \colon \R^m \times \R^r \to \R^r$ be coordinate projections. Suppose that one of the conditions 
\begin{enumerate}
\item $r \ge 2$,
\item $\omega$ is conformally rigid,
\item $n=2$, or 
\item $\omega = \omega_\SL$
\end{enumerate}
is satisfied. Then the calibration $\widetilde \omega = \pi_1^*\omega\wedge \pi_2^*\vol_{\R^r} \in \Lambda^{n+r}\R^{m+r}$ is conformally rigid.
\end{theorem}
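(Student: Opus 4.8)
The plan is to reduce the conformal rigidity of the stabilization $\widetilde\omega$ to a rigidity property of the factor $\omega$, by first showing that every $\widetilde\omega$-calibrated submanifold is forced to be a Riemannian product with a Euclidean factor, and then constraining the remaining factor through the classical description of conformally flat products. Let $\widetilde N \subset \R^{m+r}$ be a conformally flat $\widetilde\omega$-calibrated submanifold; by \Cref{def:conf:rigid} it suffices to prove that $\widetilde N$ is flat. Iterating \Cref{lemma:Grassmannian} $r$ times gives
\[
\Gr(\widetilde\omega) = \{\, V \times \R^r \colon V \in \Gr(\omega) \,\},
\]
where $\R^r$ abbreviates $\{0\}\times\R^r \subset \R^m\times\R^r = \R^{m+r}$. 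Hence for each $p\in\widetilde N$ the tangent space splits orthogonally as $T_p\widetilde N = V_p\times\R^r$ with $V_p\in\Gr(\omega)$. In particular the constant coordinate vector fields spanning the second factor are everywhere tangent to $\widetilde N$, and since they are constant their flows are translations and must preserve $\widetilde N$; thus $\widetilde N$ is, locally near each point, the metric product $N\times\R^r$ with $N=\pi_1(\widetilde N)$ an $n$-dimensional submanifold satisfying $T_qN=V_q\in\Gr(\omega)$, i.e. $N$ is $\omega$-calibrated. As conformal flatness and flatness are local curvature conditions and a product of flat manifolds is flat, it suffices to prove that this local factor $N$ is flat.

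Next I would constrain $N$ using that $\widetilde N=N\times\R^r$ is conformally flat. Invoking the classical classification of conformally flat Riemannian products in dimension $\ge 4$, together with the three-dimensional criterion for a surface times a line in the borderline case $n=2,\ r=1$, a product $N^n\times\R^r$ with flat Euclidean second factor can be conformally flat only if: $N$ is flat when $r\ge 2$ (both factors having dimension $\ge 2$ forces opposite constant curvatures, so the flat $\R^r$ forces $N$ flat); and $N$ has constant sectional curvature $c$ when $r=1$. Moreover, $\omega$-calibrated submanifolds are minimal \cite{Har:Law:82}, so the Gauss equation with vanishing mean curvature yields $\mathrm{Ric}_N\le 0$, whence $c\le 0$ for $n\ge 2$.

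It then remains to upgrade constant nonpositive curvature to flatness under each hypothesis, after which $N$ flat gives $\widetilde N$ flat. Case (1) is already finished, as $r\ge 2$ yields $N$ flat directly. In case (2), a manifold of constant curvature is conformally flat, so the assumed conformal rigidity of $\omega$, applied to the $\omega$-calibrated submanifold $N$, forces $N$ flat. In case (3), $n=2$: by \Cref{prop:2-dim-case} the translated surface $N$ is an isometric copy of a holomorphic curve in $\C^{\ell}$, and a holomorphic curve of constant Gauss curvature in a finite-dimensional $\C^{\ell}$ is flat (equivalently, a minimal surface of constant curvature in Euclidean space is planar), so $c=0$. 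In case (4), $N$ is a minimal Lagrangian submanifold of $\C^n$ of constant curvature, and by the classification of such submanifolds (Ejiri) these are flat; this is consistent with $\omega_\SL$ \emph{failing} to be conformally rigid, since its non-flat conformally flat calibrated examples --- the cones of \Cref{prop:SL-catenoid} and \Cref{prop:SL-example-concrete} --- do not have constant sectional curvature and are thereby excluded once the stabilization forces constant curvature.

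I expect the main obstacle to be twofold. First, pinning down the exact conformally-flat-product statement across all relevant dimensions, in particular the borderline three-dimensional product $N^2\times\R$, and making the passage to ``$N$ has constant curvature'' rigorous rather than quoting a blanket theorem. Second, the rigidity of constant-curvature calibrated submanifolds in cases (3) and (4): the holomorphic-curve rigidity underlying $n=2$ and, above all, the classification of constant-curvature minimal Lagrangian submanifolds required for $\omega_\SL$, which is where the genuine geometric input beyond the soft product reduction enters.
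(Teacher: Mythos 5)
Your proposal is correct and follows essentially the same route as the paper: you split the conformally flat $\widetilde\omega$-calibrated submanifold locally as a Riemannian product $N \times U$ with $N$ an $\omega$-calibrated submanifold of $\R^m$ (the paper establishes this via parallel distributions and the local de Rham decomposition in \Cref{prop:locflat-new} and \Cref{cor:locflat-new}, while your translation-invariance argument using the constant tangent fields is an elementary equivalent), then apply Yau's theorem on conformally flat products and conclude case by case exactly as in \Cref{sec:products}, including Ejiri's theorem for $\omega_\SL$. The only notable difference is case (3), where the paper reduces via \Cref{lemma:symplectic-equivalence} to K\"{a}hler submanifolds of $\C^\ell$ and cites Umehara's theorem (Einstein K\"{a}hler submanifolds of $\C^N$ are totally geodesic), which is precisely the constant-curvature holomorphic-curve rigidity you invoke; your parenthetical appeal to a general ``minimal surface of constant curvature in Euclidean space is planar'' statement is stronger than needed and should be replaced by that citation.
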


Before proving \Cref{thm:product}, we state a corollary, which yields the density of conformally rigid calibrations among all calibrations. Recall that, by \Cref{thm:Liouville}, conformal rigidity is equivalent to inner Möbius rigidity. 

\begin{corollary}\label{cor:applicability}
Let $\omega \in \Lambda^n \R^m$ be a calibration for $3 \leq n \leq m$. Then there exists a conformally rigid calibration $\widetilde \omega\in \Lambda^n \R^m$ and a form $\epsilon\in \Lambda^n \R^m$ having comass at most $2$ for which $\omega = \widetilde \omega + \epsilon$ and $\widetilde \omega + t\epsilon$ is equivalent to $\widetilde \omega$ for $0\le t < 1$.
\end{corollary}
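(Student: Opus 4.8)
The plan is to read the splitting Lemma \ref{lemm:splitting} as an approximate decomposition of $\omega$ and to certify the conformal rigidity of its leading term by the stabilization Theorem \ref{thm:product}. First I would reduce to the normal form required by \Cref{lemm:splitting}. The comass of a form, the equivalence relation $\CO(\cdot) = \CO(\cdot)$ on calibrations, and the property of conformal rigidity all depend only on the Grassmannian data and are invariant under pullback by an orthogonal map $O \in \SO(m)$: indeed $\Gr(O^{*}\eta) = O^{-1}\Gr(\eta)$ and $\CO(O^{*}\eta) = O^{-1}\CO(\eta)$, while $O$, being an isometry, carries (conformally) flat calibrated submanifolds to (conformally) flat ones. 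Hence I may replace $\omega$ by $O^{*}\omega$ for a suitable $O \in \SO(m)$ chosen so that $(O^{*}\omega)(e_{m-n+1},\dots,e_m) = 1$, and thereby assume $\omega$ is in the normal form of \Cref{lemm:splitting}; the decomposition produced below then pulls back to the original calibration by $(O^{-1})^{*}$ without changing comass, equivalence, or conformal rigidity.

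Applying \Cref{lemm:splitting} produces a calibration $\alpha \in \Lambda^{n-2}\R^{m-2}$ and a form $\epsilon \in \Lambda^{n}\R^m$ with $\|\epsilon\|_{\comass} \le 2$ for which $\omega = \widetilde\omega + \epsilon$, where $\widetilde\omega = \pi_1^{*}\alpha \wedge \pi_2^{*}\vol_{\R^2}$, and for which $\widetilde\omega + t\epsilon$ is equivalent to $\widetilde\omega$ for every $0 \le t < 1$. This immediately supplies the comass bound, the additive decomposition, and the equivalence assertion of the corollary; the only point left to verify is that $\widetilde\omega$ is conformally rigid.

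To see this I would split according to the degree $n-2$ of $\alpha$. When $n \ge 4$, the form $\alpha$ is a calibration of degree $n-2 \ge 2$ on $\R^{m-2}$ and $\widetilde\omega = \pi_1^{*}\alpha \wedge \pi_2^{*}\vol_{\R^2}$ is exactly its stabilization by $\vol_{\R^2}$; since the stabilizing factor has dimension $r = 2 \ge 2$, hypothesis (1) of \Cref{thm:product} is met and $\widetilde\omega$ is conformally rigid. When $n = 3$ the degree of $\alpha$ drops to $1$, which lies outside the range of \Cref{thm:product}; here I instead use that a unit $1$-covector is simple, so that $\widetilde\omega = \pi_1^{*}\alpha \wedge \pi_2^{*}\vol_{\R^2}$ is a simple $3$-form. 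Then $\Gr(\widetilde\omega)$ consists of a single plane $V_0$, every connected $\widetilde\omega$-calibrated submanifold has constant tangent space $V_0$, and \Cref{lemm:affinetranslation} forces such a submanifold into an affine translate of $V_0$; being affine, it is flat, so $\widetilde\omega$ is conformally rigid in this case as well.

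Transporting the decomposition $\omega = \widetilde\omega + \epsilon$ back by $(O^{-1})^{*}$, and invoking the invariance of comass, of equivalence, and of conformal rigidity recorded in the first paragraph, then yields the statement for the original $\omega$. I expect the only genuine subtlety to be the boundary case $n = 3$: there the degree of the factor $\alpha$ falls below the threshold of \Cref{thm:product}, so conformal rigidity must be obtained separately from the simplicity of $\widetilde\omega$. Every other step is a direct assembly of \Cref{lemm:splitting} and \Cref{thm:product}.
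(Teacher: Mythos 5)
Your proposal is correct and follows essentially the same route as the paper: reduce to the normal form of \Cref{lemm:splitting} via an isometry $O \in \SO(m)$, obtain the decomposition and equivalence from that lemma, certify conformal rigidity of the leading term by \Cref{thm:product}(1) when $n \geq 4$ and by simplicity (via \Cref{lemm:affinetranslation}) when $n = 3$, and pull back by $(O^{-1})^{*}$. The paper's proof is terser but structurally identical, including the same case split at $n = 3$; your explicit verification of the invariance of comass, equivalence, and conformal rigidity under $O^{*}$ merely spells out what the paper leaves implicit.
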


\begin{proof}
Let $O \in \SO(m)$ be an isometry for which the calibration $O^*\omega \in \Lambda^n \R^m$ satisfies the assumptions of \Cref{lemm:splitting}, that is, $(O^*\omega)(e_{m-n+1},\ldots, e_m) =1$ for the standard basis $(e_1,\ldots, e_m)$ of $\R^m$. Then, by \Cref{lemm:splitting}, $O^{*}\omega = \widetilde{\omega}' + \epsilon'$, where $\widetilde{\omega}'$ is a calibration, $\|\epsilon'\|_{\comass} \leq 2$, and $\widetilde{\omega}' + t \epsilon'$ is a calibration equivalent to $\widetilde{\omega}'$ for $0 \leq t < 1$. Furthermore, $\widetilde{\omega}'$ is simple when $n = 3$ or satisfies assumption (1) in \Cref{thm:product} for $n \geq 4$. In either case, $\widetilde{\omega}'$ is conformally rigid. We may now take $\widetilde{\omega} = ( O^{-1} )^{*}\widetilde{\omega}'$ and $\epsilon = ( O^{-1} )^{*}\epsilon'$. The claim follows.
\end{proof}

In the following proposition, we prove basic properties for calibrated submanifolds of $\widetilde \omega = \pi_1^*\omega\wedge \pi_2^*\vol_{\R^r}$. Recall that, by \Cref{lemma:Grassmannian}, we have $\Gr(\widetilde \omega) = \{ V \times \R^r \colon V\in \Gr(\omega)\}$.

\begin{proposition}
\label{prop:locflat-new}
Let $\omega \in \Lambda^n\R^m$ be a calibration for $2\le n \le m$, $r\ge 1$, and let $\pi_1 \colon \R^m \times \R^r\to \R^m$ and $\pi_2 \colon \R^m \times \R^r \to \R^r$ be coordinate projections. Let also $\widetilde \omega = \pi_1^*\omega\wedge \pi_2^*\vol_{\R^r}\in \Lambda^{n+r} \R^{m+r}$, let $M \subset \R^{m+r}$ be an $\widetilde \omega$-calibrated submanifold, and consider the distribution
\[
\Delta = M \times (\{0\}\times \R^r) \subset TM.
\]
Then the following properties hold:
\begin{enumerate}
\item The distribution $\Delta$ is fixed by the holonomy group of $M$. Furthermore, $\Delta$ and its orthogonal complement $\Delta^\perp \subset TM$ are parallel distributions and their leaves are totally geodesic in $M$. \label{item:localflat-now-1} 
\item The orthogonal projection $\pi_2$ restricts to a Riemannian local isometry on the leaves $L_p$, for $p \in M$, of $\Delta$. Moreover, $L_p = \left\{ \pi_1(p) \right\} \times \pi_2( L_p )$, for $p\in M$.
\label{item:localflat-now-2}
\item The leaves of $\Delta^{\perp}$ are in one-to-one correspondence with the connected components of the fibers $M \cap \pi_2^{-1}(p)$ for $p \in M$. That is, there exists a unique connected $\omega$-calibrated submanifold $N_p \subset \R^m$ for which $N_p \times \left\{ \pi_2(p) \right\}$ is the maximal leaf of $\Delta^{\perp}$ containing $p$. \label{item:localflat-now-3}
\end{enumerate}
\end{proposition}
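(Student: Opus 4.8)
The plan is to exploit that $M$ is a submanifold of the \emph{flat} ambient space $\R^{m+r}$ together with the Grassmannian description $\Gr(\widetilde\omega) = \{V \times \R^r \colon V \in \Gr(\omega)\}$ recalled before the statement. This description shows that the fixed $r$-plane $\{0\}\times\R^r$ lies in $T_pM$ for every $p\in M$, so that $\Delta$ is a genuine $r$-dimensional subdistribution of $TM$, and $\Delta^\perp_p = V_p \times \{0\}$, where $V_p \in \Gr(\omega)$ is the factor with $T_pM = V_p \times \R^r$. In particular $\dim M = n+r$.

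For assertion (1), the key observation I would make is that the constant coordinate vector fields $e_{m+1},\dots,e_{m+r}$ spanning $\{0\}\times\R^r$ are tangent to $M$ and constant in $\R^{m+r}$. Writing $D$ for the flat ambient connection and $\nabla$ for the Levi--Civita connection of $M$, one has $\nabla_X e_{m+k} = (D_X e_{m+k})^{\top} = 0$ for every $X$ tangent to $M$, so each $e_{m+k}$ is a \emph{parallel} vector field on $M$. Hence $\Delta = \operatorname{span}(e_{m+1},\dots,e_{m+r})$ is a parallel distribution: it is invariant under parallel transport, so it is fixed by the holonomy group, and $\Delta^\perp$ is parallel as well since parallel transport is orthogonal. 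Finally, a parallel distribution is involutive, because $[X,Y] = \nabla_X Y - \nabla_Y X \in \Delta$ for $X,Y\in\Delta$, and its leaves are totally geodesic since $\nabla_X Y$ remains tangent to the leaf; the same applies to $\Delta^\perp$. This settles (1).

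For (2), since $\ker d\pi_1 = \{0\}\times\R^r = \Delta$, the map $\pi_1$ is constant along curves tangent to $\Delta$, hence constant on each connected leaf $L_p$. As $L_p$ is an $r$-dimensional integral manifold contained in the affine $r$-plane $\{\pi_1(p)\}\times\R^r$, it is open in that plane, giving $L_p = \{\pi_1(p)\}\times\pi_2(L_p)$. The metric induced on $L_p$ is the Euclidean metric of this plane, on which $\pi_2$ acts as the isometry $(\pi_1(p),v)\mapsto v$; thus $\pi_2|_{L_p}$ is a Riemannian local isometry. For (3), I would observe dually that $d\pi_2$ carries $\{0\}\times\R^r \subset T_pM$ isometrically onto $\R^r$, so $\pi_2|_M \colon M \to \R^r$ is a submersion with $\ker d(\pi_2|_M)_p = V_p\times\{0\} = \Delta^\perp_p$. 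Therefore the fibers $M\cap\pi_2^{-1}(q)$ are $n$-dimensional integral manifolds of $\Delta^\perp$, and the leaves of $\Delta^\perp$ are exactly their connected components, which is the claimed correspondence. The maximal leaf $L_p^{\perp}$ through $p$ lies in $\R^m\times\{\pi_2(p)\}$; writing $L_p^{\perp} = N_p \times \{\pi_2(p)\}$ with $N_p = \pi_1(L_p^{\perp})$ a connected $n$-dimensional submanifold of $\R^m$, its tangent space at $x$ is the factor $V$ with $T_{(x,\pi_2(p))}L_p^{\perp} = V\times\{0\}$, and $V\in\Gr(\omega)$ by the Grassmannian description. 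Hence $N_p$ is $\omega$-calibrated and is uniquely determined by the connected component.

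The conceptual crux is the parallel vector field observation in (1): once $\Delta$ is recognized as a parallel distribution, its integrability, the totally geodesic property of both families of leaves, and the parallelism of $\Delta^\perp$ are all formal consequences. The remaining steps demand only routine care, namely identifying the abstract maximal integral manifolds with the connected components of the fibers of $\pi_1|_M$ and $\pi_2|_M$ (using that these restrictions have constant rank, respectively are submersions) and checking that the $\omega$-calibration condition descends to $N_p$ through $\Gr(\widetilde\omega)=\{V\times\R^r\}$. I expect no genuine obstacle beyond bookkeeping.
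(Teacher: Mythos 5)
Your proposal is correct and follows essentially the same route as the paper: the crux in both is that the constant coordinate vector fields $e_{m+1},\dots,e_{m+r}$ are tangent to $M$ and hence parallel for the induced Levi--Civita connection, which yields the holonomy invariance, parallelism of $\Delta$ and $\Delta^{\perp}$, and totally geodesic leaves, with the Grassmannian identity $\Gr(\widetilde\omega)=\{V\times\R^r : V\in\Gr(\omega)\}$ supplying the calibrated structure of $N_p$. Your treatment of (2) and (3) is a mild streamlining of the paper's (constancy of $\pi_1$ along leaves instead of the exponential-map description, and the submersion $\pi_2|_M$ instead of an invariance-of-domain argument for identifying leaves with fiber components), but these are routine variants of the same argument, not a different method.
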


We conclude that an $\widetilde \omega$-calibrated submanifold is locally a Riemannian product; see e.g.~Kobayashi and Nomizu \cite[Chapter IV, Proposition 5.2.]{Kob:Nom:96}.

\begin{corollary}
\label{cor:locflat-new}
Let $\omega \in \Lambda^n\R^m$ be a calibration for $2\le n \le m$, $r\ge 1$, and let $\pi_1 \colon \R^m \times \R^r\to \R^m$ and $\pi_2 \colon \R^m \times \R^r \to \R^r$ be coordinate projections. Let also $\widetilde \omega = \pi_1^*\omega \wedge \pi_2^*\vol_{\R^r}\in \Lambda^{n+r} \R^{m+r}$, and let $M \subset \R^{m+r}$ be an $\widetilde \omega$-calibrated submanifold. 

Then, for each $p \in M$, there exist a neighborhood $V$ of $p$ in $M$, an $\omega$-calibrated submanifold $N \subset \R^m$, an open set $U \subset \R^r$, and a Riemannian isometry $\phi \colon N \times U \to M$ onto $V$.
\end{corollary}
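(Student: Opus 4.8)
The plan is to deduce the corollary directly from \Cref{prop:locflat-new} via the local de Rham splitting theorem. That proposition equips $M$ with two orthogonal, complementary, and parallel distributions $\Delta$ and $\Delta^{\perp}$ whose leaves are totally geodesic; in particular $TM = \Delta \oplus \Delta^{\perp}$ is a decomposition into parallel subbundles. By the local version of the de Rham decomposition theorem, Kobayashi--Nomizu \cite[Chapter IV, Proposition 5.2.]{Kob:Nom:96}, every $p \in M$ therefore has a neighborhood $V$ that is a Riemannian product of a neighborhood of $p$ in the leaf $L_p^{\perp}$ of $\Delta^{\perp}$ and a neighborhood of $p$ in the leaf $L_p$ of $\Delta$, the product isometry being obtained by following the two foliations out of $p$.

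It remains to identify the two factors with the objects in the statement. By property~(\ref{item:localflat-now-3}) of \Cref{prop:locflat-new}, the leaf $L_p^{\perp}$ of $\Delta^{\perp}$ through $p$ is $N_p \times \left\{ \pi_2(p) \right\}$ for a connected $\omega$-calibrated submanifold $N_p \subset \R^m$, and its induced metric is that of $N_p$ as a submanifold of $\R^m$; so, after shrinking, this factor is isometric to a relatively open piece $N$ of $N_p$, which is again $\omega$-calibrated. By property~(\ref{item:localflat-now-2}), the leaf $L_p$ of $\Delta$ satisfies $L_p = \left\{ \pi_1(p) \right\} \times \pi_2( L_p )$ and $\pi_2$ restricts to a Riemannian local isometry on it; since $\dim L_p = r$, the set $U \coloneqq \pi_2( L_p )$ is open in $\R^r$ and, after shrinking to a neighborhood of $p$, the restriction $\pi_2|_{ L_p } \colon L_p \to U$ is a Riemannian isometry.

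Substituting these two identifications into the local product isometry yields a Riemannian isometry $\phi \colon N \times U \to M$ onto the neighborhood $V$ of $p$ in $M$, which is exactly the assertion. The only point requiring care is the choice of $V$: one shrinks so that both leaf-neighborhoods are embedded and $\pi_2|_{ L_p }$ is a genuine (not merely local) isometry, which is routine since the distributions are parallel with totally geodesic leaves. Thus no obstacle arises beyond the standard local splitting argument.
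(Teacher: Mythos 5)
Your proposal is correct and follows exactly the route the paper intends: the paper derives the corollary from \Cref{prop:locflat-new} together with the local de Rham splitting theorem, citing the same Kobayashi--Nomizu reference, and leaves the identification of the two factors (via properties (2) and (3) of the proposition) implicit, which you have simply spelled out. Your added care about shrinking is harmless but not strictly needed for the $\Delta$-factor, since the proof of \Cref{prop:locflat-new} shows $\pi_2|_{L_p}$ is a translation onto an open set, hence already a genuine isometry.
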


\begin{proof}[Proof of Proposition \ref{prop:locflat-new}]
Let $(e_1,\ldots, e_{m+r})$ be the standard basis of $\R^{m+r}$ and, for $j=1,\ldots, r$, let $X_j \colon M \to \R^{m+r}$ be a restriction of the coordinate vector field $p \mapsto e_{m+j}$.

We recall that the Levi-Civita connection $\nabla$ on $M$ coincides with the tangential projection of the Levi-Civita connection on $\mathbb{R}^m$, see e.g. \cite[Chapter 5]{Lee:18}. Thus, if we consider any piecewise smooth curve $\gamma \colon [a,b] \to M$, we have
\begin{equation}\label{eq:levicivita}
    \nabla_{ \dot{\gamma}(t) } X_j = 0
    \quad\text{for every differentiability point of $\gamma$ and $1 \leq j \leq r$.}
\end{equation}
Indeed, \eqref{eq:levicivita} holds for the Euclidean Levi-Civita connection and therefore also for its tangential projection $\nabla$ since $X_j$ takes values on $TM$. Thus the parallel transport along smooth curves acts as the identity on each $X_j$, see e.g. \cite[Corollary 4.33, Theorem 4.34, and Corollary 4.35]{Lee:18}, for every $1 \leq j \leq r$. In particular, the holonomy group at a point $p \in M$ acts on $\Delta_p$ as the identity. This implies that $\Delta$ is a parallel distribution and the metric compatibility of the Levi--Civita connection gives that $\Delta^{\perp}$ is parallel as well. Recalling that the Levi--Civita connection is torsion-free, the parallel distributions are integrable; see e.g. \cite[Proposition 5.5]{Lee:18} for the quoted properties of the connection. Furthermore, the leaves are totally geodesic in $M$ by \cite[Proposition 5.1 (2),  p. 180]{Kob:Nom:96}. Thus \eqref{item:localflat-now-1} follows.

Let $L_p$ and $L_{p}^{\perp}$ be the maximal (connected) integral manifolds of $\Delta$ and $\Delta^{\perp}$, respectively, at $p \in M$. The restriction of the orthogonal projection $\pi_2$ to $L_p$ is a local Riemannian isometry, as the differential of the restriction is an isometry. Combining this with $L_p$ being totally geodesic in $M$, we deduce that the exponential map at each $q \in L_p$ acts as the affine map $t \mapsto q + tv$ for every $v \in \Delta_q = T_q L_p$. Hence, for every $p \in M$, the open set $U_p = \pi_2( U_p ) - \pi_2( p ) \subset \mathbb{R}^{ r }$ satisfies
\begin{equation*}
    L_p = \left\{ p + (0,v) \mid v \in U_p \right\}.
\end{equation*}
We conclude that \eqref{item:localflat-now-2} holds.

To finish, we observe that every $p \in M$ is contained in $L_p^{\perp}$; so, in particular, in some leaf of $\Delta^{\perp}$. Moreover, as $L_p^{\perp}$ is totally geodesic and connected, given any $q \in L_p^{\perp}$, there exists a smooth curve $\gamma \colon [0,1] \rightarrow L_p^{\perp}$ joining $p$ to $q$. Since the differential of $\pi_2$ is identically zero on the tangents of $L_p^{\perp}$, we deduce that $\pi_2 \circ \gamma$ is constant. This implies that $L_p^{\perp} \subset \pi_2^{-1}( \pi_2(p) )$. Conversely, if $K$ is a connected component of $M \cap \pi_2^{-1}( \pi_2(p) )$, then 
\begin{equation*}
    K = \bigcup_{ p \in K } L_p^{\perp}.
\end{equation*}
The invariance of domain yields that such $L_p^{\perp}$ are (relatively) open in $K$ so the connectivity of $K$ implies that $K = L_p^{\perp}$ for a point $p \in K$. Now \eqref{item:localflat-now-3} is immediate by considering the definition of $\Delta^{\perp}$ and the assumption we have on the tangent bundle $TM$.
\end{proof}

\Cref{thm:product} now follows from a result of Yau on conformally flat Riemannian manifolds and a related result of Ejiri on totally real submanifolds.

\begin{proposition}[{\cite[Theorem 4]{Yau:73}}]
\label{prop:conformallyflat:product}
Let $M = N_1 \times N_2$ be a conformally flat Riemannian manifold of dimension $n \geq 3$. Then, either one of the factors $N_1$ and $N_2$ is one-dimensional and the other factor has constant sectional curvature, or the dimensions of the factors are at least two and the factors $N_1$ and $N_2$ have constant sectional curvature $\kappa_1$ and $\kappa_2$, respectively, with $\kappa_1 = -\kappa_2$.
\end{proposition}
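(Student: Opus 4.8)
The plan is to translate the hypothesis of conformal flatness into pointwise algebraic and first-order conditions on the curvature of the product and then exploit the block structure of the curvature, Ricci, and Schouten tensors of a Riemannian product. Recall that for a product $M = N_1 \times N_2$ the Riemann tensor $R$ vanishes on any quadruple of tangent vectors that is not entirely tangent to a single factor in a compatible pairing, that the Ricci tensor is block diagonal, $\mathrm{Ric} = \mathrm{Ric}_1 \oplus \mathrm{Ric}_2$, and that the scalar curvature satisfies $s = s_1 + s_2$; consequently the Schouten tensor $P = \tfrac{1}{n-2}\big(\mathrm{Ric} - \tfrac{s}{2(n-1)}g\big)$ is block diagonal as well. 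I would split the argument according to the two standard characterizations of conformal flatness: for $n \ge 4$, vanishing of the Weyl tensor, equivalently $R(X,Y,Z,T) = P(X,Z)g(Y,T) + P(Y,T)g(X,Z) - P(X,T)g(Y,Z) - P(Y,Z)g(X,T)$; and for $n = 3$, vanishing of the Cotton tensor, equivalently the Codazzi identity $(\nabla_X P)(Y,Z) = (\nabla_Y P)(X,Z)$.

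For $n \ge 4$ I would first feed in the mixed components of the Weyl identity. Taking $X,Z$ tangent to $N_1$ and $Y,T$ tangent to $N_2$, the left-hand side vanishes while the right-hand side collapses to $P(X,Z)g(Y,T) + P(Y,T)g(X,Z)$; choosing $X = Z$ and $Y = T$ unit forces $P(X,X) = -P(Y,Y)$ for all such unit vectors. Hence $P|_{N_1} = c_1 g_1$ and $P|_{N_2} = c_2 g_2$ with $c_1 + c_2 = 0$, so each factor is Einstein. I would then use the pure components: for $X,Y,Z,T$ all tangent to $N_1$ the identity reads $R_1(X,Y,Z,T) = 2c_1\big(g_1(X,Z)g_1(Y,T) - g_1(X,T)g_1(Y,Z)\big)$, so a factor of dimension at least two has constant sectional curvature $\kappa_1$ proportional to $c_1$, and symmetrically $\kappa_2$ proportional to $c_2$. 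The relation $c_1 = -c_2$ then yields $\kappa_1 = -\kappa_2$, which is the second alternative of the statement. If one factor is one-dimensional its pure component is vacuous, so only the other factor's constant sectional curvature survives, and one checks that $c_1 + c_2 = 0$ imposes no further restriction; this is the first alternative.

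For $n = 3$ the only nontrivial splitting has $N_1$ one-dimensional and $N_2$ a surface $\Sigma$, so the Weyl tensor is automatically zero and I would instead use the Codazzi identity for $P$. A direct computation gives $P|_\Sigma = \tfrac{K}{2}\,g_\Sigma$, where $K$ is the Gauss curvature of $\Sigma$, and $P(e_0,e_0) = -\tfrac{K}{2}$ in the parallel direction $e_0$. Evaluating Codazzi on vectors tangent to $\Sigma$ (which is totally geodesic in the product, so the ambient and intrinsic covariant derivatives agree) yields $(XK)\,g_\Sigma(Y,Z) = (YK)\,g_\Sigma(X,Z)$; testing on an orthonormal pair forces $\nabla K = 0$, so $\Sigma$ has constant Gauss curvature, again the first alternative.

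The main obstacle is not any single computation but organizing the case analysis so that the conclusions are extracted in the correct logical order: the Einstein property of the factors must be derived from the mixed Weyl components \emph{before} the pure components can be read as a constant-sectional-curvature statement, and the one-dimensional factors together with the genuinely different $n = 3$ regime (where the Weyl tensor carries no information and the first-order Cotton/Codazzi condition is essential) have to be treated separately. Care is also needed with the Kulkarni--Nomizu bookkeeping to verify that the remaining mixed components of the Weyl identity, such as the three-one splits, are automatically satisfied and hence contribute no constraints.
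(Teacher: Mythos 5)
First, a point of comparison: the paper contains no proof of this proposition --- it is quoted directly from Yau \cite[Theorem 4]{Yau:73} --- so your argument must be judged on its own. Its architecture is the standard and correct one: block structure of $\mathrm{Ric}$ and of the Schouten tensor on a product, mixed versus pure components of the identity expressing $R$ through $P$ when the Weyl tensor vanishes ($n \geq 4$), and the Cotton/Codazzi condition in the genuinely different $n=3$ regime. Your $n=3$ computation is complete and correct: $P|_{\Sigma} = \frac{K}{2} g_{\Sigma}$, $P(e_0,e_0) = -\frac{K}{2}$, and the Codazzi identity on $\Sigma$-tangent vectors forces $\nabla K = 0$ (the remaining components involving $e_0$ are indeed vacuous because the splitting is parallel).

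There is, however, one genuine gap in the $n \geq 4$ case. After the mixed components give $P|_{TN_1} = c_1 g_1$ and $P|_{TN_2} = c_2 g_2$ with $c_1 + c_2 = 0$, the coefficients $c_1, c_2$ are a priori \emph{functions on $M$}, not constants: the Schouten tensor of the product involves the total scalar curvature $s_1 + s_2$, so $c_1$ depends on both coordinates. Consequently ``so each factor is Einstein'' is premature (at that stage each factor is only pointwise-Einstein), and the later assertion ``a factor of dimension at least two has constant sectional curvature $\kappa_1$ proportional to $c_1$'' claims a constancy that has not been established --- for a two-dimensional factor the pure-component identity says only that the Gauss curvature is isotropic at each point, which is vacuous. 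The repair is short but must be made explicit. When both factors have dimension at least two, the pure components identify $2c_1$ with the pointwise sectional curvature of $N_1$, hence $c_1$ depends only on the $N_1$-coordinate, and symmetrically $c_2$ depends only on the $N_2$-coordinate; the relation $c_1(p) = -c_2(q)$ for all $(p,q)$ then forces both to be global constants, settling every such splitting, including $2+2$. In the remaining splitting $1 + (n-1)$ with $n \geq 4$ this separation argument is unavailable (the one-dimensional factor carries no curvature), and constancy of the curvature of the other factor requires Schur's lemma applied to the pointwise-constant-curvature manifold of dimension $n-1 \geq 3$, which you never invoke. As written, your proof establishes that the sectional curvatures are isotropic at each point and opposite as functions --- strictly weaker than the proposition's conclusion that $\kappa_1$ and $\kappa_2$ are constants with $\kappa_1 = -\kappa_2$.
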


\begin{proposition}[{\cite{Eji:82}}]
\label{prop:conformallyflat:product-real}
Let $N$ be a totally real, minimal submanifold of constant sectional curvature $c$ immersed in an $n$-dimensional complex space form. Then $N$ is totally geodesic in the space form or, if $c=0$, flat.
\end{proposition}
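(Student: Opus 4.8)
The statement is Ejiri's theorem, and the plan is to prove it by feeding the Gauss and Codazzi equations of a totally real submanifold into a Bochner analysis of its cubic form. Since $N$ has the same real dimension as the complex dimension of the ambient space form $\widetilde M$ of constant holomorphic sectional curvature $4\tilde c$, it is Lagrangian, so the parallel complex structure $J$ identifies the normal bundle with $J(TN)$; the genuinely totally real case reduces to this one by restricting to the smallest totally geodesic complex submanifold of $\widetilde M$ containing $N$, which is again a complex space form. Writing $h$ for the second fundamental form, I would work with the cubic form $\sigma(X,Y,Z)=\langle h(X,Y),JZ\rangle$. This $\sigma$ is totally symmetric (a standard property of Lagrangian submanifolds of K\"ahler manifolds), minimality $\mathrm{tr}\,h=0$ makes it trace-free, and the K\"ahler identity $\nabla^{\perp}(JZ)=J\nabla Z$ along $N$ gives $(\nabla_W\sigma)(X,Y,Z)=\langle(\nabla_W h)(X,Y),JZ\rangle$. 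Moreover, for tangent $X,Y,Z$ the ambient curvature $\widetilde R(X,Y)Z$ of a complex space form is tangential on a totally real submanifold, so the Codazzi equation collapses to $(\nabla_X h)(Y,Z)=(\nabla_Y h)(X,Z)$; equivalently $\nabla\sigma$ is a \emph{totally symmetric} four-tensor.

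The first step is purely algebraic. In an orthonormal frame one has $h(X,Y)=\sum_k\sigma(X,Y,e_k)Je_k$, and since a totally real $2$-plane in $\widetilde M$ has sectional curvature $\tilde c$, the Gauss equation for the constant curvature $c$ of $N$ reads $c-\tilde c=\langle h_{ii},h_{jj}\rangle-\norm{h_{ij}}^2$ for $i\neq j$. Summing over $j$ and using $\sum_j h_{jj}=0$ yields $\sum_j\norm{h_{ij}}^2=(n-1)(\tilde c-c)$ for every $i$, hence
\[
\norm{h}^2=\norm{\sigma}^2=n(n-1)(\tilde c-c).
\]
Thus $\norm{\sigma}^2$ is a nonnegative \emph{constant}; in particular $\tilde c\ge c$, with equality exactly when $h\equiv 0$, i.e.\ when $N$ is totally geodesic. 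This disposes of one alternative and leaves the case in which $\norm{\sigma}^2$ is a positive constant, where the goal becomes to prove $c=0$.

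For the second step I pass to second order. Total symmetry of $\nabla\sigma$ together with trace-freeness makes $\sigma$ divergence-free, so the Ricci identity on the constant-curvature manifold $N$ gives the intrinsic Weitzenb\"ock identity $\tfrac12\Delta\norm{\sigma}^2=\norm{\nabla\sigma}^2+c(n+1)\norm{\sigma}^2$ for the rough Laplacian. As $\norm{\sigma}^2$ is constant the left-hand side vanishes, whence $c(n+1)\norm{\sigma}^2=-\norm{\nabla\sigma}^2\le 0$; since $\norm{\sigma}^2>0$ this forces $c\le 0$ and excludes positive curvature outright. It remains to exclude $c<0$. Here I would compute the same Laplacian a second time \emph{extrinsically}, through Simons' formula for $\Delta h$ in the K\"ahler ambient, which reproduces the term linear in $\tilde c$ but also a quartic term $Q(\sigma)$ in the cubic form. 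Comparing the two evaluations of $\langle\sigma,\Delta\sigma\rangle$ produces a pointwise identity of the shape $Q(\sigma)=(A\tilde c-c(n+1))\norm{\sigma}^2$; feeding in $\norm{\sigma}^2=n(n-1)(\tilde c-c)$ together with a sharp algebraic bound for $Q$ on symmetric trace-free cubic forms turns the surviving possibility $c<0$ into a numerical contradiction, leaving $c=0$. Once $c=0$ the space form $N$ is flat, which is the second alternative.

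The main obstacle is this last comparison. The Gauss computation and the intrinsic Weitzenb\"ock identity are routine and already exclude $c>0$; the delicate points are (i) setting up Simons' formula in the complex space form so that the ambient $J$-curvature contributions are correctly bookkept, and (ii) proving the sharp quartic estimate for $Q(\sigma)$ that, combined with the value of $\norm{\sigma}^2$, rules out $c<0$ (equivalently, that forces $\nabla\sigma=0$ and hence $c\norm{\sigma}^2=0$). I expect (ii) to be the crux, since a complete manifold of constant negative curvature is noncompact, so no integration-by-parts shortcut is available and a genuinely pointwise algebraic inequality for the cubic form is required.
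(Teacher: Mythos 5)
The first thing to say is that the paper itself contains no proof of this proposition: it is imported verbatim from Ejiri \cite{Eji:82} and used as a black box in the proof of \Cref{thm:product} (the case $\omega=\omega_{\SL}$, $r=1$), so your attempt can only be judged on its own terms and against Ejiri's argument. The parts you actually carry out are correct. Since $\dim_{\R}N$ equals the complex dimension of the ambient space form, $N$ is automatically Lagrangian, so your ``reduction to the Lagrangian case'' is unnecessary (and, as stated, the reduction to the smallest totally geodesic complex submanifold containing $N$ is false for general totally real submanifolds --- harmless here only because the dimension hypothesis makes it moot). The total symmetry of $\sigma$ and of $\nabla\sigma$, the Gauss computation giving the constant $\norm{\sigma}^2=n(n-1)(\tilde c-c)$ (hence $\tilde c\ge c$ with equality exactly in the totally geodesic case), and the intrinsic Weitzenb\"ock identity $0=\norm{\nabla\sigma}^2+c(n+1)\norm{\sigma}^2$, which excludes $c>0$ and gives $\nabla\sigma=0$ when $c=0$, are all sound.

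The genuine gap is the exclusion of $c<0$, which you defer to an unstated ``sharp quartic estimate'' for $Q(\sigma)$ --- and the comparison you propose is in fact provably vacuous, so no such estimate can close the argument along that route. The quartic term in Simons' formula is not a free quantity to be bounded: for a Lagrangian of constant curvature the pointwise Gauss equation in operator form reads $[A_{Je_i},A_{Je_j}]=(c-\tilde c)\bigl(e_i\otimes e_j^{\flat}-e_j\otimes e_i^{\flat}\bigr)$, and contracting it (using total symmetry of $\sigma$ and trace-freeness) gives $\operatorname{tr}(A_{Je_i}A_{Je_j})=(n-1)(\tilde c-c)\,\delta_{ij}$; hence both quartic invariants in Simons' formula are the explicit constants $2n(n-1)(\tilde c-c)^2$ and $n(n-1)^2(\tilde c-c)^2$. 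Substituting these into the extrinsic Bochner formula with $\norm{\sigma}^2$ constant returns exactly $\norm{\nabla\sigma}^2=-(n+1)c\,\norm{\sigma}^2$, i.e.\ the \emph{same} identity as your intrinsic computation. This is forced: both evaluations use only Codazzi (total symmetry of $\nabla\sigma$), trace-freeness, and the Gauss equation (for a Lagrangian the Ricci equation is equivalent to Gauss via $R^{\perp}(X,Y)JZ=JR(X,Y)Z$), so they are one identity written two ways, and for $c<0$ they are perfectly consistent with $\norm{\nabla\sigma}^2=|c|(n+1)\norm{\sigma}^2>0$. Ruling out $c<0$ --- the actual content of Ejiri's theorem beyond the routine steps --- therefore needs a mechanism that goes beyond second-order norm identities (Ejiri exploits the rigid algebraic structure of the cubic form encoded in the commutator relations above), and your proposal contains no candidate for it.
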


\begin{proof}[Proof of \Cref{thm:product}]
Let $M\subset \R^{m+r}$ be a conformally flat $\widetilde \omega$-calibrated submanifold and let $p\in M$. Then, by \Cref{cor:locflat-new}, there is a (relatively) open neighbourhood $p \in V \subset M$ that is Riemannian isometric to a product $N \times U$ in a neighborhood of $p$, where $N \subset \R^m$ is an $\omega$-calibrated submanifold and $U \subset \R^r$ is an open subset. We will prove that $N$ is flat which will then imply that $V$ and hence $M$ is flat. This will prove that $\widetilde{\omega}$ is conformally rigid.

Suppose first that $r\ge 2$. Since also $n\ge 2$, we have, by Yau's theorem (Proposition \ref{prop:conformallyflat:product}), that $N$ has constant sectional curvature zero. Thus $N$ is a flat submanifold of $\R^m$.

Suppose now that $\omega$ is conformally rigid. Then, by Yau's theorem, $N$ has constant sectional curvature and hence $N$ is conformally flat. Since $N$ is $\omega$-calibrated and $\omega$ is conformally rigid, we conclude that $N$ is flat.

Since a calibration in $\Lambda^2 \mathbb{R}^m$ is equivalent to a symplectic form in a subspace by \Cref{lemma:symplectic-equivalence}, we may apply \Cref{lemm:affinetranslation} and \Cref{cor:faceequivalence} in order to assume that $\omega = \omega_{\symp} \in \Lambda^2 \C^\ell$ in $\C^\ell$ for $1 \leq \ell \leq \lfloor m/2 \rfloor$. If $\ell \geq 2$, then $N \subset \C^{\ell}$ is a K\"ahler submanifold. By Yau's theorem, $N$ has constant sectional curvature. Hence $N$ is Einstein and, by Umehara's theorem \cite{Ume:87}, $N$ is totally geodesic in $\C^\ell$. We conclude that $N$ is affine and, in particular, flat. Similarly, if $\ell = 1$, then $N$ is an open set in $\C$ and therefore flat. Either case, $N$ is flat.

Finally, let $\omega = \omega_\SL$ and $n=m/2$. Again it remains to consider the case $r=1$. Then, by Yau's theorem, $N$ has constant sectional curvature. Since $N$ is calibrated by $\omega_\SL$, it is a totally real submanifold of $\C^n$ (see \Cref{lemma:HL-eq} below for details). Thus, by Ejiri's theorem (Proposition \ref{prop:conformallyflat:product-real}), $N$ is either totally geodesic in $\C^n$ or flat. In either case, $N$ is flat.

We conclude that, in all four cases, the calibration $\widetilde \omega$ is conformally rigid.
\end{proof}

\subsection{Codimension two calibrations and powers of symplectic forms are conformally rigid}
\label{sec:conformal-rigidity-products}

In this section, we prove that the normalized powers of the symplectic form are conformally rigid and therefore inner Möbius rigid by \Cref{thm:Liouville}. We also prove that codimension two calibrations are conformally rigid in dimensions higher than five.
\begin{theorem}\label{thm:kahler}
For $2\le k < \ell$ in $\mathbb{R}^{2\ell}$, the calibration 
\[
    \frac{ \omega_\symp^{ k } }{ k! } \in \Lambda^{2k}( \R^{2\ell} ),
\]
is conformally rigid.
\end{theorem}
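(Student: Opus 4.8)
The plan is to identify the calibrated submanifolds of $\frac{\omega_\symp^k}{k!}$ with complex submanifolds via Wirtinger's inequality, and then to show that a conformally flat K\"ahler manifold of complex dimension at least two is flat; conformal rigidity follows immediately from the definition.

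First I would record, using Wirtinger's inequality (see Harvey--Lawson \cite{Har:Law:82}), that $\frac{\omega_\symp^k}{k!}$ is indeed a calibration and that $\Gr\!\left(\frac{\omega_\symp^k}{k!}\right)$ consists exactly of the complex $k$-planes in $\C^\ell \cong \R^{2\ell}$. Consequently, every $\frac{\omega_\symp^k}{k!}$-calibrated submanifold $N \subset \C^{\ell}$ is a complex $k$-dimensional submanifold, hence K\"ahler for the induced metric. Since $2 \le k < \ell$, the real dimension $2k$ is at least $4$, so it suffices to prove: a conformally flat complex submanifold $N$ of complex dimension $k \ge 2$ is flat.

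The key step is curvature algebra. Because $N$ is K\"ahler, its Riemann tensor satisfies $R(JX,JY,Z,W) = R(X,Y,Z,W)$, where $J$ is the complex structure. Conformal flatness in dimension $2k \ge 4$ means the Weyl tensor vanishes, so $R$ is the standard (Kulkarni--Nomizu) expression built from the metric $g$, the Ricci tensor $\rho$, and the scalar curvature $S$. Evaluating the K\"ahler identity $R(X,Y,X,Y) = R(JX,JY,X,Y)$ on orthonormal pairs $X,Y$ with $Y \perp JX$ (such pairs exist precisely because $k \ge 2$), the right-hand side vanishes while the left-hand side reduces to $\rho(X,X) + \rho(Y,Y) = S/(2k-1)$. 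Letting $Y$ range over the unit sphere of $\{X,JX\}^{\perp}$ forces $\rho$ to be a constant multiple of $g$ on that subspace, and varying $X$ propagates this to all directions; thus $N$ is Einstein. A conformally flat Einstein manifold of dimension $\ge 3$ has constant sectional curvature $c$, and a K\"ahler manifold of complex dimension $\ge 2$ with constant sectional curvature must satisfy $c = 0$ (a direct check using $R(X,Y)\circ J = J\circ R(X,Y)$ on a triple $X, Y, JX$ with $Y \perp X, JX$). Hence $N$ is flat, which proves that $\frac{\omega_\symp^k}{k!}$ is conformally rigid; by \Cref{thm:Liouville} it is then inner M\"obius rigid as well.

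As an alternative to the last two reductions, once $N$ is shown to be Einstein one may invoke Umehara's theorem \cite{Ume:87} to conclude directly that the K\"ahler submanifold $N \subset \C^{\ell}$ is totally geodesic, hence affine and a fortiori flat; this mirrors the use of Umehara's theorem already made in the proof of \Cref{thm:product}. The main obstacle is the Einstein step: one must organize the computation so that the vanishing of the Weyl tensor and the $J$-invariance of the curvature together pin down $\rho$ as a multiple of $g$. The remaining implications (conformally flat plus Einstein gives constant curvature, and K\"ahler of complex dimension $\ge 2$ forces that constant to vanish) are standard and purely algebraic.
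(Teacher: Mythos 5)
There is a genuine gap at $k=2$: your key Einstein step fails in real dimension four, and in fact the intrinsic statement you reduce to --- \emph{conformally flat K\"ahler of complex dimension $\geq 2$ implies flat} --- is false there. The identity you extract from $W=0$ and the K\"ahler symmetry is $\rho(X,X)+\rho(Y,Y)=S/(2k-1)$ for orthonormal $X,Y$ with $Y\perp X, JX$. For $k\geq 3$ the propagation works: given unit $U,V$, the intersection $\{U,JU\}^{\perp}\cap\{V,JV\}^{\perp}$ has dimension $\geq 2k-4\geq 2$, so a common test vector $Y$ gives $\rho(U,U)=\rho(V,V)$ and hence Einstein; this part of your argument is sound and amounts to an inline proof of the Yano--Mogi theorem \cite{Yan:Mog:55}, which the paper simply cites for $k\geq 3$. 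But for $k=2$ that intersection can be trivial, and the product $\bS^2(c)\times \mathbb{H}^2(-c)$ is a conformally flat K\"ahler $4$-manifold (Tanno \cite{Sh:72} shows these are exactly the non-flat examples) that is neither Einstein nor flat; since it genuinely satisfies your hypotheses, it satisfies every algebraic consequence of them, including your trace identity with $\rho=\mathrm{diag}(c,c,-c,-c)$ in a $J$-adapted frame. So no purely intrinsic curvature algebra can pin down $\rho$ as a multiple of $g$ when $k=2$, and your ``varying $X$ propagates this to all directions'' is precisely the step that breaks. Your fallback via Umehara \cite{Ume:87} does not rescue this case either, since it also takes Einstein as input.

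Any correct proof must therefore use the extrinsic hypothesis that $N$ is a complex submanifold of $\C^{\ell}$, not merely an abstract conformally flat K\"ahler manifold. This is exactly how the paper closes the $k=2$ case: by Tanno's classification, a non-flat conformally flat K\"ahler $4$-manifold is locally a holomorphic isometric product of two constant-curvature K\"ahler surfaces with curvatures $-K$ and $K$, and the positively curved factor would then sit inside $\C^{\ell}$ as a K\"ahler submanifold of constant sectional curvature $K>0$, contradicting the non-positivity of sectional curvature of K\"ahler submanifolds of $\C^{\ell}$ \cite[Chapter IX, Proposition 9.4]{Kob:Nom:96:II}. To repair your write-up, keep your argument for $k\geq 3$ (or cite \cite{Yan:Mog:55}) and replace the Einstein step for $k=2$ with this Tanno-plus-curvature-comparison argument; the final reductions (conformally flat Einstein gives constant curvature, and K\"ahler constant curvature in complex dimension $\geq 2$ forces $c=0$) are correct as you state them.
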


\begin{remark}\label{rem:wirtinger}
Recall that $\omega_\symp^{ k } / k!$ is a calibration by Wirtinger's inequality and $\omega_\symp^{ \ell-1 } / (\ell-1)! = \star\omega_{\symp}$, cf. \cite[1.8.2]{Fed:69}.
\end{remark}

\begin{proof}[Proof of Theorem \ref{thm:kahler}]
Let $M$ be a conformally flat $(\omega_\symp^k/k!)$-calibrated submanifold of $\R^{2\ell}$. Then, in particular, $M$ is a conformally flat K\"ahler manifold. 

For $k \geq 3$, conformally flat Kähler manifolds are flat by a result of Yano and Mogi \cite[Theorem 4.1]{Yan:Mog:55}; see also Tanno \cite[p. 501]{Sh:72}. 

For $k=2$, we have that either $M$ is flat or, by Tanno's theorem \cite[p. 501, Theorem]{Sh:72}, that for every $p \in M$, there exists a connected open neighbourhood $W \subset M$ of $p$, $K \ge 0$, and an isometric map $\phi \colon M_1 \times M_2 \rightarrow W$, where $M_1$ and $M_2$ are Kähler surfaces with constant sectional curvature $-K$ and $K$, respectively. Since the symplectic forms on $M_1$ and $M_2$ are simple and the metrics on $M_1, M_2$, and $W$ are compatible with the respective symplectic forms and the almost complex structures, see e.g. \cite[Section 2.1]{McD:Sal:04} for the definitions, it is straight-forward to verify that $\phi$ is a symplectomorphism and, a posteriori, that $D\phi \circ J_{ M_1 \times M_2 } = J_{ W } \circ D\phi$ for the almost complex structures. Thus $\phi$ is, in fact, holomorphic.

Let $p_1\in M_1$. Then the surface $\phi( \left\{p_1\right\} \times M_2 ) \subset M \subset \mathbb{C}^l$ is a Kähler surface of $\mathbb{C}^l$ of constant sectional curvature $K \geq 0$. Since the sectional curvature of Kähler surfaces in $\mathbb{C}^l$ is non-positive by \cite[Chapter IX, Proposition 9.4]{Kob:Nom:96:II}, we conclude that $K=0$. Thus $M$ is flat also for $k=2$.
\end{proof}

\codimMobius*

\begin{proof}
Let $\omega\in \Lambda^{m-2}\R^m$ be a calibration. Then $(-1)^{ n(m-n) }\star \omega \in \Lambda^2 \R^m$ is a calibration by \Cref{lemma:calibration-duality} and, by \Cref{lemma:symplectic-equivalence}, there exist an orthogonal projection $\pi \colon \R^m \to V \subset \R^m$ and an isometric linear isomorphism $\iota \colon V \to \mathbb{R}^{2\ell}$ for which $(-1)^{ n(m-n) }\star \omega$ is equivalent to $( \iota \circ \pi )^{*} \omega_{\symp}$ for the standard symplectic form $\omega_\symp \in \Lambda^2 \R^{2\ell}$ in $\R^{2\ell}$ for $1 \le \ell \le \lfloor m/2 \rfloor$. By \Cref{lemma:calibration-duality}, we have that $\omega$ and $\star ( \iota \circ \pi )^{*}\omega_{\symp}$ are equivalent. So it suffices to prove that $\star ( \iota \circ \pi )^{*}\omega_{\symp}$ is conformally rigid. We consider three separate cases.

Suppose first that $\ell = 1$. Then both $\omega_{\symp}$ and $\star ( \iota \circ \pi )^{*}\omega_{\symp}$ are simple. The conformal rigidity holds in this case. 

Suppose now that $2 \ell = m$. Then the map $\iota \circ \pi \colon \R^m \to \R^{2\ell}$ is an isometric linear isomorphism and 
\begin{equation*}
    \star ( \iota \circ \pi )^{*}\omega_{\symp} = ( \det ( \iota \circ \pi ) )\iota^{*}( \star \omega_{\symp} ) = ( \det ( \iota \circ \pi ) )( \iota \circ \pi )^{*}\left( \frac{ \omega_{\symp}^{ \ell -1  } }{ (\ell - 1)! } \right),
\end{equation*}
where the first equality follows from the definition of the Hodge star and the second from Wirtinger's inequality, cf. \Cref{rem:wirtinger}. Since $m\ge 5$, we have that $\ell-1\ge 2$ in this case. Thus $\star ( \iota \circ \pi )^{*}\omega_{\symp}$ is conformally rigid by \Cref{thm:kahler}.

It remains to consider the cases $5 \leq 2\ell + 1 \leq m$. Let $\pi_1 \colon \R^m \to \R^{2\ell}$ and $\pi_2 \colon \R^m \to \R^{m-2\ell}$ be coordinate projections. A direct computation shows that
\begin{equation*}
    \star \pi_{1}^{*}\omega_{\symp} = \pi_1^{*}\left( \frac{\omega_\symp^{\ell-1}}{(\ell-1)!} \right) \wedge \pi_{2}^{*}\vol_{ \R^{m-2\ell } }.
\end{equation*}
Let $\pi^{\perp} \colon \R^m \to V^{\perp} \subset \R^m$ be the orthogonal projection to the orthogonal complement of $V$ and let $j \colon V^{\perp} \rightarrow \mathbb{R}^{m-2\ell}$ be a linear isometry for which the linear map $L \coloneqq ( \iota \circ \pi, j \circ \pi^{\perp} ) \colon \mathbb{R}^m \to \R^{2\ell} \times \R^{m-2\ell}$ is an orientation-preserving isometry. Then
\begin{align*}
L^{*}\left( \pi_1^{*}\left( \frac{\omega_\symp^{\ell-1}}{(\ell-1)!} \right) \wedge \pi_{2}^{*}\vol_{ \R^{m-2\ell } } \right) = L^*\left( \star \pi_1^*\omega_\symp\right) 
    &= \star L^*\pi_1^*\omega_\symp 
    \\
    &= \star (\iota\circ \pi)^*\omega_\symp.
\end{align*}
It remains to prove that $\star (\iota \circ \pi)^{*}\omega_{\symp}$ is conformally rigid. For $\ell=2$, this follows from (3) in \Cref{thm:product} and for $\ell\ge 3$ from (2) in \Cref{thm:product}; recall that (2) is applicable because of \Cref{thm:kahler}.

We have now exhausted all the possible cases so the claim follows.
\end{proof}

\subsection{Proof of Liouville's theorem in low dimensions}
\label{sec:proof-of-classification}

We are now ready to summarize the results in the form of a proof of the main theorem.

\classification*
\begin{proof}
Suppose first that $n=m-1$. Then $\omega$ is simple, $\Gr(\omega) = \{V\}$ is a singleton, and each $\omega$-calibrated submanifold is contained in a translation of $V$ by \Cref{lemm:affinetranslation}. In particular, $\omega$ is conformally flat. For $n=m-2 \geq 3$, the claim follows from \Cref{thm:codim-2}. Thus it remains to prove the claim for calibrations in $\Lambda^3 \R^6$. 

Let $\omega \in \Lambda^3 \R^6$. By the Morgan's classification theorem \cite[Theorem 4.8]{Mor:85}, either $\Gr(\omega)$ is discrete, $\omega$ is face equivalent to $\omega_\SL$, or there exists a five dimensional subspace $\mathbb{V} \subset \mathbb{R}^6$, an orthogonal projection $\pi \colon \mathbb{R}^6 \rightarrow \mathbb{V}$, and a linear isometry $\iota \colon \mathbb{V} \rightarrow \mathbb{R}^4 \times \mathbb{R}$ such that $\omega$ is equivalent to $( \iota \circ \pi )^{*}( \omega_{\symp} \wedge dt )$.  

In the last case, $\omega$ is conformally rigid by \Cref{thm:codim-2} (and \Cref{lemm:affinetranslation}). Thus it remains to consider the first case. Suppose $\Gr(\omega)$ is discrete and let $M \subset \R^6$ be a connected $\omega$-calibrated submanifold. Then the Gauss map $M \to \Gr(\omega)$, $p \mapsto T_p M$, is constant and so $M$ is calibrated by a simple calibration in $\Lambda^3 \R^6$. Thus, again by \Cref{lemm:affinetranslation}, $M$ is contained in an affine $3$-dimensional subspace of $\R^6$. This completes the proof.
\end{proof}


\section{Examples of calibrations without the Liouville property}
\label{sec:examples}

\subsection{Special Lagrangians are not inner M\"obius rigid}
\label{sec:Special-Lagrangians}
In the first subsection below, we consider the Lagrangian catenoid example in \Cref{prop:SL-catenoid} in further detail. In the second subsection below, we consider the mapping in \Cref{prop:SL-example-concrete}.

\subsubsection{The Lagrangian catenoid}
Recall that $F = \widehat F \circ \varphi \colon \R^n\setminus \{0\} \to \R^{2n}$, where $\varphi \colon \R^n \setminus \{0\} \to \R\times \bS^{n-1}$ is the map $x\mapsto (\log |x|, x/|x|)$ and $\widehat F \colon \R\times \bS^{n-1}\to \C^n$ is the map $(t,p) \mapsto f(t)p$, where $f \colon \R\to \C$ is the function $t\mapsto (\sinh(nt)-i)^{1/n}$. Here we identify each $z = ( z_1, \dots, z_n ) \in \C^n$ with $(x,y) \in \R^n \times \R^n$ so that $z_j = x_j + i y_j$ for $1 \leq j \leq n$.

Since $\varphi$ is a conformal homeomorphism, it suffices to show that $\widehat F$ is conformal and satisfies the equation $\star \widehat F^*\omega_\SL = \| D\widehat F \|^n$.

The inclusion $\S^{n-1} \subset \R^n$ identifies the tangent space $T_{(t,p)}( \R \times \S^{n-1} )$ with $\R \times T_{p} \S^{n-1}$. Observe also that the real and imaginary parts of $\widehat{F}$ are orthogonal to $\S^{n-1} \subset \R^n$, so the representation of $( D_{ (t,p) }\widehat{F} )^{*} D_{ (t,p) }\widehat{F}$, where $*$ refers to the Hermitean conjugate of the differential, in the standard basis has a $2 \times 2$-block form with zero off-diagonal blocks. In fact, as
\begin{equation*}
    \partial_t \widehat{F}(t,p)
    =
    \frac{ \cosh(nt) }{ \sinh(nt) - i } \widehat{F}(t,p),
\end{equation*}
the top-left most block of $( D\widehat{F} )^{*} D\widehat{F}$ is
\begin{equation*}
    \frac{ \cosh^2(nt) }{ 1 + \sinh^2(nt) }| \widehat{F}(t,p) |^2
    =
    \cosh^{ 2/n }( nt ).
\end{equation*}
Similarly, the bottom-right block is an $(n-1)\times(n-1)$ matrix $\cosh^{ 2/n }(t) I_{n-1}$ for the identity matrix $I_{n-1}$ on $T_p \S^{n-1}$. So
\begin{equation*}
    ( ( D\widehat{F} )^{*} D\widehat{F} )_{ (t,p) } = \cosh^{ 2/n }(t) I_{n} \quad\text{in $\R \times T_{p} \S^{n-1}$}. 
\end{equation*}
Hence $\widehat{F}$ is a conformal embedding with $\| D\widehat{F} \|^n = \cosh(nt)$. To get that $\widehat{F}$ is calibrated by a Special Lagrangian, let $e_2, \dots, e_n \in T_{p}\S^{n-1}$ be orthonormal such that $( p, e_2, \dots, e_n )$ is oriented in the standard way in $\R^n$. We denote $v_1 = \partial_t \widehat{F}(t,p) = ( \cosh(nt) ) / ( \sinh(nt)-i ) f(t) (p,0)$ and $v_j = ( D \widehat{F} )_{ (t,p) } e_j = f(t) ( e_j, 0 )$ for $2 \leq j \leq n$. Then
\begin{align*}
    v_1 \wedge v_2 \wedge \dots \wedge v_{n}
    &=
    \frac{ \cosh(nt) }{ ( \sinh(nt)-i ) } ( f(t) )^n
    ( p, 0 ) \wedge ( e_2, 0 ) \wedge \dots \wedge ( e_n, 0 )
    \\
    &=
    \cosh(nt) ( p, 0 ) \wedge ( e_2, 0 ) \wedge \dots \wedge ( e_n, 0 ).
\end{align*}
Notice also that $J(p,0) = ( 0, p )$ and $J ( e_j, 0 ) = ( 0, e_j )$ for $2 \leq j \leq n$ for the almost complex structure $J$ on $\C^n$. So the plane $\mathbb{V}_p$ spanned by $( (p,0), ( e_2, 0), \dots, ( e_n, 0) )$ is a Lagrangian plane. In fact, $$\omega_{\SL}( (p,0), (e_2,0), \ldots, ( e_n, 0 ) ) = 1$$ by the following argument. Since the Special Unitary group acts on the set of maximal planes of $\omega_{\SL}$, we consider $O \in \SO(n)$ which maps $( p, e_2, \dots, e_n )$ to the standard basis on $\R^n$ and thus $T= O \times O \colon \R^n \times \R^n \to \R^n\times \R^n$ in $\mathrm{SU}(n)$ maps $\mathbb{V}_p$ to the Special Lagrangian plane span by the first $n$-coordinate vectors in $\C^n = \R^n \times \R^n$. So
\begin{align*}
    \star \widehat{F}^{*} \omega_{\SL}(t,p)
    &=
    \omega_{\SL}( v_1, v_2, \ldots, v_n )
    =
    \cosh(nt)
    =
    \| D \widehat{F} \|^n(t,p).
\end{align*}
We conclude $\star \widehat{F}^*\omega_{\SL} = \| D \widehat{F} \|$ and \Cref{prop:SL-catenoid} follows.

\subsubsection{Mapping through a torus}
We begin by deconstructing the mapping $h \colon \R^{n+1} \to \C^n$ in the statement of \Cref{prop:SL-example-concrete}.  Let $\bS = \bS^{2n-1} \subset \C^n=\R^{2n}$ be the Euclidean unit sphere and $\bT = \bS^1(\frac{1}{\sqrt{n}})\times \cdots \times \bS^1(\frac{1}{\sqrt{n}}) \subset \bS \subset \C^n$ a torus.  

We introduce two auxiliary mappings. Let $\varphi \colon \R^n \times \R \to \bT \times \R$ be the locally isometric covering map,
\[
(y_1,\ldots, y_n,y_{n+1}) \mapsto \left( \frac{1}{\sqrt{n}}e^{i\sqrt{n} y_1}, \ldots, \frac{1}{\sqrt{n}}e^{i\sqrt{n} y_n},y_{n+1}\right),
\]
and let $E \colon \bS \times \R \to \C^n$ be the conformal (exponential) map $(z,t) \mapsto e^tz$. Since $\varphi$ is a local isometry and $E$ is conformal, we have that $E\circ \varphi$ is weakly conformal and $\norm{D(E\circ \varphi)} = \norm{DE} \circ \varphi$. Moreover, $\norm{DE(p,t)} = e^t$ for $(p,t)\in \bS \times \R$. Since 
\begin{align*}
(E \circ \varphi)(y_1,\ldots, y_n,t) 
&= e^t \left( \frac{1}{\sqrt{n}}e^{i\sqrt{n} y_1}, \ldots, \frac{1}{\sqrt{n}}e^{i\sqrt{n} y_n}\right)
\\
&= \frac{1}{\sqrt{n}} e^t \left( e^{i\sqrt{n} y_1}, \ldots, e^{i\sqrt{n} y_n}\right)
\end{align*}
for $(y_1,\ldots, y_n, t)\in \R^n\times \R$, we have that 
\[
h =  i^{-(n-1)} E\circ \varphi \colon \R^{n+1} \to \C^n.
\]
In particular, $h$ is conformal and $\norm{Dh(y_1,\ldots, y_n,t)} = e^{t}$ for $(y_1,\ldots, y_n,t) \in \R^{n+1}$.

Before defining the mapping $O$ and completing the proof of \Cref{prop:SL-example-concrete}, we analyze the mapping $h$ further. Since $E \circ \varphi$ is weakly conformal, so is $h$. Furthermore $\norm{Dh(y,t)} =  e^{t}$ for $(y,t) \in \R^n \times \R$. We also have the following lemma. For the lemma, we denote
\begin{align*}
    \eta
    &\coloneqq
    \sum_{ j = 1 }^{ n }
        (-1)^{ n-j }
        dy_1 \wedge \dots \wedge \widehat{ dy_j } \wedge \dots \wedge dy_{n} \wedge dt 
    \in
    \Lambda^{n}( \mathbb{R}^n \times \mathbb{R} ).
\end{align*}
Let also $S \colon \R^n \to \R$ be the sum $(y_1,\ldots, y_n) \mapsto \sum_{j=1}^n y_j$.

\begin{lemma}
\label{lemma:SL-h-properties}
For the mapping $h = i^{-(n-1)} E\circ \varphi \colon \R^{n+1} \to \C^n$, we have
\[
(h^*\omega_\SL)|_{\ker S\times \R} = \frac{e^{nt}}{\sqrt{n}} \eta|_{\ker S \times \R}.
\]
\end{lemma}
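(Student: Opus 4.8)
The plan is to compute $h^{*}(dz_1\wedge\cdots\wedge dz_n)$ directly in the coordinates $(y_1,\dots,y_n,t)$ and then take real parts after restricting to $\ker S\times\R$. Writing $c=i^{-(n-1)}/\sqrt n$ and $w_k=e^{t+i\sqrt n\,y_k}$, the $k$-th component of $h$ is $h_k=c\,w_k$, so that
\[
dh_k = c\,w_k\,(dt+i\sqrt n\,dy_k),
\qquad
h^{*}(dz_1\wedge\cdots\wedge dz_n)=c^{n}w_1\cdots w_n\bigwedge_{k=1}^{n}(dt+i\sqrt n\,dy_k).
\]
The first genuine step is the wedge expansion
\[
\bigwedge_{k=1}^{n}(dt+i\sqrt n\,dy_k)
=(i\sqrt n)^{n-1}\eta+(i\sqrt n)^{n}\,dy_1\wedge\cdots\wedge dy_n .
\]
Every expansion term containing $dt$ twice vanishes; a term with the single $dt$ drawn from the $j$-th factor contributes $(i\sqrt n)^{n-1}(-1)^{n-j}\,dy_1\wedge\cdots\wedge\widehat{dy_j}\wedge\cdots\wedge dy_n\wedge dt$ after moving $dt$ to the last slot, and summing over $j$ reproduces $(i\sqrt n)^{n-1}\eta$, while the unique term with no $dt$ gives the pure $dy$-form. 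I would verify the sign $(-1)^{n-j}$ carefully on the generic term.

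Next I would restrict to the $n$-dimensional subspace $\ker S\times\R$. The decisive simplification is that the pure $dy$-term dies there: $dy_1\wedge\cdots\wedge dy_n$ is the pullback under the projection $\R^n\times\R\to\R^n$ of the volume form of $\R^n$, and this projection maps $\ker S\times\R$ onto the $(n-1)$-dimensional subspace $\ker S$, so by a rank count the pullback of a top form vanishes. Hence on $\ker S\times\R$ only the $\eta$-term survives, and since $S\equiv 0$ there one has $w_1\cdots w_n=e^{nt+i\sqrt n\,S}=e^{nt}$. Collecting the constant $c^{n}(i\sqrt n)^{n-1}=i^{-(n-1)^2}/\sqrt n$ gives
\[
h^{*}(dz_1\wedge\cdots\wedge dz_n)\big|_{\ker S\times\R}
=\frac{i^{-(n-1)^2}}{\sqrt n}\,e^{nt}\,\eta\big|_{\ker S\times\R}.
\]

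Finally, taking $\omega_{\SL}=\re(dz_1\wedge\cdots\wedge dz_n)$ and using that $\eta$ is a real form, the lemma follows once one checks that the leading constant $i^{-(n-1)^2}/\sqrt n$ is the positive real $1/\sqrt n$, so that $(h^{*}\omega_{\SL})|_{\ker S\times\R}=\frac{e^{nt}}{\sqrt n}\,\eta|_{\ker S\times\R}$. The step I expect to be the main obstacle is exactly this phase bookkeeping: the factor $i^{-(n-1)}$ in the definition of $h$ is tuned precisely so that, after combining the powers of $i$ coming from $c^{n}$, from $(i\sqrt n)^{n-1}$, and from the $(-1)^{n-j}$ signs packaged into $\eta$, the surviving coefficient is real and positive. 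Everything else—the componentwise differential, the wedge expansion, and the rank count that kills the $dy$-only term—is a routine calculation.
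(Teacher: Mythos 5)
Your computation follows essentially the same route as the paper's proof: the componentwise differential $dh_k$, the identical wedge expansion
\begin{equation*}
\bigwedge_{k=1}^{n}(dt+i\sqrt n\,dy_k)=(i\sqrt n)^{n-1}\eta+(i\sqrt n)^{n}\,dy_1\wedge\cdots\wedge dy_n,
\end{equation*}
and the reduction $w_1\cdots w_n=e^{nt}$ on $\ker S\times\R$ are all exactly as in the paper, and your signs there are correct. One genuine local difference, which is fine and even slightly cleaner: you kill the $dy_1\wedge\cdots\wedge dy_n$ term by a rank count on the $n$-dimensional subspace $\ker S\times\R$ (any $n$ tangent vectors have $y$-parts in the $(n-1)$-dimensional $\ker S$), whereas the paper keeps that term and disposes of it only at the end, by noting its coefficient is purely imaginary so it vanishes upon taking real parts.

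The gap sits precisely at the step you deferred: the identity $i^{-(n-1)^2}=1$ is \emph{false} for even $n$. If $n$ is odd then $n-1$ is even, $(n-1)^2\equiv 0 \pmod 4$, and the constant is indeed $1/\sqrt n$; but if $n$ is even then $(n-1)^2\equiv 1\pmod 4$, so $i^{-(n-1)^2}=-i$, and your computation yields $h^{*}(dz_1\wedge\cdots\wedge dz_n)|_{\ker S\times\R}=-\tfrac{i}{\sqrt n}e^{nt}\,\eta|_{\ker S\times\R}$, whose real part is \emph{zero} --- the opposite of the lemma. Your phase bookkeeping is actually faithful to the literal definition of $h$, in which the unimodular scalar $i^{-(n-1)}$ multiplies every coordinate of the vector and therefore enters the determinant $n$ times, contributing $i^{-n(n-1)}$. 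The paper's own proof, by contrast, pulls $i^{-(n-1)}$ out to the \emph{first} power in its opening line, which by multilinearity amounts to placing the phase on a single coordinate; under that convention the surviving coefficient is $n^{-1/2}\,i^{-(n-1)}\,i^{\,n-1}=1/\sqrt n$ for every $n$, and the lemma holds uniformly. So as written your argument proves the lemma only for odd $n$; for even $n$ the postponed check cannot be discharged, and you would need to adopt the single-coordinate phase convention (equivalently, replace the prefactor of $h$ by an $n$-th root of $i^{-(n-1)}$), which is evidently what the paper's proof implicitly does.
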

\begin{proof}
We observe first that
\begin{align*}
&h^{*}(dz_1 \wedge \cdots \wedge dz_n) = i^{ -(n-1) } d\left( \frac{1}{\sqrt{n}} e^t e^{i\sqrt{n}y_1} \right) \wedge \dots \wedge d\left( \frac{1}{\sqrt{n}} e^{t} e^{i\sqrt{n}y_n } \right) \\
&= n^{ -\frac{n}{2} } i^{ -(n-1) } e^{nt} \left( e^{i\sqrt{n}y_1} ( dt + i \sqrt{n}  dy_1 ) \wedge \dots \wedge e^{i\sqrt{n}y_n}( dt + i \sqrt{n} dy_n ) \right) \\
&= n^{ -\frac{n-1}{2} } i^{ -(n-1) } e^{nt} e^{i\sqrt{n} S(y)} \left( ( dt + i \sqrt{n}  dy_1 ) \wedge \dots \wedge ( dt + i \sqrt{n} dy_n ) \right).
\end{align*}
Thus, on $\ker S\times \R$, 
\begin{align*}
h^*(dz_1 \wedge \cdots \wedge dz_n)
&= n^{ -\frac{n}{2} } i^{ -(n-1) } e^{nt} \left( ( dt + i \sqrt{n}  dy_1 ) \wedge \dots \wedge ( dt + i \sqrt{n} dy_n ) \right) \\
&= n^{ -\frac{n}{2} } i^{ -(n-1) } e^{nt} \left( (i\sqrt{n})^n dy_1\wedge \cdots \wedge dy_n + (i\sqrt{n})^{n-1} \eta\right)\\
&= \frac{e^{nt}}{\sqrt{n}} \left( i\sqrt{n} dy_1\wedge \cdots \wedge dy_n + \eta\right).
\end{align*}
Hence, on $\ker S\times \R$, 
\[
h^*\omega_\SL = h^*\Re (dz_1 \wedge \cdots \wedge dz_n) = \Re\, h^*(dz_1 \wedge \cdots \wedge dz_n) = \frac{e^{nt}}{\sqrt{n}} \eta
\]
as claimed.
\end{proof}

We are now ready to prove \Cref{prop:SL-example-concrete}.

\begin{proof}[Proof of \Cref{prop:SL-example-concrete}]
Since $\eta$ is simple and $\star \eta = - \left( dy_1 + \cdots + dy_n\right)$, we have that 
\[
\norm{\eta}_{\comass} 
= \norm{\star \eta}_\comass 
= \norm{dy_1 + \cdots + dy_n}_{HS} 
= \sqrt{n}
\]
and that 
\[
\Gr\left(\frac{1}{\sqrt{n}} \eta \right) = \{ \langle v \rangle^\bot \},
\] 
where $v\in \R^{n+1}$ is the vector $v = e_1+ \cdots + e_n$. 

Let now $O \colon \R^n \to \langle v \rangle^\bot$ be an isometry for which $\star O^*\eta>0$. Then $O^*\eta = \sqrt{n} \vol_{\R^n}$. Let also $h \colon \R^{n+1} \to \C^n$ be as in Lemma \ref{lemma:SL-h-properties}. We define $F \colon \R^n \to \C^n$ by the formula
\[
F = h \circ O.
\]
Now $F$ is conformal and $\norm{DF(x_1,\ldots, x_n)} = e^{x_n}$ for each $(x_1,\ldots, x_n)\in \R^n$. Since $O^*\eta = \sqrt{n} \vol_{\R^n}$, we have, by Lemma \ref{lemma:SL-h-properties}, that
\[
\star F^*\omega_\SL 
= O^* h^*\eta = O^* \left(\frac{e^{nt}}{\sqrt{n}} \eta \right) 
= e^{nx_n} \vol_{\R^n} 
= \norm{DF}^n \vol_{\R^n}.
\]
The claim follows. 
\end{proof}


\subsection{Associative and Cayley calibrations are not inner M\"obius rigid}
\label{sec:special-cases}

Based on the non-rigidity of the Special Lagrangian, we obtain that neither the associative calibration $\omega_\assoc \in \Lambda^3 \R^7$ nor the Cayley calibration $\omega_\Cayley \in \Lambda^4 \R^8$ are inner M\"obius rigid. We refer to Joyce \cite[Sections 12.2.1 and 12.4.2]{Joy:07} for the terminology and more detailed discussion on the calibrations above; see also Harvey and Lawson \cite[Section III.1]{Har:Law:82}.

For the definitions below, let $\omega_{\symp}$ and $\widetilde{\omega}_{\symp}$ denote the standard symplectic forms on $\R^6$ and $\R^8$, respectively. We also let $\omega_{\SL} = \Re( dz_1 \wedge dz_2 \wedge dz_3 )$ and $\widetilde{\omega}_{\SL} = \Re( dz_1 \wedge dz_2 \wedge dz_3 \wedge dz_4 )$ denote the Special Lagrangian on $\R^6$ and $\R^8$, respectively.

We consider the coordinate projection $\pi \colon \R^7 \to \R^6$, $( t, x_1, \dots, x_6 ) \mapsto ( x_1, \dots, x_6 )$. Then
\begin{align*}
    \omega_\assoc &=  dt \wedge \pi^*\omega_{\symp} + \pi^*\omega_{\SL} 
    \in \Lambda^3 \R^7, 
    \\
    \omega_{\coassoc} 
    &= \star \omega_\assoc 
    = 
    \frac{1}{2} \pi^* \left(  \omega_\symp \wedge \omega_\symp \right) - dt \wedge \pi^*( \star \omega_\SL ),
\quad \text{and} \\
\omega_\Cayley 
&= \frac{1}{2}\left( \widetilde{\omega}_{\symp} \wedge \widetilde{\omega}_{\symp}\right) + \widetilde{\omega}_{\SL} 
\in \Lambda^4 \R^8
\end{align*}
are the associative and coassociative on $\R^7$ and Cayley form on $\R^8$, respectively.

Calibrations $\omega_\assoc$ and $\omega_\Cayley$ are not inner Möbius rigid. For these observations, we recall a fact from Harvey and Lawson \cite{Har:Law:82}.
\begin{lemma}
\label{lemma:HL-eq}
Let $\omega_\symp \in \Lambda^2 \R^{2n}$ be the symplectic form and $\omega_\SL \in \Lambda^n \R^{2n}$ the Special Lagrangian for $n \geq 3$. Then $\omega_\symp|_V = 0$ for $V\in \Gr(\omega_\SL)$.
\end{lemma}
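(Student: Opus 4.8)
The plan is to reduce the statement to an elementary linear-algebra computation in $\C^n$, following the Harvey--Lawson philosophy but keeping it self-contained. I identify $\R^{2n}$ with $\C^n$ as in the statement, so that $\omega_\SL = \Re(dz_1 \wedge \cdots \wedge dz_n)$ and $\omega_\symp = \sum_{k} dx_k \wedge dy_k$, and fix $V \in \Gr(\omega_\SL)$. Since $\omega_\SL|_V$ is an $n$-form on the $n$-dimensional space $V$, it is a scalar multiple of the volume form, and the condition $\|\omega_\SL|_V\|_\comass = 1$ together with the calibration property forces this scalar to have modulus $1$; orienting $V$ suitably, I may then choose an oriented orthonormal basis $(v_1, \ldots, v_n)$ of $V$ with $\omega_\SL(v_1, \ldots, v_n) = 1$. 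Assembling the complex coordinate vectors of $v_1, \dots, v_n$ as the columns of an $n \times n$ complex matrix $A$, the identity $(dz_1\wedge\cdots\wedge dz_n)(v_1,\ldots,v_n) = \det_{\C} A$ yields $\Re \det_{\C} A = 1$.

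The heart of the argument is to analyze the Hermitian matrix $A^*A$, with $A^*$ the conjugate transpose. Its $(j,k)$ entry is the Hermitian inner product $\langle v_j, v_k\rangle_{\C}$, whose real part is the Euclidean inner product $\langle v_j, v_k\rangle = \delta_{jk}$ and whose imaginary part is exactly $\omega_\symp(v_j, v_k)$; indeed $\Im\langle u, v\rangle_{\C} = \omega_\symp(u,v)$ for all $u, v$. Thus
\[
A^*A = I + iQ, \qquad Q_{jk} = \omega_\symp(v_j, v_k),
\]
where $Q$ is the real skew-symmetric matrix representing $\omega_\symp|_V$ in the basis $(v_1,\ldots,v_n)$. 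Since $Q$ is real skew-symmetric, its nonzero eigenvalues come in conjugate pairs $\pm i\lambda$ with $\lambda > 0$, so the eigenvalues of $A^*A = I + iQ$ occur in pairs $1 \pm \lambda$ together with trivial factors $1$. Hence
\[
|\det_{\C} A|^2 = \det(A^*A) = \prod_j (1 - \lambda_j^2) \le 1,
\]
with equality if and only if every $\lambda_j$ vanishes, that is, if and only if $Q = 0$.

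To finish, I combine the two computations: $1 = \Re\det_{\C} A \le |\det_{\C} A| \le 1$ forces $|\det_{\C} A| = 1$, so the displayed inequality is an equality and $Q = 0$. As $Q$ is the matrix of $\omega_\symp|_V$, this gives $\omega_\symp|_V = 0$, the claim. The main obstacle here is bookkeeping rather than conceptual: one must set up carefully the passage between the real and Hermitian inner products, verify the identity $\Im\langle u,v\rangle_{\C} = \omega_\symp(u,v)$, and extract the equality case cleanly from the spectral decomposition of the skew-symmetric $Q$. Alternatively, I could bypass the computation entirely by invoking that $\mathrm{SU}(n)$ acts transitively on $\Gr(\omega_\SL)$ (as already used in Section~\ref{sec:Special-Lagrangians}): the standard plane $\R^n \times \{0\}$ lies in $\Gr(\omega_\SL)$ and manifestly satisfies $\omega_\symp|_{\R^n\times\{0\}} = 0$, while every element of $\mathrm{SU}(n) \subset \mathrm{U}(n)$ preserves $\omega_\symp$, so $\omega_\symp$ vanishes on every $\mathrm{SU}(n)$-translate of $\R^n\times\{0\}$, hence on all of $\Gr(\omega_\SL)$.
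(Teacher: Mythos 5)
Your proof is correct, but it takes a genuinely different route from the paper. The paper's proof is two lines: it cites \cite[Equation (1.2), p.~87]{Har:Law:82}, which says that for $P \in \Gr(\omega_\SL)$ and $u \in P$ one has $iu \perp P$, whence $\omega_\symp|_V = 0$ immediately, since $\omega_\symp(u,v)$ is (up to sign) the real inner product $\langle iu, v\rangle$. Your main argument instead reproves this Harvey--Lawson fact from scratch: writing the orthonormal basis of $V$ as the columns of a complex matrix $A$, using $A^{*}A = I + iQ$ with $Q$ the skew-symmetric matrix of $\omega_\symp|_V$, and extracting $Q = 0$ from the equality case of $|\det A|^2 = \prod_j(1-\lambda_j^2) \le 1$ against $\Re \det A = 1$ --- this is essentially the classical computation behind Harvey--Lawson's result that the faces of $\Re(dz_1\wedge\cdots\wedge dz_n)$ are Lagrangian, and all the steps check out (the identity $\Im\langle u,v\rangle_{\C} = \omega_\symp(u,v)$ holds with the convention $\langle u,v\rangle_{\C} = \sum_k \overline{u_k}v_k$, and the sign convention is immaterial since only $Q=0$ is needed). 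What the paper buys with its citation is brevity; what you buy is self-containedness. Your alternative $\mathrm{SU}(n)$-argument is also valid, but note two caveats: the paper's Section \ref{sec:Special-Lagrangians} only uses that $\mathrm{SU}(n)$ \emph{acts} on the set of maximal planes of $\omega_\SL$, not that it acts \emph{transitively}, so the transitivity (every Special Lagrangian plane is $A \cdot (\R^n \times \{0\})$ for some $A \in \mathrm{SU}(n)$) would need its own citation to Harvey--Lawson --- at which point it is no more elementary than the fact the paper cites directly.
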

\begin{proof}
By \cite[Equation (1.2), p. 87]{Har:Law:82}, we have, for $P\in \Gr(\omega_\SL)$ and $u\in P$, that $i u \bot P$. Thus $\omega_\symp|_V = 0$ for each $V\in \Gr(\omega_\SL)$.
\end{proof}
\begin{corollary}\label{cor:assoc:cayley:notrigid}
The calibrations $\omega_\assoc\in \Lambda^3 \R^7$ and $\omega_\Cayley \in \Lambda^4 \R^8$ are not inner Möbius rigid.
\end{corollary}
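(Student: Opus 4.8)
The plan is to transfer the failure of inner Möbius rigidity of the Special Lagrangian --- witnessed by the entire, non-affine conformal $\omega_\SL$-curve of \Cref{prop:SL-example-concrete} --- to $\omega_\assoc$ and $\omega_\Cayley$. The mechanism is algebraic: in both decompositions displayed above, $\omega_\SL$ (resp.\ $\widetilde{\omega}_{\SL}$) occurs as a summand, while the remaining summand restricts to zero on Special Lagrangian planes. Hence every conformal $\omega_\SL$-curve, after an isometric inclusion into the ambient space, is simultaneously a conformal $\omega_\assoc$- or $\omega_\Cayley$-curve, and the non-Möbius norm function $e^{x_n}$ of the example rules out the inner Möbius property.

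For $\omega_\assoc$ I would take the conformal $\omega_\SL$-curve $F_0 \colon \R^3 \to \R^6$ from \Cref{prop:SL-example-concrete} with $n=3$ and postcompose it with the isometric inclusion $\iota \colon \R^6 \to \R^7$, $x \mapsto (0,x)$, onto the coordinate subspace $\{0\}\times\R^6$. Since $\iota^* dt = 0$ and $\pi \circ \iota = \id_{\R^6}$, the identity $\omega_\assoc = dt \wedge \pi^*\omega_{\symp} + \pi^*\omega_\SL$ yields $\iota^*\omega_\assoc = \omega_\SL$. Therefore, for each $O \in \SO(\omega_\SL)$ the isometry $\iota \circ O \colon \R^3 \to \R^7$ satisfies $(\iota \circ O)^*\omega_\assoc = O^*\omega_\SL = \vol_{\R^3}$, so $\iota \circ O \in \SO(\omega_\assoc)$ and consequently $\iota \circ A \in \CO(\omega_\assoc)$ for every $A \in \CO(\omega_\SL)$. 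As $D(\iota \circ F_0)_x = \iota \circ (DF_0)_x \in \CO(\omega_\assoc)$ almost everywhere, the map $\iota \circ F_0$ is a nonconstant conformal $\omega_\assoc$-curve.

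For $\omega_\Cayley$ I would use the curve $F_0 \colon \R^4 \to \R^8$ of \Cref{prop:SL-example-concrete} with $n=4$ directly, taking $\omega_\SL = \widetilde{\omega}_{\SL}$ to be the Special Lagrangian on $\C^4$. Here the key point is that for $O \in \SO(\widetilde{\omega}_{\SL})$ the image $\im O \in \Gr(\widetilde{\omega}_{\SL})$ is Lagrangian, so $\widetilde{\omega}_{\symp}|_{\im O} = 0$ by \Cref{lemma:HL-eq} and hence $O^*\widetilde{\omega}_{\symp} = 0$. This forces $O^*(\tfrac{1}{2}\widetilde{\omega}_{\symp}\wedge\widetilde{\omega}_{\symp}) = \tfrac{1}{2}(O^*\widetilde{\omega}_{\symp})\wedge(O^*\widetilde{\omega}_{\symp}) = 0$, so that $O^*\omega_\Cayley = O^*\widetilde{\omega}_{\SL} = \vol_{\R^4}$; thus $\SO(\widetilde{\omega}_{\SL}) \subset \SO(\omega_\Cayley)$ and $\CO(\widetilde{\omega}_{\SL}) \subset \CO(\omega_\Cayley)$. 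It follows that $F_0$ is itself a nonconstant conformal $\omega_\Cayley$-curve.

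It remains to check that these curves are not inner Möbius. Both inherit the norm function $\|DF\| = e^{x_n}$ from \Cref{prop:SL-example-concrete} ($n=3$ for $\iota \circ F_0$, using that $\iota$ is an isometric embedding, and $n=4$ for $F_0$). An inner Möbius curve $A \circ g$ with $A$ an affine isometry satisfies $\|D(A \circ g)\| = \|Dg\|$, and by \eqref{eq:Mobius} the norm of a Möbius transformation is either constant or a positive multiple of $|x-x_0|^{-2}$; the function $e^{x_n}$ is of neither form. Hence neither curve is an inner Möbius curve, so $\omega_\assoc$ and $\omega_\Cayley$ fail to be inner Möbius rigid. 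I expect the only substantive step to be the vanishing of the auxiliary summand on Special Lagrangian planes: this is immediate from $\iota^* dt = 0$ for $\omega_\assoc$, but genuinely rests on the Lagrangian condition, via \Cref{lemma:HL-eq}, for $\omega_\Cayley$.
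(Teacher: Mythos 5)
Your proposal is correct and follows essentially the same route as the paper: both transfer the non-rigidity of the Special Lagrangian via the decompositions of $\omega_\assoc$ and $\omega_\Cayley$, using the isometric inclusion $\iota\colon\R^6\to\R^7$ (where $\iota^*dt=0$ kills the symplectic term) for the associative case and \Cref{lemma:HL-eq} (Lagrangian planes annihilate $\widetilde{\omega}_{\symp}$, hence its square) for the Cayley case, with the curve of \Cref{prop:SL-example-concrete} as witness. The only cosmetic differences are that you verify membership pointwise in $\CO(\omega_\assoc)$ and $\CO(\omega_\Cayley)$ rather than via the pullback identity $H^*\omega_\assoc = F^*\omega_\SL$ together with $\|DH\|=\|DF\|$, and that you check non-inner-Möbiusness explicitly through the norm function $e^{x_n}$, where the paper simply invokes that the witness curve is not inner Möbius (a calibration-independent property of the map).
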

\begin{proof}
Let $F\colon \Omega \to \R^6$ be a conformal $\omega_\SL$-curve and let $H = \iota \circ F \colon \Omega \to \R \times \R^6$ be a map, where $\iota \colon \R^6 \to \R^7$ is the standard inclusion $x \mapsto (0,x)$. Then, by \Cref{lemma:HL-eq}, $H^*\omega_\assoc = F^*\omega_\SL$. Since $\norm{DH} = \norm{DF}$, we conclude that $H$ is a conformal $\omega_\assoc$-curve. In particular, $\omega_\assoc$ is not inner M\"obius rigid.

Regarding the Cayley calibration, let $F\colon \Omega \to \R^8$ be a conformal $\widetilde{\omega}_\SL$-curve that is not inner Möbius. Then, again by \Cref{lemma:HL-eq}, $F^*\omega_\Cayley = F^*\widetilde{\omega}_\SL$. Thus $F$ is a conformal $\omega_\Cayley$-curve. Since $\widetilde{\omega}_{\SL}$ is not inner Möbius rigid, neither is the Cayley calibration.
\end{proof}

\begin{remark}
To our knowledge, it is an open question whether there are conformal $\omega_{\assoc}$-curves which are not $\pi^*\omega_\SL$-curves or whether there are conformal $\omega_\Cayley$-curves which are not $\pi^*\omega_\SL$-curves. More generally, we are not aware of a classification of conformal curves associated to calibrations $\omega_\assoc\in \Lambda^3 \R^7$, $\omega_\coassoc \in \Lambda^4 \R^7$, and $\omega_\Cayley \in \Lambda^4 \R^8$. In particular, we do not know if $\omega_{\coassoc}$ is inner Möbius rigid.
\end{remark}

\section{Space of inner M\"obius rigid calibrations and quasiregular curves}
\label{sec:HPP}

Consider a calibration $\omega \in \Lambda^n \mathbb{R}^m$, for $3 \leq n \leq m$, and for $t \geq 0$, and for each open set $U \subset \mathbb{R}^n$, let $\mathcal{F}^{\omega}_{t}(U)$ denote the collection $(1+t)$-quasiregular $\omega$-curves defined on $U$. That is, each element is a mapping $F \colon U \to \mathbb{R}^m$ satisfying $F \in W^{1,n}_{\loc}( U, \mathbb{R}^m )$ and $\| DF \|^n \leq (1+t) \left(\star F^{*}\omega\right)$. Finally, let $\mathcal{F}^{\omega}_t$ denote the collection of every $(1+s)$-quasiregular $\omega$-curve $F \colon B_1 \to \mathbb{R}^m$ mapping from the unit ball $B_1 \subset \mathbb{R}^n$ into the unit ball of $\mathbb{R}^m$ for $0 \leq s \leq t$.

\subsection{Modulus of continuity for quasiregular curves}
We recall that by \cite[Theorem 6.8]{Iko:23}, the elements of $\mathcal{F}^{\omega}_t(U)$ are Hölder-continuous with exponent $(1+t)^{-1}$. Furthermore, by a standard Caccioppoli argument, cf. \cite[Lemma 6.1]{Onn:Pan:21}, the diameter of the image controls the energy of a quasiregular curve. We record these observations in the following proposition.

\begin{proposition}\label{prop:localcontinuity}
Let $\omega \in \Lambda^n \mathbb{R}^m$ be a calibration for $3 \leq n \leq m$, and let $F \colon U \to \mathbb{R}^m$ be a $(1+t)$-quasiregular $\omega$-curve for $t \geq 0$. Then, for $B( x_0, r ) \subset U$, we have
\begin{equation*}
    | F(x) - F(y) |
    \leq
    C(n,t)
    \left( 
        \int_{ B( x_0, r ) } \|DF\|^{n} \,d\mathcal{H}^n
    \right)^{ \frac{1}{n} }
    r^{ - \frac{1}{1+t} }
    | x - y |^{ \frac{1}{1+t} }
\end{equation*}
for $x, y \in B( x_0, r/2 )$.

Moreover, for $B( x_0, 2r ) \subset \Omega$, we have that
\begin{equation*}
    \left( \int_{ B( x_0, r ) } \|DF\|^n \,d\mathcal{H}^n \right)^{ \frac{1}{n} }
    \leq
    C(n) (1+t)
    \diam F(B(x_0,2r)).
\end{equation*}
\end{proposition}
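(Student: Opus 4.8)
The plan is to handle the two inequalities separately: the first follows from the Hölder regularity of quasiregular $\omega$-curves, and the second from a Caccioppoli-type energy estimate.

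For the first inequality I would invoke \cite[Theorem 6.8]{Iko:23}, which gives local Hölder continuity with exponent $(1+t)^{-1}$, and make its quantitative form explicit by recalling the energy-growth estimate underlying it. Since $\|DF\|^n \le (1+t)\,\star F^*\omega$ almost everywhere and $\star F^*\omega$ integrates to the $\omega$-area, the energy obeys a Morrey-type decay
\[
\int_{B(x_0,\rho)}\|DF\|^n\,d\mathcal H^n \le C(n,t)\left(\frac{\rho}{r}\right)^{\frac{n}{1+t}}\int_{B(x_0,r)}\|DF\|^n\,d\mathcal H^n, \qquad 0<\rho\le r.
\]
Morrey's Dirichlet-growth lemma then turns this into the pointwise bound of exponent $(1+t)^{-1}$ with multiplicative constant governed by the normalized $L^n$-energy over $B(x_0,r)$, which is precisely the asserted estimate for $x,y\in B(x_0,r/2)$.

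For the second inequality I would run the Caccioppoli argument of \cite[Lemma 6.1]{Onn:Pan:21}. The key structural fact is that $F$ is a $(1+t)$-quasiminimizer of the $n$-energy: for any competitor $G$ with $F-G\in W^{1,n}_0(B(x_0,2r),\R^m)$ one has $\int F^*\omega = \int G^*\omega$ by the Stokes argument from the proof of \Cref{prop:nharmonic} (writing $F^*\omega - G^*\omega = d\tau$ with $\tau$ compactly supported), so that, using \Cref{lemma:LA} for the final step,
\[
\int_{B(x_0,2r)}\|DF\|^n\,d\mathcal H^n \le (1+t)\int_{B(x_0,2r)}\star F^*\omega\,d\mathcal H^n = (1+t)\int_{B(x_0,2r)}\star G^*\omega\,d\mathcal H^n \le (1+t)\int_{B(x_0,2r)}\|DG\|^n\,d\mathcal H^n.
\]
I would then choose a cutoff $\eta$ with $\eta\equiv 1$ on $B(x_0,r)$, $\eta\equiv 0$ outside $B(x_0,2r)$ and $|\nabla\eta|\le C/r$, and take $G=(1-\eta)F+\eta c$ for a fixed $c\in F(B(x_0,2r))$; then $DG$ is supported in the annulus $A=B(x_0,2r)\setminus B(x_0,r)$ with $\|DG\|\le \|DF\|+(C/r)|F-c|$ there. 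Substituting and bounding $|F-c|\le \diam F(B(x_0,2r))$ on $A$ gives
\[
\int_{B(x_0,r)}\|DF\|^n\,d\mathcal H^n \le C(n,t)\left(\int_{A}\|DF\|^n\,d\mathcal H^n + \bigl(\diam F(B(x_0,2r))\bigr)^n\right),
\]
and taking $n$-th roots yields the claim.

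The main obstacle is the leftover annular energy $\int_A\|DF\|^n$: its coefficient exceeds $1$, so it cannot be absorbed directly. I would eliminate it by the usual hole-filling device, applying the estimate on a nested family of radii $r\le\rho<\rho'\le 2r$ and invoking the Giaquinta–Modica iteration lemma to reach the clean bound $\int_{B(x_0,r)}\|DF\|^n\,d\mathcal H^n \le C(n,t)\,r^{-n}\int_{B(x_0,2r)}|F-c|^n\,d\mathcal H^n$. The remaining care lies in tracking the dependence of the constants on $t$ through this iteration so that the final estimate is linear in $(1+t)$, as stated.
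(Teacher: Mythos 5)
Your treatment of the first inequality matches the paper's: the paper simply rescales by $T(x)=x_0+rx$ and cites \cite[Theorem 6.8]{Iko:23} on the unit ball, so your Morrey-decay sketch (which is indeed how that cited H\"older estimate is proven, via the decay $\int_{B(x_0,\rho)}\|DF\|^n\,d\mathcal{H}^n \le C(n,t)(\rho/r)^{n/(1+t)}\int_{B(x_0,r)}\|DF\|^n\,d\mathcal{H}^n$ and Morrey's Dirichlet growth lemma) re-derives what the citation already supplies. That half is fine.

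For the energy--diameter bound you take a genuinely different route, and it contains a gap that you flag but do not close. The paper never compares $F$ to a competitor: it applies the Caccioppoli inequality of \cite[Lemma 6.1]{Onn:Pan:21}, whose mechanism is to exploit exactness of $F^{*}\omega$ (each term $dF_{i_1}\wedge\dots\wedge dF_{i_n}$ has primitive $(F_{i_1}-c_{i_1})\,dF_{i_2}\wedge\dots\wedge dF_{i_n}$), integrate by parts \emph{once} against $\varphi^n$, apply H\"older, and absorb the resulting factor $\bigl(\int\varphi^n\|DF\|^n\bigr)^{(n-1)/n}$ using the pointwise quasiregularity $\|DF\|^n\le(1+t)\star F^{*}\omega$. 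Writing $a=\int\varphi^n F^{*}\omega\ge 0$ and $b=\int|D\varphi|^n|F-c|^n\,d\mathcal{H}^n$, this yields $a\le n\,\bigl((1+t)a\bigr)^{(n-1)/n}b^{1/n}$, hence $a\le n^n(1+t)^{n-1}b$, and therefore
\begin{equation*}
    \int_{B(x_0,r)}\|DF\|^n\,d\mathcal{H}^n
    \le
    (1+t)\,a
    \le
    C(n)\,(1+t)^n\bigl(\diam F(B(x_0,2r))\bigr)^n,
\end{equation*}
with no annular energy term, no iteration, and the constant exactly linear in $(1+t)$ after taking $n$-th roots. Your competitor-plus-hole-filling route, by contrast, produces a decay factor $\theta=C_1/(1+C_1)$ with $C_1\simeq 2^{n-1}(1+t)$, and the Giaquinta--Modica iteration constant blows up like $(1-\theta)^{-(n+1)}\simeq(1+t)^{n+1}$; after the $n$-th root you land at order $(1+t)^{1+1/n}$, and no choice of parameters in the iteration removes this, since $\theta\to 1$ as $t\to\infty$. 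So as written your argument proves the estimate only with a superlinear constant $C(n,t)$ — sufficient for the paper's later applications (where $\sup t_l<\infty$), but strictly weaker than the proposition as stated. The fix is to abandon the quasiminimality comparison in favor of the single-integration-by-parts absorption above, which is precisely the content of the cited \cite[Lemma 6.1]{Onn:Pan:21}; note also that your intermediate step $\int\|DF\|^n\le(1+t)\int\star F^{*}\omega$ already uses the same quasiregularity inequality, so no new ingredient is needed.
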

\begin{proof}
Let $T(x) = x_0 + rx$ be the affine map taking the unit ball $B$ to the ball $B(x_0,r)$. Then, by \cite[Theorem 6.8]{Iko:23}, the $(1+t)$-quasiregular $\omega$-curve $f = F \circ T$ satisfies the modulus of continuity estimate for $x_0 = 0$ and $r = 1$. The general modulus of continuity estimate is immediate.

Regarding the control on the energy, we denote $K = 1+t$ for simplicity and let $\varphi \in C^\infty_0(B(x_0,2r))$ be a non-negative function for which $\varphi \equiv 1$ in $B(x_0,r)$ and $\norm{ D\varphi} \le 3/r$. Since $\omega\in \Lambda^n \R^m$ is a calibration, we have, by \cite[Lemma 6.1]{Onn:Pan:21}, that
\begin{align*}
\int_{ B(x_0,r)} \norm{DF}^n \,d\mathcal{H}^n 
&\le K\int_{B(x_0,r)} F^*\omega
\le K \int_{B(x_0,2r)} \varphi^n F^*\omega \\
&\le n^n K^{n} \int_{B(x_0,2r)} |D \varphi|^n(x) |F(x)-F(x_0)|^n  \,d\mathcal{H}^n(x) \\ 
&\le n^n K^n \int_{B(x_0,2r)} |D \varphi|^n \,d\mathcal{H}^n ( \diam FB(x_0,2r) )^{n} \\
&\le C(n) K^n ( \diam FB(x_0,2r) )^n,
\end{align*}
where $C(n)$ is a constant depending only on $n$.
\end{proof}


\subsection{Compactness properties of quasiregular curves}
As an application of \Cref{prop:localcontinuity}, we establish the following compactness result.
\begin{proposition}\label{prop:normalfamily}
Let $U \subset \mathbb{R}^n$ be a domain and let $\omega_l \in \Lambda^n \mathbb{R}^m$ be a sequence of calibrations, for $3 \leq n \leq m$, and consider a sequence of $( 1+t_l)$-quasiregular $\omega_l$-curves $F_l \colon U \rightarrow \mathbb{R}^m$. Suppose $\sup t_l = T < \infty$ and $\sup \| F_l \|_\infty < \infty$. Then there exists a calibration $\omega \in \Lambda^n \mathbb{R}^m$, $t \in [0,T]$ and a $( 1+t )$-quasiregular $\omega$-curve $F \colon U \to \mathbb{R}^m$ such that a subsequence of $( F_l, \omega_l, t_l )$ converges to $( F, \omega, t )$. More precisely, there exists a sequence $l_j \rightarrow \infty$ for which $F_{ l_j }$ converges to $F$ uniformly on compact sets of $U$, $\omega_{ l_j }$ to $\omega$ in $\Lambda^{n} \mathbb{R}^m$ and $t_{ l_j }$ to $t$ in $\mathbb{R}.$
\end{proposition}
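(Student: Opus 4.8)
The plan is to combine the \emph{a priori} estimates of \Cref{prop:localcontinuity} with an Arzelà--Ascoli argument and with the weak continuity of Jacobian minors at the critical exponent. First I would extract convergent subsequences of the data. Since $\Lambda^n\R^m$ is finite dimensional and the comass norm is continuous, the unit comass sphere $\{\xi\in\Lambda^n\R^m \colon \norm{\xi}_{\comass}=1\}$ is compact; hence a subsequence of $(\omega_l)$ converges to some $\omega\in\Lambda^n\R^m$ with $\norm{\omega}_{\comass}=1$, that is, to a calibration. As $t_l\in[0,T]$, a further subsequence gives $t_l\to t\in[0,T]$. I pass to these subsequences (without relabeling for the moment).

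Next I would produce the limit map. Write $C_0=\sup_l\norm{F_l}_\infty<\infty$. For every ball $\overline{B}(x_0,2r)\subset U$, the second estimate of \Cref{prop:localcontinuity} bounds the energies uniformly by
\[
    \int_{B(x_0,r)}\norm{DF_l}^n\,d\mathcal{H}^n
    \le
    C(n)(1+T)^n(2C_0)^n .
\]
Feeding this into the first estimate of \Cref{prop:localcontinuity}, and using $\tfrac{1}{1+t_l}\ge\tfrac{1}{1+T}$ together with the boundedness of $C(n,t_l)$ on $[0,T]$, shows that $(F_l)$ is locally uniformly Hölder with exponent $\tfrac{1}{1+T}$, hence locally equicontinuous. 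Combined with $\sup_l\norm{F_l}_\infty<\infty$, the Arzelà--Ascoli theorem (applied on a compact exhaustion of $U$ and diagonalized) yields a subsequence $F_{l_j}$ converging uniformly on compact subsets of $U$ to a continuous map $F\colon U\to\R^m$. The uniform energy bounds also bound $(F_{l_j})$ in $W^{1,n}_{\loc}(U,\R^m)$, so after a further subsequence $DF_{l_j}\rightharpoonup G$ weakly in $L^n_{\loc}$; testing against $\varphi\in C_c^\infty(U)$ and using the uniform convergence identifies $G=DF$ distributionally. Thus $F\in W^{1,n}_{\loc}(U,\R^m)$ and $F_{l_j}\rightharpoonup F$ weakly in $W^{1,n}_{\loc}$.

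It remains to pass the quasiregularity inequality to the limit, which is the main difficulty. Fix $\varphi\in C_c^\infty(U)$ with $\varphi\ge0$. The functional $A\mapsto\int_U\varphi\norm{A}^n$ is convex and strongly continuous on $L^n$, hence weakly lower semicontinuous, so $\int_U\varphi\norm{DF}^n\le\liminf_j\int_U\varphi\norm{DF_{l_j}}^n$. For the right-hand side I write $\star F_l^*\omega_l=\sum_I(\omega_l)_I\,M_I(DF_l)$, where the finitely many $M_I$ are the $n\times n$ minors of the differential weighted by the signs of the Hodge star. By the weak continuity of Jacobian minors under weak $W^{1,n}$-convergence (see e.g.~\cite{Iw:Ma:01}), one has $\int_U\varphi\,M_I(DF_{l_j})\to\int_U\varphi\,M_I(DF)$ for each $I$; since $(\omega_{l_j})_I\to\omega_I$ and these integrals are uniformly bounded by the local energy bound, it follows that $\int_U\varphi\,\star F_{l_j}^*\omega_{l_j}\to\int_U\varphi\,\star F^*\omega$. (Here I use that for $W^{1,n}$ maps the distributional Jacobians coincide with the pointwise minors, so $\star F^*\omega=\sum_I\omega_I M_I(DF)\in L^1_{\loc}$.) Combining these facts with $t_{l_j}\to t$ in the inequality $\int_U\varphi\norm{DF_{l_j}}^n\le(1+t_{l_j})\int_U\varphi\,\star F_{l_j}^*\omega_{l_j}$ gives
\[
    \int_U\varphi\norm{DF}^n
    \le
    (1+t)\int_U\varphi\,\star F^*\omega .
\]
As $\varphi\ge0$ was arbitrary and both integrands lie in $L^1_{\loc}$, this yields $\norm{DF}^n\le(1+t)\,\star F^*\omega$ almost everywhere, so $F$ is a $(1+t)$-quasiregular $\omega$-curve and $(F_{l_j},\omega_{l_j},t_{l_j})\to(F,\omega,t)$ as required. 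The crux is the weak continuity of the top-order minors at the exponent $p=n$, which must be understood distributionally: the compact embedding $W^{1,n}_{\loc}\hookrightarrow L^q_{\loc}$ ($q<\infty$) provides the strong convergence of the components of $F_{l_j}$, while the uniform energy bound controls the lower-order cofactors, and together they drive the inductive divergence-structure argument of Reshetnyak and Ball.
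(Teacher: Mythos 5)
Your proof is correct, but at the decisive step it takes a genuinely different route from the paper. The extraction phase is the same in both: compactness of the comass-unit sphere for $(\omega_l)$, and the energy and H\"older bounds of \Cref{prop:localcontinuity} feeding an Arzel\`a--Ascoli/diagonalization argument for $(F_l)$. Where the paper differs is in passing the distortion inequality to the limit: rather than proving weak continuity of the pullbacks directly, it first replaces the varying calibrations by the fixed limit $\omega$, absorbing the error via Hadamard's inequality (\Cref{lemma:LA}),
\begin{equation*}
    \star F^{*}_{l_j}\omega
    \;\geq\;
    \star F^{*}_{l_j}\omega_{l_j} - \|\omega_{l_j}-\omega\|_{\comass}\,\|DF_{l_j}\|^n
    \;\geq\;
    \frac{1-\epsilon}{\,1+\epsilon+t\,}\,\|DF_{l_j}\|^n
\end{equation*}
for $j$ large, so that each $F_{l_j}$ becomes a $\tfrac{1+\epsilon+t}{1-\epsilon}$-quasiregular $\omega$-curve for the \emph{single} calibration $\omega$; it then invokes the closedness of $K$-quasiregular $\omega$-curves under locally uniform limits \cite[Theorem 4.1]{Pan:20} and lets $\epsilon \to 0^{+}$. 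Your argument instead inlines the content of that cited theorem: weak $W^{1,n}_{\loc}$ compactness from the energy bounds, weak lower semicontinuity of the convex functional $A \mapsto \int \varphi \|A\|^n$, and Reshetnyak's distributional weak continuity of the top-order $n \times n$ minors at the critical exponent $p = n$ (each minor being the Jacobian of a coordinate projection of $F$, with pointwise and distributional minors agreeing for $W^{1,n}$ maps), while the moving calibrations are handled simply by convergence of the coefficients $(\omega_{l_j})_I$ paired against the uniformly $L^1_{\loc}$-bounded minor integrals. Both arguments are sound: the paper's Hadamard-absorption trick buys brevity by reducing to an off-the-shelf limit theorem with a fixed calibration, whereas your version is self-contained and makes explicit the compensated-compactness (null-Lagrangian) input on which that limit theorem ultimately rests, at the cost of carrying the minor-convergence machinery yourself.
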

\begin{proof}
By \Cref{prop:localcontinuity}, we deduce that the family $( F_l|_{ K } )_{ l = 0 }^{ \infty }$ is normal for every compact set $K \subset U$ by Arzelà--Ascoli theorem. Hence we obtain a continuous map $F \colon U \to \mathbb{R}^m$ that is a locally uniform limit of a subsequence $( F_{l_j} )_{ j = 1 }^{ \infty }$. Consider such a limit and a subsequence.

By passing to a further subsequence and relabeling, we lose no generality in assuming that $t_j \rightarrow t \in [0,T]$ and $\omega_{l_j} \rightarrow \omega$ in $\Lambda^{n} \mathbb{R}^m$. In particular, for every $1 > \epsilon > 0$, there exists $j_0 \in \mathbb{N}$ such that every $j \geq j_0$ satisfies
\begin{align*}
    t_{l_j} \leq \epsilon + t
    \quad\text{and}\quad
    \|\omega_{l_j} - \omega\|_{ \comass }
    \leq
    \frac{ \epsilon }{ 1 + \epsilon + t }.
\end{align*}
Therefore, by Hadamard inequality, we deduce that
\begin{align*}
    \star F^{*}_{ l_j }\omega
    &\geq
    -
    \frac{ \epsilon }{ 1 + \epsilon + t }
    \| DF_{ l_j } \|^n
    +
    \star F^{*}_{ l_j }\omega_{ l_j }
    \geq
    \left( - \frac{ \epsilon }{ 1 + \epsilon + t } + \frac{ 1 }{ 1 + t_j } \right) \| DF_{ l_j } \|^n
    \\
    &\geq
    \left( 1 - \epsilon \right)
    \frac{ 1 }{ 1 + \epsilon + t }
    \| DF_{ l_j } \|^n.
\end{align*}
So $F_{ l_j }$ is a $( 1 + \epsilon + t )/( 1 - \epsilon )$-quasiregular $\omega$-curve for $j \geq j_0$. This is true for the locally uniform limit $F$ as well, cf. \cite[Theorem 4.1]{Pan:20}. Since $\epsilon > 0$ was arbitrary, the limit $F$ is a $(1+t)$-quasiregular $\omega$-curve.
\end{proof}

\subsection{Iwaniec function}

For $3 \le n \le m$, recall that $\cC_{n}( \R^m ) \subset \Lambda^n \R^m$ is the space of all calibrations in $\Lambda^n \R^m$ and $\mathcal{I}_n( \R^m ) \subset \mathcal C_{n}( \R^m )$ is the subspace of all inner Möbius rigid calibrations on $\Lambda^{n}\mathbb{R}^m$.

Let $\tau \colon \cC_{n}( \R^m ) \times [0,\infty) \to [0,2]$, $(\omega, t) \mapsto \tau^\omega(t)$, be the function given by the formula 
\begin{equation}\label{eq:functionfunction}
    \tau^{\omega}(t)
    \coloneqq
    \sup_{ F \in \mathcal{F}_t^{\omega} }
    \min_{ \substack{ G \colon B_1 \to \R^m \\ \text{inner Möbius} \\ F(0) = G(0) } } d( F, G ),
\end{equation}
where we use the notation 
\begin{equation*}
    d( F, G ) \coloneqq \sup_{ x \in B_1 } | 1 - |x| | | F(x) - G(x) |.
\end{equation*}
We call the function $\tau$ an \emph{Iwaniec function} as it was used in \cite{Iwa:87} to study the distance of quasiregular mappings of small distortion to M\"obius transformations, that is, in the case $n=m$ and $\omega = \vol_{\R^n}$. Note that $\norm{\tau}_\infty \le 2$, since isometric affine $\omega$-curves are inner Möbius, and that the minimum in \eqref{eq:functionfunction} is justified by \Cref{prop:normalfamily}.

Iwaniec function identifies inner M\"obius rigid calibrations. More precisely, a calibration $\omega\in \cC_n(\R^m)$ is inner Möbius rigid if and only if $\tau^{\omega}(0) = 0$. Indeed, $d( F, G ) = 0$ for $F \in \mathcal{F}_t^{\omega}$ and inner Möbius $G$ if and only if $F \equiv G$ and thus $F$ is a $(1+t)$-quasiregular $\omega$-curve that is inner Möbius. The Iwaniec function also has the following pointwise continuity property for inner M\"obius rigid calibrations.

\begin{lemma}\label{lemm:confrigid:closed}
For every calibration $\omega \in \mathcal{I}_{n}( \R^m )$, 
\[
\lim_{ t \rightarrow 0^{+} } \tau^{\omega}(t) = 0, \quad\text{for $3 \le n \le m$.}
\]
\end{lemma}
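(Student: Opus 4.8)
The plan is to argue by contradiction using the normal family compactness of \Cref{prop:normalfamily}. First I would record that $\tau^{\omega}$ is nondecreasing in $t$, since $\mathcal{F}_s^{\omega} \subset \mathcal{F}_t^{\omega}$ whenever $s \le t$; hence $L \coloneqq \lim_{t\to 0^{+}} \tau^{\omega}(t)$ exists in $[0,2]$ and, because $\omega$ is inner Möbius rigid, $L \ge \tau^{\omega}(0) = 0$. Assuming toward a contradiction that $L>0$, I would choose $t_l \downarrow 0$ and, for each $l$, a curve $F_l \in \mathcal{F}_{t_l}^{\omega}$ (so a $(1+s_l)$-quasiregular $\omega$-curve with $0 \le s_l \le t_l$ mapping $B_1$ into the unit ball) for which the inner Möbius minimum in \eqref{eq:functionfunction} exceeds $L - 1/l$. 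Since $\| F_l \|_\infty \le 1$ and $s_l \to 0$, \Cref{prop:normalfamily} applied with the constant sequence $\omega_l = \omega$ produces a subsequence $F_{l_j}$ converging locally uniformly to a $1$-quasiregular, i.e.\ conformal, $\omega$-curve $F \colon B_1 \to \R^m$.

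Next I would invoke inner Möbius rigidity of $\omega$: the limit $F$ is either constant or an inner Möbius curve. In either case the goal is to produce inner Möbius comparison curves $G_j$ with $G_j(0) = F_{l_j}(0)$ and $d(F_{l_j}, G_j) \to 0$; since this forces $\min_G d(F_{l_j},G) \le d(F_{l_j},G_j) \to 0$, it contradicts $\min_G d(F_{l_j}, G) > L - 1/l_j \to L > 0$. The elementary point that makes convergence in the weighted metric accessible is that \emph{locally uniform convergence of bounded maps upgrades to convergence in $d$}: writing $B_1$ as the union of a compact interior set $\{|x| \le 1-\epsilon\}$ and a thin boundary annulus, the interior contributes little because $|F_{l_j}-F|$ is small there, while the annulus contributes little because the weight $|1-|x||$ is at most $\epsilon$ and $|F_{l_j}-F| \le 2$; the same reasoning shows the $d$-distance from $F_{l_j}$ to the constant map $c_j$ with value $F_{l_j}(0)$ tends to $0$.

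When $F$ is nonconstant, it is inner Möbius, and I would take $G_j = F + (F_{l_j}(0) - F(0))$; this is again inner Möbius, since translating $A\circ g$ changes only the translation part of the affine factor $A$, it satisfies $G_j(0) = F_{l_j}(0)$, and $d(F,G_j) = |F_{l_j}(0) - F(0)| \to 0$, so $d(F_{l_j},G_j) \le d(F_{l_j},F) + d(F,G_j) \to 0$. The main obstacle is the case $F \equiv c$ constant, as a constant map is not itself inner Möbius. Here I would exhibit near-degenerate inner Möbius curves: for $g_R(x) = (x-a)/|x-a|^2$ with $|a| = R$, the conformal factor satisfies $\| Dg_R \| \le (R-1)^{-2}$ on $B_1$, so $\sup_{x \in B_1} |g_R(x) - g_R(0)| \le (R-1)^{-2} \to 0$; post-composing with an affine isometry $A(y) = (F_{l_j}(0) - L g_R(0)) + Ly$, where $L \in \SO(\omega)$ (nonempty, as $\omega$ is a calibration), gives an inner Möbius $G_j$ with $G_j(0) = F_{l_j}(0)$ and oscillation $\sup_x |G_j(x)-G_j(0)| \le (R-1)^{-2}$. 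Choosing $R = R_j \to \infty$ then yields $d(F_{l_j},G_j) \le d(F_{l_j}, c_j) + \sup_x|G_j(x)-G_j(0)| \to 0$, completing the contradiction and proving $L = 0$.
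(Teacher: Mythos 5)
Your argument is correct and follows the same core route as the paper's proof: extract a locally uniformly convergent subsequence via \Cref{prop:normalfamily}, compare $F_{l_j}$ to a translate of the conformal limit curve $F$ so that the basepoints match, and upgrade locally uniform convergence of bounded maps to convergence in the weighted metric $d$ by splitting $B_1$ into a compact core and a thin annulus where the weight $\left| 1 - |x| \right|$ is small. Two points where you go beyond the paper's write-up are worth noting. First, your monotonicity observation ($\mathcal{F}_s^{\omega} \subset \mathcal{F}_t^{\omega}$ for $s \le t$) justifies that $\lim_{t\to 0^{+}}\tau^{\omega}(t)$ exists, which the paper uses implicitly. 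Second, and more substantively, the paper simply declares $H_l = F - F(0) + F_l(0)$ to be inner M\"obius ``as $\omega$ is inner M\"obius rigid,'' which silently ignores the case where the limit $F$ is constant: under the paper's definition an inner M\"obius curve is never constant (the linear part $L$ is orthogonal and $g$ is a restriction of a nonconstant M\"obius transformation), so rigidity alone does not supply a comparison curve there. Your near-degenerate inversions $g_R(x) = (x-a)/|x-a|^2$ with $|a| = R \to \infty$, post-composed with an affine isometry to match basepoints, fill exactly this gap with oscillation $O\left( (R-1)^{-2} \right)$, and since you only need an upper bound on the minimum in \eqref{eq:functionfunction}, it is immaterial whether that minimum is attained in the degenerate case. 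So your proposal is a correct, and in fact slightly more careful, version of the paper's argument.
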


\begin{proof}
By definition there are $t_l \in (0, 2^{-l})$ and $F_l \in \mathcal{F}^{ \omega }_{ t_l }$ such that
\[
    | \tau^{\omega}( t_l ) - \lim_{ t \rightarrow 0^{+} } \tau^{\omega}( t ) | \leq 2^{-l}
\]
and
\[
| \tau^{\omega}( t_l ) - \min_{ \substack{G \in \mathcal{F}^{\omega}( B_1 ) \\ F_l(0) = G(0)} }
    d( F_l, G ) | \leq 2^{-l}.
\]

\Cref{prop:normalfamily} implies that up to passing to a subsequence and relabeling, we lose no generality in assuming that $t_l \rightarrow 0$ and $F_l$ converges locally uniformly to a conformal $\omega$-curve $F \colon B_1 \to \mathbb{R}^m$. If $H_l = F - F(0) + F_{l}(0)$, then $H_l$ is an inner Möbius curve with $H_l(0) = F_l(0)$ as $\omega$ is inner Möbius rigid. Consequently,
\begin{equation*}
    \lim_{ t \rightarrow 0^{+} } \tau^{\omega}( t )
    \leq
    2^{ 1 - l }
    +
    \sup_{ x \in B_1 } | 1 - |x| | | F_l(x) - H_l(x) |.
\end{equation*}
If $\epsilon > 0$, by locally uniform convergence of $F_l$ to $F$, there exists $l_0 \in \mathbb{N}$ such that for every $l \geq l_0$, we have $2^{1-l} < \epsilon/2$ and
\begin{equation*}
    | F_l(x) - H_l(x) | < \frac{ \epsilon }{ 2 } \quad\text{if $x \in B_1$ satisfies $| 1 - |x| | \geq \epsilon/2$.}
\end{equation*}
Moreover, if $x \in B_1$ satisfies $| 1 - |x| | < \epsilon/2$, then
\begin{equation*}
    | 1 - |x| | | F_l(x) - H_l(x) | < \epsilon.
\end{equation*}
So for $l \geq l_0$, we have
\begin{equation*}
    d( F_l, H_l ) = \sup_{ x \in B_1 } | 1 - |x| | | F_l(x) - H_l(x) | \leq \epsilon.
\end{equation*}
Consequently, $\lim_{ t \rightarrow 0^{+} } \tau^{\omega}( t ) \leq \epsilon$ and so $\lim_{ t \rightarrow 0^{+} } \tau^{\omega}( t ) = 0$.
\end{proof}


\subsection{The space of inner Möbius rigid calibrations}
\label{sec:HPP:structure}

In this section, we prove the density of $\mathcal{I}_n(\R^m)$ in $\cC_n(\R^m)$ and use the Iwaniec function to show that $\mathcal{I}_n(\R^m)$ is a $G_\delta$ set in $\cC_n(\R^m)$. We begin with the result on density.

\begin{lemma}
\label{lemma:confrigid:supremum-fail}
For $3 \leq n \leq m$, the set $\mathcal{I}_n( \R^m )$ is dense in $\mathcal{C}_n( \R^m )$.
\end{lemma}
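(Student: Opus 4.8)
The plan is to exploit the decomposition furnished by \Cref{cor:applicability}, which shows that every calibration sits at the endpoint of a segment of calibrations all equivalent to a conformally rigid one. Density will then follow once I observe that inner Möbius rigidity is invariant under the equivalence relation $\CO(\omega) = \CO(\omega')$.

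Concretely, I would begin with an arbitrary calibration $\omega \in \mathcal{C}_n(\R^m)$. Applying \Cref{cor:applicability}, I obtain a conformally rigid calibration $\widetilde\omega \in \Lambda^n\R^m$ and a form $\epsilon \in \Lambda^n\R^m$ with $\|\epsilon\|_{\comass} \le 2$ such that $\omega = \widetilde\omega + \epsilon$ and such that, for each $0 \le t < 1$, the form $\omega_t = \widetilde\omega + t\epsilon$ is a calibration equivalent to $\widetilde\omega$, that is, $\CO(\omega_t) = \CO(\widetilde\omega)$.

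Next I would record the elementary but essential observation that inner Möbius rigidity depends only on $\CO(\omega)$: since a conformal $\omega$-curve is by definition a $W^{1,n}_{\loc}$-mapping $F$ with $(DF)_x \in \CO(\omega)$ almost everywhere, two equivalent calibrations possess exactly the same conformal curves, so one is inner Möbius rigid if and only if the other is. By \Cref{thm:Liouville}, the conformally rigid calibration $\widetilde\omega$ is inner Möbius rigid; hence, by equivalence, each $\omega_t$ with $0 \le t < 1$ lies in $\mathcal{I}_n(\R^m)$. Finally, letting $t \to 1^-$ yields $\omega_t \to \widetilde\omega + \epsilon = \omega$ in $\Lambda^n\R^m$, exhibiting $\omega$ as a limit of inner Möbius rigid calibrations. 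As $\omega$ was arbitrary, $\mathcal{I}_n(\R^m)$ is dense in $\mathcal{C}_n(\R^m)$.

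The substantive content of the argument has already been carried out in \Cref{cor:applicability} (which itself rests on \Cref{lemm:splitting} together with the conformal rigidity results \Cref{thm:product} and \Cref{thm:codim-2}); the only remaining point demanding care is the invariance of inner Möbius rigidity under calibration equivalence, which is immediate from the definition of conformal $\omega$-curve, so I anticipate no serious obstacle. I would merely double-check that the $\omega_t$ are genuine calibrations—guaranteed by \Cref{cor:applicability}—and that the convergence $\omega_t \to \omega$ takes place in the topology of $\Lambda^n\R^m$, which is clear since $\omega_t - \omega = (t-1)\epsilon \to 0$.
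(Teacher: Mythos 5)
Your proposal is correct and follows the same route as the paper: invoke \Cref{cor:applicability} to write $\omega = \widetilde\omega + \epsilon$ with $\omega_t = \widetilde\omega + t\epsilon$ equivalent to the conformally rigid $\widetilde\omega$ for $0 \le t < 1$, conclude via \Cref{thm:Liouville} that each $\omega_t$ is inner Möbius rigid, and let $t \to 1^-$. The only (immaterial) difference is that you transport rigidity along the equivalence $\CO(\omega_t) = \CO(\widetilde\omega)$ at the level of inner Möbius rigidity, since equivalent calibrations have identical conformal curves, whereas the paper transports it at the level of conformal rigidity before applying \Cref{thm:Liouville}.
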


\begin{proof}
We may assume that that there exists $\omega \in \mathcal{C}_n( \R^m ) \setminus \mathcal{I}_n( \R^m )$. By \Cref{cor:applicability}, $\omega = \widetilde{\omega} + \epsilon$, where $\widetilde{\omega}$ is a conformally rigid calibration and $\omega_t = \widetilde{\omega} + t\epsilon$ is a calibration equivalent to $\widetilde{\omega} = \omega_0$ for every $0 \leq t < 1$ and thus conformally rigid. We deduce from \Cref{thm:Liouville} that $\omega_t$ is inner Möbius rigid for $0 \leq t < 1$. The density of $\mathcal{I}_n( \R^m )$ in $\mathcal{C}_n( \R^m )$ follows.
\end{proof}

To show that $\mathcal{I}_n(\R^m)$ is a $G_\delta$ set in $\cC_n(\R^m)$, we prove that $\tau$ is upper semicontinuous.

\begin{lemma}\label{lemm:distortionfunction}
For $3\le n \le m$, the function $\tau \colon \cC_{n}( \R^m ) \times [0,\infty) \to [0,2]$ is upper semicontinuous. In particular, $\left\{ \omega \in \cC_{n}(\R^m) \colon \tau^{\omega}(t) < \epsilon \right\}$ is open in $\cC_n( \R^m )$ for every $\epsilon \geq 0$ and $t \geq 0$.
\end{lemma}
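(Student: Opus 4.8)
The plan is to verify the defining inequality of upper semicontinuity directly: for an arbitrary sequence $(\omega_l, t_l) \to (\omega, t)$ in $\cC_n(\R^m) \times [0,\infty)$, I want $\limsup_l \tau^{\omega_l}(t_l) \le \tau^{\omega}(t)$. After passing to a subsequence (and relabeling) along which $\tau^{\omega_l}(t_l)$ converges to this $\limsup$, I would, for each $l$, invoke the definition \eqref{eq:functionfunction} to pick a near-maximizer $F_l \in \mathcal{F}^{\omega_l}_{t_l}$ satisfying $\min_G d(F_l, G) \ge \tau^{\omega_l}(t_l) - 1/l$, where the minimum is over inner Möbius curves $G$ with $G(0) = F_l(0)$. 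Note that the class of inner Möbius curves does not depend on the calibration, so only the outer $\sup$ over $F$ carries the $\omega_l$-dependence.

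Next I would extract a convergent curve. Each $F_l$ is $(1+s_l)$-quasiregular for some $s_l \le t_l$ and maps $B_1$ into the unit ball of $\R^m$, so $\sup_l s_l \le \sup_l t_l < \infty$ and $\sup_l \|F_l\|_\infty < \infty$; since also $\omega_l \to \omega$, \Cref{prop:normalfamily} supplies a subsequence with $F_l \to F$ locally uniformly, where $F$ is a $(1+s)$-quasiregular $\omega$-curve with $s = \lim s_l \le t$. The image condition is closed under locally uniform limits, so $F \in \mathcal{F}^{\omega}_t$ and hence $\tau^{\omega}(t) \ge \min_G d(F, G)$ over inner Möbius $G$ with $G(0)=F(0)$; the minimum here is attained, again by \Cref{prop:normalfamily}. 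Thus it remains to show $\limsup_l \min_G d(F_l, G) \le \min_G d(F, G)$.

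For this last step I would let $G^{*}$ be a minimizer for $F$ and set $G_l = G^{*} - F(0) + F_l(0)$. Since adding a constant to an inner Möbius curve produces an inner Möbius curve, $G_l$ is admissible for $F_l$ (it satisfies $G_l(0) = F_l(0)$), so $\min_G d(F_l, G) \le d(F_l, G_l)$. Writing $F_l - G_l = (F_l - F) + (F - G^{*}) + (F(0) - F_l(0))$ and using $|1-|x|| \le 1$, I would bound $d(F_l, G_l) \le \sup_{x \in B_1} |1-|x||\,|F_l(x) - F(x)| + d(F, G^{*}) + |F(0) - F_l(0)|$. The third term vanishes as $l \to \infty$, and $d(F, G^{*}) = \min_G d(F,G)$; taking $\limsup_l$ and combining with the near-maximality of $F_l$ then yields $\limsup_l \tau^{\omega_l}(t_l) \le \min_G d(F,G) \le \tau^{\omega}(t)$, proving joint upper semicontinuity. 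The stated consequence that $\{\omega \in \cC_n(\R^m) \colon \tau^{\omega}(t) < \epsilon\}$ is open is then immediate, since sublevel sets of upper semicontinuous functions are open.

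The main obstacle I anticipate is the first error term $\sup_{x \in B_1} |1-|x||\,|F_l(x)-F(x)|$: locally uniform convergence only controls $F_l - F$ on compact subsets of $B_1$, while near $\partial B_1$ the difference $|F_l - F|$ need not be small. The resolution I would use is to split $B_1$ into a compact core $\{|1-|x|| \ge \delta\}$, on which $F_l \to F$ uniformly, and a thin collar $\{|1-|x|| < \delta\}$, on which $|F_l(x)-F(x)| \le 2$ (both maps land in the unit ball) so that $|1-|x||\,|F_l-F| \le 2\delta$; letting $l \to \infty$ and then $\delta \to 0$ forces this term to $0$. A secondary point to check carefully is that the locally uniform limit $F$ genuinely lies in $\mathcal{F}^{\omega}_t$, i.e.\ that the quasiregularity constant of $F$ does not exceed $1+t$ and that its image stays in the (closed) unit ball, both of which are guaranteed by \Cref{prop:normalfamily} and the bound $s \le t$.
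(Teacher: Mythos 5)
Your proposal is correct and takes essentially the same route as the paper's proof: near-maximizers $F_l$, compactness via \Cref{prop:normalfamily} to extract a locally uniform limit $F \in \mathcal{F}^{\omega}_{t}$, and the translated minimizer $G^{*} - F(0) + F_l(0)$ as an admissible competitor for $F_l$. Your core/collar splitting is exactly the proof of the convergence fact the paper invokes without detail (that $d(A_j,B_j) \to 0$ when $\sup_{B_1}|A_j - B_j|$ is bounded and $|A_j - B_j| \to 0$ locally uniformly), and you harmlessly omit the paper's additional reverse inequality via the curves $A_j$, which yields an exact limit identity but is not needed for upper semicontinuity.
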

\begin{proof}
Consider $t = \lim_{ l \rightarrow \infty } t_l$ and $\omega = \lim_{ l \rightarrow \infty } \omega_l$ in $[0,\infty) \times \cC_{n}( \R^m )$. By passing to a subsequence, we may assume that
\begin{equation*}
    \lim_{ j \rightarrow \infty } \tau^{ \omega_{ l_j } }( t_{ l_j } )
    = 
    \limsup_{ l \rightarrow \infty } \tau^{ \omega_{ l } }(t_{ l }).
\end{equation*}
By passing to a further subsequence and relabeling, we find $F_{l_j} \in \mathcal{F}^{ \omega_{l_j} }_{ t_{l_j} }$, $F \in \mathcal{F}^{ \omega }_{ t }$, and inner Möbius $G_{l_j}, G \colon B_1 \to \R^m$ satisfying $F_{l_j}(0) = G_{l_j}(0)$, $F( 0 ) = G(0)$, and the properties
\[
d( F_{ l_j }, G_{ l_j } )
    =
    \min_{ \substack{ \widetilde{G} \colon B_1 \to \R^m \\ \text{inner Möbius} \\ F_{ l_j }(0) = \widetilde{G}(0) } } d( F_{ l_j }, \widetilde{G} );
\]
\[
| \tau^{ \omega_{ l_j } }(t_{l_j}) - d( F_{ l_j }, G_{ l_j } ) | \leq 2^{-j};
\]
and
\[
    d( F_{ l_j }, F ) + d( G_{ l_j }, G ) \leq 2^{-j}.
\]
To obtain these properties, we apply the compactness result, \Cref{prop:normalfamily}, and the convergence of the function $d( A_j, B_j )$ to zero if $( \sup_{ x \in B_1 }|A_j - B_j| )_{ j = 1 }^{ \infty }$ is a bounded sequence and $( | A_j - B_j | )_{ j = 1 }^{ \infty }$ converges locally uniformly to zero in $B_1$.

Let now $A_{ j } \coloneqq ( G_{ l_j } - F_{ l_j }( 0 ) ) + F(0)$. Then as $F_{ l_j }$ converges locally uniformly to $F$, we deduce that
\begin{equation*}
    \lim_{ j \rightarrow \infty } | d( A_j, F ) - d( G_{ l_j }, F_{ l_j } ) | = 0,
\end{equation*}
so
\begin{equation*}
    \limsup_{ l \rightarrow \infty } \tau^{ \omega_{ l } }(t_l)
    =
    \lim_{ j \rightarrow \infty }
    d( A_j, F )
    \geq
    \min_{ \substack{ \widetilde{G} \colon B_1 \to \R^m \\ \text{inner Möbius} \\ F(0) = \widetilde{G}(0) } } d( F, \widetilde{G} ).
\end{equation*}
Now if $\widetilde{G}$ realizes the above minimum for $d( F, \widetilde{G} )$, we consider $H_j = ( \widetilde{G} - F(0) ) + F_{ l_j }( 0 )$ and deduce that
\[
    \lim_{ j \rightarrow \infty }
    \left| d( F, \widetilde{G} ) - d( F_{ l_j }, H_j ) \right| 
    =
    0
\]
and
\[
    d( F_{ l_j }, H_j )
    \geq
    \min_{ \substack{ \widetilde{G} \colon B_1 \to \R^m \\ \text{inner Möbius} \\ F_{ l_j }(0) = \widetilde{G}(0) } } d( F_{ l_j }, \widetilde{G} )
    =
    d( F_{ l_j }, G_{ l_j } ).
\]

Combining the identities gives that
\begin{align*}
    \limsup_{ l \rightarrow \infty } \tau^{ \omega_l }(t_l)
    =
    d( F, \widetilde{G} )
    \leq
    \tau^{ \omega }(t).
\end{align*}
The upper semicontinuity follows.
\end{proof}

We are now ready to summarize the proof of Theorem \ref{thm:compactnessproperty}.

\compactness*

\begin{proof}
Since
\[
\mathcal{I}_{n}( \R^m ) = \bigcap_{ \epsilon > 0 } \left\{ \omega \in \mathcal{C}_{n}( \R^m ) \mid \tau^{\omega}(0) < \epsilon \right\},
\]
Lemma \ref{lemm:distortionfunction} yields that $\mathcal{I}_n(\R^m)$ is a $G_\delta$ set. The claimed density follows from 
\Cref{lemma:confrigid:supremum-fail}. In case of a strict inequality $\mathcal{I}_n( \R^m ) \subset \mathcal{C}_n( \R^m )$, the density implies non-compactness of $\mathcal{I}_n( \R^m )$.
\end{proof}


\subsection{Stability of quasiregular curves}
In this last section, we prove \Cref{thm:localinjec}. The main ingredient in the proof is the following proposition.

\begin{proposition}
\label{prop:HPP-revisited}
Let $3 \le n \le m$ and let $\omega \in \Lambda^n \R^m$ be an inner M\"obius rigid calibration. Then, for $H>1$, there exist $\rho = \rho(H) \in ( 0, 1/4 ]$ and $\varepsilon=\varepsilon(\omega,H)>0$ for which every $( 1 + \varepsilon )$-quasiregular $\omega$-curve is $(H, \rho )$-weakly quasisymmetric. In particular, every nonconstant $( 1 + \varepsilon )$-quasiregular $\omega$-curve is a topological immersion and a discrete map.
\end{proposition}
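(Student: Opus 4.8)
The plan is to exploit the Iwaniec function $\tau^{\omega}$ together with the fact that, because $\omega$ is inner Möbius rigid, $\lim_{t\to 0^{+}}\tau^{\omega}(t)=0$ by \Cref{lemm:confrigid:closed}. First I would record the scale invariances that normalise the problem. Precomposing with a domain similarity and postcomposing with a target similarity $y\mapsto c+\lambda y$ both preserve the class of $(1+\varepsilon)$-quasiregular $\omega$-curves, and the ratio appearing in \eqref{eq:weakly:QS} is invariant under all of these. Hence, reducing to balls with $\overline{B(x_0,r_0)}\subset\Omega$ (which suffices) and rescaling the domain so that $B(x_0,r_0)=B_1$, subtracting $F(x_0)$, and rescaling the target so that $\sup_{\overline{B_1}}|F|=1$, it suffices to prove the single normalised inequality $\sup_{B(0,\rho)}|F|\le H\inf_{\partial B(0,\rho)}|F|$ for every $F\in\mathcal F^{\omega}_{\varepsilon}$ with $F(0)=0$, where $\rho=\rho(H)\le 1/4$ and $\varepsilon=\varepsilon(\omega,H)$ are to be chosen. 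For such an $F$, the definition \eqref{eq:functionfunction} of $\tau^{\omega}$ furnishes an inner Möbius curve $G$ with $G(0)=0$ and $d(F,G)\le\tau^{\omega}(\varepsilon)=:\delta$; since $|1-|x||\ge 1/2$ on $\overline{B(0,1/2)}$, this gives $|F-G|\le 2\delta$ on $\overline{B(0,1/2)}\supset\overline{B(0,\rho)}$.

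Next I would establish that inner Möbius curves are uniformly close to affine at small scales. Because $G$ is bounded by $1+2\delta$ on $\overline{B(0,1/2)}$, the pole of the underlying Möbius transformation lies outside $\overline{B(0,1/2)}$, so $\dist(\mathrm{pole},\overline{B(0,\rho)})\ge 1/2-\rho\ge 1/4$; consequently the conformal factor $\norm{DG}$ varies over $B(0,\rho)$ by a factor at most $1+C_n\rho$, whence $\sup_{\overline{B(0,\rho)}}|G|\le (1+C_n\rho)\inf_{\partial B(0,\rho)}|G|$. I would choose $\rho=\rho(H)\le 1/4$ so that $1+C_n\rho\le\sqrt H$; call this the near-affineness estimate. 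Writing the scale-invariant ratio $r_\rho(F):=\sup_{\overline{B(0,\rho)}}|F|\,/\inf_{\partial B(0,\rho)}|F|$ and $s:=\inf_{\partial B(0,\rho)}|G|>0$ (positive since $G$ is an injective immersion with $G(0)=0$), the bound $|F-G|\le 2\delta$ then yields $\sup_{\overline{B(0,\rho)}}|F|\le\sqrt H\,s+2\delta$ and $\inf_{\partial B(0,\rho)}|F|\ge s-2\delta$, so the desired inequality $r_\rho(F)\le H$ follows as soon as $s\ge c(H)\,\delta$ with $c(H)=\tfrac{2(1+H)}{H-\sqrt H}$. Having fixed $\rho$, one then chooses $\varepsilon=\varepsilon(\omega,H)$ so small that $\delta=\tau^{\omega}(\varepsilon)$ beats this threshold, using \Cref{lemm:confrigid:closed}.

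The hard part is precisely the non-degeneracy estimate $s\ge c(H)\,\delta$: it can fail when $F$, and hence $G$, is nearly constant on $B(0,1/2)$ with its unit size attained near $\partial B_1$, so that the additive error $2\delta$ swamps the intrinsic size $s$ of $G$ on the test ball. This \emph{concentration/escape} phenomenon is the main obstacle, and it cannot be removed by an additive $\tau$-estimate alone. I would resolve it by a compactness--contradiction argument built on \Cref{prop:normalfamily}: assuming the statement fails for some $H$ and the chosen $\rho$, one takes $\varepsilon_l\to 0$ and normalised counterexamples $F_l$, renormalises each at the scale of its test ball so that the family is uniformly bounded and non-degenerate on a fixed concentric ball (controlling the renormalisation through the modulus-of-continuity and mass--energy estimates of \Cref{prop:localcontinuity} to prevent both blow-up and escape), and extracts a locally uniform limit $F$ which is a conformal $\omega$-curve by \Cref{prop:normalfamily}. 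By inner Möbius rigidity of $\omega$, the nonconstant limit $F$ is an inner Möbius curve, and the scale invariance keeps any singularity uniformly far from the test ball, so the near-affineness estimate gives $r_\rho(F)\le\sqrt H$; on the other hand the violated inequalities pass to the limit to give $r_\rho(F)\ge H$, a contradiction. Finally, the closing assertion is immediate: by the consequence of \eqref{eq:weakly:QS} noted before the statement, an $(H,\rho)$-weakly quasisymmetric map is constant or a topological immersion, and discrete when nonconstant.
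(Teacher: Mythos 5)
Your first two paragraphs are sound and run parallel to the paper's actual route: the reduction via the Iwaniec function, the use of \Cref{lemm:confrigid:closed}, the observation that an inner M\"obius approximant $G$ with pole outside $B_1$ satisfies a near-affine ratio bound $\sup_{\overline{B(0,\rho)}}|G| \le \frac{1+\rho}{1-\rho}\inf_{\partial B(0,\rho)}|G|$, and the resulting threshold $s \ge c(H)\delta$ are all correct, and you rightly identify the non-degeneracy estimate as the crux. Indeed the paper's proof consists of exactly this skeleton, except that the crux is handled by importing the quantitative estimate of \cite[Proposition 7.7]{Hei:Pan:Pry:23} (Iwaniec's method), whose conclusion has the \emph{multiplicative} error term $\bigl(\frac{1+2\rho}{1-2\rho} + 2^{4/\rho}\delta\bigr)\min_{|x-a|=\rho r}|F(x)-F(a)|$; the factor $2^{4/\rho}$ is the price of an iteration over dyadic scales with renormalization at each scale, which is precisely what defeats the degenerate regime $s \lesssim \delta$.

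Your third paragraph, which replaces that iteration with a one-shot compactness--contradiction argument, has a genuine gap. \Cref{prop:normalfamily} requires $\sup_l \|F_l\|_\infty < \infty$, so you must normalize the counterexample sequence somewhere; but no normalization achieves simultaneously the uniform boundedness and the non-degeneracy you assert. If you normalize $\sup_{\overline{B(0,\rho)}}|F_l| = 1$ (``at the scale of the test ball''), nothing in the paper bounds $|F_l|$ on any strictly larger ball: \Cref{prop:localcontinuity} controls energy by the \emph{diameter of the image on a larger ball} --- the wrong direction --- and there is no Harnack-type growth estimate available for quasiregular curves; such an estimate at definite scales is essentially equivalent to what you are trying to prove. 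If instead you normalize on a larger ball, say $\sup_{\overline{B(0,2\rho)}}|F_l| = 1$, then the supremum may concentrate near $\partial B(0,2\rho)$, the locally uniform limit $F$ is the constant $0$ (and your own computation shows this regime actually occurs: failure of the inequality forces $s_l < c(H)\delta_l$, hence $F_l = O(\delta_l)$ uniformly on $\overline{B(0,1/2)}$, so the counterexamples \emph{do} degenerate at the test scale), and the limiting inequality reads $0 \ge H\cdot 0$ --- no contradiction. The parenthetical claim that the modulus-of-continuity and mass--energy estimates ``prevent both blow-up and escape'' is therefore an assertion of exactly the missing mechanism, not a proof of it. To close the argument you would have to reinstate the multi-scale renormalization of Iwaniec \cite{Iwa:87} as in \Cref{prop:7.7:HPP-revisited}: restrict to the half-ball whenever degeneracy occurs, renormalize so the restricted map lies again in $\mathcal{F}^{\omega}_{\varepsilon}$, propagate the failed ratio (which is invariant under these renormalizations) to the next scale, and track the compounded constants --- which is the paper's route. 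Your closing sentence on the ``in particular'' clause is fine, though the paper spells out the argument (two points with equal image force local constancy via \eqref{eq:weakly:QS}, so fiber components are open).
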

\Cref{prop:HPP-revisited} is a generalization of \cite[Theorem 1.6]{Hei:Pan:Pry:23}, where the authors consider a calibration $\omega \in \Lambda^n \R^m$ for which $\Gr( \omega )$ is discrete. More precisely, they consider the calibration
\begin{equation}\label{eq:pullback:factor}
    \vol_{ ( \mathbb{R}^n )^{k} }^{ \times }
    =
    \sum_{ j = 1 }^{ k } \pi^{*}_j \vol_{ \mathbb{R}^n } \in \Lambda^n (\R^n)^k,
\end{equation}
where $\pi_j \colon ( \mathbb{R}^n )^{k} \to \mathbb{R}^n$ is the coordinate projection $(v_1,\ldots, v_k) \mapsto v_j$. The key step in that proof is the following statement.
\begin{proposition}[{\cite[Proposition 7.7]{Hei:Pan:Pry:23}}]\label{prop:7.7:HPP}
Let $n \geq 3$ and $k \geq 1$ and denote $\mathbb{R}^{m} = ( \mathbb{R}^{n} )^{k}$, and fix $\delta > 0$. Then there exists $\epsilon = \epsilon( n,m,\delta) > 0$ for the following. Let $B = B( a, r ) \subset \mathbb{R}^n$ be a ball and let $F \colon B \to \mathbb{R}^m$ be a $(1+\epsilon)$-quasiregular $\vol_{ ( \mathbb{R}^n )^{k} }^{ \times }$-curve. Then
\begin{equation*}
    \sup_{ x \in \rho B } | F(x) - F(a) | \leq \left( \frac{ 1+2\rho}{1-2\rho} + 2^{ \frac{4}{\rho} }\delta \right) \min_{ |x-a| = \rho r } | F(x) - F(a) |
\end{equation*}
for every $0 < \rho \leq 4^{-1}$, where $\rho B \coloneqq B( a, \rho r )$.
\end{proposition}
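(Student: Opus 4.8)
The plan is to argue by contradiction and extract, from a hypothetical failure of the inequality, a nonconstant \emph{conformal} $\vol_{(\R^n)^k}^{\times}$-curve that is forced to be simultaneously injective and maximally degenerate. Two structural inputs drive the argument. First, by \Cref{lemm:decompositionofcalibrations} the Grassmannian $\Gr(\vol_{(\R^n)^k}^{\times})=\{V_1,\dots,V_k\}$ is the discrete set of coordinate $n$-planes; hence every nonconstant conformal $\vol_{(\R^n)^k}^{\times}$-curve maps into a single factor $V_j$ and is, by the classical Liouville theorem, the restriction of a Möbius transformation $\S^n\to\S^n$ followed by the isometric inclusion of $V_j$. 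In particular $\vol_{(\R^n)^k}^{\times}$ is inner Möbius rigid and every such limit curve is injective. Second, if $g$ is a Möbius map as in \eqref{eq:Mobius} with $g(0)=0$ and a pole $p$ at distance $d=|p|$, an elementary computation shows that on each sphere $\{|x|=\rho\}$ the quantity $|g(x)-g(0)|$ is proportional to $1/|x-p|$, whence the ratio $\sup_{|x|\le\rho}|g(x)-g(0)|\big/\min_{|x|=\rho}|g(x)-g(0)|$ equals $(d+\rho)/(d-\rho)$; for $\rho\le 1/4$ and $d\ge 1/2$ this is at most $(1+2\rho)/(1-2\rho)$, and it equals $1$ when $g$ has no pole. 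Since an inner Möbius curve $F=A\circ g$ satisfies $|F(x)-F(0)|=|g(x)-g(0)|$, the same distortion bound holds for $F$.

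Suppose the proposition fails for some $\delta>0$. Then there are $(1+\tfrac1l)$-quasiregular $\vol_{(\R^n)^k}^{\times}$-curves $F_l$ on balls $B(a_l,r_l)$ and radii $\rho_l\in(0,1/4]$ violating the inequality. Precomposing with the conformal map $x\mapsto a_l+r_lx$, subtracting $F_l(a_l)$, and dividing by $\Lambda_l:=\sup_{\overline B(0,1/2)}|F_l-F_l(0)|>0$, we may assume $F_l\colon B_1\to\R^m$ with $F_l(0)=0$, $\sup_{\overline B(0,1/2)}|F_l|=1$, and, writing $M_l=\tfrac{1+2\rho_l}{1-2\rho_l}+2^{4/\rho_l}\delta$,
\[
\sup_{|x|\le\rho_l}|F_l| \;>\; M_l\,\min_{|x|=\rho_l}|F_l| .
\]
Passing to a subsequence, $\rho_l\to\rho_*\in[0,1/4]$.

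If $\rho_*>0$, then \Cref{prop:normalfamily} applied on $B(0,1/2)$ (the $F_l$ are uniformly bounded there, the calibrations are all equal, and the distortions tend to $1$) yields, along a subsequence, a conformal $\vol_{(\R^n)^k}^{\times}$-curve $F\colon B(0,1/2)\to\R^m$ with $\sup_{\overline B(0,1/2)}|F|=1$, hence nonconstant and, by the first paragraph, injective and inner Möbius. As $F$ is finite on $B(0,1/2)$, the pole of its underlying Möbius map lies at distance $\ge 1/2$, so the distortion bound gives $\sup_{|x|\le\rho_*}|F|\le\tfrac{1+2\rho_*}{1-2\rho_*}\min_{|x|=\rho_*}|F|$, with $\min_{|x|=\rho_*}|F|>0$ by injectivity. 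Letting $l\to\infty$ in the displayed inequality gives $\sup_{|x|\le\rho_*}|F|\ge\big(\tfrac{1+2\rho_*}{1-2\rho_*}+2^{4/\rho_*}\delta\big)\min_{|x|=\rho_*}|F|$, and comparing the two estimates yields $0\ge 2^{4/\rho_*}\delta\,\min_{|x|=\rho_*}|F|>0$, a contradiction.

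If $\rho_*=0$, rescale: set $s_l=\sup_{|x|\le\rho_l}|F_l|$ and $G_l(y)=(F_l(2\rho_ly)-F_l(0))/s_l$, a $(1+\tfrac1l)$-quasiregular $\vol_{(\R^n)^k}^{\times}$-curve on $B(0,(2\rho_l)^{-1})$ with $G_l(0)=0$ and $\sup_{\overline B(0,1/2)}|G_l|=1$. The displayed inequality becomes $1>M_l\min_{|y|=1/2}|G_l|$, and since $M_l\to\infty$ we obtain $\min_{|y|=1/2}|G_l|\to 0$. Extracting a limit $G$ that is again a nonconstant injective inner Möbius curve, finite on $\overline B(0,1/2)$ with $G(0)=0$, would force $\min_{|y|=1/2}|G|=0$, i.e.\ $G(y_0)=0=G(0)$ for some $|y_0|=1/2$, contradicting injectivity. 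The main technical obstacle is precisely this extraction: because images of quasiregular curves may grow (they are locally like quasiregular self-maps of $\R^n$ into a single factor), the rescaled family need not be bounded on any ball strictly larger than $\overline B(0,1/2)$, so \Cref{prop:normalfamily} does not apply verbatim at shrinking scales. I would resolve this by passing to the $\S^m$-valued (chordal) normal family — the spherical analogue of \Cref{prop:normalfamily} — along which the $G_l$ subconverge locally uniformly on $\R^n$; the bound $|G_l|\le 1$ on $\overline B(0,1/2)$ keeps the limit finite and equal to the conformal curve $G$ there, which completes the contradiction.
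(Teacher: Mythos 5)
Your two structural inputs are sound: the discreteness of $\Gr(\vol^{\times}_{(\R^n)^k})$ does force nonconstant conformal limit curves to be injective inner M\"obius curves, and your computation of the M\"obius distortion ratio $(d+\rho)/(d-\rho)\le(1+2\rho)/(1-2\rho)$ is correct and explains the constant in the statement. The gap is in the compactness step, in both cases. After dividing by $\Lambda_l=\sup_{\overline{B}(0,1/2)}|F_l-F_l(0)|$ you control $|F_l|$ \emph{only} on $\overline{B}(0,1/2)$, so \Cref{prop:normalfamily} applies only with $U=B(0,1/2)$ and gives locally uniform convergence on compacts of the \emph{open} half-ball. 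The normalization $\sup_{\overline{B}(0,1/2)}|F_l|=1$ can concentrate at the sphere $\{|x|=1/2\}$: equicontinuity up to the closed ball is unavailable, since the estimates of \Cref{prop:localcontinuity} require bounding $\diam F_l(B(x_0,2r))$ on balls protruding past $|x|=1/2$, where you have no bound. Hence the limit $F$ may be constant; then $\min_{|x|=\rho_*}|F|=0$ and your comparison reads $0\ge 2^{4/\rho_*}\delta\cdot 0$ --- no contradiction. (Nothing prevents $\min_{|x|=\rho_l}|F_l|=0$ along the sequence either: quasiregular curves of distortion $>1$ need not be discrete, so the violated inequality is consistent with a degenerating sequence.) A growth bound of the form $\sup_{\overline{B}(0,1/2)}|F_l|\le C\sup_{|x|\le\rho_l}|F_l|$ would fix this, but that is essentially the weak quasisymmetry being proven --- invoking it is circular. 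The $\rho_*=0$ case suffers the same degeneration, and additionally your contradiction needs convergence \emph{on} the sphere $\{|y|=1/2\}$, exactly where locally uniform convergence stops; the ``chordal normal family'' you invoke to repair this is not established anywhere (\Cref{prop:normalfamily} requires a uniform sup bound, and no Montel-type theorem for quasiregular curves is available), and even granted, it would not rule out the limit being constant on $\overline{B}(0,1/2)$.

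This degeneration problem is precisely what the actual proof (HPP, following Iwaniec \cite{Iwa:87}) is engineered to avoid, which is why the paper does not argue by blow-up. There the family $\mathcal{F}^{\omega}_t$ is normalized to map the unit ball into the unit ball --- bounded on the \emph{entire} domain, so no mass escapes to the boundary --- and the weighted distance $d(F,G)=\sup_{x\in B_1}|1-|x||\,|F(x)-G(x)|$ from \eqref{eq:functionfunction} discounts exactly the boundary region where control is lost. The limit property $\lim_{t\to0^+}\tau^{\omega}(t)=0$ (\Cref{lemm:confrigid:closed}) then gives uniform weighted closeness to inner M\"obius curves, and growth estimates for M\"obius maps convert a weighted error $\delta$ at unit scale into the multiplicative error $2^{4/\rho}\delta$ at \emph{every} scale $\rho\in(0,1/4]$ simultaneously. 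This yields a single $\epsilon(n,m,\delta)$ valid for all $\rho$ at once --- something a fixed-$\rho$ compactness argument cannot produce, and which your $\rho_l\to0$ case, where the argument genuinely breaks, does not repair. To salvage your approach you would in effect have to reconstruct this weighted-distance machinery, at which point you are running the paper's proof.
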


The proof is based on a similar result for mappings due to Iwaniec \cite{Iwa:87}. Having Propositions \ref{prop:localcontinuity} and \ref{prop:normalfamily} at hand, the proof of \Cref{prop:7.7:HPP} holds as is for inner Möbius rigid calibrations $\omega \in \mathcal{I}_{n}( \R^m )$. Indeed, the proof of \Cref{prop:7.7:HPP} relies on three ingredients. The first is the limit property, \Cref{lemm:confrigid:closed}. The second is the fact that conformal $\omega$-curves are inner Möbius whenever $\omega \in \mathcal{I}_{n}( \R^m )$. The third is the growth estimates for $|G|$ where $G$ is an inner Möbius curve satisfying $|G(0)| < 1$. The latter estimates are exactly the same as for Möbius transformations considered in \cite{Iwa:87}. These facts remain unchanged for any inner Möbius rigid calibration and thus we obtain the following.

\begin{proposition}\label{prop:7.7:HPP-revisited}
Let $m \geq n \geq 3$, $\omega \in \mathcal{I}_{n}( \R^m )$, and $\delta > 0$. Then there exists $\epsilon = \epsilon( \omega,\delta) > 0$ for the following. Let $B = B( a, r ) \subset \mathbb{R}^n$ be a ball and let $F \colon B \to \mathbb{R}^m$ be a $(1+\epsilon)$-quasiregular $\omega$-curve. Then
\begin{equation*}
    \sup_{ x \in \rho B } | F(x) - F(a) | \leq \left( \frac{ 1+2\rho}{1-2\rho} + 2^{ \frac{4}{\rho} }\delta \right) \min_{ |x-a| = \rho r } | F(x) - F(a) |
\end{equation*}
for every $0 < \rho \leq 4^{-1}$.
\end{proposition}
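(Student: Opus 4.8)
The plan is to follow verbatim the proof of \Cref{prop:7.7:HPP} given in \cite[Proposition 7.7]{Hei:Pan:Pry:23} for the calibration $\vol_{ ( \R^n )^{k} }^{ \times }$, and to check that each of its three ingredients survives the passage to a general inner Möbius rigid calibration $\omega \in \mathcal{I}_n( \R^m )$. The argument is a quantitative stability estimate of Iwaniec type: one measures how far a nearly conformal curve is from the space of inner Möbius curves, and then transfers the \emph{sharp} distortion estimate enjoyed by inner Möbius curves to $F$, with the slack term $2^{4/\rho}\delta$ absorbing the approximation error.

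First I would normalize. Precomposing $F$ with the affine map $x \mapsto a + rx$ reduces to the unit ball $B = B_1$ centered at $a = 0$, and translating and scaling the image reduces to $F(0) = 0$ with $F$ in the class $\mathcal{F}^{\omega}_{\varepsilon}$ (both sides of the asserted inequality are homogeneous of degree one in $F$ and invariant under the affine reparametrization of the domain, so these reductions are harmless on the compact subball $\rho B_1$ on which everything is controlled). At this point the Iwaniec function $\tau^\omega$ enters: by its definition, and since the minimizer is attained by \Cref{prop:normalfamily}, there is an inner Möbius curve $G \colon B_1 \to \R^m$ with $G(0) = F(0)$ and $d( F, G ) \le \tau^{\omega}( \varepsilon )$, where $d( F, G ) = \sup_{ x \in B_1 } |1 - |x||\,|F(x) - G(x)|$.

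The three ingredients now read as follows. The \emph{limit property} \Cref{lemm:confrigid:closed}, whose proof rests on the compactness \Cref{prop:normalfamily} together with the very definition of inner Möbius rigidity of $\omega$, gives $\tau^{\omega}( \varepsilon ) \to 0$ as $\varepsilon \to 0^{+}$. The second ingredient is that a conformal $\omega$-curve is inner Möbius, which is exactly the hypothesis $\omega \in \mathcal{I}_n( \R^m )$ and makes the comparison map $G$ genuinely inner Möbius. The third is the sharp growth estimate for an inner Möbius curve $G = A \circ M$ with $A(x) = y_0 + Lx$, $L \in \CO( \omega )$, and $M$ a Möbius transformation with $|G(0)| < 1$: since $|G(x) - G(0)| = \lambda\,|M(x) - M(0)|$ for $\lambda = \|L\|$, the ratio $\sup_{ x \in \rho B_1 } |G(x) - G(0)| / \min_{ |x| = \rho } |G(x) - G(0)|$ coincides with that of the Möbius transformation $M$ and is therefore bounded by $\tfrac{1+2\rho}{1-2\rho}$, exactly as in \cite{Iwa:87}. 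Combining this sharp bound for $G$ with the estimate $|F(x) - G(x)| \le \tfrac{4}{3}\tau^{\omega}( \varepsilon )$ for $x \in \rho B_1$ (where $|1-|x|| \ge 3/4$) and expressing the resulting error relative to the boundary-scale quantity $\min_{ |x| = \rho } |F(x) - F(0)|$ via the growth estimates for $G$ produces the constant $2^{4/\rho}$; choosing $\varepsilon = \varepsilon( \omega, \delta )$ small enough that $\tau^{\omega}( \varepsilon )$ forces this error below $2^{4/\rho}\delta\,\min_{ |x| = \rho } |F(x) - F(0)|$ closes the argument.

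The main obstacle is to confirm that the growth and normalization bookkeeping producing the factor $2^{4/\rho}$ is insensitive to the structure of $\Gr( \omega )$, since in \cite{Hei:Pan:Pry:23} it was carried out for a calibration with \emph{discrete} Grassmannian, whereas a general $\omega \in \mathcal{I}_n( \R^m )$ may have positive-dimensional $\Gr( \omega )$. The point to verify is that these estimates depend only on the sharp Möbius distortion of the comparison curves $G$ and on the uniform modulus-of-continuity and energy bounds of \Cref{prop:localcontinuity}, both of which are stated for arbitrary calibrations, so that the dimension of $\Gr( \omega )$ never enters. Once this is checked, the proof of \Cref{prop:7.7:HPP} transcribes without change.
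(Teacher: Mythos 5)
Your proposal is correct and follows essentially the same route as the paper, which likewise reduces the statement to the proof of \cite[Proposition 7.7]{Hei:Pan:Pry:23} and verifies the same three ingredients: the limit property $\tau^{\omega}(\varepsilon) \to 0$ from \Cref{lemm:confrigid:closed} (via the compactness in \Cref{prop:normalfamily}), the fact that conformal $\omega$-curves are inner Möbius since $\omega \in \mathcal{I}_n(\R^m)$, and the sharp Möbius growth estimates for the comparison curve $G$, which are unchanged because the scaling factor of $L \in \CO(\omega)$ cancels in the ratio. Your closing observation -- that the bookkeeping producing $2^{4/\rho}$ depends only on these estimates and on \Cref{prop:localcontinuity}, never on the structure of $\Gr(\omega)$ -- is precisely the point the paper makes in asserting that the argument of \cite{Hei:Pan:Pry:23} ``holds as is.''
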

We are now ready to prove \Cref{prop:HPP-revisited}.
\begin{proof}[Proof of \Cref{prop:HPP-revisited}]
Let $H > 1$ and consider $\rho = \rho(H) \in ( 0, 4^{-1} ]$ satisfying
\begin{equation*}
    \frac{ 1+2\rho}{1-2\rho}
    \leq
    \frac{ H + 1 }{ 2 },
\end{equation*}
and $\delta = \delta( H ) \in ( 0, 1 )$ for which
\begin{equation*}
    2^{ \frac{4}{\rho} } \delta \leq \frac{ H - 1 }{ 2 }.
\end{equation*}
Let $\epsilon = \epsilon( \omega, \delta )$ be as in \Cref{prop:7.7:HPP-revisited}, and notice that $\epsilon$ depends only on $\omega$ and $H$. Now consider a nonconstant $(1+\epsilon)$-quasiregular $\omega$-curve $F \colon \Omega \to \mathbb{R}^m$ defined on a domain $\Omega \subset \R^n$. By the above choices of $\delta$, $\rho$ and $\epsilon$, we deduce from \Cref{prop:7.7:HPP-revisited} that $F$ is $( H, \rho )$-weakly quasisymmetric.

It remains to establish the local injectivity of $F$. To this end, let $x_0 \in \Omega$ and consider $r_0 > 0$ so that $B_0 = B( x_0, ( 1 + 2 \rho^{-1} ) r_0 ) \subset \Omega$. We claim that $F$ is constant on $\Omega$ or injective on $B_1 = B( x_0, r_0 )$. Indeed, if two distinct $x, a \in B_1$ satisfy $F(x) = F(a) = v$ for some $v \in \mathbb{R}^m$, we have that $a \in \partial B( x, | x - a | )$ and $B( x, \rho^{-1}|x-a| ) \subset B_0 \subset \Omega$. The $( H, \rho )$-weak quasisymmetry then implies that $F$ is constant on $B( x, |x-a| )$. Similar argument shows that the connected component of $\left\{ y \in \Omega \mid F(y) = v \right\}$ (for the given $v \in \R^m$ above) containing $x$ is relatively open in $\Omega$, so either $F$ is constant or a topological immersion.
\end{proof}


We finish with the proof of \Cref{thm:localinjec}.

\weakQS*

\begin{proof}
Let $\omega \in \mathcal{I}_n( \R^m )$ for $3 \leq n \leq m$, $H > 1$ and consider $\rho(H) \in (0, 4^{-1}]$ as in \Cref{prop:7.7:HPP-revisited}. Let $\epsilon = \epsilon( \omega, H ) > 0$ be as in \Cref{prop:HPP-revisited} and let $0 < \delta < \epsilon/(1+\epsilon)$. Consider $\tau \in \mathcal{C}_n( \R^m )$ such that $\| \tau - \omega \|_{ \comass } < \delta$ and a conformal $\tau$-curve $F \colon \Omega \to \R^m$ on a domain $\Omega \subset \R^n$. Then, by Hadamard inequality, we deduce
\begin{equation*}
    \|DF\|^n = \star F^{*}\omega \leq \delta\|DF\|^n + \star F^{*}\tau.
\end{equation*}
Rearranging the inequality implies that $F$ is a $(1-\delta)^{-1}$-quasiregular $\omega$-curve and, by the choice of $\delta > 0$, an $(1+\epsilon)$-quasiregular $\omega$-curve. Then \Cref{prop:7.7:HPP-revisited} gives that $F$ is an $( H, \rho )$-weak quasisymmetry.
\end{proof}

\bibliographystyle{alpha}

\end{document}